\documentclass[11pt]{article}

\RequirePackage{amsthm,amsmath,amsfonts,amssymb,stmaryrd}

\RequirePackage[colorlinks,citecolor=blue,urlcolor=blue,menucolor=black,linkcolor=black]{hyperref}
\RequirePackage{graphicx}

\usepackage[total={7in, 9in}]{geometry}

\theoremstyle{plain}
\newtheorem{lemma}{Lemma}[section]
\newtheorem{theorem}[lemma]{Theorem}
\newtheorem{cor}[lemma]{Corollary}

\newtheorem{proposition}[lemma]{Proposition}

\newtheorem{conj}{Conjecture}

\theoremstyle{remark}
\newtheorem{rem}[lemma]{Remark}

\newcommand\linf{\lambda_1^\infty}
\newcommand\vep{\varepsilon}

\newcommand{\sinc}{\operatorname{sinc}}

\usepackage{authblk}

\newlength{\affilskip}
\setlength{\affilskip}{5pt}

\begin{document}

\title{A branching particle system as a model of semipushed fronts}
\author[*]{Julie Tourniaire}
\affil[*]{Faculty of Mathematics, University of Vienna, Oskar-Morgenstern-Platz 1, 1090 Wien, Austria\vspace{\affilskip}}

\maketitle

\begin{abstract}
    We consider a system of particles performing a one-dimensional dyadic branching
    Brownian motion with space-dependent branching rate, negative drift $-\mu$ and
    killed upon reaching $0$, starting with $N$ particles. More precisely, particles
    branch at rate $\rho/2$ in the interval $[0,1]$, for some $\rho>1$, and at rate
    $1/2$ in $(1,+\infty)$. The drift $\mu(\rho)$ is chosen in such a way that,
    heuristically, the system is critical in some sense: the number of particles
    stays roughly constant before it eventually dies out. This particle system can
    be seen as an analytically tractable model for fluctuating fronts, describing the
    internal mechanisms driving the invasion of a habitat by a cooperating population.
    Recent studies from Birzu, Hallatschek and Korolev suggest the existence of
    three classes of fluctuating fronts:  pulled, semipushed and fully pushed fronts.
    Here, we rigorously verify and make precise this classification and focus on
    the semipushed regime.  This complements previous results from Berestycki,
    Berestycki and Schweinsberg for the case $\rho=1$.
\end{abstract}

\tableofcontents

\section{Introduction}

In this article, we are interested in the underlying dynamics of travelling wavefronts arising from certain reaction diffusion equations. Formally, the front is represented by a branching Brownian motion (BBM) with absorption at zero and negative drift $-\mu$. This system can be seen as a co-moving frame following the particles located at the tip of the front. In this framework, the drift $\mu$ is interpreted as the speed of the wave.

In this introductory section, we first  motivate our analysis with the results of some recent studies and state an informal version of the main theorem  in Section \ref{sec:relatedmodel}. In Section \ref{sec;csbp}, we recall some well known facts on continuous-state branching processes. The model and the results are given in Section \ref{sec:model} and a sketch of the proof is outlined in Section \ref{sec:skproof}. In Section \ref{pt:Laplacian}, we explain the connection between the model defined in Section \ref{sec:relatedmodel} and the generalised principal eigenvalue of the perturbed Laplacian on the half-line. We then discuss the link between our model and previous work on pulled fronts and branching processes in Section \ref{relmod} and give a biological interpretation of the result in Section \ref{sec:allee}.

\subsection{Noisy FKPP-type equations and semipushed fronts}\label{sec:relatedmodel}

This work is motivated by the results of recent work by Birzu, Hallatschek and Korolev \cite{Birzu2018,Birzu:2020up} on the noisy FKPP-type equation
\begin{equation}
    u_t=\frac{1}{2}u_{xx}+r_{0} u(1-u)(1+Bu)+\frac{1}{\sqrt{N}}\Gamma(u)W(t,x). \label{eq:FKPPhall}
\end{equation}  From a biological standpoint, Equation \eqref{eq:FKPPhall} models the invasion of an uncolonised habitat by a species:  $u$ corresponds to the population density, $r_0$ to the per-capita growth rate at low densities, $B$ is a positive parameter scaling the strength of cooperation between the individuals, $N$ is the local number of particles at   equilibrium, $\Gamma$ stands for the strength of the demographic fluctuations and $W$ is a Gaussian white noise. The numerical experiments and analytical arguments from \cite{Birzu2018,Birzu:2020up} suggest the existence of three regimes in Equation \eqref{eq:FKPPhall}: the \textit{pulled} regime for $B\leqslant 2$, the \textit{semipushed} or \textit{weakly pushed} regime for $B\in(2,B_c)$, for some $B_c>2$, and the \textit{fully pushed} regime, for $B\geqslant B_c$.

The notion of pulled and pushed waves was first introduced by Stokes \cite{stokes1976two} in PDE theory. The distinction between the pulled and pushed regimes in  \eqref{eq:FKPPhall} is based on the asymptotic spreading speed $v$ of the solutions of the limiting reaction diffusion equation  ($N=\infty$),
\begin{equation}
    u_t=\frac{1}{2}u_{xx}+f(u), \label{limit:PDEh}
\end{equation}
with
\begin{equation}
    f(u):=r_0u(1-u)(1+Bu). \label{reactionterm}
\end{equation}
It is a known fact (see e.g.~\cite{Hadeler:1975uk}) that Equation (\ref{limit:PDEh}) has a one-parameter family of front solutions $u(t,x)=\varphi_c(x-ct)$ for  $c\geqslant c_{\min}$, for some $c_{\min}>0$. Moreover, it was shown \cite{stokes1977nonlinear} that the asymptotic spreading speed $v$ of any solution to Equation \eqref{limit:PDEh} with compactly supported initial data is equal to the minimal speed $c_{\min}$. We refer to  \cite{these}, Chapter 1, for further details on the convergence of such solutions.
An invasion is then said to be ``pulled'' if $c_{\min}$ coincides with the asymptotic speed $c_0$ of the linearised equation
\begin{equation*}
    u_t=\frac{1}{2}u_{xx}+f'(0)u,
\end{equation*}
and ``pushed'' if $c_{\min}>c_0$. In Equation (\ref{limit:PDEh}), the transition between pulled and pushed fronts occurs at $B=2$ \cite{Hadeler:1975uk}.
As observed in \cite{Birzu2018}, the addition of demographic fluctuations in \eqref{limit:PDEh} uncovers  a second phase transition within the pushed regime. This leads to the distinction of two classes of pushed fronts:  \textit{semipushed} (or \textit{weakly pushed}) fronts and  \textit{fully pushed} fronts. The effect of fluctuations on pulled fronts has already been widely studied in the literature. A rich theory based on the work of  Brunet, Derrida and co-authors \cite{Brunet1997,Brunet2006,Brunet2006a} describes the behaviour of the front solutions of (\ref{eq:FKPPhall}) for $B=0$. The spreading speed of these solutions admits a correction of order $\log(N)^{-2}$ compared to the one of the limiting PDE \eqref{limit:PDEh}. In this sense, fluctuations have a huge impact on pulled fronts (see Section \ref{relmod} for further details). Moreover, the genealogy at the tip of the front is expected to be described by a Bolthausen--Sznitman coalescent over a time scale of order $\log(N)^3$, which suggests that the particles located at the tip of the front evolve as a population undergoing natural selection.

On the other hand, pushed fronts are expected to be less sensitive. In \cite{Birzu2018}, it is numerically observed that for $B>B_c$, the fluctuations in the position of the front and in the genetic drift occur on a time scale of order $N$, which may indicate the presence of Kingman's coalescent (a coalescent with binary mergers). This is consistent with the fact that the population in the bulk behaves like a neutral population. However, for intermediate values of $B$, that is $B\in(2,B_c)$, the fluctuations appear on a shorter time scale, namely $N^\gamma$ with $\gamma\in(0,1)$.  This intermediate region is defined as the semipushed regime.

In this work, we propose an analytically tractable particle system to investigate the microscopic mechanisms leading to semipushed invasions. This model is an extension of the one studied by Berestycki, Berestycki and Schweinsberg \cite{Berestycki2010} to prove the conjecture on the genealogy of pulled fronts. Similarly, we are able to exhibit the time scale and the structure of the genealogy of our particle system. Based on the branching particle system analysed in \cite{Berestycki2010}, we consider a branching Brownian motion with absorption at $0$, negative drift $-\mu$ and a space-dependent branching rate $r(x)$ of the form
\begin{equation}
    \label{def:r}
    r(x)=\frac{1}{2}+\frac{\rho-1}{2}\mathbf{1}_{x\in[0,1]},
\end{equation}
for some $\rho\geqslant 1$. As mentioned above, this system is a toy model for what happens to the right of the front. Hence, the parameter $\rho$ plays the same role as $B$ in Equation \eqref{eq:FKPPhall} and thus scales the strength of the cooperation between the particles.

We assume that the system starts with $N$ particles located at $1$. We denote by $N_t$ the number of particles alive in the particle system at time $t$ and consider the rescaled number of particles $\bar{N}_t=N_t/N$.
Essentially, our result is the following:
\begin{theorem}[informal version]\label{informal:version} Let $\rho_1:=1+\frac{\pi^2}{4}$.
    There exists $\rho_2>\rho_1$ such that for all $\rho\in(\rho_1,\rho_2)$, there exists $\mu(\rho)>1$ and $\alpha=\alpha(\rho)\in(1,2)$ such that, if we consider the BBM with branching rate \eqref{def:r} and drift $-\mu(\rho)$, the process $(\bar{N}_{N^{\alpha-1}t})_{t>0}$ converges in law to an $\alpha$-stable continuous-state branching process as $N$ goes to infinity.
  Moreover, the exponent $\alpha$  is an increasing function of $\rho$ such that $\alpha(\rho)\to i$ as $\rho\to\rho_i$, $i=1,2$.

\end{theorem}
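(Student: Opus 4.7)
The overall strategy is to follow the classical spine-plus-stable-limit recipe, with the spectral analysis of Section~\ref{pt:Laplacian} providing the quantitative input. Because no interaction is imposed on the particles, the $N$ initial ancestors evolve independently, so that
$$N_{N^{\alpha-1}t} = \sum_{i=1}^{N} \xi_i(t),$$
where the $\xi_i(t)$ are i.i.d.\ copies of the total number of particles alive at time $N^{\alpha-1}t$ in a BBM started from a single particle at $1$. The entire task reduces to understanding the marginal distribution of $\xi(t)$ for large $N$ and invoking an invariance principle in the domain of attraction of a spectrally positive $\alpha$-stable law.

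The first step is the spectral analysis. Using Section~\ref{pt:Laplacian}, I would choose $\mu(\rho)$ so that the generalised principal eigenvalue $\linf$ of $\mathcal{L} = \tfrac{1}{2}\partial_{xx} - \mu\partial_x + r(x)$ on $(0,\infty)$ with Dirichlet condition at the origin equals zero, and denote by $\vl$ the associated positive ground state. By the standard many-to-one formula, $M_t = \sum_{u} \vl(X_t^u)$ is then a nonnegative martingale whose fluctuations encode the large-time behaviour of $N_t$. The exponential decay rate of $\vl$ at infinity and the value of $\mu(\rho)$ jointly determine the exponents $\rho_1,\rho_2$ and the function $\alpha(\rho)$: $\rho_1$ marks the pulled/semi-pushed transition (where $\mu(\rho)$ detaches from its pulled value), and $\rho_2$ the transition at which the second moment of the single-ancestor contribution becomes finite, i.e.\ $\alpha(\rho_2)=2$.

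The second and most delicate step is the tail estimate
$$\P\bigl(\xi(t) > u\bigr) \sim c(t)\, u^{-\alpha}, \qquad u \to \infty.$$
Heuristically, a single ancestor produces an anomalously large descendant population only when one of its descendants diffuses to a far position $y\gg 1$ before returning to $[0,1]$: the probability of such an excursion decays like $e^{-\gamma y}$, while conditionally the expected progeny it triggers grows like $e^{\beta y}$. The ratio $\gamma/\beta$ will be $\alpha(\rho)$. Making this rigorous requires comparing the BBM outside $[0,1]$ to a Brownian motion with drift and no branching, computing hitting probabilities via the spectral data of Section~\ref{pt:Laplacian}, and passing from Laplace transform asymptotics of the semigroup of $\mathcal{L}$ to tail asymptotics of $\xi(t)$ through a Tauberian argument.

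Once the tail estimate is secured, a functional limit theorem for i.i.d.\ triangular arrays in the $\alpha$-stable domain of attraction yields convergence of $(\bar N_{N^{\alpha-1}t})_{t>0}$ to a spectrally positive $\alpha$-stable Lévy process, which via the Lamperti time change is precisely the $\alpha$-stable continuous-state branching process recalled in Section~\ref{sec;csbp}. Tightness in Skorokhod space should follow from the martingale property of $M_t$ together with the Aldous criterion and moment estimates on the truncated weighted process. The main obstacle is the tail estimate itself: identifying $\alpha(\rho)$ and $c(t)$ calls for fine control of the Feynman--Kac semigroup associated with $\mathcal{L}$ far from the branching region, well beyond the first-order many-to-one calculation. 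A secondary difficulty is to show that on the time scale $N^{\alpha-1}$ the bulk population has already relaxed to a quasi-stationary spatial profile proportional to $\vl$, so that the one-ancestor tail computation genuinely governs the asymptotics of the full system rather than being obscured by the transient from the initial condition at $1$.
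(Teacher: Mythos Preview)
Your reduction to i.i.d.\ single-ancestor contributions is correct as far as it goes, but the subsequent chain ``triangular array $\Rightarrow$ $\alpha$-stable L\'evy process $\Rightarrow$ Lamperti $\Rightarrow$ CSBP'' is conceptually wrong. The index $i=1,\dots,N$ of the sum is not a time index, so the stable limit theorem gives you at best the one-dimensional marginal of $\bar N_{N^{\alpha-1}t}$ at a \emph{fixed} $t$, not a L\'evy process in $t$. The Lamperti transform is a \emph{random} time change between a CSBP and a L\'evy process and plays no role here: the limit should be the CSBP directly, because the branching property of the particle system (your i.i.d.\ decomposition) becomes the branching property $p_t(x+y,\cdot)=p_t(x,\cdot)*p_t(y,\cdot)$ of the limit. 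To get the finite-dimensional distributions you would still need the Markov structure, and $N_t$ alone is not Markov since the spatial configuration matters.

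The deeper gap is that your ``tail estimate'' $\mathbf P(\xi(t)>u)\sim c(t)u^{-\alpha}$ is stated at a fixed $t$, whereas what you actually need is control of $\xi(N^{\alpha-1}t)$ with $N\to\infty$. On that growing time scale a single ancestor produces, on average, order $N$ descendants and makes many far excursions; isolating the rare ones that generate the heavy tail is precisely the hard part, and it is not reducible to a Tauberian argument on the Feynman--Kac semigroup. The paper handles this by introducing an auxiliary absorbing barrier at $L\sim\tfrac{1}{\mu-\beta}\log N$: particles stay in $[0,L]$ most of the time (controlled by first and second moment estimates on the weighted process $Z'_t=\sum e^{\mu(X_v-L)}w_1(X_v)$ via the spectral decomposition on $[0,L]$), and the jumps of the limiting CSBP come from the rare particles that hit $L$, whose descendants stopped at $L-y$ are shown to be asymptotically $e^{(\mu-\beta)y}W$ with $\mathbf P(W>x)\sim C x^{-\alpha}$. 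The Laplace transform of $Z_{t_{k+1}}$ given $\mathcal F_{t_k}$ is then computed on small time steps $t_{k+1}-t_k=\theta e^{2\beta(L+A)}$, and an Euler-scheme argument for the ODE $\partial_t u_t=-b u_t^\alpha$ assembles these into the Laplace exponent of the $\alpha$-stable CSBP. Only at the very end is $Z$ transferred back to $\bar N$ via a second-moment comparison.

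Your heuristic for why $\alpha=(\mu+\beta)/(\mu-\beta)$ emerges (excursion cost $e^{-\gamma y}$ versus offspring gain $e^{\beta y}$) is the right picture, but the spine/ground-state martingale $M_t=\sum\varphi_{\lambda_1^\infty}(X_v)$ you propose lives on the half-line and does not by itself see the fluctuations; it is the finite-interval eigenfunction $w_1$ on $[0,L]$, together with the precise rate $\lambda_1^\infty-\lambda_1^L\sim C e^{-2\beta L}$, that produces the time scale $N^{\alpha-1}=e^{2\beta L}$.
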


This result is consistent with the observations made on the fluctuations in \cite{Birzu2018} and with the genealogical structure proposed in \cite{Birzu:2020up} for semipushed fronts. Indeed, it is known that the genealogy corresponding to an $\alpha$-stable continuous-state branching process is given by a Beta$(2-\alpha,\alpha)$-coalescent \cite{Birkner2005}. Theorem \ref{informal:version} thus suggests that the genealogy of the particle system in the semipushed regime interpolates between Bolthausen--Sznitman ($\alpha=1$) and Kingman ($\alpha=2$) coalescents.

We refer to Section \ref{sec:model} for a precise statement of Theorem \ref{informal:version} and to Section \ref{sec;csbp} for a definition of continuous-state branching processes.

\subsection{Continuous-state branching processes}\label{sec;csbp}
We recall known facts about continuous-state branching processes (CSBP) and, more specifically, the family of $\alpha$-stable CSBP, for $\alpha\in[1,2]$ (see e.g.~\cite{berestycki2009recent,Birkner2005}). A continuous-state branching process is a $[0, \infty]$-valued Markov process $(\Xi(t), t \geq 0)$ whose transition functions satisfy the branching property $p_t(x + y, \: \cdot) = p_t(x, \: \cdot) * p_t(y, \: \cdot),$ which means that the sum of two independent copies of the process starting from $x$ and $y$ has the same finite-dimensional distributions as the process starting from $x + y$.  It is well-known that continuous-state branching processes can be characterised by their branching mechanism, which is a function $\Psi: [0, \infty) \rightarrow \mathbb{R}$.  If we exclude processes that can make an instantaneous jump to $\infty$, the function $\Psi$ is of the form $$\Psi(q) = \gamma q + \beta q^2 + \int_0^{\infty} (e^{-qx} - 1 + qx \mathbf{1}_{x \leq 1}) \: \nu(dx),$$ where $\gamma \in \mathbb{R}$, $\beta \geq 0$, and $\nu$ is a measure on $(0, \infty)$ satisfying $\int_0^{\infty} (1 \wedge x^2) \: \nu(dx) < \infty$.  If $(\Xi(t), t \geq 0)$ is a continuous-state branching process with branching mechanism $\Psi$, then for all $\lambda \geq 0$,
\begin{equation}\label{csbpLaplace}
    E[e^{-\lambda \Xi(t)} \,| \, \Xi_0 = x] = e^{-x u_t(\lambda)},
\end{equation}
where $u_t(\lambda)$ can be obtained as the solution to the differential equation
\begin{equation}\label{diffeq}
    \frac{\partial}{\partial t} u_t(\lambda) = -\Psi(u_t(\lambda)), \hspace{.5in} u_0(\lambda) = \lambda.
\end{equation}
We will be interested in $\alpha$-stable CSBP for $\alpha\in[1,2]$, for which the branching mechanism $\Psi$ is of the form
\begin{equation}
    \label{eq:Psi}
    \Psi(u) =
    \begin{cases}
        -au + b u^\alpha & \text{if } \alpha \in (1,2], \\
        -au + b u\log u  & \text{if } \alpha = 1.
    \end{cases}
\end{equation}
It is known that in this case, the CSBP does not explode in finite time, i.e.~Grey's condition is satisfied. The 2-stable CSBP is also known as the \emph{Feller diffusion} and the 1-stable CSBP as \emph{Neveu's CSBP}.

\subsection{The model: assumptions and main result} \label{sec:model}

We consider a dyadic branching Brownian motion with killing at zero, negative drift $-\mu$ and position-dependent branching rate $r:[0,\infty)\to\mathbb{R}$ given by
\[
    r(x) = \begin{cases}
        \rho/2 & x\in[0,1], \\
        1/2    & x>1,
    \end{cases}
\]
for some parameter $\rho\geqslant1$. We denote by $\mathcal N_t$ the set of particles in the system at time $t$ and for all $v\in\mathcal N_t$, we denote by $X_v(t)$ the position of the particle $v$ at time $t$. Furthermore, we write $N_t = |\mathcal N_t|$ for the number of particles in the system at time $t$. The drift $\mu$ is chosen with respect to $\rho$ in such a way that the number of particles in the system  stays roughly constant. Depending on the value of $\rho$, $\mu$ is equal to $1$ (\textit{pulled regime}) or $\mu$ is strictly larger than $1$ (\textit{pushed regime}).

In practice, $\mu=\mu(\rho)$ is a function of $\rho$ related to the generalised principal  eigenvalue $\linf$ of a certain differential operator (see Section \ref{sec:skproof} for further details). More precisely, we have

\begin{itemize}
    \item If $\rho\leq1+\frac{\pi^2}{4}$, then
          \begin{equation}
              \mu=1. \label{defmu1}
          \end{equation}
    \item If $\rho>1+\frac{\pi^2}{4}$, then  $\mu$ is the unique solution of
          \begin{equation}
              \frac{\tan(\sqrt{\rho-\mu^2})}{\sqrt{\rho-\mu^2}}=-\frac{1}{\sqrt{\mu^2-1}}, \quad \text{such that } \quad \rho-\mu^2\in\left[\frac{\pi^2}{4},\pi^2\right].\label{def:murho}
          \end{equation}
\end{itemize}
In terms of $\linf$, we have $\linf=0$ for $\rho<1+\frac{\pi^2}{4}$, $\linf>0$ for $\rho>1+\frac{\pi^2}{4}$ and the definition of $\mu$ given by Equations (\ref{defmu1}) and \eqref{def:murho} is equivalent to
\begin{equation}
    \label{def:mu} \mu=\sqrt{1+2\linf},
\end{equation}
so that
\begin{equation*}
    \mu>1 \quad \iff \quad \linf >0 \quad \iff \quad\rho>1+\frac{\pi^2}{4}.
\end{equation*}
The branching Brownian motion with absorption at $0$, branching rate $r(x)$ and drift $-\mu$ is now fully defined. Let us define the exponent $\alpha$: for $\mu>1$, we set
\begin{equation}
    \alpha=\frac{\mu+\sqrt{\mu^2-1}}{\mu-\sqrt{\mu^2-1}}.\label{defalpha:rho}
\end{equation}

We now define two regimes of interest for the parameter $\rho$. The first one corresponds to  the \textit{pushed} regime:
\begin{equation}\label{hpushed}
    \quad
    \rho>\rho_1,\tag{H$_{psh}$}
\end{equation}
where
\begin{equation*}
    \rho_1=1+\frac{\pi^2}{4}.
\end{equation*}
It turns out that the transition between the \textit{weakly pushed} and the \textit{fully pushed} regimes occurs when $\alpha=2$, which corresponds to the critical value of $\mu$,
\begin{equation}\label{def:muc}
    \mu_c=\frac{3}{4}\sqrt{2}.
\end{equation}
Therefore, the \textit{weakly pushed} regime corresponds to the following range of the parameter $\rho$:
\begin{equation}\tag{H$_{wp}$}
    \label{hwp}
    \rho_1<\rho<\rho_2,
\end{equation}
where $\rho_2$ is the unique solution of
\begin{equation*}
    \frac{\tan\left(\sqrt{\rho-\mu_c^2}\right)}{\sqrt{\rho-\mu_c^2}}=-\frac{1}{\sqrt{\mu_c^2-1}}\quad s.t. \quad \rho-\mu_c^2 \in\left[\frac{\pi^2}{4},\pi^2\right].
\end{equation*}
Numerically, we have $\rho_1\approx 3.467$ and $\rho_2\approx 4.286$.
In this regime, we prove the following convergence result, which is the main result of this article:
\begin{theorem}\label{semipushedfr}
    Assume that \eqref{hwp} holds and suppose that the system initially starts with $N$ particles located at $1$. Then there exists an explicit constant $\sigma(\rho)>0$ such that, if we define $\bar{N_t}= \sigma(\rho) N_t/N$,  as $N\to\infty$, the finite-dimensional distributions of the processes $(\overline N_{N^{\alpha-1}t})_{t>0}$ converge to the finite-dimensional distributions of an $\alpha$-stable CSBP starting from 1, where $\alpha$ is given by Equation \eqref{defalpha:rho}.
\end{theorem}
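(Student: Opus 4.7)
The plan is to establish convergence of finite-dimensional distributions by Laplace transform analysis, extending the approach used by Berestycki--Berestycki--Schweinsberg in the pulled case $\rho=1$ to the pushed regime. Since the $N$ initial particles generate independent subtrees (by the branching property), it suffices to analyse a single-particle system. Writing $v_N(t,\lambda)=1-\mathbf{E}_1[\exp(-\lambda \sigma(\rho) N_{N^{\alpha-1}t}/N)]$, where $\mathbf{E}_1$ denotes expectation for a BBM started from one particle at $x=1$, one has
\[
\mathbf{E}\bigl[\exp(-\lambda \bar N_{N^{\alpha-1}t})\bigr]=\bigl(1-v_N(t,\lambda)\bigr)^{N},
\]
so the task reduces to showing that $N v_N(t,\lambda)\to u_t(\lambda)$, where $u_t$ solves \eqref{diffeq} with $\Psi(u)=-au+u^{\alpha}$ (the coefficient of $u^{\alpha}$ being normalised to $1$ by the choice of $\sigma(\rho)$). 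Once this holds at a single time, the Markov property bootstraps it to finite-dimensional convergence.

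The first main step is a spectral analysis of the single-particle generator $L=\tfrac12\partial_{xx}-\mu\partial_x+r(x)$ with Dirichlet boundary at $0$. Using the generalised principal eigenpair $(\linf,\vl)$ described in Section~\ref{pt:Laplacian}, together with the left eigenfunction $\pl$, one obtains sharp asymptotics for $\mathbf{E}_x[N_t]$ and for the survival probability $\mathbf{P}_x(N_t>0)$. The criticality calibration \eqref{def:mu}, $\mu=\sqrt{1+2\linf}$, is precisely what makes these quantities neither grow nor decay exponentially, and it forces $\mathbf{P}_1(N_{N^{\alpha-1}t}>0)$ to be of order $1/N$, which justifies the $N_t/N$ rescaling. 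The linear coefficient $a$ in $\Psi$ will be read off from the next-to-leading order of these spectral expansions.

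The core of the argument is the identification of the $\alpha$-stable jumps. In the weakly pushed regime \eqref{hwp}, the right eigenfunction $\vl$ decays strictly faster than $e^{-x}$, so a ``typical'' surviving lineage contributes only a bounded number of descendants. The heavy-tailed fluctuations arise from rare excursions: with probability of order $z^{-\alpha}$, a single particle produces a descendant family of size proportional to $Nz$ at time $N^{\alpha-1}t$. Quantitatively, I would show that the renormalised descendant population $Z$ of a single surviving particle satisfies $\mathbf{P}(Z>z)\sim c\,z^{-\alpha}$ with $\alpha$ given by \eqref{defalpha:rho}; the exponent arises as the ratio of the two exponential rates appearing in $\vl$ on $(1,\infty)$, namely $\mu+\sqrt{\mu^{2}-1}$ and $\mu-\sqrt{\mu^{2}-1}$. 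Feeding this tail into the Laplace functional and expanding in $1/N$ produces the nonlinearity $u\mapsto u^{\alpha}$; differentiating in $t$ then yields the ODE \eqref{diffeq} with $\Psi$ of the form \eqref{eq:Psi}. The transition $\alpha=2$ occurs exactly when this tail becomes square-integrable, matching the definition \eqref{def:muc} of $\mu_c$ and hence of $\rho_{2}$.

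The main obstacle is this sharp tail estimate on $Z$, which requires both an upper bound (via truncation of the space at a $\log N$--dependent cutoff, combined with spectral estimates on the killed semigroup) and a matching lower bound (via an explicit construction in which a particle escapes far to the right, then generates a super-critical subsystem that survives down to the original system's scale). The condition $\rho\notin\{1+k^{2}\pi^{2}/4\}$ in \eqref{hpushed} avoids the resonant values of $\rho$ where the Sturm--Liouville spectrum develops a degenerate second mode and the leading spectral expansion fails. A secondary but non-trivial difficulty is uniform control of $v_N$ in $(t,\lambda)$ on compacts, needed to pass from pointwise convergence of $N v_N$ to the nonlinear ODE \eqref{diffeq}; this is done by a Gr\"onwall-type bootstrap exploiting the monotonicity of $v_N$ in $\lambda$ and the semilinear fixed-point equation satisfied by $v_N$ after conditioning on the first branching event.
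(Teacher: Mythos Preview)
Your proposal identifies several correct ingredients—the spectral calibration \eqref{def:mu}, the origin of $\alpha$ as $(\mu+\beta)/(\mu-\beta)$, and the reduction via independence to a single-particle Laplace functional—but the route you outline diverges substantially from the paper's, and it has a genuine gap.

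The paper does not work directly with $v_N(t,\lambda)$ and does not attempt a tail estimate on the single-particle population at time $N^{\alpha-1}t$. Instead it introduces a right barrier at $L=\frac{1}{\mu-\beta}\log N$ (so that $N=e^{(\mu-\beta)L}$ and $N^{\alpha-1}=e^{2\beta L}$) and studies the weighted process $Z_t=\sum_{v}e^{\mu(X_v(t)-L)}w_1(X_v(t))\mathbb{1}_{X_v(t)\in[0,L]}$, where $w_1$ is (a multiple of) the principal Dirichlet eigenfunction on $[0,L]$. It first proves $Z_{e^{2\beta L}t}\Rightarrow\Xi(t)$ (Theorem~\ref{th:csbp}) via a time-discretisation and Euler-scheme argument for the Laplace transform (Sections~\ref{sec:smallts}--\ref{ztcsbp}). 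The $\alpha$-stable nonlinearity enters through particles that hit $L$: each such particle spawns a family whose rescaled size is governed by the additive-martingale limit $W$ of a BBM with \emph{constant} branching rate $1/2$ (Section~\ref{sec:W}), and it is Liu's theorem applied to $W$ that supplies the precise tail $\mathbf P(W>x)\sim cx^{-\alpha}$. Theorem~\ref{semipushedfr} is then deduced by showing that $e^{-(\mu-\beta)L}N_t$ and $C_0Z_t$ are close on the relevant time scale (Lemma~\ref{lemf}). Incidentally, the limiting branching mechanism is $b\lambda^\alpha$ with no linear term; the criticality choice \eqref{def:mu} kills the drift exactly, so the ``linear coefficient $a$ read off from next-to-leading order'' that you anticipate does not appear.

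The gap in your approach is the passage from a heavy tail to the nonlinearity $u\mapsto u^\alpha$ in the evolution equation. Conditioning on the first branching event yields a semilinear equation with \emph{quadratic} nonlinearity $v_N^2$, not $v_N^\alpha$; the exponent $\alpha$ must then emerge from the interplay between this quadratic structure and the spatial inhomogeneity over the long time $N^{\alpha-1}$, and you give no mechanism for this. In the paper this mechanism is made explicit: the barrier decouples bulk dynamics (controlled by first- and second-moment estimates on $[0,L]$, for which one needs $\mu>3\beta$, i.e.\ \eqref{hwp}) from rare excursions to $L$, and the $\alpha$-tail of $W$ produces the $b\lambda^\alpha$ contribution in Proposition~\ref{prop7} directly. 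Your ``truncation at a $\log N$-dependent cutoff'' is in fact this barrier, but in the paper it is the organising principle of the entire proof, not merely a device for an upper bound.
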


A more general version of Theorem~\ref{semipushedfr} is stated in Theorem \ref{ThNt}. In addition, an explicit formula for $\sigma(\rho)$ is given in Section \ref{nbrpart:CSBP} (see Equation \eqref{def:sigma}). We strongly believe that this result can be completed with the study of the cases $\rho\in[1,\rho_1)$ and $\rho\in(\rho_2,+\infty)$. The expected convergence results are summarised in the following conjectures. This will be the subject of future work.

\begin{conj} If $\rho<\rho_1$, under suitable assumptions on the initial configurations, the finite-dimensional distributions of the processes $(\overline N_{(\log N)^3t})_{t>0}$ converge to the finite-dimensional distributions of a 1-stable (Neveu's) CSBP starting from 1, as $N\to \infty$.
\end{conj}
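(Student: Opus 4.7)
The plan is to adapt the strategy of Berestycki, Berestycki and Schweinsberg~\cite{Berestycki2010}, developed for the special case $\rho = 1$, to the full pulled range $\rho \in [1, \rho_1)$. Since $\linf = 0$ and $\mu = 1$ throughout this range, the system is critical in the pulled sense and the heuristic picture of~\cite{Berestycki2010} should persist: the natural length scale is $L_N \sim \log N$ (the Brunet--Derrida cutoff), the timescale is $(\log N)^3$, and Neveu's CSBP arises because a single ``champion'' particle that climbs to height $L_N$ generates a heavy-tailed family whose size has a $1/k$ tail, up to logarithmic factors. The only structural change compared to~\cite{Berestycki2010} is the enhanced branching rate on $[0,1]$, which affects the quasi-stationary profile but leaves the critical exponents invariant.

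First I would introduce a moving barrier at $L_N = \log N + O(\log \log N)$, chosen so that the principal eigenvalue of the operator from Section~\ref{pt:Laplacian}, now restricted to $(0, L_N)$ with Dirichlet conditions at both ends, is of order $(\log N)^{-2}$. The corresponding BBM, killed at $\{0, L_N\}$, is then only slightly subcritical. The eigenfunction $\varphi_{L_N}$ must match $C^1$ at $x = 1$ across the jump of $r$, but its asymptotics can be extracted by gluing the explicit representations on $[0,1]$ (where the potential is $(\rho - 1)/2$) and on $(1, L_N)$ (where it vanishes). A quasi-stationarity argument then shows that, conditional on survival, the spatial profile of the killed process is close to $\varphi_{L_N}$ after a short relaxation phase.

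Second, using a spine decomposition and a suitable change of measure, one describes the particles reaching $L_N$ as an asymptotically Poisson point process whose intensity involves $\varphi_{L_N}$, and computes the tail of the number of descendants generated by such a particle before extinction. This is where the $1/k$ tail enters; the main calculation is a Laplace-transform identity for the BBM killed at $\{0, L_N\}$ started near $L_N$, following~\cite{Berestycki2010} but with the piecewise-constant branching rate. Summing these contributions and rescaling time by $(\log N)^3$, the jump measure of the rescaled particle number should converge to the L\'evy measure $x^{-2}\, dx$ of Neveu's CSBP, which corresponds to the branching mechanism $\Psi(u) = b u \log u$ of equation~\eqref{eq:Psi}.

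Finally, a coupling argument removes the barrier: any particle crossing $L_N$ is killed within $O((\log N)^2)$ time with high probability, by the negative drift, so on the relevant timescale the two processes agree. The hardest step will be the sharp first- and second-moment estimates required in Step~2 to identify the L\'evy measure. Neveu's CSBP sits at the edge of the stable family (infinite variance), so the delicate cancellations in~\cite{Berestycki2010} must be redone with the piecewise eigenfunction, and one must check that the discontinuity of $r$ at $x = 1$ does not disturb any of the logarithmic factors. A separate technical issue, signalled by ``suitable assumptions on the initial configuration'' in the statement, is that one must start the system in a state whose empirical profile is already comparable to $\varphi_{L_N}$; otherwise, the first $O(L_N^2)$ units of time are spent relaxing and the convergence cannot hold from $t = 0$.
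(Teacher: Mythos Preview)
The paper does not prove this statement: it is explicitly presented as a conjecture, with the authors writing that the cases $\rho\in[1,\rho_1)$ and $\rho\in(\rho_2,+\infty)$ ``will be the subject of future work.'' There is therefore no proof in the paper to compare your proposal against.

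That said, your outline is a plausible strategy and is consonant with the paper's own heuristics in Section~\ref{sec:skproof}, which state that for $\rho<\rho_1$ one expects to choose $L=\log N+3\log\log N$ as in \cite{Berestycki2010}, that $\linf=0$ so $\mu=1$, and that the limiting eigenvalues form a continuous spectrum rather than having an isolated positive principal eigenvalue. Your sketch correctly identifies the barrier, the $(\log N)^3$ timescale, and the $1/k$ tail leading to Neveu's CSBP. However, it remains a heuristic outline rather than a proof: the spectral analysis of Section~\ref{sec:spec} would need to be redone in the regime where $\lambda_1^L\to 0$ (no spectral gap), the heat-kernel and Green-function estimates of Sections~\ref{sec:heatkernel}--\ref{sec:green} rely on the positivity of $\linf$ and would require substantial modification, and the Laplace-transform computation in Section~\ref{sec:W} giving the tail exponent $\alpha$ degenerates at $\alpha=1$. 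These are precisely the technical reasons the paper leaves this case open.
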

\begin{conj}If $\rho > \rho_2$,  under suitable assumptions on the initial configurations, the finite-dimensional distributions of the processes $(\overline N_{Nt})_{t>0}$ converge  to the finite-dimensional distributions of a Feller diffusion starting from 1, as $N\to \infty$.
\end{conj}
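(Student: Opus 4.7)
The target is a central-limit-type scaling: $(\bar N_{Nt})_{t \geq 0}$ should converge to a Feller diffusion on time scale $N$ (rather than $N^{\alpha-1}$ as in Theorem~\ref{semipushedfr}). In the fully pushed regime $\rho > \rho_2$ the exponent $\alpha(\rho)$ defined by \eqref{defalpha:rho} satisfies $\alpha > 2$, which should translate into family sizes of individual ancestors having \emph{finite variance}. So the Gaussian character of the limit must come from aggregating many independent, essentially finite-variance contributions, rather than from a handful of heavy-tailed ones as in the semi-pushed case.

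The plan is to reuse the spectral setup of Section~\ref{sec:skproof}: let $\vl$ denote the principal generalised eigenfunction of $\frac{1}{2}\partial_{xx} - \mu\partial_x + r$ on $(0,\infty)$ with Dirichlet boundary at zero, and set $Z_t = e^{-\linf t}\sum_{v \in \mathcal{N}_t}\vl(X_v(t))$, which is a non-negative local martingale. The ``suitable initial configurations'' should be those for which the empirical distribution of the $X_v(0)$ is concentrated in the bulk and compatible with the quasi-stationary profile $\vl$, so that $Z_0/N \to 1$. The strategy is then to (i) show that $N_t/N$ and a suitable multiple of $Z_t/N$ coincide to leading order uniformly on compact time intervals after rescaling; (ii) identify the limit of $Z_{Nt}/N$ as a Feller diffusion through its martingale problem.

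The heart of the argument is a moment computation via many-to-one and many-to-two lemmas. Many-to-one yields $\E[Z_t]$ and confirms that in the scaling $t \mapsto Nt$ the drift is $o(1)$. Many-to-two, combined with the eigenfunction identity, gives
\[
\Var(Z_t) \;=\; \int_0^{t} \E\!\Bigl[\sum_{v\in\mathcal{N}_s}\vl\bigl(X_v(s)\bigr)^2 e^{-2\linf s}\Bigr]\,ds \,+\, o(1),
\]
and the crucial point for $\rho > \rho_2$ is that the two-particle Green's kernel of the tilted ($h$-transformed) motion is integrable: the eigenfunction $\vl$ decays fast enough at infinity that $\int_0^\infty \vl(x)^2 \psi(x)\,dx < \infty$, where $\psi$ is the corresponding quasi-stationary density. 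This gives a variance of order $\sigma^2 N t$ with an explicit $\sigma^2(\rho) > 0$ generalising \eqref{def:sigma}. Combined with tightness and the martingale problem for the Feller diffusion (whose generator is $\frac{\sigma^2}{2} x \partial_x^2$), this identifies the limit.

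The main obstacle will be ruling out macroscopic jumps. In the semi-pushed regime, the number of descendants $F$ produced by a single ``lucky'' excursion far to the right has a regularly varying tail of index $\alpha \in (1,2)$, which is precisely the source of the CSBP jumps. For $\rho > \rho_2$ one expects $\P(F > x) \leq C x^{-\alpha}$ with $\alpha > 2$, hence
\[
\P\!\left(\sup_{0 \leq s \leq Nt}\bigl|N_s - N_{s^-}\bigr| > \vep N\right) \xrightarrow[N\to\infty]{} 0
\]
for every $\vep > 0$. Proving this sharp tail requires a careful study of the BBM with absorption on the event that a single ancestor's descendants reach a large level $L$, using uniform estimates on $\vl$, $\vl'$ and the associated Green function already developed for Theorem~\ref{semipushedfr}, but now in a parameter range where the ODE analysis of Section~\ref{sec:skproof} yields exponential integrability of the relevant hitting functionals. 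Once this tail bound is in place, the truncation $F \wedge \vep N$ accounts for the full variance up to $o(1)$, and a classical triangular array CLT combined with the martingale structure of $Z_t$ delivers the Feller limit.
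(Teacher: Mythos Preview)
This statement is a conjecture, and the paper does not prove it; Section~8 only sketches the intended strategy. So your proposal should be compared with that sketch rather than with a full proof.

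The paper's outlined route keeps the barrier at $L$ and the Laplace-transform/Euler-scheme machinery of Sections~\ref{sec:smallts}--\ref{sec:cvcsbp}, but changes the time scale to $N=e^{(\mu-\beta)L}$ and exploits the key inequality $\mu<3\beta$ (equivalently $\alpha>2$). The crucial observation there is that on this shorter time scale no particle reaches $L$ at all: since $\linf-\lambda_1\sim a e^{-2\beta L}$ and $(\mu-\beta)<2\beta$, one gets $\mathbb{E}[R([0,\theta e^{(\mu-\beta)(L+A)}])]=\varepsilon_L Z_0'$. Thus the ``jump'' mechanism that drove the semi-pushed limit is simply absent, and the Feller limit must come entirely from a \emph{precise} (not merely upper-bounded) second-moment computation for $Z'$, again via the Green function. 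Your route is genuinely different: you discard the barrier and argue directly on the half-line via the martingale problem, controlling the quadratic variation of $Z$ and then separately ruling out macroscopic jumps by a tail bound $\mathbb{P}(F>x)\leqslant Cx^{-\alpha}$. Both approaches hinge on the same integrability: the second-moment integral $\int e^{\mu y}v_1(y)^3\,dy$ behaves like $\int e^{(\mu-3\beta)y}\,dy$, finite exactly when $\mu<3\beta$. The paper's framing is cleaner in that it sidesteps the jump analysis entirely (no particle hits $L$, so there is nothing to bound), whereas your proposal still carries the burden of proving the tail estimate on $F$ and a truncation argument, which is more than is needed.

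Two smaller points. First, with the drift chosen as in \eqref{def:mu}, the principal eigenvalue of $\frac12\partial_{xx}-\mu\partial_x+r$ on $(0,\infty)$ with Dirichlet at $0$ is zero, so $\sum_{v}\vl(X_v(t))$ is already a local martingale and your factor $e^{-\linf t}$ should be dropped. Second, your variance formula and the phrase ``quasi-stationary density $\psi$'' are schematic; the actual object arising from the many-to-two lemma is $\int_0^t\int p_s(x,y)\,2r(y)\,[e^{\mu y}v_1(y)]^2\,dy\,ds$, and making this precise is exactly the ``finer estimate of the second moment'' the paper flags as the missing ingredient. Your martingale-problem approach is viable, but be aware that the paper's sketch gets to the same destination with less work by reusing the existing $(0,L)$ estimates and noting that the right boundary becomes irrelevant.
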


The proof of Theorem \ref{semipushedfr} relies on  first and  second moment estimates for several processes. The assumptions (\ref{hpushed}) and (\ref{hwp}) are used to estimate these moments in the \textit{weakly pushed} regime. The first moment estimates (see Sections \ref{sec:fm} and \ref{sec:fmR}) will be established under assumption (\ref{hpushed}), so that they can also be used to investigate the \textit{fully pushed} regime, whereas the second moment calculations will  require the assumption (\ref{hwp}).

One can also investigate systems with more general branching rates of the form
\begin{equation*}
    r(x)=\frac{1}{2}+\frac{\rho-1}{2}f(x), \quad x\in[0,\infty),
\end{equation*}
for a function $f$ that is compactly supported (or even a function that converges quickly  to zero). In this case, the spectrum and eigenvectors are not necessarily explicit, but one can still analyse the system using spectral methods.

\subsection{Comparison with results on fluctuating fronts}

In the particle system, we say that the pulled regime corresponds to  $\rho\in\left[1,\rho_1\right)$, the weakly pushed regime to $\rho\in(\rho_1,\rho_2)$ and the fully pushed regime to $\rho>\rho_2$. From a biological standpoint, the process $N_t$ is related to \emph{the number of descendants left by the early founders} mentioned in \cite{Birzu:2020up}. Moreover, CSBPs can be seen as scaling limits of Galton-Watson processes, with  associated genealogical structures \cite{berestycki2009recent}. In this sense, the convergence results stated in Theorem \ref{semipushedfr} and in the two conjectures are consistent with the observations on the genealogical trees made in \cite{Birzu:2020up}. In the pulled regime, the genealogy of the particles at the tip of the front  is the one of a population undergoing selection, that is a Bolthausen--Sznitman coalescent. We know since the work of Bertoin and Le Gall \cite{bertoin2000bolthausen} that it is precisely the genealogy associated with Neveu's CSBP. Similarly, we know that the genealogy associated to the $\alpha$-stable CSBP and the Feller diffusion are respectively the Beta$(2-\alpha,\alpha)$-coalescent and Kingman's coalescent \cite{Birkner2005}. Again, this is exactly what is observed in \cite{Birzu:2020up}.

Moreover, note that the transitions between the three regimes occur at the same critical values of $\mu$ and $v$ (recall from Section \ref{sec:relatedmodel} that $v$ refers to the asymptotic spreading speed of the solutions of Equation (\ref{limit:PDEh})). Indeed, consider Equation \eqref{eq:FKPPhall} with $r_0=\frac{1}{2}$. Therefore $c_0=1$ and the invasion speed $v$ is given by \cite{Hadeler:1975uk}
\begin{equation}
    v=v(B)=\begin{cases}\label{eq:speedPDE}
        \sqrt{2r_0}= 1                                   & \text{if} \quad B\leqslant 2 \\
        \frac{1}{2}\sqrt{r_0B}\left(1+\frac{2}{B}\right) & \text{if} \quad B>2.
    \end{cases}
\end{equation}
In the particle system, note that the drift is also equal to $1$ in the pulled regime (see Equation \eqref{defmu1}). In both cases, the transition between the pushed and the pulled regimes happens when the propagation speed, $\mu$ or $v$, becomes larger than $1$, that is when $\rho>1+\frac{\pi^2}{4}$ in the particle system and $B> 2$ in the noisy FKPP Equation (\ref{limit:PDEh}). Similarly, the transition between weakly and fully pushed waves occurs for the same critical value of the invasion speed. Following \cite{Birzu2018},  consider $\tilde{\alpha}$ such that (see \cite{Birzu2018}, Equation (8))
\begin{equation}\label{defalphat}
    \tilde{\alpha}=\begin{cases}
        1+\frac{2\sqrt{1-c_0^2/v^2}}{1-\sqrt{1-c_0^2/v^2}} & \text{if} \quad\frac{v}{c_0}\in \left(1,\frac{3}{4}\sqrt{2}\right) \\
        2                                                  & \text{if}\quad\frac{v}{c_0} \geqslant \frac{3}{4}\sqrt{2}.
    \end{cases}
\end{equation}
Birzu, Hallatschek and Korolev observe that the fluctuations in the pushed regime appear on a time scale $N^{\tilde{\alpha}-1}$, so that the transition between the weakly and fully pushed regimes occurs at $v=\frac{3}{4}\sqrt{2}c_0$. This is consistent with Theorem \ref{semipushedfr}: if $r_0=\frac{1}{2}$, then $c_0=1$, so that the transition occurs at $v=\frac{3}{4}\sqrt{2}$, which corresponds to the critical value $\mu_c$ from Equation \eqref{def:muc}, delineating the semipushed and the pushed regimes. In addition, for $c_0=1$, we have
\begin{equation*}
    \tilde{\alpha}=\begin{cases}
        \frac{v+\sqrt{v^2-1}}{v-\sqrt{v^2-1}} & \text{if} \quad v\in \left(1,\frac{3}{4}\sqrt{2}\right) \\
        2                                     & \text{if}\quad v \geqslant \frac{3}{4}\sqrt{2}.
    \end{cases}
\end{equation*}
which seems to indicate the existence of a universality class given our definition of $\alpha$ (see Equation \eqref{defalpha:rho}). In particular, note that the exponent $\alpha$ (resp. $\tilde{\alpha}$) depends on $\rho$ (resp. $B$) only through the drift $\mu$ (resp. the speed $v$). This can be explained by the fact that the particles causing the jumps in the CSBP stay far away from the region in which the branching rate depends on $\rho$ (see below for further explanations).

We now investigate the asymptotic behaviour of $\mu$ and $v$ as the cooperation parameters $\rho$ and $B$ tend to their critical values. First, note that Equation \eqref{eq:speedPDE} implies that, for $r_0=\frac{1}{2}$,
\begin{equation*}
    v(B)\sim \frac{1}{2}\sqrt{\frac{B}{2}}\quad \text{as} \quad  B\to\infty.
\end{equation*}
On the other hand, by definition of $\mu$ (see Equation (\ref{def:murho})), we have $\frac{\pi^2}{4}\leqslant \rho-\mu^2\leqslant \pi^2$, so that
\begin{equation*}
    \mu\sim \sqrt{\rho}\quad \text{as} \quad\rho \to \infty.
\end{equation*}
When $B\to 2$, $B>2$, a second order Taylor expansion gives that
\begin{equation*}
    v(B)\sim 1+\frac{(B-2)^2}{16}.
\end{equation*}
Additionally, when $\rho\to\rho_1$, $\rho>\rho_1$, one can show that $\mu\to 1$ and the first order expansion of each term in Equation \eqref{def:murho} gives
\begin{equation*}
    \mu^2-1\sim\frac{1}{4}\left(\rho-1-\frac{\pi^2}{4}\right)^2,
\end{equation*}
so that we have
\begin{equation*}
    \mu\sim1+\frac{1}{8}\left(\rho-1-\frac{\pi^2}{4}\right)^2.
\end{equation*}
The similar asymptotic behaviours of $\mu$ and $\rho$, as well as the three regimes observed in the particle system support the hypothesis of the existence of a universality class. This is illustrated in Figure~\ref{fig:driftv} and Figure~\ref{fig:semipushed}.

\begin{figure}[t]
    \begin{center}
        \includegraphics[trim=25  0 25  10,scale=0.6]{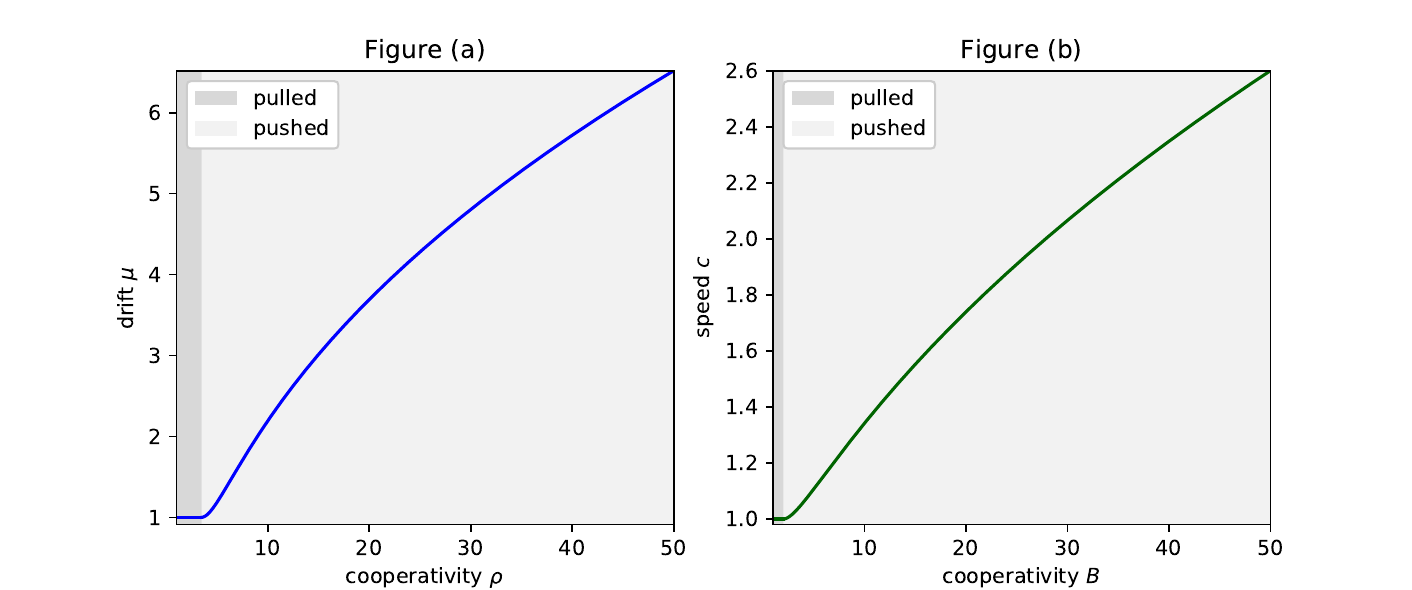}
        \caption{The expansion velocity as a function of cooperativity. Figure (a): in the particle system. Graph of $\mu$ as a function of $\rho$ (see Equations \eqref{defmu1} and \eqref{def:murho}).  The transition between the pulled and the pushed regimes occurs at $\rho_1=1+\frac{\pi^2}{4}\approx 3.47$.  Figure (b): in the PDE \eqref{limit:PDEh}. Graph of $v$ as a function of $B$ (see Equation \eqref{eq:speedPDE}) for $r_{0}=\frac{1}{2}$. The transition between the pulled and the pushed regimes occurs at $B=2$. Note that $\mu$ and $v$ have the same asymptotic behaviour when $\rho$ and $B$ tend to $+\infty$.}
        \label{fig:driftv}
    \end{center}
\end{figure}

\begin{figure}[t]
    \begin{center}
        \includegraphics[trim=25  0 25  10,scale=0.6]{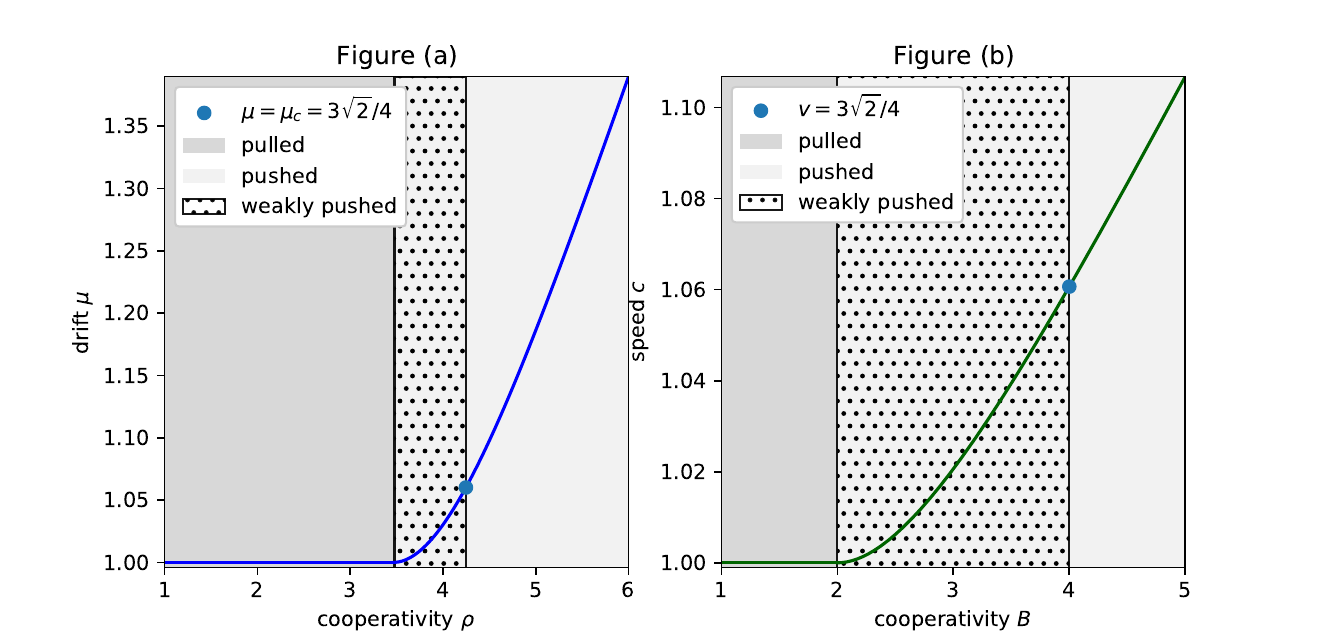}\\
        \caption[]{The expansion velocity as a function of cooperativity. Figure (a): in the particle system. Graph of $\mu$ as a function of $\rho$ (see \eqref{defmu1} and \eqref{def:murho}).  The weakly pushed regime  corresponds to $\mu\in(1,\mu_c)$. The transition between the weakly pushed and fully pushed regime occurs at $\rho=\rho_2$ (see \eqref{hwp}).  Figure (b): in the PDE. Graph of $v$ as a function of $B$ (see Equation \eqref{eq:speedPDE}) for $r_0=\frac{1}{2}$. In the noisy FKPP equation, the transition between weakly pushed and fully pushed waves occurs when $v=\mu_c$ (see \eqref{defalphat}), which corresponds to $B=4$. }
        \label{fig:semipushed}
    \end{center}
\end{figure}

\subsection{Overview of the proof}\label{sec:skproof}

The strategy of the proof is inspired by the work of Berestycki, Berestycki and Schweinsberg \cite{Berestycki2010}, who treated the case of a constant branching rate,   that is $\rho=1$. The main idea  is to introduce an additional barrier at a level $L$, depending on $N$, in such a way that the jumps of the limit of the rescaled process $\bar{N}$ are caused by particles that reach $L$.  In the case $\rho=1$, one chooses $L = \log N + 3\log \log N$, and it is reasonable to believe that this choice will also be suitable  for $\rho < \rho_1$. If $\rho \in (\rho_1,\rho_2)$, we instead choose a barrier at $L = C \log N $ for some $C>0$. In this section, we outline the main ideas used to choose this barrier and to prove the convergence to the $\alpha$-stable CSBP in the case where $\rho\in(\rho_1,\rho_2)$.

As explained in \cite{Berestycki2010}, the role of the barrier is to capture the particles that cause a jump in the CSBP or, equivallently, that will have a number of descendants of order $N$ at a later time. Hence, the level $L$ is chosen such that this number of descendants, at a later time, that is shorter than the time scale of the CSBP, is of order $N$. From this perspective, the behaviour of the particle system is the following:

\begin{enumerate}
    \item Most of the time, the particles stay in the interval $[0,L]$. Therefore the system is well-approximated by a BBM with drift $-\mu$ and branching rate $r(x)$, killed at $0$ and at the additional barrier $L$.
    \item From time to time, on the time scale of the CSBP (which we expect to be $N^{\alpha-1}$) a particle reaches $L$. The barrier $L$ is chosen in such a way that the number of descendants of a particle hitting $L$ is of order $N$ after a short time (compared to the time scale of the CSBP).
    \item In order to deal with these descendants, we let the particles reaching $L$ evolve freely during a time period which is large but of order 1. Following \cite{Berestycki2010}, one can, for example, fix some large constant $y$ and track the descendants when they first reach $L-y$. The number of such descendants will be a random quantity with tail $1/x^\alpha$. This random quantity will be proportional to an \emph{additive martingale} of the BBM rooted at the particle that reaches $L$.
    \item After this large (but independent of $L$) relaxation time, all particles are again in the interval $[0,L]$ and the system  evolves as before.
\end{enumerate}

Thanks to this sketch of proof, one can infer a suitable value of $L$ and justify the definition of the parameter $\mu$. Indeed, the first step implies that most of the time, the system can be approximated by a heat equation in the interval $[0,L]$ with Dirichlet boundary conditions. In other words, if we denote by $\mathcal N^L_t$ the set of particles in the BBM at time $t$ that have stayed in the interval $[0,L]$  until time $t$, the density of particles is given by the many-to-one lemma (see e.g.~\cite{Lawler:2018vn}, p.188):
\begin{lemma}[Many-to-one lemma]
    \label{lem:many-to-one}
    Let $p_t(x,y)$ be the fundamental solution to the PDE
    \begin{equation}
        \begin{cases}
            u_t(t,y) = \frac{1}{2}u_{yy}(t,y) + \mu u_y(t,y) + r(y)u(t,y) \\
            u(t,0) = u(t,L) = 0.
        \end{cases}\label{PDE:A}\tag{A}
    \end{equation}
    Then for every measurable positive  function $f:\mathbb{R}_+\to\mathbb{R}$, we have\footnote{The notation $\mathbb{E}_x$ means that we start with one particle at position $x$.}
    \[
        \mathbb{E}_x\left[\sum_{v\in \mathcal N^L_t} f(X_v(t))\right] = \int_0^L p_t(x,y) f(y)\,dy.
    \]
\end{lemma}
The function $p_t$ can be deduced from the Sturm--Liouville theory. Since (A) is not self-adjoint, we first define a function $\tilde p_t$ in such a way that
\begin{equation}
    p_t(x,y) = e^{\mu(x-y)+ \left(\frac{1}{2}-\frac{\mu^2}{2}\right)t}{\tilde p_t(x,y)}.\label{eq:ptqt}
\end{equation}
A direct computation shows that ${ \tilde p_t(x,y)}$ is the fundamental solution to the self-adjoint PDE
\begin{equation}
    \label{PDE:B}\tag{B}
    \begin{cases}
        u_t(t,y) = \frac{1}{2}u_{y y}(t,y) + \frac{\rho-1}{2} \mathbf{1}_{[0,1]}(y) u(t,y) \\
        u(t,0) = u(t,L) = 0.
    \end{cases}\end{equation}
By the Sturm--Liouville theory, the eigenvalues of the Sturm--Liouville problem
\begin{equation}
    \frac{1}{2}v''(x)+\frac{\rho-1}{2} v(x)\mathbf{1}_{x\leqslant 1}=\lambda v \quad \text{on} \quad  (0,L)\label{def:Tintro},
\end{equation}
with boundary conditions $v(0)=v(L)=0$, are simple and can be   {enumerated}
\[
    \lambda_1^L > \lambda_2^L > \cdots > \lambda_n^L >\cdots\to -\infty.
\]
  {It is also known that} each $\lambda_i^L$ is increasing with respect to $L$. If $v_1,v_2,\ldots$ denote the corresponding eigenfunctions of unit $\mathrm{L}^2$-norm, then $(v_i)$ is an orthonormal sequence,  complete in $\mathrm{L}^2([0,L])$, so that the function ${ \tilde p_t}$ is given by
\[
    { \tilde p_t(x,y)} = \sum_{n=1}^\infty e^{\lambda_n^L t} v_n(x)v_n(y),
\]
and hence,
\begin{equation}
    p_t(x,y) = \sum_{n=1}^\infty e^{\mu(x-y) + (\lambda_n^L + \frac{1}{2}-\mu^2/2) t} v_n(x)v_n(y)\label{def:pt}.
\end{equation}
We say that $p_t$ is the density of the BBM with branching rate $r(x)$ and drift $-\mu$, killed at $0$ and $L$, in the sense that, starting with a single particle at $x$, the expected number of particles in a Borel subset $B$ at time $t$ is given by $\int_B p_t(x,y)dy$.
Based on these observations, $\mu$ is chosen in such a way that the mass loss in $p_t$ stays controlled. Yet we will prove in Section \ref{sec:spec} that, for $\rho>\rho_1$, a positive and isolated generalised eigenvalue $\linf$ emerges as $L\to \infty$. Therefore we will choose $\mu$ such that
\begin{equation*}
    \mu=\sqrt{1+2\linf},
\end{equation*}
as stated in \eqref{def:mu}. We will prove in Section \ref{sec:spec} that this definition is equivalent to \eqref{def:murho}, see Lemma \ref{eigenvlocasymp}.
In the case where $\rho<\rho_1$, the sequence $(\lambda_i^L)$ converges to a non-positive continuous spectrum. In particular, $\linf=0$, so that $\mu=1$.
For $\rho>\rho_1$ and sufficiently large $t$, we show that
\begin{equation}\label{asymp:p}
    p_t(x,y) \approx e^{\mu(x-y) + (\lambda_1^L - \linf)t} v_1(x)v_1(y).
\end{equation}
Therefore, the time scale over which particles reach $L$ is of order $(\linf-\lambda_1^L)^{-1}.$ The spectral analysis of the system (B) provides the existence of a constant $C>0$ such that
\begin{equation*}
    \linf-\lambda_1^L\sim Ce^{-2\sqrt{2\linf} L}.
\end{equation*}
To simplify the notation we set
\begin{equation}
    \beta=\sqrt{2\linf}.\label{def:beta}
\end{equation}
As we expect the time scale of the CSBP to be given by $N^{\alpha-1}$ for some $\alpha\in(1,2)$, the asymptotic behaviour of $\lambda_1^L$ gives a first relation between $\alpha$, $N$ and $L$, that is
\begin{equation*}
    N^{\alpha-1}= e^{2\beta L}.
\end{equation*}
The eigenfunction associated to the principal eigenvalue $\lambda_1^L$  will play a crucial role in this analysis.  We  denote by $w_1$ the eigenfunction
\begin{equation*}
    w_1=\sinh\left(\sqrt{2\lambda_1^L}(L-1)\right)v_1.
\end{equation*}
This renormalisation will ensure that $w_1(L-1)$ remains of order $1$ as $L\to\infty$.
Then,  as in \cite{Berestycki2010}, we define  the process
\[
    Z_t = \sum_{v\in\mathcal N_t} e^{\mu (X_v(t)-L)} w_1(X_v(t))\mathbf{1}_{X_v(t)\in[0,L]}.
\]
As long as the particles stay in $[0,L]$, this process coincides with
$$Z_t' = \sum_{v\in\mathcal N_t^L} e^{\mu (X_v(t)-L)} w_1(X_v(t)).$$
The process $Z'_t$ is a supermartingale since, by  Lemma \ref{lem:many-to-one},
\begin{equation}
    \mathbb{E}_x\left[Z'_t\right]=e^{(\lambda_1^L-\linf)t}Z_0'.\label{eq:intr:fmZ}
\end{equation}
The process $Z_t$, and thus $Z'_t$, govern the long-time behaviour of the particle system. Indeed, for $t$ large enough, the expected number of particles in the system starting with a single particle at $x$ will be approximately  given  by
\begin{equation*}
    \mathbb{E}_x\left[N_t\right] \approx\int_0^L p_t(x,y)dy\approx e^{\mu x}v_1(x)e^{(\lambda_1^L-\linf)t}\int_0^Le^{-\mu y}v_1(y)dy.
\end{equation*}
This is a consequence of Lemma \ref{lem:many-to-one} and Equation \eqref{asymp:p}.
We will show that the second integral converges to a positive limit and that $v_1(x)\approx Ce^{-\beta L}w_1(x)$, so that
\begin{equation}
    \mathbb{E}_x\left[N_t\right] \approx Ce^{(\mu-\beta)L}Z_0', \label{eq:NZ}
\end{equation}
for $t\ll e^{2\beta L}$. Thus, we will first prove  Theorem \ref{semipushedfr} for $Z_t$ instead of $\bar{N}_t$ and then deduce the result for $\bar{N}_t$.

Moreover, we claim that the barrier $L$ has to be chosen so that
\begin{equation}
    \label{eq:NL*}N=e^{(\mu-\beta)L}.
\end{equation}
Indeed, $L$ is fixed in such a way that the particles that reach $L$ have a number of descendants of order $N$ after a short time, on the time scale $e^{2\beta L}$. Yet, if we consider the system starting with a single particle close to $L$, say at $x=L-1$, we get that $Z_0'$ is of order $1$. Thus, \eqref{eq:NL*} follows from Equation \eqref{eq:NZ}. In addition, we obtain that
\begin{equation}
    \alpha=\frac{\mu+\beta}{\mu-\beta}\label{eq:alpha},
\end{equation}
which is equivalent to the definition \eqref{defalpha:rho} (see \eqref{def:mu} and \eqref{def:beta}).

In light of Equations (\ref{eq:NZ}), (\ref{eq:NL*}) and \eqref{eq:alpha}, we claim that it is sufficient to prove that
\begin{equation}
    \label{eq:cvZ}
    Z_{e^{2\beta L}t}\Rightarrow \Xi(t), \quad \text{as} \quad \text L\to\infty,
\end{equation}
where $\Xi$ is an $\alpha$-stable CSBP,
starting with a suitable initial configuration.

As explained in \cite{Birzu:2020up}, the difference between the genealogical structures of the population for $\rho<\rho_1$, $\rho \in (\rho_1,\rho_2)$ and $\rho>\rho_2$ is explained by the \textit{fluctuations in the total number of descendants left by the early founders}. In our particle system, this number of descendants is related to the number of offspring of a particle hitting the barrier $L$. We  prove that the number $Z_y$ of these descendants reaching $L-y$ (for the first time) is such that
\begin{equation*}
    e^{-(\mu-\beta)y}Z_y\Rightarrow W\quad \text{as} \quad y\to \infty,
\end{equation*}
for some random variable $W$ satisfying
\begin{equation*}
    \mathbb{P}(W>x)\sim \frac{C}{x^\alpha}\quad \text{as} \quad x\to \infty.
\end{equation*}
The fact that $\alpha$ depends on $\rho$ only through the drift $\mu$ can be explained by this barrier at $L-y$:  it can be chosen in such a way that the particles are stopped before they reach $1$ so that they behave as  a BBM with drift $-\mu$ and constant branching rate $\frac{1}{2}$.

    {  The fluctuations of $Z'$ will be bounded using a second moment estimate.} We will make use of the many-to-two lemma.
\begin{lemma}[Many-to-two lemma, see \cite{Ikeda:1969tj}, Theorem 4.15]
    \label{lem:many-to-two}
    Let $f$ and  $p_t(x,y)$ be as in Lemma~\ref{lem:many-to-one}. Then
    \begin{align*}
         & \mathbb{E}_x\left[\left(\sum_{v\in \mathcal N^L_t} f(X_v(t))\right)^2\right] \\
         & = \int_0^L p_t(x,y) f(y)^2\,dy
        + \int_0^t \int_0^L p_s(x,y) 2r(y) \mathbb{E}_y\left[\sum_{v\in \mathcal N^L_{t-s}} f(X_v(t-s))\right]^2\,dy\,ds.
    \end{align*}
\end{lemma}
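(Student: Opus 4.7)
The plan is to decompose the square into a diagonal and an off-diagonal contribution, handle the diagonal with Lemma~\ref{lem:many-to-one}, and treat the off-diagonal factorial moment by a most-recent-common-ancestor (MRCA) decomposition. Expanding the square,
\[
\Bigl(\sum_{v\in\mathcal N_t^L} f(X_v(t))\Bigr)^2 = \sum_{v\in\mathcal N_t^L} f(X_v(t))^2 + \sum_{\substack{u,v\in\mathcal N_t^L\\ u\neq v}} f(X_u(t))\,f(X_v(t)).
\]
The expectation of the diagonal sum is immediately $\int_0^L p_t(x,y)\,f(y)^2\,dy$ by applying Lemma~\ref{lem:many-to-one} with $f^2$ in place of $f$, which matches the first term of the statement.

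For the factorial moment, I would classify every ordered pair $(u,v)$ of distinct particles in $\mathcal N_t^L$ by its (unique) MRCA, which branches at some time $s\in[0,t]$ while located at some $y\in[0,L]$ and produces two children, one being an ancestor of $u$ and the other an ancestor of $v$. By the branching property, conditional on the MRCA and the histories up to its branching time, the two subtrees below the MRCA evolve as independent BBMs with branching rate $r$, drift $-\mu$, killed at $0$ and $L$, both started from $y$ at time $s$. Independence therefore makes the joint contribution at time $t$ equal to $\bigl(\E_y\bigl[\sum_{w\in\mathcal N_{t-s}^L} f(X_w(t-s))\bigr]\bigr)^2$. A combinatorial factor $2$ appears because each unordered sibling pair of subtrees gives rise to two ordered pairs $(u,v)$.

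The last ingredient is the intensity of MRCAs at $(s,y)$: by a Campbell-type identity, the expected number of branching events occurring along the ancestry of $\mathcal N_t^L$ in an infinitesimal region $(s,s+ds)\times(y,y+dy)$ is
\[
\E_x\Bigl[\sum_{v\in\mathcal N_s^L} r(X_v(s))\,\mathbb{1}_{X_v(s)\in dy}\Bigr]\,ds,
\]
which Lemma~\ref{lem:many-to-one} (applied to $x\mapsto r(x)\mathbb{1}_{x\in dy}$) evaluates as $p_s(x,y)\,r(y)\,dy\,ds$. Assembling the three pieces yields the stated formula. The main technical point, and the only genuine content, is justifying the MRCA decomposition rigorously together with the exchange of summation and expectation. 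This can be done either via a Harris-type spine decomposition of the genealogical tree, or, more elementarily, by conditioning on the first branching time of the initial particle, deriving a Duhamel-type renewal equation for the second moment, and iterating on generations; at each step the base case is provided by Lemma~\ref{lem:many-to-one}, and the interchange is legitimate by monotone convergence.
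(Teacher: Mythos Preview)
The paper does not give its own proof of this lemma; it is simply stated with a citation to \cite{Ikeda:1969tj}, Theorem~4.15, as a known result. Your sketch is the standard MRCA/spine decomposition argument for many-to-two formulas and is correct: the diagonal term is immediate from the many-to-one lemma, and the off-diagonal term follows by classifying ordered pairs according to their most recent common ancestor, using the branching property for conditional independence of the two subtrees, and reading off the branching-point intensity $p_s(x,y)\,r(y)\,dy\,ds$ via the many-to-one lemma again. The factor $2$ from ordered versus unordered pairs is accounted for correctly, and the interchange of sum and expectation is unproblematic for nonnegative $f$ by monotone convergence (for general measurable positive $f$ as in the statement, this is all that is needed).
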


To prove \eqref{eq:cvZ}, we will follow the strategy developed in \cite{Maillard:2020aa} in the case $\rho=1$: we will show that the Laplace transform of $Z$ converges to that of $\Xi $ as $L\to\infty$. Once  \eqref{eq:cvZ} is proved, one can deduce the same convergence result for $\bar{N}_t$. It will be sufficient to prove that over a short time, on the time scale of the CSBP, $\bar N$ and $Z$ do not vary much and that $\bar{N}$ is well-approximated by $Z$ (see \eqref{eq:NZ}) as in \cite[Section 4.6]{Berestycki2010} in the case $\rho=1$.

  {We end this section with a reformulation of \eqref{hpushed} and \eqref{hwp} in terms of $\linf$, $\alpha$, $\mu$ and $\beta$ (the first assertion  will the object of Section \ref{sec:spec}):}
\begin{equation*}
    \eqref{hpushed} \; \Leftrightarrow \; \mu>1 \; \Leftrightarrow \; \linf>0 \; \Leftrightarrow \; \alpha >1,
\end{equation*}
and,
\begin{equation}\label{hwp2}
    \eqref{hwp} \; \Leftrightarrow \; \mu\in\left(1,\frac{3}{4}\sqrt{2}\right) \; \Leftrightarrow \; \linf\in\left(0,\frac{1}{16}\right) \; \Leftrightarrow \; \alpha\in(1,2).
\end{equation}
Finally, note that \eqref{eq:alpha} implies
\begin{equation}
    \alpha<2 \; \Leftrightarrow \; \mu>3\beta. \label{rem:alpha}
\end{equation}

\subsection{Perturbation of the Laplacian on the half-line}\label{pt:Laplacian}

A crucial role in the analysis will be played by the family of differential operators $T_\rho$, $\rho\in\mathbb{R}$, defined by
\[
    T_\rho u(x) = \begin{cases}
        \frac{1}{2} u''(x) + \frac{\rho}{2} \mathbf{1}_{[0,1]}(x)u(x), & x\in(0,1)\cup(1,\infty), \\
        \lim_{z\to 1} T_\rho u(z),                                     & x=1.
    \end{cases}
\]
with domain
\[
    \mathcal D_{T_\rho} = \{u\in C^1((0,\infty))\cap C^2((0,1)\cup(1,\infty)): \lim_{x\to0} u(x) = 0,\ \lim_{x\to 1} T_{\rho}u(x)\text{ exists}\}.
\]
The operator $T_\rho$ is a perturbation of the Laplacian on the positive half-line by a function of compact support.

In this section, we recall a few well-known facts about such operators, based on Section 4.6 in \cite{Pinsky1995}. These results are only given for continuous perturbations, but one can extend them to our particular perturbation by approximating the step function on $[0,1]$ by continuous functions. Actually, these facts will not be used in the following proofs, yet they provide a better understanding of the three regimes in the particle system.

Define the \emph{generalised principal eigenvalue} of the operator $T_\rho$ by
\[
    \lambda_c(\rho) = \inf\{\lambda\in\mathbb{R}: \exists u\in \mathcal D_{T_\rho}: u>0\text{ on }(0,\infty),\ T_\rho u = \lambda u\}.
\]
Theorem 4.4.3 in \cite{Pinsky1995} implies that $\lambda_c$ is a convex function of $\rho$ and Lipschitz-continuous with Lipschitz constant 1/2.
Let $(B_t)$ be a standard Brownian motion starting at $x>0$ and let $\tau=\inf\{t\in(0,\infty):B_t\notin(0,\infty)\}$. The Green function $G_\rho$ of the operator $T_\rho$ is the unique function such that, for all bounded measurable functions $f$, we have
\begin{equation*}
    \mathbb{E}\left[\int_0^\tau\exp\left(\int_0^t\frac{\rho}{2}\mathbf{1}_{[0,1]}(B_s)ds\;\right)f(B_t)dt\right]=\int_0^\infty G_\rho(x,y) f(y)dy.
\end{equation*}
Similarly, one can define the Green function of the operator $T_\rho-\lambda$, denoted by $G_\rho^\lambda$, such that
\begin{equation*}
    \mathbb{E}\left[\int_0^\tau\exp\left(\int_0^t\left(\frac{\rho}{2}\mathbf{1}_{[0,1]}(B_s)-\lambda\right)\, ds\right)f(B_t)dt\right]=\int_0^\infty G^\lambda_\rho(x,y) f(y)dy.
\end{equation*}
Recall from \cite[Section 4.3]{Pinsky1995}  that an operator is called
\begin{itemize}
    \item \emph{subcritical}, if its Green function is finite (and hence positive harmonic functions, i.e.~eigenfunctions of eigenvalue 0, exist),
    \item \emph{critical}, if its Green function is infinite, but positive harmonic functions exist,
    \item \emph{supercritical}, if no positive harmonic function exists.
\end{itemize}
It is well known that the Laplacian on the positive half-line, i.e.~the unperturbed operator $T_0$, is subcritical in the sense of \cite{Pinsky1995}: its Green function is finite and is given by $G_0(x,y) = 2x\wedge y$, $x,y>0$. Furthermore, its generalised principal eigenvalue is $\lambda_c(0) = 0$. It then follows from Theorem~4.6.4 in \cite{Pinsky1995} that there exists $\rho_c > 0$, such that $\lambda_c(\rho) = 0$ for all $\rho \le \rho_c$ and $\lambda_c(\rho) > 0$ for all $\rho > \rho_c$. Moreover, $T_\rho$ is subcritical for $\rho < \rho_c$, critical for $\rho = \rho_c$ and supercritical for $\rho > \rho_c$. In fact, Theorem~4.7.2 in \cite{Pinsky1995} implies that $T_\rho-\lambda_c(\rho)$ is critical for $\rho >\rho_c$.

These properties can be verified by elementary calculations, which also yield exact expressions for $\rho_c$ and $\lambda_c(\rho)$. We summarise these calculations in the following proposition:

\begin{proposition}\label{th:Trho}
    Define $\rho_c = \pi^2/4$. Define the function
    \[
          {h}(x) = \sinc(\sqrt{x})^{-2},\ x\in [\rho_c,\pi^2),
    \]
    where $\sinc(z) = \sin(z)/z$.
    Then $  {h}$ is an increasing and strictly convex function on $[\rho_c,\pi^2)$ with $  {h}(\rho_c) = \rho_c$, $  {h}'(\rho_c) = 1$ and $  {h}(x) \to \infty$ as $x\to \pi^2$. Denote by $  {h}^{-1}$ its inverse, defined on $[\rho_c,\infty)$. Then
    \[
        \lambda_c(\rho) = \begin{cases}
            0                                                  & \rho \le \rho_c \\
            \frac 1 2 (\rho -   {h}^{-1}(\rho)) & \rho > \rho_c.
        \end{cases}
    \]
\end{proposition}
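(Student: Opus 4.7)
The plan is to solve the eigenvalue equation $T_\rho u = \lambda u$ explicitly on each of $[0,1]$ and $(1,\infty)$, match the two pieces at $x=1$, and identify $\lambda_c(\rho)$ as the smallest $\lambda$ for which the resulting function can be chosen strictly positive on all of $(0,\infty)$.

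First, on $(1,\infty)$ the ODE $u''=2\lambda u$ has oscillating solutions when $\lambda<0$; such solutions must change sign, so no positive eigenfunction exists and $\lambda_c(\rho)\geq 0$. For $\lambda>0$ the general solution on $(1,\infty)$ is $u(x)=Ce^{-\sqrt{2\lambda}(x-1)}+De^{\sqrt{2\lambda}(x-1)}$, while on $[0,1]$ the Dirichlet condition at $0$ forces, up to a positive scalar, $u(x)=\sin(\sqrt{\rho-2\lambda}\,x)$ when $\rho>2\lambda$ and $u(x)=\sinh(\sqrt{2\lambda-\rho}\,x)$ otherwise. Imposing $C^1$ matching at $x=1$ gives explicit formulas for $C$ and $D$; in the oscillatory case,
\[
2D=\sin(\sqrt{\rho-2\lambda})+\frac{\sqrt{\rho-2\lambda}}{\sqrt{2\lambda}}\cos(\sqrt{\rho-2\lambda}).
\]
A short analysis of the critical points of $u$ on $(1,\infty)$ shows that positivity there is equivalent to $D\geq 0$ together with $C+D>0$, while positivity on $(0,1]$ requires $\sqrt{\rho-2\lambda}\leq \pi$.

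When $\sqrt{\rho-2\lambda}\leq\pi/2$ both constraints are automatic, so $\lambda=0$ is admissible precisely when $\rho\leq\pi^2/4$, yielding $\rho_c=\pi^2/4$ and $\lambda_c(\rho)=0$ on $[0,\rho_c]$. For $\rho>\rho_c$, taking $\lambda=0$ gives $D=\sqrt{\rho}\cos(\sqrt{\rho})<0$, so $\lambda$ must be strictly positive. The binding regime is $\sqrt{\rho-2\lambda}\in(\pi/2,\pi)$, and the infimum corresponds to $D=0$, that is,
\[
\frac{\tan(\sqrt{\rho-2\lambda_c})}{\sqrt{\rho-2\lambda_c}}=-\frac{1}{\sqrt{2\lambda_c}}.
\]
Squaring and setting $y=\sqrt{\rho-2\lambda_c}$ yields $2\lambda_c=y^2\cot^2 y$, whence
\[
\rho=y^2+2\lambda_c=\frac{y^2}{\sin^2 y}=\frac{1}{\sinc(y)^2}=g(y^2),
\]
so that $y^2=g^{-1}(\rho)$ and $\lambda_c(\rho)=(\rho-g^{-1}(\rho))/2$ as claimed. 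Continuity across $\rho=\rho_c$ is automatic since $g^{-1}(\pi^2/4)=\pi^2/4$.

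It remains to verify the properties of $g$ listed in the statement. Setting $w=\sqrt{x}$ and differentiating,
\[
g'(x)=\frac{\sin w-w\cos w}{w\sin^3 w},
\]
which is positive on $w\in(\pi/2,\pi)$ since $\sin w>0$ and $-\cos w>0$ there; the values $g(\pi^2/4)=\pi^2/4$, $g'(\pi^2/4)=1$ and $g(x)\to+\infty$ as $x\uparrow\pi^2$ follow by direct evaluation and from the simple zero of $\sin$ at $\pi$. The step I expect to require the most bookkeeping is the strict convexity of $g$: computing $g''(x)$ produces a trigonometric expression in $w$ whose positivity on $(\pi/2,\pi)$ must be established by elementary estimates, for instance by regrouping the numerator as a single-sign combination of $\sin w$, $\cos w$ and polynomials in $w$. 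This is the only delicate piece of the proof; an alternative route would exploit that $w\mapsto w/\sin w$ is positive and increasing on $(\pi/2,\pi)$ and argue convexity of its square via the logarithmic second derivative, but this amounts to the same kind of trigonometric estimate.
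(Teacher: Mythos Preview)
Your proof is correct and follows essentially the same route as the paper's: solve the ODE on $[0,1]$ and on $(1,\infty)$, match at $x=1$, reduce positivity on $(1,\infty)$ to a sign condition on the coefficient of the growing exponential (your $D\ge 0$ is exactly the paper's $a+b\ge 0$), and invert the resulting transcendental equation through $g$. The paper in fact dismisses the properties of $g$---including strict convexity---as ``easily checked'' and gives no details, so your caution there is more than the paper provides; the only minor imprecision in your write-up is the sentence ``taking $\lambda=0$ gives $D=\sqrt{\rho}\cos(\sqrt{\rho})<0$'', which as written is only valid for $\rho_c<\rho<9\pi^2/4$, but for larger $\rho$ positivity already fails on $(0,1]$ and your conclusion is unaffected.
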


The proof of this proposition can be found in Appendix \ref{proof:15}. One could go on calculating the positive eigenfunctions of the operator $T_\rho$ for all $\rho$. One would see that, for every $\rho\in \mathbb{R}$ and every $\lambda \ge \lambda_c(\rho)$, there exists a unique (up to a multiplicative constant) positive eigenfunction of eigenvalue $\lambda$. For $\lambda = \lambda_c(\rho)$, this function is affine on $[1,\infty)$ with positive slope for $\rho < \rho_c$, and exponentially decreasing, with exponent $-\sqrt{2\lambda_c(\rho)}$, on $[1,\infty)$, for $\rho > \rho_c$. In fact, in the latter case, an eigenfunction is
\[
    u(x) = \begin{cases}
        \sin(\sqrt{  {h}^{-1}(\rho)} x)                                  & x\in[0,1]       \\
        \sin(\sqrt{  {h} ^{-1}(\rho)}) e^{-\sqrt{2\lambda_c(\rho)}(x-1)} & x\in[1,\infty).
    \end{cases}
\] This function will play a crucial role in the system. Indeed, it corresponds to a harmonic function of the critical operator $T_\rho-\lambda_c(\rho)$. According to Theorem 8.6 in \cite{Pinsky1995}, this function is the unique (up to positive multiples) \textit{invariant function} for the transition measure associated to $T_\rho-\lambda_c(\rho)$. Roughly speaking, this means that $u$ is a stable configuration in the particle system. On the other hand, for $\lambda > \lambda_c(\rho)$, the function grows exponentially on $[1,\infty)$ with exponent $\sqrt{2\lambda}$.

Let us now go back to the differential operator $\frac{1}{2}\partial_{xx}+\mu \partial_x+r(x)$ from Equation \eqref{PDE:A}. Thanks to Equation \eqref{eq:ptqt}, the Green function $G$ of this operator  can be expressed thanks to $G^\lambda_{\rho-1}$,
\begin{equation*}
    G(x,y)=e^{\mu(x-y)}G_{\rho-1}^{\lambda}(x,y), \quad\text{for} \; \lambda=\frac{\mu^2-1}{2} .
\end{equation*}
The value of $\mu$ will be then chosen in such a way that the differential operator associated to \eqref{PDE:A} has a harmonic function. Then, for $\rho-1<\rho_c$ it is sufficient to choose $\mu=1$ since $T_{\rho-1}$ is subcritical. For $\rho-1>\rho_c$, we know that $T_{\rho-1}-\lambda_c(\rho-1)$ is critical. Therefore the corresponding Green function is infinite but the operator has harmonic functions. Hence we will choose the drift $\mu$ such that
\begin{equation*}
    \mu(\rho)=\sqrt{1+2\lambda_c(\rho-1)}.
\end{equation*}
Note that the limit $\linf$ of the maximal eigenvalues $\lambda_1^L$ and the generalised principal eigenvalue $\lambda_c(\rho-1)$ coincide. This is a consequence of Theorem 4.1 in \cite{Pinsky1995}.

\subsection{Related models}\label{relmod}
A rich theory has been developed in the case where $B=0$ in \eqref{eq:FKPPhall}, which corresponds to a special case of the pulled regime. First, the equation
\begin{equation}
    \label{eq:FKPPn}
    u_t=\frac{1}{2}u_{xx}+u(1-u)+\sqrt{\frac{u(1-u)}{N}}W(t,x)
\end{equation}
was studied in \cite{Brunet2006} to investigate the effect of demographic fluctuations on the FKPP equation. Indeed, if one removes the noise term in \eqref{eq:FKPPn}, one  obtains the FKPP equation, introduced by Fisher \cite{Fisher:1937wr} and independently by Kolmogorov, Petrovskii and Piskounov \cite{Kolmogorov:1937tc}, to describe the invasion of a stable phase ($u\approx1$) in an unstable phase ($u\approx0$). In this case, it is well-known \cite{Kolmogorov:1937tc} that $c_{\min}=c_0=\sqrt{2r_0}$ so that the invasion is pulled.

As explained in \cite{Panja2004}, the FKPP equation can be seen as the hydrodynamic limit of many particle systems. However, the finite nature of these physical or biological systems induces fluctuations, which can be modelled by adding multiplicative square root noise to the FKPP equation. Heuristically, this correction corresponds to the rescaled \textit{difference between the limiting PDE and the particle system in the style of a central limit theorem}   \cite{Mueller:1994wa}. The addition of this noise term in Equation (\ref{eq:FKPPn}) makes the shape and position of the front fluctuate.

In \cite{Brunet2006}, the authors explain how to infer the first order of the correction to the speed of the noisy fronts (compared to the deterministic fronts) thanks to a particle system. Since the fluctuations emerge at the leading edge of the front, they do not need to introduce a saturation rule in the particle system to deduce the correction to the velocity of the wave. Analysing the mechanisms driving the invasion, they conjecture that  the fluctuations appear over a time scale of order $\log(N)^3$. They deduce from this fact that the correction of the speed $c_0$ is of order $\log(N)^{-2}$. This statement was then rigorously proved in \cite{Mueller2010} for the SPDE \eqref{eq:FKPPn}. This correction, that is much greater than expected ($1/\sqrt{N}$), underscores the large fluctuations in the pulled regime.

In \cite{Brunet2006a,Brunet_2007}, the authors analyse a particle system with a fixed population size to
investigate the genealogy at the tip of the invasion front in the pulled regime. The particles evolve in discrete
time and, at each generation, independently give birth to exactly $k$ children, scattered around the parental
location. At the end of each generation, only the $N$ rightmost individuals survive. This set of particles forms
a cloud that does not diffuse and can be described by a front governed by \eqref{eq:FKPPn} \cite{Brunet_2007}.
In this framework, they conjecture \cite{Brunet2006a} that the genealogy of the particles in the cloud is
described by a Bolthausen--Sznitman coalescent. The fact that the correction to the speed of this system
is the same as the one for solutions of \eqref{eq:FKPPn} was rigorously proved in \cite{Berard2010},
in the case $k=2$. This result was then extended to random offspring distributions in \cite{Mallein2015}.

The conjecture on the genealogy stated in \cite{Brunet2006a,Brunet_2007} was proved under slightly different assumptions in \cite{Berestycki2010}.
Berestycki, Berestycki and Schweinsberg \cite{Berestycki2010}  considered a branching Brownian motion with absorption for a suitable choice of drift $-\mu$.
It is the branching property of the BBM that makes this system analytically tractable.   The drift is then chosen to be supercritical, matching the correction to the speed of the noisy front conjectured in \cite{Brunet2006}: for each integer $N$, they consider a dyadic BBM, with drift $-\mu_N$, with
\begin{equation}
    \mu_N=\sqrt{1-\frac{\pi^2}{(\log(N)+3\log\log(N))^2}},\label{driftBBS}
\end{equation}
starting, for instance, with $N\log(N)^3$ particles at $x=1$. With the notation of Theorem \ref{semipushedfr}, they obtain that, as $N$ goes to $\infty$, the processes $\left(\bar{N}_{\log(N)^3t},\;t\geqslant 0\right)$ converge in law to Neveu's continuous-state branching process. Using the results from \cite{bertoin2000bolthausen}, they deduce from this fact that the genealogy of the system is given by the Bolthausen--Sznitman coalescent. It was then shown in \cite{Maillard:2016uw} that many ideas developed in \cite{Berestycki2010} also hold in the case of a BBM with constant population size $N$.

In this work, we are interested in the genealogy of the particles at the tip of the front for a more general form of the reaction term in the limiting PDE. While the study in \cite{Brunet2006,Brunet2006a} concerns FKPP fronts, that are classified as \textit{pulled}, we focus on reaction terms of the form (\ref{reactionterm}). In this case, the deterministic front in the limiting PDE can be either \textit{pulled} ($B\leqslant 2$) or \textit{pushed} ($B>2$).

In the semipushed regime,  an $\alpha$-stable CSBP emerges in the limit in the particle system. This suggests that the genealogy of the particles  is given by a Beta($2-\alpha,\alpha$)-coalescent \cite{Birkner2005}. While Beta($2-\alpha,\alpha$)-coalescents  are known to interpolate between Bolthausen--Sznitman and Kingman coalescents in population models \cite{schweinsberg2003coalescent}, simple  systems exhibiting such a continuous phase transition are not  so common in the literature. Another particle system showing a similar interpolation regime can be found in \cite{cortines2018genealogy}.
A transition between the Bolthausen--Sznitman and Kingman coalescents also appears in \cite{Brunet2012} but the genealogical structure emerging in the interpolation regime is given by a $\Lambda-$coalescent.

Stochastic models for population genetics have received quite a lot of attention recently.
In \cite{fitnesswave} and \cite{liu2021particle}, the authors considered an inhomogeneous BBM, in which the difference between the branching rate and the death rate is linear,  to model a population undergoing natural selection.  Powell \cite{powell2019invariance} studied a critical branching diffusion in a bounded domain (in $\mathbb{R}^d$) and proved that the genealogical tree of the particles converges to Aldous’ Continuum Random Tree. In discrete space, Etheridge and Penington \cite{etheridge2020genealogies} examined a structured Moran model to describe the genealogy of an advantageous allele in a diploid population under selection.

In this work, the system is nearly critical as in \cite{Berestycki2010} but, unlike \eqref{driftBBS}, the drift $\mu$ is chosen as a function of $\rho$ and does not depend on $N$. This difference is due to different behaviours of the spectrum of the differential operators associated with the BBM. This relation between the generalised eigenvalues and local extinction/exponential growth has already been discussed, see e.g.~\cite{Englander:2004aa,powell2019invariance}.

\subsection{Biological motivations: the Allee effects}\label{sec:allee}

In biology, spatial invasions are often described by the minimal front solutions of \eqref{limit:PDEh}.
In terms of population models, a front is pushed, for instance, in the presence of a sufficiently strong \emph{Allee effect}, meaning that the particles near the front have a competitive advantage over particles far away from the front. The strength of the Allee effect is scaled by the parameter $B$ in the reaction term \eqref{reactionterm}.

Allee effects are well-explained in \cite{Hallatschek2008}: ``The presence of conspecifics can be beneficial due to numerous factors, such as predator dilution, anti predator vigilance, reduction of inbreeding and many others. Then, the individuals in the very tip of the front do not count so much, because the rate of reproduction decreases when the number density becomes too small. Consequently, the front is pushed in the sense, that its time-evolution is determined by the behaviour of an ensemble of individuals in the boundary region''. In sharp contrast, pulled invasions are the ones for which the growth is maximal at low densities so that the individuals located at the leading edge pull the invasion. As explained in  \cite{stokes1976two}, the consequence of this fact is that ``the speed of the wave is determined by the fecundity of their pioneers'', or, in other words, it only depends on $f'(0)$ (see  \eqref{reactionterm}). Pushed waves are faster and \textit{pushed}, or driven, by the nonlinear dynamic of the bulk (see Section \ref{sec:relatedmodel}). Consequently, the speed of the waves depends on the functional form of the reaction term $f$.

This shift in the invasion speed is not the only consequence of Allee effects. Indeed, one can investigate the genealogies of a particle system governed by Equation \eqref{eq:FKPPhall}. One expects them to evolve over larger time-scales for pushed fronts than for pulled fronts. In biological terms, this translates into a larger genetic diversity \cite{Hallatschek2008}. For pulled fronts, the time-scale is logarithmic in $N$ and the genealogy is described by the Bolthausen--Sznitman coalescent \cite{Brunet2008}. If the Allee effect is sufficiently strong, it is natural to assume that the genealogy evolves over the timescale $N$ and is described by Kingman's coalescent \cite{Birzu2018}. This was proved in the case of strong Allee effects in the context of population genetics \cite{etheridge2020genealogies}. Strong Allee effects are often modelled by bistable reaction diffusion equations, which can not be considered with reaction terms of the form \eqref{reactionterm} (heuristically, it corresponds to $B\to \infty$). See  \cite[Chapter 1]{these} for further details on the classification of Allee effects. The simulations in \cite{Birzu2018} and the analysis conducted here describe the intermediate regime between these two extremes: the genealogy is observed on a time scale $N^{\alpha-1}$ for some $\alpha\in(1,2)$ and its structure is given by a Beta-coalescent.

According to \cite{Birzu2018}, pulled and pushed fronts can also be distinguished by the spatial position of the ancestors of the particles. Taking a particle at random and looking at its ancestor at a time far in the past, this ancestor will sit at the leading edge of the front (i.e.~far to the right of the front) in pulled fronts, whereas it will be at the middle of the front (i.e.~in the bulk) in pushed fronts, where most particles lie \cite[Fig.~2]{Birzu2018}. One can consider the trajectory described by the ancestors of this particle as the path of an \emph{immortal particle}, and thus conjecture the following two distinct behaviours: in pulled fronts, the path of an immortal particle typically spends most of its time far away from the bulk, whereas in pushed fronts, it spends most of its time in the bulk, in the vicinity of the other particles. Indeed, in the model studied in \cite{Berestycki2010}, which can be seen as a simplification of the noisy FKPP equation, the prime example of a pulled front, the path of the immortal particle resembles in the co-moving frame a Brownian motion constrained to stay in an interval of size of order $\log N$, and is thus typically a distance $\log N$ away from the bulk. On the other hand, for pushed fronts, one should expect that the path of an immortal particle is described in the co-moving frame by a positive recurrent Markov process independent of the population size.

Another distinction arises when one considers the events that drive the evolutionary dynamics, i.e. those that cause mergers in the ancestral lines of individuals randomly sampled from the population. The authors of \cite{Birzu2018} conjecture that the distinction does not take place between pulled and pushed, but between pulled and semipushed on the one side and fully pushed on the other \cite[SI, p36]{Birzu2018}. In fully pushed fronts, the population can be approximated by a neutral population, with \textit{all the organisms at the front}. In contrast, the particles located at the tip of the front  drive the evolutionary dynamics in semipushed and pulled waves.
This is consistent with the genealogical structures introduced above. Indeed, in pulled and semipushed fronts we expect the genealogies to be described by coalescents with multiple mergers. In these coalescents, single individuals replace a fraction of the population during coalescence events. It is reasonable to think that, for this to happen, a particle has to move far away from the front in order to have time to produce a large number of descendants before being incorporated in the front again. On the other hand, in fully pushed fronts, we expect the genealogy to be described by Kingman's coalescent, indicating that the population behaves like a neutral population where particles are indistinguishable. Thus, typical particles, i.e. those which are in the bulk, should drive the evolutionary dynamics. Of course, it is still possible for particles to move far away from the front and replace a fraction of the population. But since Kingman's coalescent only consists of binary mergers, these events are not visible in the limit and thus have to happen on a longer time-scale than the time-scale $N$ at which the genealogy evolves. The characteristics of the three types of fronts are summarised in Table~\ref{tab:pulled_pushed}.

\begin{table}[ht]
    \begin{center}
        \begin{tabular}{|p{5cm}|c|c|c|}
            \hline
                                                                                  & \textbf{pulled}                                   & \multicolumn{2}{c|}{\textbf{pushed}}                                            \\
                                                                                  &                                                   & semipushed                                                & fully pushed        \\
            \hline
            \textit{cooperativity $B$}                                            & $B\in(0,2]$                                       & $B\in(2,B_c)$                                             & $B\in(B_c,+\infty)$ \\
            \hline
            \textit{Allee effect}                                                 & \multicolumn{3}{c|}{\textbf{weak Allee effect}}                                                                                     \\
            \textit{}                                                             & \multicolumn{1}{l}{$\leftarrow$ no Allee effect}  & \multicolumn{2}{r|}{  strong Allee effect $\rightarrow$}                        \\
            \textit{}                                                             & \multicolumn{1}{c}{$(B=0)$}                       & \multicolumn{2}{c|}{  $\qquad \qquad \quad (B\to\infty)$}                       \\
            \hline
            \textit{speed of front compared to linearised equation}               & same                                              & \multicolumn{2}{c|}{faster}                                                     \\
            \hline
            \textit{path of an immortal particle}                                 & far to right of front                             & \multicolumn{2}{c|}{close to front}                                             \\
            \hline
            \textit{time-scale of genealogy}                                      & polylog($N$)                                      & $N^{\alpha-1}$, $\alpha\in(1,2)$                          & $N$                 \\
            \hline
            \textit{evolutionary dynamics driven by particles at positions\ldots} & \multicolumn{2}{c|}{\ldots far to right of front} & \ldots close to front                                                           \\
            \hline
        \end{tabular}
    \end{center}
    \caption{\label{tab:pulled_pushed} Summary of the characteristics of pulled, semipushed and fully pushed fronts. }
\end{table}

\subsection{Structure of the article}

The proof of the result follows the steps detailed in Section \ref{sec:skproof}.
In Section \ref{sec:density}, we examine the density of particles $p_t$: we fully characterise the spectrum of \eqref{def:Tintro} and show that the particles stabilise at a stationary configuration after a long time.
In Section \ref{sec:moment:est}, we bound the first and second moments of several quantities (including $Z_t'$) which rule the long-time behaviour of the system.
In Section \ref{sec:R}, we control the number of particles that hit the level $L$. In Section, \ref{sec:W}, we estimate the number of descendants of these particles after a large time of order~1. In Section \ref{sec:smallts} and \ref{sec:cvcsbp}, we put all these estimates together to prove the convergence result.

\subsection{Some notation}
We recall the definition of several quantities  depending  on the parameter $\rho$ of the model, as well as their dependences. In the remainder of the paper, we denote by $\lambda_1$ the maximal eigenvalue of the Sturm-Liouville problem \eqref{def:Tintro} with boundary condition $v(0)=v(L)=0$. Hence, $\lambda_1$ depends on $L$ and we prove (this is the object of Section \ref{sec:spec}) that, for $\rho>\rho_1$, $\lambda_1$ increases with $L$ and converges to a positive limit $\linf$ as $L$ goes to $\infty$.

In this case, we write $\alpha, \beta, \gamma$ and $\mu$ to refer to the following quantities:
\begin{equation}
    \label{defnot}
    \mu=\sqrt{1+2\linf}, \quad \beta=\sqrt{2\linf}, \quad  \alpha=\frac{\mu+\sqrt{\mu^2-1}}{\mu-\sqrt{\mu^2-1}}=\frac{\mu+\beta}{\mu-\beta}, \; \text{and} \; \; \gamma=\sqrt{\rho-1-2\linf},
\end{equation}
to emphasise that they do not depend on $L$, but only on $\rho$.

Throughout the paper, $C$ denotes a positive constant whose value may change from line to line. Unless otherwise specified, these constants only depend on $\rho$. Numbered constants keep the same value throughout the text.

\section[Branching Brownian motion in an interval: the density of particles]{BBM in an interval: the density of particles}\label{sec:density}

The goal of this section is to estimate the density of particles $p_t$ in the BBM with absorption at $0$ and at an additional barrier $L>0$. Recall from Lemma \ref{lem:many-to-one} and Equation \eqref{eq:ptqt} that the density  of particles in the dyadic BBM with branching rate $r(x)$ and drift  $-\mu$, killed upon exiting the interval $(0,L)$ can be calculated using the fundamental solution ${ \tilde p_t}$ of the self-adjoint partial differential equation \eqref{PDE:B}. According to Sturm--Liouville theory, this fundamental solution can be expressed in term of the eigenvalues and the eigenfunctions of  \eqref{def:Tintro}. Section \ref{sec:spec} is  devoted to this spectral decomposition.

We then prove that the particles stabilise at a \textit{stationary configuration} after a time of order $L$ in Section
\ref{sec:heatkernel}. In Section \ref{sec:green}, we give a bound on the Green function associated to \eqref{PDE:B}.

\subsection{Spectral analysis}\label{sec:spec}

Let $L>1, \, \rho\in (1,\infty)$ and consider the Sturm-Liouville problem (SLP) consisting of the equation
\begin{equation}
    \frac{1}{2}v''(x)+\frac{\rho-1}{2} v(x)\mathbf{1}_{x\leqslant 1}=\lambda v \quad \text{on} \quad  (0,L),\tag{E}
    \label{eq:defT}
\end{equation}
together with the boundary conditions
\begin{equation}\tag{BC}\label{bc}
    v(0)=v(L)=0.
\end{equation}
Let us first recall well-know facts about Sturm--Liouville theory following \cite[Section 4.6]{Zettl:2010aa}:
\begin{itemize}
    \item[(i)] A solution of \eqref{eq:defT} is defined as a function $v:[0,L]\to \mathbb{R}$ such that $v$ and $v'$ are absolutely continuous on $[0,L]$ and satisfies \eqref{eq:defT} a.e.~on $(0,L)$. In particular, any solution $v$ is continuously differentiable on $[0,L]$ and since $x\mapsto\mathbf{1}_{[0,1]}(x)$ is continuous on $(0,1)$ and $(1,L)$, the solutions are also twice differentiable on $(0,1)\cup(1,L)$ and (\ref{eq:defT}) holds for all $x\in(0,1)\cup(1,L)$.
    \item[(ii)] A complex number $\lambda$ is an eigenvalue of the Sturm--Liouville problem \eqref{eq:defT} with boundary conditions \eqref{bc}  if Equation \eqref{eq:defT} has a solution $v$ which is not identically zero on $[0,L]$ and that satisfies \eqref{bc}. This set of eigenvalues will be referred to as the spectrum.
    \item[(iii)] The spectrum of the SLP \eqref{eq:defT} with boundary conditions \eqref{bc} is infinite, countable and it has no finite accumulation point. Besides, it is upper bounded and all the eigenvalues are simple and real so that they can be numbered
          \begin{equation*}
              \lambda_1>\lambda_2>...> \lambda_n>...
          \end{equation*}
          where
          \begin{equation*}
              \lambda_n\rightarrow -\infty \quad \textnormal{ as } \quad  n\rightarrow+\infty.
          \end{equation*}
    \item[(iv)] As a consequence, the eigenvector $v_i$ associated to $\lambda_i$ is unique up to multiplicative constants. Furthermore, the sequence of eigenfunctions can be normalised to be an orthonormal sequence of $\mathrm{L}^2([0,L])$. This orthonormal sequence is complete in $\mathrm{L}^2([0,L])$ so that the fundamental solution of PDE \eqref{PDE:B} can be written as (see e.g.~\cite[p.188]{Lawler:2018vn})
          \begin{equation}
              { \tilde p_t(x,y)}=\sum_{k=1}^\infty e^{\lambda_k t}\frac{v_k(x)v_k(y)}{\|v_k\|^2}.\label{def:qt1}
          \end{equation}
    \item[(v)]{  The eigenvector $v_1$ does not change sign on $(0,L)$. For $k\geq 2$, the eigenvector $v_k$ has exactly $k-1$ zeros in $(0,L)$.}
    \item[(vi)] For fixed $i\in\mathbb{N}$, the eigenfunction $\lambda_i$ is an increasing function of $L$ (see \cite[Theorem 4.4.4]{Zettl:2010aa}).
\end{itemize}

In Lemma \ref{eigenvloc} and Lemma \ref{ev:sc}, we give a characterisation of the eigenvalues $\lambda_i$ and of the corresponding eigenvectors $v_i$ for large $L$.  The remainder of the subsection (Lemma \ref{eigenvlocasymp}  to Lemma \ref{lem:est:evk})  is devoted to the study of the asymptotic behaviour of the $\lambda_i$ and the $v_i$ as $L$ tends to $\infty$.

We now introduce some notation that will be used throughout this section.   {Let $Q^+:=\{z\in\mathbb{C}:z=\rho e^{i\theta}, \ \rho\geq 0,\ \theta\in[0,\frac{\pi}{2}]\}$. Let  $\tilde s:Q^+\to \tilde s (Q^+),\ z\mapsto z^2$. Note that $\tilde s$ is one-to-one and that $\tilde s (Q^+)$ is the upper half-plane. We denote by $\sqrt{\cdot} \ : \ \tilde s (Q^+)\to Q^+$ its inverse function.
For all $(x,\lambda)\in [0,+\infty)\times \mathbb{R}$, define
\begin{equation}
    \mathcal{S}(x,\lambda)=\frac{\sinh(\sqrt{\lambda}x)}{\sqrt{\lambda}}=\begin{cases}
        \frac{\sinh\left(\sqrt{\lambda}x\right)}{\sqrt{\lambda}}  & (x,\lambda)\in [0,+\infty)\times(0,+\infty) \\
        \frac{\sin\left(\sqrt{-\lambda}x\right)}{\sqrt{-\lambda}} & (x,\lambda)\in [0,+\infty)\times(-\infty,0) \\
        x                                                         & (x,\lambda)\in[0,+\infty)\times \{0\}
    \end{cases}.
\end{equation}
Similarly, let $\mathcal{C}(x,\lambda)=\cosh\left(\sqrt{\lambda}x\right)/\sqrt{\lambda}$ and $\mathcal{T}(x,\lambda)=\frac{\mathcal{S}(x,\lambda)}{\mathcal{C}(x,\lambda)}$  for $(x,\lambda)\in [0,+\infty)\times \mathbb{R}$.}

\begin{lemma} \label{eigenvloc} Assume $\rho\notin\left\{1+\left(n-\frac{1}{2}\right)^2\pi^2, \; n\in\mathbb{N}\right\}$.
    There exists $L_0 = L_0(\rho)$, such that the following holds for all $L\ge L_0$: Let $K\in\mathbb{N}$ be the largest positive integer such that
    \begin{equation}
        \rho-1>\left(K-\frac{1}{2}\right)^2\pi^2,
        \label{ik}
    \end{equation} and $K=0$ otherwise.
    Then, for all $1\leqslant k \leqslant K$, $\lambda_k$ is the unique solution of
    \begin{equation}
        \mathcal{T}\left(1,2\lambda+1-\rho\right)=\mathcal{T}\left(L-1,2\lambda\right),
        \tag{$\square$}
        \label{square}
    \end{equation}
    such that
    \begin{equation}
        (\rho-1-k^2\pi^2\vee 0)<2\lambda_{k}< \rho - 1-\left(k-\frac{1}{2}\right)^2\pi^2.
        \label{enc}
    \end{equation}
    Furthermore, $\lambda_k<0$ for all $k > K$. More precisely, set for all $i\ge0$:
    \begin{align}
        A_i & = \frac{1}{2}\left(\left(K+\frac{1}{2}+i\right)^2\pi^2+1-\rho\right), \label{def:A}         \\
        N_i & = \left\lfloor\frac{(L-1)}{\pi}\sqrt{2 A_i} \; +\frac{1}{2}\right\rfloor + i, \label{def:N}
    \end{align}
    and $A_{-1} = N_{-1} = 0$. Also, set $a_0=0$ and
    \begin{equation}
        a_j=\frac{\left(j-\frac{1}{2}\right)^2}{2(L-1)^2}\pi^2,\quad j\ge 1.
        \label{def:aj}
    \end{equation}
    Then, for every $i\ge0$ and every $j\in \mathbb{N}$ such that $N_{i-1} < j \le N_{i}$,
    $\lambda_{K+j}$ is the unique solution of \eqref{square} in the interval
    \begin{equation}
        (-A_i,-A_{i-1}) \cap (-a_{j-i+1},-a_{j-i}).
        \label{encK}
    \end{equation}
    Finally, for all $k\in \mathbb{N}$, the eigenvector $v_k$ associated with $\lambda_k$ is unique up to multiplicative constants and is given by
    \begin{equation}
        v_k(x)=
        \begin{cases}
            \mathcal{S}(x,2\lambda_k-\rho-1)/\mathcal{S}(1,2\lambda_k-\rho-1) & x\in [0,1], \\
            \mathcal{S}(L-x,2\lambda_k)/\mathcal{S}(L-1,2\lambda_k)           & x\in[1,L].  \\
        \end{cases}
        \label{vecp1}
    \end{equation}
\end{lemma}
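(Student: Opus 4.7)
The plan is to reduce the eigenvalue problem $Tv=\lambda v$ to a scalar transcendental equation by separation of variables on the two subintervals, and then to count its real roots via monotonicity and the intermediate value theorem on a family of explicit sub-intervals.

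First I would observe that on each piece the equation has constant coefficients: $v''=(2\lambda+1-\rho)v$ on $(0,1)$ and $v''=2\lambda v$ on $(1,L)$. The Dirichlet condition at each endpoint pins down the solution up to a scalar, yielding $v(x)=A\,\mathcal{S}(x,2\lambda+1-\rho)$ on $[0,1]$ and $v(x)=B\,\mathcal{S}(L-x,2\lambda)$ on $[1,L]$. Continuity of $v$ and of $v'$ at $x=1$ then gives a $2\times 2$ homogeneous system for $(A,B)$; its vanishing determinant, using $\partial_x\mathcal{S}=\mathcal{C}$ together with $\partial_x\mathcal{S}(L-x,\cdot)=-\mathcal{C}(L-x,\cdot)$, is exactly the characteristic equation \eqref{square} (up to a sign that I would track carefully). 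Once \eqref{square} holds, the ratio $A/B$ is fixed by either matching equation, producing the eigenvector \eqref{vecp1}. This reduces the lemma to locating the real roots of \eqref{square}.

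For the positive roots one restricts to $\lambda\in(0,(\rho-1)/2)$: outside this range a direct sign check on both sides rules out any solution. Writing the left-hand side as $\tan(\sqrt{\rho-1-2\lambda})/\sqrt{\rho-1-2\lambda}$ and the right-hand side as (a signed multiple of) $\tanh(\sqrt{2\lambda}(L-1))/\sqrt{2\lambda}$, the right-hand side is smooth, monotone in $\lambda$, and uniformly close to $1/\sqrt{2\lambda}$ as soon as $L\ge L_0(\rho)$. The left-hand side has poles at $\rho-1-2\lambda=((k-1/2)\pi)^2$ and zeros at $\rho-1-2\lambda=(k\pi)^2$, and is strictly monotone between consecutive pole and zero. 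An intermediate-value argument on each sub-interval $((\rho-1-k^2\pi^2)\vee 0,\rho-1-(k-1/2)^2\pi^2)/2$ for $k=1,\dots,K$ then gives exactly one crossing, namely the $\lambda_k$ described in \eqref{enc}; the condition $L\ge L_0(\rho)$ is precisely what makes the right-hand side large enough near each lower endpoint for the crossing to occur.

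For the negative roots both sides of \eqref{square} are trigonometric. The sequence $(-A_i)$ in \eqref{def:A} enumerates the poles of the left-hand side and $(-2a_j)$ from \eqref{def:aj} enumerates the poles of the right-hand side, so the intervals \eqref{encK} are by construction the maximal connected components of $\mathbb{R}_-$ on which both sides are smooth. On each such component both sides are monotone and sweep $\mathbb{R}$, so the intermediate-value theorem (combined with a sign check at the endpoints) produces exactly one root. The indexing $k=K+j$ with $N_{i-1}<j\le N_i$ is then obtained by counting the $N_i-N_{i-1}$ components lying inside $(-A_i,-A_{i-1})$. The main obstacle is this combinatorial bookkeeping: one must verify that the Sturm--Liouville ordering by decreasing $\lambda$ coincides with the ordering induced by the interleaved sequences $\{A_i\}$ and $\{a_j\}$, which relies on the non-resonance assumption $\rho\notin\{1+k^2\pi^2/4\}$ to rule out the degenerate coincidence $-A_i=-2a_j$, and on the explicit formula for $N_i$ which counts precisely how many $a_j$'s fall between two consecutive $A_i$'s.
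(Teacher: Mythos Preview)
Your proposal is correct and follows essentially the same route as the paper: derive the characteristic equation by matching $C^1$ data at $x=1$, then locate its roots by analysing the monotonicity and pole structure of $\tan(\sqrt{\rho-1-2\lambda})/\sqrt{\rho-1-2\lambda}$ against $\tanh(\sqrt{2\lambda}(L-1))/\sqrt{2\lambda}$ (resp.\ $\tan(\sqrt{-2\lambda}(L-1))/\sqrt{-2\lambda}$) for $\lambda>0$ (resp.\ $\lambda<0$). Two small slips to fix when you write it out: the poles of the right-hand side for $\lambda<0$ are at $\lambda=-a_j$, not $-2a_j$; and on each intersection interval \eqref{encK} only \emph{one} of the two sides actually sweeps all of $\mathbb{R}$ (the other is merely continuous and monotone there), so the uniqueness comes from opposite monotonicities rather than both sweeping $\mathbb{R}$.
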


\begin{proof}
    Let $\lambda$ be an eigenvalue of the SLP consisting of \eqref{eq:defT} with boundary condition \eqref{bc} and consider $v$ an eigenvector associated to $\lambda$. As mentioned above, the eigenvalue $\lambda$ is real and $v$ is unique up to multiplicative constants. In addition, the function $v$ is twice differentiable on $(0,1)\cup(1,L)$ and solves the following system
    \begin{equation}
        \begin{cases}
            v''(x)=(2\lambda+1-\rho) v(x) & x \in (0,1) , \\
            v''(x)=2\lambda v(x)          & x\in(1,L),    \\
            v(0)=v(L)=0.
        \end{cases}
        \label{syst}
        \tag{$\mathcal{C}_\lambda$}
    \end{equation}
    First, let us prove by contradiction that the spectrum is bounded above by $\frac{\rho-1}{2}$. Suppose that \eqref{syst} has a solution for some   $\lambda>(\rho-1)/2.$ Then, $\lambda_1>\frac{\rho-1}{2}$ and the function $v_1$ can be written as \begin{equation} 
        v_1(x)=\begin{cases}
            A\sinh\left(\sqrt{2\lambda_1+1-\rho}\,x\right) & x\in (0,1), \\
            B\sinh\left(\sqrt{2\lambda_1}\,(L-x)\right)    & x\in(1,L),
        \end{cases}
        \label{evlpo}
    \end{equation}{}for some $(A,B)\neq (0,0)$. Recalling that $v_1$ is positive on $(0,L)$, we see that both $A$ and $B$ are positive. Moreover, the derivative  $v_1'$ is continuous at $1$ so that $A$ and $B$ satisfy
    \begin{equation*} 
        A\sqrt{2\lambda_1+1-\rho}\,\cosh(\sqrt{2\lambda_1}(L-1))=-B\sqrt{2\lambda_1}\, \cosh(\sqrt{2\lambda_1+1-\rho}).
    \end{equation*}
    This implies that $(A,B)=(0,0)$, which contradicts the fact that $\lambda_1$ is an eigenvalue. Similarly, one can prove that $\lambda_1\neq\frac{\rho-1}{2}$.

    Let us now characterise the positive part of the spectrum. Let $v$ be a solution of  (\ref{syst}) for some $0<\lambda<\frac{\rho-1}{2}$. Then there exists $(A,B)\neq(0,0)$ such that
    \begin{equation*} 
        v(x)=\begin{cases}
            A\sin\left(\sqrt{\rho-1-2\lambda}\,x\right) & x\in (0,1), \\
            B\sinh\left(\sqrt{2\lambda}\,(L-x)\right)   & x\in(1,L).
        \end{cases}
        \label{ev}
    \end{equation*}
    { 
    In fact, we have $A\neq 0$ and $B\neq 0$: point (v) implies that $v$ cannot be constant equal to $0$ on $(0,1)$ nor on $(1,L)$.
    Since $v$ is continuous and differentiable at $1$, we see that $A,B$ and $\lambda$ solve the system}
    \begin{equation}\label{sys:lambda}
        \begin{cases}
            A\sin(\sqrt{\rho-1-2\lambda}) =B\sinh(\sqrt{2\lambda}(L-1)),                                      \\
            A\sqrt{\rho-1-2\lambda}\cos(\sqrt{\rho-1-2\lambda})=-B\sqrt{2\lambda}\cosh(\sqrt{2\lambda}(L-1)). \\
        \end{cases}
    \end{equation}
    In particular, this implies  that $\sqrt {\rho-1-2\lambda}\notin\{\left(k-\frac{1}{2}\right)\pi, \; k\in\mathbb{N}\}$.  In addition, we get that $\lambda$ is solution to
    \begin{equation}
        -\frac{\tan(\sqrt{\rho-1-2\lambda})}{\sqrt{\rho-1-2\lambda}}=\frac{\tanh(\sqrt{2\lambda}(L-1))}{\sqrt{2\lambda}}.
        \label{eq1}
    \end{equation}
    We now prove that Equation (\ref{eq1}) has exactly $K$ solutions in $\left(0,\frac{\rho-1}{2}\right)$ for $L$ large enough. Let
    \begin{equation*}\label{deffg}
        \begin{array}{l rcl}
            {  f_1} : & (0,\frac{\pi}{2})\cup\left(\cup_{k\in\mathbb{N}}\left(\left(k-\frac{1}{2}\right)\pi,\left(k+\frac{1}{2}\right)\pi\right)\right) & \rightarrow & \mathbb{R}        \\
                                  & x                                                                                                                               & \mapsto     & \frac{\tan(x)}{x}\end{array}
        \quad \text{and}  \quad \begin{array}{l rcl}{  f_2}: & (0,\infty) & \rightarrow & (0,\infty)         \\
                                         & x          & \mapsto     & \frac{\tanh(x)}{x}\end{array}.
    \end{equation*}
    For $x\in(0,\frac{\pi}{2})\cup\left(\cup_{k\in\mathbb{N}}\left(\left(k-\frac{1}{2}\right)\pi,\left(k+\frac{1}{2}\right)\pi\right)\right),$ $${  f_1}'(x)=\frac{2x-\sin(2x)}{2x^2\cos(x)^2}>0.$$ Besides, ${  f_1}(x)<0$ if and only if $x\in\cup_{k\in\mathbb{N}}\left(\left(k-\frac{1}{2}\right)\pi,k\pi\right)$ and ${  f_1}(x)\rightarrow 0$ as $x\rightarrow k\pi$ and ${  f_1}(x)\rightarrow +\infty$ as $x\rightarrow \left(k-\frac{1}{2}\right)\pi^-$. Similarly, by a convexity argument, we get that for $x\in(0,\infty)$ $${  f_2}'(x)=\frac{2x-\sinh(2x)}{x^2\cosh(x)^2}<0.$$ Note that for all $x>0$, ${  f_2}(x)$ is positive and that  ${  f_2}(x)\to 0$ as $x\to\infty$ and ${  f_2}(x)\to1$ as $x\to 0$.
    As a consequence, on each interval $\left(\frac{1}{2}(\rho-1-(k+\frac{1}{2})^2\pi^2),\frac{1}{2}(\rho-1-(k-\frac{1}{2})^2\pi^2)\right),$ $k\in\{ 1,...,K-1\}$,
    the function $\lambda\mapsto - {  f_1}(\sqrt{\rho-1-2\lambda})$ is increasing and
    \begin{eqnarray*}
        -{  f_1}(\sqrt{\rho-1-2\lambda})=0  &&  \text{for } \quad\lambda=\frac{1}{2}(\rho-1-k^2\pi^2),\\
        -{  f_1}(\sqrt{\rho-1-2\lambda})\to\infty  &&\text{as } \quad \lambda\to \frac{1}{2}\left(\rho-1-\left(k-\frac{1}{2}\right)^2\pi^2\right)^-.
    \end{eqnarray*}
    On the other hand, the function  $\lambda\mapsto (L-1){  f_2}(\sqrt{2\lambda}(L-1))$ is positive and decreasing on $(0,\infty)$. Hence, Equation (\ref{eq1}) has a unique solution in each interval
    $(\frac{1}{2}(\rho-1-k^2\pi^2),\allowbreak \frac{1}{2}(\rho-1-(k-\frac{1}{2})^2\pi^2)),$ $k\in\{ 1,...,K-1\}$. It has no solution in  $\cup_{k=1}^{K-1}(\frac{1}{2}(\rho-1-(k+\frac{1}{2})^2\pi^2),\frac{1}{2}(\rho-1-k^2\pi^2))$ since $\lambda\mapsto {  f_2}(\sqrt{2\lambda}(L-1))$ is positive  and $\lambda\mapsto -{  f_1}(\sqrt{\rho-1-2\lambda})$ is negative on this set. Then, note that for sufficiently large $L$, Equation (\ref{eq1}) has a unique solution in the interval $(0\vee\frac{1}{2}(\rho-1-K^2\pi^2),\frac{1}{2}(\rho-1-(K-\frac{1}{2})^2\pi^2)$. Indeed, the function $\lambda\mapsto - {  f_1}(\sqrt{\rho-1-2\lambda})$ is positive, increasing and
    \begin{align*}
        -{  f_1}(\sqrt{\rho-1-2\lambda})\to\infty                                      & \quad\text{as} \quad \lambda\to \frac{1}{2}\left(\rho-1-\left(K-\frac{1}{2}\right)^2\pi^2\right), \\
        -{  f_1}(\sqrt{\rho-1-2\lambda})\to 0                                          & \quad \text{as} \quad \lambda\to \frac{1}{2}(\rho-1-K^2\pi^2),                                    \\
        -{  f_1}(\sqrt{\rho-1-2\lambda})\to -\frac{\tan{\sqrt{\rho-1}}}{\sqrt{\rho-1}} & \quad\text{as} \quad \lambda\to 0.
    \end{align*}
    Besides, $\lambda\mapsto {  f_2}(\sqrt{2\lambda}(L-1))$ is positive, decreasing and ${  f_2}(\sqrt{2\lambda}(L-1))\rightarrow L-1$ as $\lambda\to 0$. Therefore, if $L>1-\frac{\tan\sqrt{\rho-1}}{\sqrt{\rho-1}}$, Equation  (\ref{eq1}) has one solution in $(0\vee\frac{1}{2}(\rho-1-K^2\pi^2),\frac{1}{2}(\rho-1-(K-\frac{1}{2})^2\pi^2))$. If it exists, this solution is unique.
    If $\rho-1-K^2\pi^2>0$, there is no solution of (\ref{eq1}) in $(0,\frac{1}{2}(\rho-1-K^2\pi^2)]$ since the LHS of (\ref{eq1}) is negative on this set.
    Therefore, for $L>1-\frac{\tan\sqrt{\rho-1}}{\sqrt{\rho-1}}$, we found exactly $K$ solutions of (\ref{eq1}) in $(0,\frac{\rho-1}{2})$. Conversely, one can check that these solutions are eigenvalues, corresponding to eigenvectors defined by (\ref{vecp1}).

    We now prove that $\lambda=0$  does not belong to the spectrum. Assume that \eqref{syst} has a solution for $\lambda=0$. Then, this solution is of the form
    \begin{equation*} 
        v(x)=\begin{cases}
            C(L-1)\sin\left(\sqrt{\rho-1}\ x\right) & x\in[0,1], \\
            C\sin(\sqrt{\rho-1})(L-x)               & x\in[1,L],
        \end{cases}
    \end{equation*}
    for some $C\neq 0$. Here we use that $v$ is continuous at $1$ and that $\sin(\sqrt{\rho-1})\neq0$. A direct calculation shows that this function is not differentiable at $1$ as soon as  $L>1-\frac{\tan(\sqrt{\rho-1})}{\sqrt{\rho-1}}$. Thus $0$ is not an eigenvalue.

    We now move to the negative part of the spectrum. In this case, a solution $v$ of (\ref{syst}) can be written as $$  v(x)= \begin{cases}
            A\sin\left(\sqrt{\rho-1-2\lambda}\,x\right) & x\in [0,1], \\
            B\sin\left(\sqrt{-2\lambda}\,(L-x)\right)   & x\in[1,L],
        \end{cases}$$
    for some $A\neq 0$, $B\neq0$ (see (v)). Using that $v$ in continuously differentiable at $1$ shows that $A$ and $B$ solve
    \begin{equation} \label{sys:lambda1}
        \begin{cases}
            A\sin(\sqrt{\rho-1-2\lambda})  =B\sin(\sqrt{2\lambda}(L-1)),                                     \\
            A\sqrt{\rho-1-2\lambda}\cos(\sqrt{\rho-1-2\lambda})=-B\sqrt{2\lambda}\cos(\sqrt{2\lambda}(L-1)). \\
        \end{cases}
    \end{equation}
    Again, this shows that $\sqrt{\rho-1-2\lambda}\notin \left\{\left(n-\frac{1}{2}\right),\ n\in \mathbb{N}\right\}$. Moreover, $\lambda$ solves the equation
    \begin{equation}
        -\frac{\tan(\sqrt{\rho-1-2\lambda})}{\sqrt{\rho-1-2\lambda}}=\frac{\tan(\sqrt{-2\lambda}(L-1))}{\sqrt{-2\lambda}}.
        \label{eq2}
    \end{equation}
    Consider the sequences $(A_i)$ and $(a_j)$ defined in (\ref{def:A}) and (\ref{def:aj}). The function $\lambda\mapsto (L-1){  f_1}(\sqrt{-2\lambda}(L-1))$ is defined on $\cup_{j=0}^\infty(-a_{j+1},-a_j)$. In view of the above, $\lambda\mapsto (L-1){  f_1}(\sqrt{-2\lambda}(L-1))$ is decreasing on each interval $(-a_{j+1},-a_j)$. Similarly, the function $\lambda\mapsto -{  f_1}(\sqrt{\rho-1-2\lambda})$ is defined on $\cup_{i=0}^\infty(-A_i,-A_{i-1})$ and is increasing on each interval $(-A_i,-A_{i-1})$. Besides, for all $i\geqslant 0$ and $j\geqslant1$,
    \begin{eqnarray}\label{lim1}
        \lim\limits_{\substack{\lambda \rightarrow -a_{j} \\ x>-a_{j}}} (L-1){  f_1}(\sqrt{-2\lambda}(L-1))&=& +\infty,  \quad
        \lim\limits_{\substack{\lambda \rightarrow -a_{j} \\ x<-a_{j}}} (L-1){  f_1}(\sqrt{-2\lambda}(L-1))= -\infty,\\
        \lim\limits_{\substack{\lambda \rightarrow -A_{i} \\ x<-A_{i}}} -{  f_1}(\sqrt{\rho-1-2\lambda})&=& +\infty,
        \quad \lim\limits_{\substack{\lambda \rightarrow -A_{i} \\ x>-A_{i}}} -{  f_1}(\sqrt{\rho-1-2\lambda})= -\infty. \label{lim2}
    \end{eqnarray}
    Therefore,
    Equation (\ref{eq2}) has a unique solution in each non-empty interval of the form $(-A_i,-A_{i-1})\cap(-a_{j+1},-a_{j})$, $i\geqslant 0$ and  $j\geqslant 1$. On the other hand, for $j=0$ we have
    \begin{eqnarray}
        \lim\limits_{\substack{\lambda \rightarrow 0\\ \lambda<0}} (L-1){  f_1}(\sqrt{-2\lambda}(L-1)) &=&L-1, \;
        \lim\limits_{\substack{\lambda \rightarrow 0\\ \lambda<0}} -{  f_1}(\sqrt{\rho-1-2\lambda}) =-\frac{\tan\sqrt{\rho-1}}{\sqrt{\rho-1}}, \label{lim3}
    \end{eqnarray}
    so that (\ref{eq2}) has no solution in $(-a_1,-a_0)$ as long as $L>1-\frac{\tan{\sqrt{\rho-1}}}{\sqrt{\rho-1}}$. Hence the negative eigenvalues are distributed as follows: there is no eigenvalue in $(-a_1,-a_0)$,  two eigenvalues in each interval $(-a_{j+1},-a_j)$ such that $-A_i\in (-a_{j+1},-a_j)$ for some $i\geqslant 0$, one smaller than $-A_i$ and one larger than $-A_i$, and a unique eigenvalue in each interval $(-a_{j+1},-a_j)$ which does not satisfy the two above conditions. Let us prove that this is equivalent to \eqref{encK}.

    For  $i\geqslant 0$, denote by $n_i$ the largest integer such that $a_{n_i}<A_i$ . One can prove that\begin{equation*}
        n_i=\left\lfloor \frac{(L-1)}{\pi}\sqrt{2A_i}+\frac{1}{2}\right\rfloor.
    \end{equation*}
    Note that $N_0=n_0$. According to \eqref{lim1}, \eqref{lim2} and \eqref{lim3}, for all $0< j\leqslant n_0$, the eigenvalue $\lambda_{K+j}$ is the unique solution of \eqref{eq2} located in the interval $(-a_{j+1},-a_j)$,  which coincides with Equation \eqref{encK}. Assume that \eqref{encK} holds until some $i\geqslant 0$. Then, the eigenvalue $\lambda_{K+N_i+1}$ is the unique solution of \eqref{eq2} located in
    \begin{equation*}
        (-A_{i+1},-A_i)\cap(-a_{n_i+1},-a_{n_i}).
    \end{equation*}
    If we set $j=N_i+1$, then $n_i=N_i-i=j-1-i$ and we get that
    \begin{equation*}
        \lambda_{K+j}\in(-A_{i+1},-A_i)\cap(-a_{j-i},-a_{j-i-1}).
    \end{equation*}
    Similarly, there is a  unique solution of \eqref{eq2} in each interval $$(-A_{i+1},-A_i)\cap(-a_{n_i+k},-a_{n_i+k-1})$$ for all $2\leqslant k\leqslant n_{i+1}-n_i+1$. Hence, for $j=N_i+k=n_i+i+k$, $1\leqslant k\leqslant n_{i+1}-n_i+1$
    \begin{equation}\label{eq:denomb}
        \lambda_{K+j}\in(-A_{i+1},-A_i)\cap (-a_{n_i+k}, -a_{n_i+k-1})= (-A_{i+1},-A_i)\cap (-a_{j-i}, -a_{j-i-1}).
    \end{equation}
    Finally, note that $n_{i+1}-n_i=N_{i+1}-N_i-1$ so that \eqref{eq:denomb} holds for all $N_i+1 \leqslant j\leqslant N_{i+1}.$ This concludes the proof of the lemma.
\end{proof}
It remains to characterise the spectrum in the case $\rho=1+\left(n-\frac{1}{2}\right)^2\pi^2$ for some $n\in\mathbb{N}$. In Lemma \ref{ev:sc}, we prove that the distribution of the eigenvalues is similar to the previous case.
\begin{figure}[t]
    \begin{center}
        \includegraphics[trim=50 30 50 40,clip,scale=0.5]{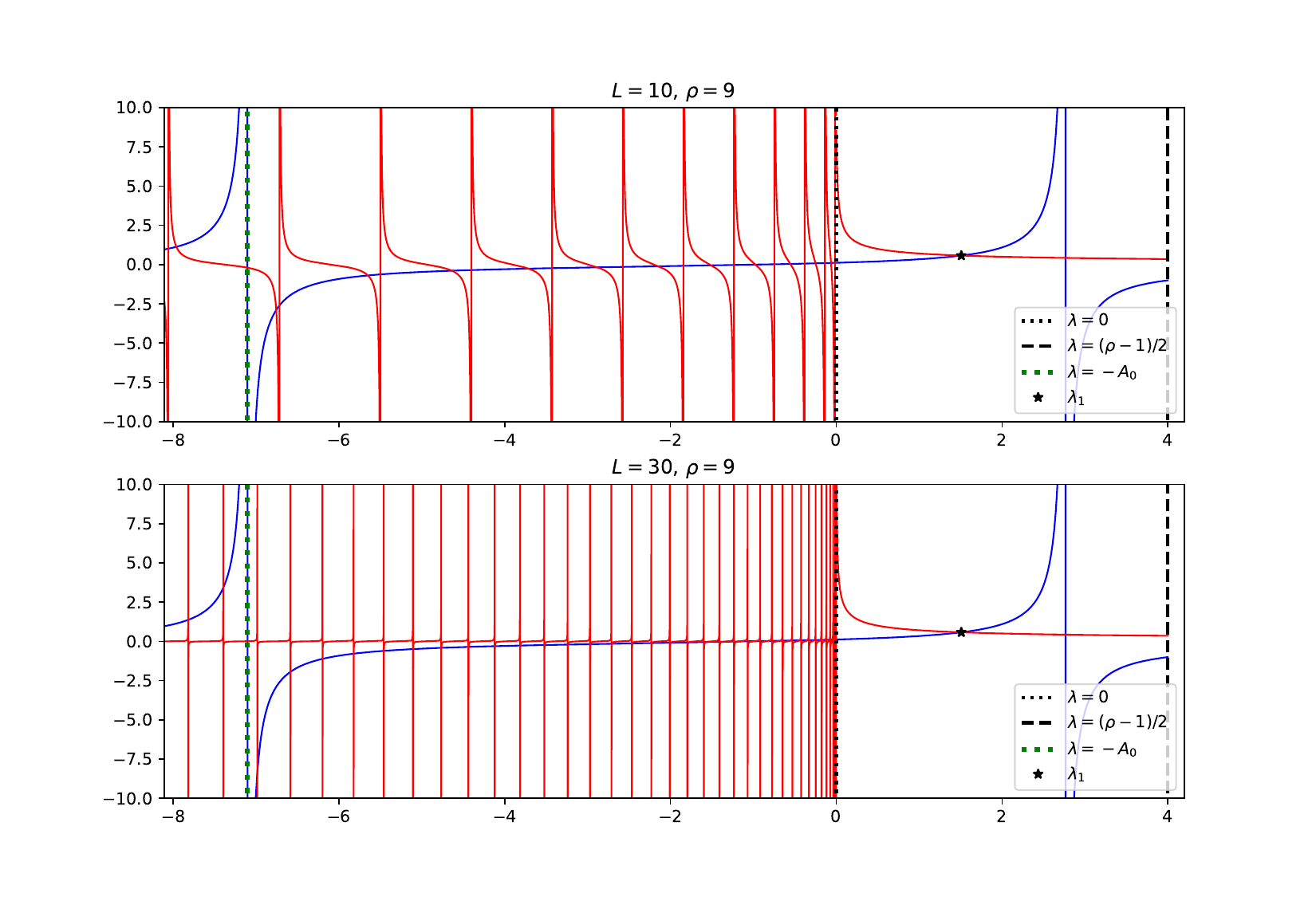}
        \caption{Location of the eigenvalues of the SLP \eqref{eq:defT} for $\rho=4$ and different values of $L$. The blue line represents the LHS of \eqref{square}.  The red line corresponds to the RHS of \eqref{square}. The eigenvalues are located at the intersections of the blue and red solid lines. Note that the negative eigenvalues tend to a continuous spectrum as $L\to\infty$. For $\rho=9$, we have $K=1$.}
        \label{fig:1}
    \end{center}
\end{figure}

\begin{figure}[t]
    \begin{center}
        \includegraphics[trim=50 30 50 40,clip,scale=0.5]{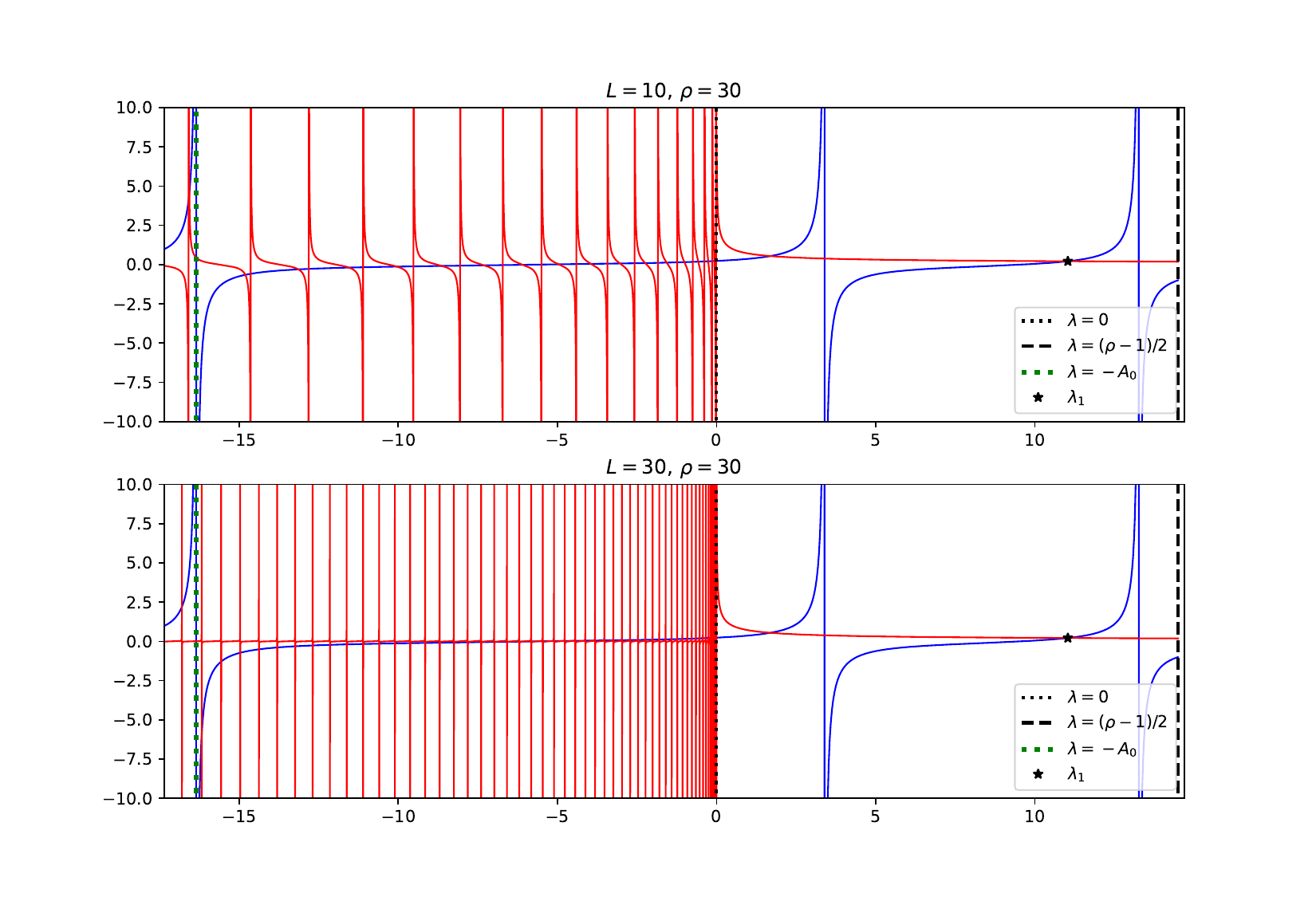} \caption{Location of the eigenvalues of the SLP \eqref{eq:defT} for $\rho=30$ and different values of $L$.  For $\rho=30$, we have $K=2$.}
        \label{fig:2}
    \end{center}
\end{figure}

\begin{lemma}\label{ev:sc}
    Assume $\rho=1+\left(n-\frac{1}{2}\right)^2\pi^2$ for some $n\in\mathbb{N}$ and let $K=n-1$. Then, for all $1\leqslant k \leqslant K$, $\lambda_k$ is the unique solution of \eqref{square}
    such that,
    \begin{equation*}
        (\rho-1-k^2\pi^2\vee 0)<2\lambda_{k}< \rho - 1-\left(k-\frac{1}{2}\right)^2\pi^2.
    \end{equation*}
    Furthermore, $\lambda_k<0$ for all $k > K$. More precisely, for every $i\in\mathbb{N}$ and every $j\in \mathbb{N}$ such that $N_{i-1} < j \le N_{i}$,
    $\lambda_{K+j}$ is the unique solution of  \eqref{square} located  in the interval
    \begin{equation}
        (-A_{i},-A_{i-1}) \cap (-a_{j-i+1},-a_{j-i}), \label{eigenvlocsc}
    \end{equation}
    where the sequences $(N_i)$, $(A_i)$ and $(a_j)$ are the same as the ones defined in Lemma \ref{eigenvloc}.
    In addition, there exists a constant $C>0$ such that for $L$ large enough, we have
    \begin{equation}
        \lambda_{K+1}<-\frac{C}{L^2}. \label{asymp:a1}
    \end{equation}
\end{lemma}
\begin{proof}
    The proof of Lemma \ref{ev:sc} is similar to the one of Lemma \ref{eigenvloc}. Again, one can prove that the spectrum is bounded above by $\frac{\rho-1}{2}$ and that $0$ is not an eigenvalue by proving that the corresponding solutions of \eqref{syst} are not differentiable at $1$.
    The positive eigenvalues can then be defined as above, remarking that in the case of the smallest positive eigenvalue $\lambda_K$ we have $\rho-1-K^2\pi^2>0,
    $ so that the same argument holds.
    For negative eigenvalues, first note that $N_0=A_0=0$ and that Equation \eqref{square} has a solution located in the interval $(-a_0,-a_1)$. As a consequence, all the indices are shifted as stated in the lemma.

    We now prove Equation \eqref{asymp:a1}, which provides an upper bound on the negative part of the spectrum. Let $\lambda<0$ be a negative  eigenvalue of the SLP \eqref{eq:defT} with boundary conditions \eqref{bc}. Hence, as in \eqref{sys:lambda1}, there exist two constants $ A\neq 0, B\neq 0$ such that
    \begin{align*}
        \begin{cases}A\sin(\sqrt{\rho-1-2\lambda} )  =B\sin(\sqrt{-2\lambda}(L-1)) \\
            A\sqrt{\rho-1-2\lambda}\, \cos(\sqrt{\rho-1-2\lambda})  = -B\sqrt{-2\lambda} \cos(\sqrt{-2\lambda}(L-1)).\end{cases}
    \end{align*}
    Let us now assume that for all $C>0$, then there exists $L$ large  such that $\lambda_{K+1}\geqslant -C/L^2$. Then, up to extraction, $\lambda_{K+1}L^2\to0$ as $L\to \infty$. Letting $L$ tend to $+\infty$ in the first line of the system gives that $A=0$, which contradicts the fact that $A,B\in\mathbb{R}^*$. This concludes the proof of the lemma.
\end{proof}

The positions of the eigenvalues for different values of $L$ and $\rho$ are illustrated in Figures \ref{fig:1} and \ref{fig:2}.

\begin{rem} 
    Recall from \eqref{hwp2} that  $\linf\in(0,1/16)$ for all $\rho\in(\rho_1,\rho_2)$. This combined with point (vi), Lemma \ref{eigenvloc} and Lemma \ref{ev:sc} shows that
    $$\rho-1<\pi^2+2\lambda_1<\pi^2+2\linf<\frac{1}{8}+\pi^2 < \left(2-\frac{1}{2}\right)^2\pi^2=\frac{9}{4}\pi^2$$
    for all $\rho\in(\rho_1,\rho_2)$.
    As a consequence, $K=1$ under (\ref{hwp}). Moreover, for $n=1$, $\rho=1+\left(n-\frac{1}{2}\right)^2\pi^2=\rho_1$ and, for $n\geq 2$, $\rho=1+\left(n-\frac{1}{2}\right)^2\pi^2>\rho_2.$

    Although  Lemma \ref{ev:sc} is not necessary to study the semipushed regime, it will ensure that our estimates on the fundamental solution $p_t$ are valid for all $\rho>\rho_1$.
    \label{rm:K1}
\end{rem}

\begin{lemma}[Asymptotic expansions of the positive eigenvalues]\label{eigenvlocasymp} Assume that (\ref{hpushed}) holds. Let $K\in\mathbb{N}$ be the largest positive integer such that
    $
        \rho-1>\left(K-\frac{1}{2}\right)^2\pi^2.
    $ Then, for all $1\leqslant k\leqslant K$, $\lambda_k$ is increasing and tends to the unique solution $\lambda_k^\infty$ of
    \begin{equation}
        -\frac{\tan(\sqrt{\rho-1-2\lambda})}{\sqrt{\rho-1-2\lambda}}=\frac{1}{\sqrt{2\lambda}}
        \label{eqlim}
    \end{equation}{}
    located in the interval
    \begin{equation*}
        \left(\frac{\rho-1-k^2\pi^2}{2}\vee 0,\frac{1}{2}\left(\rho-1 - \left(k-\frac{1}{2}\right)^2\pi^2\right)\right),
    \end{equation*} as $L\to\infty$. Moreover, for all $1\leqslant k\leqslant K$, there exists a constant $C_k(\rho)>0$ such that
    \begin{equation*}
        \lambda_k=\lambda_k^\infty-C_k(\rho)e^{-2\sqrt{2\lambda_k^\infty} L}+o\left(e^{-2\sqrt{2\lambda_k^\infty}L}\right).
    \end{equation*}

\end{lemma}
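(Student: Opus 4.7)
The plan is to view $\lambda_k$ as the implicit solution of the characteristic equation \eqref{eq1} derived in the proof of Lemma~\ref{eigenvloc}, namely $F(\lambda) = H_L(\lambda)$, where
\begin{equation*}
F(\lambda) = -\frac{\tan(\sqrt{\rho - 1 - 2\lambda})}{\sqrt{\rho - 1 - 2\lambda}}, \quad H_L(\lambda) = \frac{\tanh(\sqrt{2\lambda}(L-1))}{\sqrt{2\lambda}}, \quad G(\lambda) := \lim_{L \to \infty} H_L(\lambda) = \frac{1}{\sqrt{2\lambda}}.
\end{equation*}
Since $H_L \to G$ exponentially fast as $L \to \infty$, a perturbation / implicit-function argument should yield the existence of $\lambda_k^\infty$ together with an explicit leading-order correction. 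To prove monotonicity and convergence, recall from the proof of Lemma~\ref{eigenvloc} that $F$ is strictly increasing on the interval \eqref{enc}, while $H_L$ is strictly decreasing in $\lambda$ at fixed $L$ and strictly increasing in $L$ at fixed $\lambda > 0$, since $\partial_L H_L(\lambda) = \operatorname{sech}^2(\sqrt{2\lambda}(L-1)) > 0$. The implicit function theorem applied to $F - H_L = 0$ shows that $L \mapsto \lambda_k(L)$ is monotonic; the min--max principle (enlarging the interval enlarges the admissible trial space of the Rayleigh quotient) shows it is in fact strictly increasing. Being bounded above by the right endpoint of \eqref{enc}, the sequence converges to some $\tilde\lambda_k$. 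Passing to the limit in $F(\lambda_k) = H_L(\lambda_k)$ and using that $H_L \to G$ uniformly on compacts of $(0, \infty)$ shows $\tilde\lambda_k$ solves \eqref{eqlim}. Existence and uniqueness on the prescribed interval follow by the same strict-monotonicity argument as in Lemma~\ref{eigenvloc}: $F$ is strictly increasing from a non-positive value at the left endpoint to $+\infty$ at the right, and $G$ is strictly decreasing and positive, so they cross exactly once. Hence $\tilde\lambda_k = \lambda_k^\infty$. (The word \emph{decreasing} in the lemma statement should be read as \emph{monotone}; the sign of the correction below confirms that in fact $\lambda_k(L) \nearrow \lambda_k^\infty$.)

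For the asymptotic expansion, the elementary estimate $\tanh(u) = 1 - 2 e^{-2u} + O(e^{-4u})$ as $u \to +\infty$ yields
\begin{equation*}
H_L(\lambda) - G(\lambda) = -\frac{2\, e^{-2\sqrt{2\lambda}(L-1)}}{\sqrt{2\lambda}} + O\!\left( \frac{e^{-4\sqrt{2\lambda}(L-1)}}{\sqrt{\lambda}} \right).
\end{equation*}
Rewriting the defining equation as $(F - G)(\lambda_k) = (H_L - G)(\lambda_k)$ and Taylor-expanding the smooth function $F - G$ around $\lambda_k^\infty$, where $(F - G)(\lambda_k^\infty) = 0$ and $(F - G)'(\lambda_k^\infty) > 0$ (since $F$ is strictly increasing and $G$ strictly decreasing at $\lambda_k^\infty$), I would obtain
\begin{equation*}
(F - G)'(\lambda_k^\infty)\,(\lambda_k - \lambda_k^\infty)\,(1 + o(1)) = -\frac{2\, e^{-2\sqrt{2\lambda_k^\infty}(L-1)}}{\sqrt{2\lambda_k^\infty}}\,(1 + o(1)),
\end{equation*}
after using $e^{-2\sqrt{2\lambda_k}(L-1)} = e^{-2\sqrt{2\lambda_k^\infty}(L-1)}(1 + o(1))$, which follows from $|\sqrt{2\lambda_k} - \sqrt{2\lambda_k^\infty}|\,(L-1) = O(L|\lambda_k - \lambda_k^\infty|) = o(1)$ by bootstrap. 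Solving for $\lambda_k - \lambda_k^\infty$ and pulling out the factor $e^{2\sqrt{2\lambda_k^\infty}}$ gives the claimed expansion with
\begin{equation*}
C_k(\rho) = \frac{2\, e^{2\sqrt{2\lambda_k^\infty}}}{\sqrt{2\lambda_k^\infty}\,(F - G)'(\lambda_k^\infty)} \;>\; 0.
\end{equation*}

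The main difficulty lies in the bookkeeping of error terms in the last step: one must verify that the Taylor remainder $O((\lambda_k - \lambda_k^\infty)^2) = O(e^{-4\sqrt{2\lambda_k^\infty}L})$ is negligible compared with the leading $O(e^{-2\sqrt{2\lambda_k^\infty}L})$ term, and that replacing $\sqrt{2\lambda_k}$ by $\sqrt{2\lambda_k^\infty}$ in the exponent costs only a $(1 + o(1))$ factor. Both issues are handled by a bootstrap starting from the crude bound $|\lambda_k - \lambda_k^\infty| = O(e^{-2\sqrt{2\lambda_k^\infty}(L-1)})$, which itself follows from the magnitude of $|H_L(\lambda_k^\infty) - G(\lambda_k^\infty)|$ together with the non-degeneracy of $(F - G)'(\lambda_k^\infty)$, making the argument self-consistent.
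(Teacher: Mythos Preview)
Your proposal is correct and follows essentially the same route as the paper: both arguments use the monotonicity of $F$ and of $H_L$ (in $\lambda$ and in $L$) to get convergence, then Taylor-expand the characteristic equation around $\lambda_k^\infty$ using $\tanh(u)=1-2e^{-2u}+O(e^{-4u})$, with the same bootstrap $L|\lambda_k-\lambda_k^\infty|\to 0$ to replace $\sqrt{2\lambda_k}$ by $\sqrt{2\lambda_k^\infty}$ in the exponent. The only difference is presentational: the paper carries out the expansion by hand to obtain $C_1(\rho)$ in closed form (needed for Corollary~\ref{exp:lambda1}), whereas you express it as $2e^{2\sqrt{2\lambda_k^\infty}}/\big(\sqrt{2\lambda_k^\infty}\,(F-G)'(\lambda_k^\infty)\big)$, which is the same constant in disguise.
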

\begin{proof}
    Recall from Lemma \ref{eigenvloc} and Lemma \ref{ev:sc} that for $L\geqslant L_0(\rho)$ and $1\leqslant k\leqslant K$, $\lambda_k$ is the unique solution of Equation (\ref{square}):
    \begin{equation*}
        \frac{\tan(\sqrt{\rho-1-2\lambda})}{\sqrt{\rho-1-2\lambda}}=-\frac{\tanh(\sqrt{2\lambda}(L-1))}{\sqrt{2\lambda}}
    \end{equation*}
    located in the interval $(\frac{1}{2}(\rho-1-k^2\pi^2\vee 0),\frac{1}{2}(\rho -1- \left(k-\frac{1}{2}\right)^2\pi^2))$.
    Recall from the proof of Lemma \ref{eigenvloc} that the function $\lambda\mapsto -{  f_1}(\sqrt{\rho-1-2\lambda})$ is increasing on each interval $(\frac{1}{2}(\rho-1-k^2\pi^2\vee 0),\frac{1}{2}(\rho -1- \left(k-\frac{1}{2}\right)^2\pi^2))$ and does not depend on $L$. On the other hand,  the function $\lambda\mapsto (L-1){  f_2}(\sqrt{2\lambda}(L-1))$ is decreasing on $(1,\infty)$ for all $L>1$. In addition, the function $L\mapsto (L-1){  f_2}(\sqrt{2\lambda}(L-1))$ is increasing on $[1,+\infty)$ for all $\lambda>0$. As a consequence, $\lambda_k$ is an increasing function of $L$ for all $1\leqslant k\leqslant K$. Since it is upperbounded by $\frac{1}{2}(\rho -1- \left(k-\frac{1}{2}\right)^2\pi^2)$, it converges to some limit $\lambda_k^\infty\in\left(\frac{1}{2}(\rho-1-k^2\pi^2\vee 0),\frac{1}{2}(\rho -1- \left(k-\frac{1}{2}\right)^2\pi^2)\right]$.

    If $\lambda_k^\infty =\frac{1}{2}(\rho -1- \left(k-\frac{1}{2}\right)^2\pi^2),$ the LHS of (\ref{square}) tends to $+\infty$ as $L\to \infty$ whereas the RHS tends to $(2\lambda_k^\infty)^{-\frac{1}{2}}$. Thus, $\lambda_k^\infty\in\left(\frac{1}{2}(\rho-1-k^2\pi^2\vee 0),\frac{1}{2}(\rho -1- \left(k-\frac{1}{2}\right)^2\pi^2)\right).$
    Then, since the RHS and LHS of (\ref{square}) are continuous on each interval $(\frac{1}{2}(\rho-1-k^2\pi^2\vee 0),\frac{1}{2}(\rho -1- \left(k-\frac{1}{2}\right)^2\pi^2))$, we obtain that $\lambda_k^\infty$ is a solution of Equation (\ref{eqlim}). Moreover one can show that this solution is unique.

    Let us now compute an asymptotic expansion of $\lambda_k$ as $L\to\infty$. From now, we assume that $k=1$ but similar calculations can be made for $k\in\llbracket 2,K\rrbracket.$ Let us first recall the definitions of $\beta$ and $\gamma$ from Equation (\ref{defnot}). Note that $\beta>0$ since $\linf>0$ under \eqref{hpushed}. Then, remark that $\cos(\gamma)<0$. Indeed, we know from the first part of the lemma that $\gamma \in \left(\frac{\pi}{2},\pi\right)$ and that
    $\sin(\gamma)=-\frac{\gamma}{\beta}\cos(\gamma).$
    Hence,  $\cos(\gamma)=-\frac{\beta}{\gamma}\sin(\gamma)\leqslant-\frac{1}{2}\frac{\beta}{\gamma}<0$ if $\gamma\in\left(\frac{\pi}{2},\frac{3\pi}{4}\right]$ and $\cos(\gamma)\leqslant -\frac{1}{2}$ if $\gamma\in\left(\frac{3\pi}{4},\pi\right)$.

    Let us now rewrite (\ref{square}) as
    \begin{equation}
        \sqrt{\frac{2\lambda_1}{\rho-1-2\lambda_1}}\tan\left(\sqrt{\rho-1-2\lambda_1}\right)=-\tanh\left(\sqrt{2\lambda_1}(L-1)\right),\label{eq:sqr:2}
    \end{equation}
    and define $h=\lambda_1-\linf.$ As $L\to\infty$, $h\to0$ and
    \begin{eqnarray*}
        \sqrt{\frac{2\lambda_1}{\rho-1-2\lambda_1}}&=&\sqrt{\frac{2(\linf+h)}{\rho-1-2(\linf+h)}}=\left(\frac{2\linf}{\rho-1-2\linf}\right)^{1/2}\left(\frac{1+\frac{h}{\linf}}{1-\frac{2h}{\rho-1-2\linf}}\right)^{1/2}\\
        &=&\frac{\beta}{\gamma}\left(1+\frac{2h}{\beta^2}\right)^{1/2}\left(1-\frac{2h}{\gamma^2}\right)^{-1/2}=\frac{\beta}{\gamma}\left(1+\frac{h}{\beta^2}+o(h)\right)\left(1+\frac{h}{\gamma^2}+o(h)\right)\\
        &=& \frac{\beta}{\gamma}\left(1+\left(\frac{\gamma^2+\beta^2}{\gamma^2\beta^2}\right)h+o(h)\right),
    \end{eqnarray*} and
    \begin{eqnarray*}
        \tan(\sqrt{\rho-1-2\lambda_1})&=&\tan\left(\gamma-\frac{h}{\gamma}+o(h)\right)=\tan(\gamma)-\frac{h}{\gamma\cos(\gamma)^2}+o(h).
    \end{eqnarray*} Then, since $\tan(\gamma)=-\frac{\gamma}{\beta}$, we have
      {\begin{align*}
             & \sqrt{\frac{2\lambda_1}{\rho-1-2\lambda_1}}\tan(\sqrt{\rho-1-2\lambda_1})                                                                             \\
             & =\frac{\beta}{\gamma}\left(\tan(\gamma)+\left(\frac{\gamma^2+\beta^2}{\gamma^2\beta^2}\tan(\gamma)-\frac{1}{\gamma\cos(\gamma)^2}\right)h+o(h)\right) \\
             & =-1-\left(\frac{\gamma^2+\beta^2}{\gamma^2\beta^2}+\frac{\beta}{\gamma^2\cos(\gamma)^2}\right)h+o(h)                                                  \\
             & =-1-\frac{1}{\gamma^2\beta^2\cos(\gamma)^2}\left((\gamma^2+\beta^2)\cos(\gamma)^2+\beta^3\right)h+o(h)                                                \\
             & =-1-\frac{(\rho-1)\cos(\sqrt{\rho-1-2\linf})^2+(2\linf)^{3/2}}{2\linf(\rho-1-2\linf)\cos(\sqrt{\rho-1-2\linf})^2}h+o(h).
        \end{align*}}
    Besides, since $\sqrt{2\lambda_1}L\to \infty$ as $L\to\infty,$
    \begin{equation*}
        \tanh(\sqrt{2\lambda_1}(L-1))=1-2e^{-2\sqrt{2\lambda_1}(L-1)}+o(e^{-2\sqrt{2\lambda_1}(L-1)}).
    \end{equation*}
    Combined with Equation (\ref{eqlim}), this implies that
    \begin{equation}\label{eq:dl:1}
        2e^{-2\sqrt{2\lambda_1}(L-1)}+o(e^{-2\sqrt{2\lambda_1}(L-1)})=\frac{(\rho-1)\cos(\sqrt{\rho-1-2\linf})^2+(2\linf)^{3/2}}{2\linf(\rho-1-2\linf)\cos(\sqrt{\rho-1-2\linf})^2}h+o(h).
    \end{equation}
    In addition, we obtain that $Lh\rightarrow 0$ as $L\to\infty$ and that
    \begin{equation*}
        e^{-2\sqrt{2\lambda_1}(L-1)}= e^{-2\beta\left(1+\frac{1}{2}\frac{h}{\beta^2}+o(h)\right)(L-1)}=e^{-2\beta(L-1)}e^{o(1)}=e^{-2\beta(L-1)}+o(e^{-2\beta(L)}).
    \end{equation*}
    Finally, according to Equation (\ref{eq:dl:1}), we have
    \begin{equation*}
        \lambda_1-\linf= -2\frac{2\linf(\rho-1-2\linf)\cos(\sqrt{\rho-1-2\linf})^2}{(\rho-1)\cos(\sqrt{\rho-1-2\linf})^2+(2\linf)^{3/2}}e^{-2\beta (L-1))}+o(e^{-2\beta L}),
    \end{equation*}
    with $$\frac{2\linf(\rho-1-2\linf)\cos(\sqrt{\rho-1-2\linf})^2}{(\rho-1)\cos(\sqrt{\rho-1-2\linf})^2+(2\linf)^{3/2}}>0,$$ since $\cos(\sqrt{\rho-1-2\linf})^2=\cos(\gamma)^2>0.$
\end{proof}
\begin{rem}\label{lb:sin} The equation used to prove that $\cos(\gamma)>0$ in the above lemma  also implies that for sufficiently large $L$
    \begin{equation}\label{ub:sin}
        \frac{\sin(\sqrt{\rho-1-2\lambda_1})}{\sqrt{\rho-1-2\lambda_1}}>\frac{1}{4}\frac{1}{\gamma\wedge \beta}.
    \end{equation}
\end{rem}
\begin{rem}\label{r:vw} The asymptotic expansion of $\lambda_1$ gives that
    \begin{equation*}
        \sinh(\sqrt{2\lambda_1}(L-1))= \frac{1}{2}e^{\beta (L-1)}+o\left(e^{\beta L}\right).
    \end{equation*}
\end{rem}

\begin{lemma}[$\mathrm{L}^2$-norm of the first eigenvector] \label{lem:l2norm}  Assume that (\ref{hpushed}) holds. As $L\to\infty$,
    \begin{equation*}
        \|v_1\|^2\rightarrow \frac{1}{2}\frac{(\rho-1)\cos(\sqrt{\rho-1-2\linf})^2+(2\linf)^{3/2}}{\sqrt {2\linf}(\rho-1-2\linf)\cos(\sqrt{\rho-1-2\linf})}.
    \end{equation*}
\end{lemma}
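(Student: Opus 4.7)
The plan is to use the explicit formula for $v_1$ given by \eqref{vecp1} in Lemma~\ref{eigenvloc}, split $\|v_1\|_2^2$ as
\[
\|v_1\|_2^2 = \int_0^1 \frac{\mathcal{S}(x,2\lambda_1+1-\rho)^2}{\mathcal{S}(1,2\lambda_1+1-\rho)^2}\,dx + \int_1^L \frac{\mathcal{S}(L-x,2\lambda_1)^2}{\mathcal{S}(L-1,2\lambda_1)^2}\,dx,
\]
and compute each piece separately. Set $\gamma_L = \sqrt{\rho-1-2\lambda_1}$ and $\beta_L = \sqrt{2\lambda_1}$, which by Lemma~\ref{eigenvlocasymp} tend respectively to $\gamma$ and $\beta$ as $L\to\infty$ (note that $\beta>0$ is ensured by assumption \eqref{hpushed}).

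For the first integral, the change of variables together with the standard identity $\sin^2(u)=(1-\cos(2u))/2$ gives
\[
\int_0^1 \frac{\sin^2(\gamma_L x)}{\sin^2(\gamma_L)}\,dx = \frac{1}{2\sin^2(\gamma_L)} - \frac{\cos(\gamma_L)}{2\gamma_L\sin(\gamma_L)},
\]
which converges by continuity to $\tfrac{1}{2\sin^2(\gamma)}-\tfrac{\cos(\gamma)}{2\gamma\sin(\gamma)}$. For the second integral, substituting $u=L-x$ and using $\sinh^2(u)=(\cosh(2u)-1)/2$ yields
\[
\int_1^L \frac{\sinh^2(\beta_L(L-x))}{\sinh^2(\beta_L(L-1))}\,dx = \frac{\coth(\beta_L(L-1))}{2\beta_L} - \frac{L-1}{2\sinh^2(\beta_L(L-1))}.
\]
Since $\beta_L\to\beta>0$, the denominator $\sinh(\beta_L(L-1))$ diverges exponentially, so the second term is negligible and $\coth(\beta_L(L-1))\to 1$; thus this integral tends to $\frac{1}{2\beta}$.

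It remains to put the two limits over a common denominator and identify the result with the formula in the statement. The key input is the limiting eigenvalue relation $\tan(\gamma) = -\gamma/\beta$ from Lemma~\ref{eigenvlocasymp} (equation \eqref{eqlim} with $k=1$), which gives $\sin^2(\gamma) = (\gamma^2/\beta^2)\cos^2(\gamma)$ and allows the $\sin(\gamma)$-terms to be re-expressed in terms of $\cos(\gamma)$. After this substitution, the limit becomes
\[
\frac{\beta^2}{2\gamma^2\cos^2(\gamma)} + \frac{\beta}{2\gamma^2} + \frac{1}{2\beta} = \frac{\beta^3 + (\beta^2+\gamma^2)\cos^2(\gamma)}{2\beta\gamma^2\cos^2(\gamma)},
\]
and using $\beta^2=2\linf$, $\gamma^2 = \rho-1-2\linf$, $\beta^2+\gamma^2=\rho-1$ produces exactly the claimed expression. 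The only non-routine point is the vanishing of the correction term in the second integral, which is where the hypothesis $\linf>0$ (and hence $\beta>0$) plays an essential role; with $\beta=0$ the integral would instead grow linearly in $L$, so the claimed finite limit is genuinely a pushed-regime phenomenon.
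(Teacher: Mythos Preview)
Your proof is correct and follows essentially the same approach as the paper: split $\|v_1\|_2^2$ into the integrals over $[0,1]$ and $[1,L]$ using the explicit formula for $v_1$, compute each piece via the standard $\sin^2$/$\sinh^2$ identities, pass to the limit using $\lambda_1\to\linf$, and simplify with the relation $\tan(\gamma)=-\gamma/\beta$ from \eqref{eqlim}. The only cosmetic difference is that the paper keeps the first integral in the form $\frac{1}{2\sin^2\gamma}\bigl(1-\tfrac{\sin(2\gamma)}{2\gamma}\bigr)$ before substituting, whereas you expand it immediately as $\tfrac{1}{2\sin^2\gamma}-\tfrac{\cos\gamma}{2\gamma\sin\gamma}$; the algebra is otherwise identical.
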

\begin{proof}
    The $\mathrm{L}^2$-norm of the function $v_1$ is given by
    \begin{equation*}
        \|v_1\|^2=\int_0^Lv_1(x)^2dx=\frac{1-\frac{\sin(2\sqrt{\rho-1-2\lambda_1})}{2\sqrt{\rho-1-2\lambda_1}}}{2\sin(\sqrt{\rho-1-2\lambda_1})^2}+\frac{\frac{\sinh(2\sqrt{2\lambda_1}(L-1))}{2\sqrt{2\lambda_1}}-(L-1)}{2\sinh(\sqrt{2\lambda_1}(L-1))^2}.
    \end{equation*}
    The first term of the RHS tends to $\frac{1}{2\sin(\gamma)^2}\left(1-\frac{\sin(2\gamma)}{2\gamma}\right)$ and the second one to $\frac{1}{2\beta}$ as $L\to\infty$. Besides, we know thanks to Equation (\ref{eqlim}) that  $\sin(\gamma)=-\frac{\gamma}{\beta}\cos(\gamma)$. Therefore,
    \begin{align*}
         & \frac{1}{2\sin(\gamma)^2}\left(1-\frac{\sin(2\gamma)}{2\gamma}\right)+\frac{1}{2\beta}                                                                                                                  \\&=\frac{1}{2}\frac{\beta^2}{\gamma^2}\left(\frac{1-\frac{\sin(\gamma)\cos(\gamma)}{\gamma}}{\cos(\gamma)^2}+\frac{1}{2\beta}\right)=\frac{1}{2}\frac{\beta^2}{\gamma^2}\left(\frac{1+\frac{\cos(\gamma)^2}{\beta}}{\cos(\gamma)^2}\right)+\frac{1}{2\beta}\\
         & = \frac{1}{2}\frac{\beta}{\gamma^2}\left(\frac{\beta+\cos(\gamma)^2}{\cos(\gamma)^2}\right)+\frac{1}{2\beta}\frac{1}{2\beta\gamma^2\cos(\gamma)^2}\left((\gamma^2+\beta^2)\cos(\gamma)^2+\beta^3\right) \\
         & =\frac{1}{2}\frac{(\rho-1)\cos(\sqrt{\rho-1-2\linf})^2+(2\linf)^{3/2}}{\sqrt {2\linf}(\rho-1-2\linf)\cos(\sqrt{\rho-1-2\linf})}.
    \end{align*}
\end{proof}
\begin{cor}[Asymptotic expansion of the maximal eigenvalue] \label{exp:lambda1}  Assume that (\ref{hpushed}) holds. Then, as $L\to \infty$,
    \begin{equation}
        \lambda_1=\linf-\frac{\beta}{\lim\limits_{L\rightarrow\infty}\|v_1\|^2}e^{-2\beta(L-1)}+o(e^{-2\beta L}).
    \end{equation}
\end{cor}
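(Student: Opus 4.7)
The plan is to simply combine the two preceding results: Lemma~\ref{eigenvlocasymp} gives the asymptotic expansion of $\lambda_1 - \linf$ with an explicit prefactor, and Lemma~\ref{lem:l2norm} gives the limit of $\|v_1\|^2$ as $L\to\infty$. The corollary is then a purely algebraic identity between these two constants, once multiplied by $\beta = \sqrt{2\linf}$. So the proof is essentially a one-line verification after unpacking the notation.

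More concretely, I would first recall from Lemma~\ref{eigenvlocasymp} that
\[
\lambda_1 - \linf = -2\,\frac{2\linf(\rho-1-2\linf)\cos(\sqrt{\rho-1-2\linf})^2}{(\rho-1)\cos(\sqrt{\rho-1-2\linf})^2+(2\linf)^{3/2}}\, e^{-2\beta(L-1)} + o(e^{-2\beta L}),
\]
and from Lemma~\ref{lem:l2norm} that (with the $\cos^2$ arising from the computation in that proof)
\[
\lim_{L\to\infty} \|v_1\|^2 = \frac{(\rho-1)\cos(\sqrt{\rho-1-2\linf})^2+(2\linf)^{3/2}}{2\sqrt{2\linf}\,(\rho-1-2\linf)\cos(\sqrt{\rho-1-2\linf})^2}.
\]
Then I would just check that
\[
\frac{\beta}{\lim_{L\to\infty}\|v_1\|^2} = \frac{\sqrt{2\linf}\cdot 2\sqrt{2\linf}(\rho-1-2\linf)\cos(\sqrt{\rho-1-2\linf})^2}{(\rho-1)\cos(\sqrt{\rho-1-2\linf})^2+(2\linf)^{3/2}} = \frac{4\linf(\rho-1-2\linf)\cos(\sqrt{\rho-1-2\linf})^2}{(\rho-1)\cos(\sqrt{\rho-1-2\linf})^2+(2\linf)^{3/2}},
\]
which is exactly the prefactor appearing in the expansion of $\lambda_1-\linf$. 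Substituting yields the claimed formula.

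There is no real obstacle here; the only thing to be slightly careful about is bookkeeping between $\beta^2 = 2\linf$, $\gamma^2 = \rho-1-2\linf$, and $\gamma^2+\beta^2 = \rho-1$, which is what allows the numerator $(\gamma^2+\beta^2)\cos(\gamma)^2 + \beta^3$ appearing in the proof of Lemma~\ref{lem:l2norm} to be rewritten as $(\rho-1)\cos(\sqrt{\rho-1-2\linf})^2 + (2\linf)^{3/2}$, matching the denominator of the Lemma~\ref{eigenvlocasymp} prefactor. Once that matching is done the corollary is immediate.
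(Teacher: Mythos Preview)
Your proposal is correct and is exactly the approach implicit in the paper: the corollary has no separate proof there because it follows immediately by combining the explicit prefactor obtained at the end of the proof of Lemma~\ref{eigenvlocasymp} with the limit of $\|v_1\|^2$ computed in Lemma~\ref{lem:l2norm}. Your observation that the relevant denominator from Lemma~\ref{lem:l2norm} should read $\cos(\sqrt{\rho-1-2\linf})^2$ (as in the penultimate line of that proof) rather than $\cos(\sqrt{\rho-1-2\linf})$ is a good catch and is indeed what makes the identity work.
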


In Lemma \ref{lem:est:evk} we give several bounds on the eigenvectors $(v_k)$  under \eqref{hpushed}. The proof of this lemma, which relies on explicit calculations, is given in Appendix~\ref{appendix}.
\begin{lemma}\label{lem:est:evk} Assume that (\ref{hpushed}) holds and let $K$ be the largest integer such that $
        \rho-1>\left(K-\frac{1}{2}\right)^2\pi^2.$
    There exist some constants $C_1,C_2,C_3,C_4,C_5$ (that only depend on $\rho$) such that for $L$ large enough
    \begin{itemize}
        \item[(i)]the norms of the vectors $v_k$ are bounded below by\begin{equation*}
                  \|v_k\|^2\geqslant C_1, \quad  \forall  k\in\llbracket2,K\rrbracket,
              \end{equation*}
              and
              \begin{equation*}
                  \|v_k\|^2\geqslant \frac{C_2}{\sin(\sqrt{\rho-1-2\lambda_k})^2\wedge\sin(\sqrt{-2\lambda_k}(L-1))^2}, \quad  \forall k> K.
              \end{equation*}
        \item[(ii)] the ratio $v_k/v_1$ is  bounded above by\begin{equation*}
                  \frac{|v_k(x)|}{\|v_k\|}\leqslant C_3 e^{\beta L}\;\frac{v_1(x)}{\|v_1\|},\quad \forall x\in[0,L], \quad  \forall  k\in\llbracket2,K\rrbracket,
              \end{equation*}
              and
              \begin{equation*}
                  \frac{|v_k(x)|}{\|v_k\|}\leqslant C_4 \sqrt{\rho-1-2\lambda_k}\;e^{\beta L}\;\frac{v_1(x)}{\|v_1\|},\quad \forall x\in[0,L],  \quad \forall k> K.
              \end{equation*}
        \item[(iii)] the ration $v_k/\|v_k\|$ is bounded above by
              \begin{equation*}
                  \frac{v_k(x)}{\|v_k\|}\leqslant C_5, \quad \forall x\in[0,L], \quad \forall k>K.
              \end{equation*}
    \end{itemize}
\end{lemma}

\subsection{Heat kernel estimates} \label{sec:heatkernel}
Recall from Equation \eqref{def:mu} that
\begin{equation*}
    \mu=\sqrt{1+2\linf}.
\end{equation*}
For the remainder of Section \ref{sec:density}, we consider a dyadic BBM with space-dependent branching rate $r(x)$ and drift $-\mu$, killed upon reaching $0$ and $L$.
Recall from Lemma \ref{lem:many-to-one} that the density of particles in this BBM is given by the fundamental solution of \eqref{PDE:A}. By definition of $\mu$, Equation \eqref{def:pt} can be written as
\begin{equation}
    p_t(x,y)=e^{\mu (x-y)}e^{-\linf t} { \tilde p_t(x,y)}= e^{\mu (x-y)}\sum_{k=1}^\infty e^{(\lambda_k-\linf)t}\frac{v_k(x)v_k(y)}{\|v_k\|^2}, \label{def:pt1}
\end{equation}
where the eigenvalues $\lambda_k$ and the eigenvectors $v_k$ are the ones defined in Lemma \ref{eigenvloc}.

In Lemma \ref{th1}, we prove that, under \eqref{hpushed}, after a time of order $L$, the density $p_t$ is well approximated by its first term. In Lemma \ref{lem:212}, we bound $p_t$ for all $t>1$ in the case $K=1$ (this technical lemma will be required at the end of the article).

\begin{lemma}\label{th1} Assume that (\ref{hpushed}) holds. There exists $c_{\ref{th1}}>0$ (that only depends on $\rho$) such that if $L$ is sufficiently large and $t>c_{\ref{th1}} L,$ then, for all $x,y\in[0,L]$
    \begin{equation}
        \left|p_t(x,y)-e^{\mu(x-y)}e^{(\lambda_1-\linf)t}\frac{v_1(x)v_1(y)}{\|v_1\|^2}\right|\leqslant e^{-\beta L}e^{\mu(x-y)}e^{(\lambda_1-\linf)t}\frac{v_1(x)v_1(y)}{\|v_1\|^2}. \label{rem:asymptM}
    \end{equation}
\end{lemma}

\begin{proof} We divide the sum (\ref{def:pt1}) into two parts, according to the sign of $\lambda_k$:
    \begin{equation}    \label{eq:sum}
        { \tilde p_t(x,y)}- e^{\lambda_1 t}\frac{v_1(x)v_1(y)}{\|v_1\|^2} = \underbrace{\sum_{k=2}^K\frac{1}{\|v_k\|^2}e^{\lambda_k t}v_k(x)v_k(y)}_{%
        \let\scriptstyle\textstyle
        \substack{=:S_1}}+\underbrace{\sum_{k=K+1}^\infty\frac{1}{\|v_k\|^2}e^{\lambda_k t}v_k(x)v_k(y)}_{%
        \let\scriptstyle\textstyle
        \substack{=:S_2}}.
    \end{equation}
    We know from Lemma \ref{eigenvloc} that for $L$ large enough and $k\geqslant 2$,
    \begin{equation}\label{ub:l1lk}
        \lambda_1-\lambda_k\geqslant \begin{cases}\frac{5}{8}\pi^2 & \text{if} \quad K\geqslant 2, \\
             \frac{\linf}{2}  & \text{if} \quad  K=1.\end{cases}
    \end{equation}
    We first bound $S_1$. According to Lemma \ref{lem:est:evk}, for $L$ large enough and $k\in\llbracket2,K\rrbracket$,
    \begin{equation*}
        \frac{1}{\|v_k\|^2}\left|v_k(x)v_k(y)\right|\leqslant  C e^{2\beta L-\lambda_1 t}\left(e^{\lambda_1 t}\frac{v_1(x)v_1(y)}{\|v_1\|^2}\right).
    \end{equation*}
    Combining this with Equation (\ref{ub:l1lk}), we get that
    \begin{equation}
        \left|S_1\right|\leqslant c_1e^{2\beta L-\frac{5}{8}\pi^2 t}\left(e^{\lambda_1 t}\frac{v_1(x)v_1(y)}{\|v_1\|^2}\right),
        \label{est:sum1}
    \end{equation}
      {for some $c_1=c_1(\rho)>0$}.
    We now bound $S_2$. We know from Lemma \ref{lem:est:evk} that for $L$ large enough  and  $k>K$,
    \begin{equation}
        \frac{1}{\|v_k\|^2}e^{\lambda_k t}\left|v_k(x)v_k(y)\right|
        \leqslant C e^{2\beta L-\lambda_1 t}\left(e^{\lambda_1 t}
        \frac{v_1(x)v_1(y)}{\|v_1\|^2}\right)\left[(\rho-1-2\lambda_k)e^{\lambda_k}\right].
        \label{est:sum2}
    \end{equation}
    Moreover, recall from
    Lemma \ref{eigenvloc} that for $i\geq 0$ and
    $N_{i-1}<j\leqslant N_i$,  $$-A_i<\lambda_{K+j}<-A_{i-1},$$
    so that we can group the terms when summing the third factor on the RHS of  \eqref{est:sum2} over $k>K$:
    \begin{align}
        \label{est:sum3}
        S_3 & :=e^{-\lambda_{K+1}t}\sum_{k=K+1}^\infty(\rho-1-2\lambda_k)e^{\lambda_k t}                            \\
        \nonumber
            & =\sum_{i=0}^\infty\sum_{j=N_{i-1}+1}^{N_i}(\rho-1-2\lambda_{K+j})e^{(\lambda_{K+j}-\lambda_{K+1})t}   \\
            & \leqslant  \sum_{i=0}^\infty\sum_{j=N_{i-1}+1}^{N_i}(\rho-1+2A_i)e^{(0\wedge(A_0-A_{i-1}))t}\nonumber \\
            & \leqslant\sum_{i=0}^\infty(N_i-N_{i-1})(\rho-1+2A_i)e^{(0\wedge(A_0-A_{i-1}))t}.\nonumber
    \end{align}
    By definition of $(N_i)$ and $(A_i)$, we get that
    \begin{equation}
        \rho-1+2A_i=\left(K+\frac{1}{2}+i\right)^2, \label{est:rho}
    \end{equation}
    and
    \begin{eqnarray*}
        N_i-N_{i-1}
        &\leqslant&\frac{\sqrt{2}}{\pi}(L-1)\left(\sqrt{2A_i}-\sqrt{2A_{i-1}}\right)+2, \\
        \sqrt{A_i}-\sqrt{A_{i-1}}&=&\frac{A_i-A_{i-1}}{\sqrt{A_i}+\sqrt{A_{i-1}}}\leqslant \frac{A_i-A_{i-1}}{\sqrt{A_0}},\\
        A_i-A_{i-1}&\leqslant& C(i+1),
    \end{eqnarray*}
    for all $i\geqslant 1$. For $i=0$,  $N_0-N_{-1}=\frac{L-1}{\pi}\sqrt{A_0}$. Hence, we obtain that  \begin{equation}
        N_i-N_{i-1}\leqslant C L (i+1) \quad \forall i\geqslant 0.
        \label{est:N}
    \end{equation}
    The exponential factor in $S_3$ is then bounded above using the definition of $A_i$. For $i\in\mathbb{N}$, we have
    \begin{eqnarray}
        2(A_0-A_{i-1})&=&\left(K+\frac{1}{2}\right)^2-\left(K+\frac{1}{2}+i-1\right)^2=-2(i-1)\left(K+\frac{1}{2}\right)-(i-1)^2\nonumber\\
        &\leqslant&-(i-1)^2.
        \label{est:A0}
    \end{eqnarray}
    Combining Equations (\ref{est:sum3}), \eqref{est:rho}, (\ref{est:N}) and (\ref{est:A0}), we obtain that for sufficiently large $L$ and $t>1$,
    \begin{equation}
        S_3\leqslant CL\left(1+\sum_{i=1}^\infty(i+1)\left(K+\frac{1}{2}+i\right)^2e^{-\frac{1}{2}(i-1)^2t}\right)\leqslant CL. \label{ub:S3}
    \end{equation}
    Therefore, this estimate combined with Equations (\ref{ub:l1lk}) and (\ref{est:sum2}) implies that for $L$ large enough and $t>1$ , we have
    \begin{equation}
        \left|S_2\right|\leqslant Ce^{2\beta L}e^{(\lambda_{K+1}-\lambda_1)t}S_3\leqslant c_2Le^{2\beta L-\frac{\linf}{2}t}\left(e^{\lambda_1 t}\frac{v_1(x)v_1(y)}{\|v_1\|^2}\right),
        \label{est:S2}
    \end{equation}
      {for some $c_2=c_2(\rho)>0.$}   {Finally, we see from  Equations \eqref{eq:sum}, (\ref{est:sum1}) and (\ref{est:S2}) that it is sufficient to choose $c_{\ref{th1}}>0$ such that
        \begin{equation*}
            \frac{5}{8}\pi^2 c_{\ref{th1}}>3\beta+1 \quad \text{and} \quad  \frac{\linf}{2} c_{\ref{th1}}>3\beta +1,
        \end{equation*}so that
        \begin{align*}
            \left|{ \tilde p_t(x,y)}- e^{\lambda_1 t}\frac{v_1(x)v_1(y)}{\|v_1\|^2}\right| & \leq \left(c_1e^{-L}+c_2Le^{-L}\right)e^{-\beta L}e^{\lambda_1 t}\frac{v_1(x)v_1(y)}{\|v_1\|^2} \\
                                                                                                       & \leq e^{-\beta L}e^{\lambda_1 t}\frac{v_1(x)v_1(y)}{\|v_1\|^2},
        \end{align*}
        for $L$ large enough. The result then follows from \eqref{def:pt1}.}
\end{proof}

\begin{lemma}\label{lem:212}
    Assume that (\ref{hwp}) holds. There exists a positive constant $C>0$ (that only depends on $\rho$) such that  the following holds: for $L$ large enough, $t>1$ and $x,y\in[0,L]$,
    \begin{equation*}
        p_t(x,y)\leqslant Ce^{\mu(x-y)}\left(v_1(x)v_1(y)+Le^{-\linf t}\right).
    \end{equation*}
\end{lemma}
\begin{proof} First, recall from Remark \ref{rm:K1} that $K=1$ under (\ref{hwp}).
      {Hence, we see from Lemma \ref{lem:est:evk} (iii) that for $L$ large enough
        \begin{equation*}
            \sup_{\substack{x\in[0,L] \\ k\geq 2}} \frac{v_k(x)}{\|v_k\|}\leq C_5.
        \end{equation*}
        Putting this together with Lemma \ref{lem:l2norm} and the fact that $\lambda_1<\linf$ (see point (vi)), we get that for $L$ large enough}
    \begin{equation*}
        p_t(x,y)\leqslant Ce^{\mu(x-y)}\left(v_1(x)v_1(y)+\sum_{k=2}^\infty e^{(\lambda_k-\linf)t}\right).
    \end{equation*}
    The sum on the RHS can be bounded using the estimates established in the proof of Lemma \ref{th1}. Indeed, since $\lambda_k<0$ for $k\geq 2$,
    \begin{equation*}
        \sum_{k=2}^\infty e^{(\lambda_k-\linf)t}=e^{(\lambda_2-\linf)t}\sum_{k=2}^\infty e^{(\lambda_k-\lambda_2)t}\leqslant (\rho-1)^{-1}e^{(\lambda_2-\linf)t}S_3\leqslant  (\rho-1)^{-1}e^{-\linf t}S_3,
    \end{equation*}
    where $S_3$ is the sum defined in \eqref{est:sum3}. Yet, we know (see \eqref{ub:S3}) that $S_3\leqslant CL$ for $t>1$. This concludes the proof of the lemma.
\end{proof}

\subsection{The Green function}\label{sec:green}

In this section, we control the integral of the density $p_t$  with respect to the time variable, that is $
    \int_0^t p_s(x,y)ds. $
This quantity will play a central role in the second moment calculations. To bound this quantity, we will need to introduce the Green function $G$ associated to the PDE \eqref{PDE:A}

Let $B_t$ be a standard one-dimensional Brownian motion. For $\lambda\geq0$, define the Green function $H_\lambda$ such that, if $(B_t,t\geq0)$  starts from $B_0=x$ and if $\tau=\inf\{t:B_t\notin (0,L)\}$, then for all bounded measurable functions $f$, we have
\begin{equation*}
    \mathbb{E}\left[\int_0^\tau  \exp\left(\int_0^t\left[r(B_u)-\frac{1}{2}-\lambda\right]\;du\right)f(B_t)\right]=\int_0^LH_\lambda(x,y)f(y)dy.
\end{equation*}
The many-to-one lemma  yields
\begin{equation*}
    H_\lambda(x,y)=\int_0^\infty e^{-\lambda t} { \tilde p_t(x,y)}\,dt=e^{\mu(y-x)}\int_0^\infty e^{(\linf-\lambda) t}p_t(x,y)dt.
\end{equation*}
For $\xi\geq 0$, define
\begin{eqnarray}\label{def:GH}
    \quad G_\xi(x,y) = e^{\mu(x-y)}H_{\linf+\xi}(x,y).
\end{eqnarray}
A first idea to estimate  $
    \int_0^t p_s(x,y)ds $
would be to bound it by $G_0(x,y)$ as in \cite{Berestycki2010}.
However, to get a sharper bound, that depends on $t$, we will rather consider the function $G_\xi$ for some $\xi>0$ and point out that
\begin{equation}\label{rk:green}
    \int_0^t p_s(x,y)ds=\int_0^\infty p_s(x,y)\mathbf{1}_{s\in[0,t]}ds\leqslant \int_0^\infty e^{\frac{t-s}{t}}p_s(x,y)ds\leqslant e G_{\frac{1}{t}} (x,y).
\end{equation}
We will first give an explicit formula for $G_\xi$ following   \cite[Chapter II]{Borodin:2012aa} and then  bound $G_\xi$ for different functions $\xi(L)$ such that $\xi(L)\to 0$ as $L\to \infty$ (see Lemma \ref{lem:alphat} and Lemma \ref{lemma:wro}). We now introduce some notation that will be used in the two following lemmas.  For $\lambda>0$, set
\begin{eqnarray}
    {  \tilde f_1}(\lambda)&=&\sqrt{2\lambda}\sin(\sqrt{\rho-1-2\lambda})+\sqrt{\rho-1-2\lambda}\cos(\sqrt{\rho-1-2\lambda})\nonumber,\\
    {  \tilde f_2}(\lambda)&=&\sqrt{2\lambda}\sin(\sqrt{\rho-1-2\lambda})-\sqrt{\rho-1-2\lambda}\cos(\sqrt{\rho-1-2\lambda})\label{def:walpha},\\
    \omega_\lambda&=&{  \tilde f_1}(\lambda)e^{\sqrt{2\lambda}(L-1)}+{  \tilde f_2}(\lambda)e^{-\sqrt{2\lambda}(L-1)}. \nonumber
\end{eqnarray}
We recall from Lemma \ref{eigenvlocasymp} that $\linf$ is the unique solution of Equation (\ref{eqlim}) such that $$\gamma=\sqrt{\rho-1-2\linf}\in\left(\frac{\pi}{2},\pi\right).$$ Therefore, ${  \tilde f_1}(\linf)=0$ and ${  \tilde f_2}(\linf)>0$. Furthermore,
\begin{equation*}
    {  \tilde f_1}'(\linf)=-\frac{(1+\sqrt{2\linf})(\rho-1)}{2\linf\sqrt{\rho-1-2\linf}}\cos(\sqrt{\rho-1-2\linf})> 0.
\end{equation*}
Let $\varphi_\lambda$ and $\psi_\lambda$ be solutions of
$$\frac{1}{2}u''(x)+\frac{\rho-1}{2}u(x)\mathbf{1}_{x\leqslant 1}=\lambda u(x),$$
such that $\varphi_\lambda(0)=0$ and $\psi_\lambda(L)=0$.
If $0<\lambda<\frac{\rho-1}{2},$ up to multiplication by a constant, we can assume the existence of constants $A,B,C,D$ such that
\begin{equation*}
    \varphi_\lambda(x)=\begin{cases}
        \sin(\sqrt{\rho-1-2\lambda}\,x)              & x\in [0,1], \\
        Ae^{\sqrt{2\lambda}x}+Be^{-\sqrt{2\lambda}x} & x\in[1,L],  \\
    \end{cases}
\end{equation*}
and
\begin{equation*}
    \psi_\lambda(x)=\begin{cases}
        C\cos(\sqrt{\rho-1-2\lambda}x)+D\sin(\sqrt{\rho-1-2\lambda}x) & x\in [0,1], \\
        \sinh(\sqrt{2\lambda}(L-x))                                   & x\in[1,L].  \\
    \end{cases}
\end{equation*}{}
Since $\psi_\lambda$ and $\varphi_\lambda$ are continuous and differentiable at $1$, the constants $A$ and $B$ satisfy
\begin{equation*}
    \begin{cases}
        \sin(\sqrt{\rho-1-2\lambda})=Ae^{\sqrt{2\lambda}}+Be^{-\sqrt{2\lambda}}                                         \\
        \sqrt{\rho-1-2\lambda}\cos(\sqrt{\rho-1-2\lambda})=\sqrt{2\lambda}(Ae^{\sqrt{2\lambda}}-Be^{-\sqrt{2\lambda}}), \\
    \end{cases}
\end{equation*}
and the constants $C$ and $D$ solve
\begin{equation*}
    \begin{cases}
        C\cos(\sqrt{\rho-1-2\lambda})+D\sin(\sqrt{\rho-1-2\lambda})=\sinh(\sqrt{2\lambda}(L-1))                                         \\
        \sqrt{\rho-1-2\lambda}(-C\sin(\sqrt{\rho-1-2\lambda}+D\cos(\sqrt{\rho-1-2\lambda})=-\sqrt{2\lambda}\cosh(\sqrt{2\lambda}(L-1)). \\
    \end{cases}
\end{equation*}
Hence,
$$\begin{cases}
        A=\frac{e^{-\sqrt{2\lambda}}}{2}\left(\sin(\sqrt{\rho-1-2\lambda})+\frac{\sqrt{\rho-1-2\lambda}}{\sqrt{2\lambda}}\cos(\sqrt{\rho-1-2\lambda})\right)=\frac{1}{2\sqrt{\lambda}}{  \tilde f_1}(\lambda)e^{-\sqrt{2\lambda}} \\
        B=\frac{e^{\sqrt{2\lambda}}}{2}\left(\sin(\sqrt{\rho-1-2\lambda})-\frac{\sqrt{\rho-1-2\lambda}}{\sqrt{2\lambda}}\cos(\sqrt{\rho-1-2\lambda})\right)=\frac{1}{2\sqrt{\lambda}}{  \tilde f_2}(\lambda)e^{\sqrt{2\lambda}}   \\
        C=\cos(\sqrt{\rho-1-2\lambda})\sinh(\sqrt{2\lambda}(L-1))+\frac{\sqrt{2\lambda}}{\sqrt{\rho-1-2\lambda}}\sin(\sqrt{\rho-1-2\lambda})\cosh(\sqrt{2\lambda})                                                                            \\
        D=\sin(\sqrt{\rho-1-2\lambda})\sinh(\sqrt{2\lambda}(L-1))-\frac{\sqrt{2\lambda}}{\sqrt{\rho-1-2\lambda}}\cos(\sqrt{\rho-1-2\lambda})\cosh(\sqrt{2\lambda}).
    \end{cases}$$
Therefore, the functions $\varphi_\lambda$ and $\psi_\lambda$ are given  (up to a multiplicative factor) by
\begin{equation}
    \varphi_\lambda (x)=\begin{cases}
        \sin(\sqrt{\rho-1-2\lambda}\,x)                                                                                                                                 & x\in [0,1], \\
        \frac{1}{2\sqrt{2\lambda}}\left({  \tilde f_1}(\lambda)e^{\sqrt{2\lambda}(x-1)}+{  \tilde f_2}(\lambda)e^{-\sqrt{2\lambda}(x-1)}\right) & x\in[1,L],
    \end{cases}
    \label{nt:phi}
\end{equation}
and
\begin{equation}
    \psi_\lambda(x)=\begin{cases}
        \sinh(\sqrt{2\lambda}(L-1))\cos(\sqrt{\rho-1-2\lambda}(x-1))                                                &             \\
        -\frac{\sqrt{2\lambda}}{\sqrt{\rho-1-2\lambda}}\cosh(\sqrt{2\lambda}(L-1))\sin(\sqrt{\rho-1-2\lambda}(x-1)) & x\in [0,1], \\
        \sinh(\sqrt{2\lambda}(L-x))                                                                                 & x\in[1,L].  \\
    \end{cases}
    \label{nt:psi}
\end{equation}
Note that $\omega_\lambda$ defined in \eqref{def:walpha} corresponds to the Wronskian of $\varphi_\lambda$ and $\psi_\lambda$: it satisfies $\omega_\lambda=\psi_{\lambda}(x)\varphi'_{\lambda}(x)-\psi'_{\lambda}(x)\varphi_{\lambda}(x)$ for all $x\in[0,L]$. It is well-known (see \cite[Chapter II]{Borodin:2012aa}) that  $G_\xi$ can be written as
\begin{equation}
    G_{\xi}(x,y)=\begin{cases}
        (\omega_{{\linf+\xi}})^{-1}e^{\mu(x-y)}\psi_{{\linf+\xi}}(x)\varphi_{{\linf+\xi}}(y) & y\leqslant x, \\
        (\omega_{{\linf+\xi}})^{-1}e^{\mu(x-y)}\psi_{{\linf+\xi}}(y)\varphi_{{\linf+\xi}}(x) & y\geqslant x. \\
    \end{cases}
    \label{greenfun}
\end{equation}

\begin{lemma} Assume that (\ref{hpushed}) holds and let $\xi:(1,\infty)\rightarrow(0,\infty)$ \label{lem:alphat} be a function such that   {$\xi(L)=o(1/L)$} as $L\to\infty$.
    There exists a constant $C>1$ (that only depends on $\rho$) such that if $L$ is sufficiently large, then
    \begin{equation*}
        \omega_{\linf+\xi} \geqslant C^{-1}\xi(L)e^{\beta L},
    \end{equation*}
    and for all $x\in[0,L]$,
    \begin{equation*}
        \varphi_{\linf+\xi}(x)\leqslant C(1\wedge x)\left(\xi(L) e^{\beta x}+e^{-\beta x}\right),
    \end{equation*}
    and
    \begin{equation*}
        \psi_{\linf+\xi}(x)\leqslant C(1\wedge(L-x))e^{\beta (L-x)}.
    \end{equation*}
\end{lemma}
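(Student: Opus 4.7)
The plan is to exploit the fact that $f(\linf)=0$ while $g(\linf)>0$ and $f'(\linf)>0$, together with the hypothesis $\xi(L)L\to 0$ which guarantees $e^{\sqrt{2(\linf+\xi)}\,L}=e^{\beta L}(1+o(1))$ and $\sqrt{\rho-1-2(\linf+\xi)}\to\gamma$. Throughout, the work is just a careful inspection of the explicit formulas \eqref{def:walpha}, \eqref{nt:phi}, \eqref{nt:psi} using these asymptotics; $C$ will denote a generic positive constant depending only on $\rho$ whose value may change from line to line.

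\textbf{Lower bound on $w_{\linf+\xi}$.} A first order Taylor expansion of $f$ around $\linf$ gives
\[
f(\linf+\xi)=f'(\linf)\,\xi + O(\xi^2),\qquad g(\linf+\xi)=g(\linf)+O(\xi),
\]
with $f'(\linf)>0$ and $g(\linf)>0$ as recalled just after \eqref{def:walpha}. Since $\xi L\to 0$, we have $\sqrt{2(\linf+\xi)}=\beta + O(\xi)$ and $e^{\pm\sqrt{2(\linf+\xi)}(L-1)} = e^{\pm\beta(L-1)}(1+o(1))$. Substituting into $w_{\linf+\xi}=f(\linf+\xi)e^{\sqrt{2(\linf+\xi)}(L-1)}+g(\linf+\xi)e^{-\sqrt{2(\linf+\xi)}(L-1)}$, the first term is $(f'(\linf)+o(1))\xi e^{\beta(L-1)}$ and dominates the second (which is $O(e^{-\beta L})$ and therefore negligible compared to $\xi e^{\beta L}$ as soon as $\xi e^{2\beta L}\to\infty$; if instead $\xi e^{2\beta L}$ stays bounded the second term is positive of order $e^{-\beta L}\gg \xi e^{\beta L}$ and the bound is even easier). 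In either case one obtains $w_{\linf+\xi}\ge c\, \xi e^{\beta L}$ for some $c>0$ and $L$ large, which is the desired lower bound (up to replacing $C$ by $1/c$).

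\textbf{Upper bound on $\varphi_{\linf+\xi}$.} Split according to the two pieces of formula \eqref{nt:phi}. For $x\in[0,1]$, $\varphi_{\linf+\xi}(x)=\sin(\sqrt{\rho-1-2(\linf+\xi)}\,x)$ and the elementary bound $|\sin(u)|\le |u|$, together with $\sqrt{\rho-1-2(\linf+\xi)}\to\gamma$, yields $\varphi_{\linf+\xi}(x)\le C x = C(1\wedge x)$, and since $e^{\pm\beta x}$ is bounded on $[0,1]$ this is majorized by $C(1\wedge x)(\xi e^{\beta x}+e^{-\beta x})$. For $x\in[1,L]$, the bounds $|f(\linf+\xi)|\le C\xi$, $|g(\linf+\xi)|\le C$ and $\sqrt{2(\linf+\xi)}\ge\beta/2$ plug directly into the second line of \eqref{nt:phi} to give
\[
\varphi_{\linf+\xi}(x)\le C\bigl(\xi\, e^{\sqrt{2(\linf+\xi)}(x-1)}+e^{-\sqrt{2(\linf+\xi)}(x-1)}\bigr)\le C\bigl(\xi e^{\beta x}+e^{-\beta x}\bigr),
\]
where we used $\sqrt{2(\linf+\xi)}=\beta(1+o(1))$ and $\xi e^{(\sqrt{2(\linf+\xi)}-\beta)L}=o(1)$ coming from $\xi L\to 0$. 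Since $1\wedge x=1$ on $[1,L]$ this is the announced bound.

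\textbf{Upper bound on $\psi_{\linf+\xi}$.} Again split via \eqref{nt:psi}. For $x\in[1,L]$, use the combined inequality $\sinh(u)\le C(1\wedge u)\,e^{u}$ valid on $[0,\infty)$ (immediate from $\sinh(u)\le u$ for small $u$ and $\sinh(u)\le e^u/2$ for large $u$) with $u=\sqrt{2(\linf+\xi)}(L-x)$; since $\sqrt{2(\linf+\xi)}=\beta(1+o(1))$ and $L-x\le L$ with $\xi L\to 0$, this yields $\psi_{\linf+\xi}(x)\le C(1\wedge(L-x))e^{\beta(L-x)}$. For $x\in[0,1]$ the target is $Ce^{\beta(L-x)}$ (as $1\wedge(L-x)=1$ for $L\ge 2$). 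Both summands of the first line of \eqref{nt:psi} are majorized by $C\sinh(\sqrt{2(\linf+\xi)}(L-1))+C\cosh(\sqrt{2(\linf+\xi)}(L-1))\le Ce^{\beta(L-1)}\le Ce^{\beta(L-x)}$ (for $x\in[0,1]$), using the boundedness of $\cos$, $\sin$, and $\sqrt{\rho-1-2(\linf+\xi)}$, together with $\sqrt{2(\linf+\xi)}\to\beta$.

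\textbf{Main obstacle.} The only non-routine step is the lower bound on $w_{\linf+\xi}$: one must ensure that the $O(\xi^2)e^{\beta L}$ remainder from the Taylor expansion of $f$ does not swallow the leading $f'(\linf)\xi e^{\beta L}$, and that the negative contribution (if any) from the $e^{-\beta L}$ term is handled. Both issues are controlled by the hypothesis $\xi L\to 0$, which in particular forces $\xi\to 0$, so $\xi^2=o(\xi)$, and by the positivity $g(\linf)>0$ which prevents cancellation in the second exponential.
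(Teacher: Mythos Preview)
Your proof is correct and follows essentially the same approach as the paper: both exploit $f(\linf)=0$, $f'(\linf)>0$, $g(\linf)>0$ together with $\xi L\to 0$ to replace $\sqrt{2(\linf+\xi)}$ by $\beta$ in the exponentials, and then read off the bounds directly from the explicit formulas \eqref{def:walpha}, \eqref{nt:phi}, \eqref{nt:psi}. The paper merely packages the preliminary estimates on $\varphi_{\linf+\xi}$ and $\psi_{\linf+\xi}$ into a separate lemma (Lemma~\ref{lemma:ubsf}), whereas you carry them out inline; and for the lower bound on $w_{\linf+\xi}$ your case split on whether $\xi e^{2\beta L}\to\infty$ is superfluous since both summands are positive, so $w_{\linf+\xi}\ge f(\linf+\xi)e^{\sqrt{2(\linf+\xi)}(L-1)}$ already suffices.
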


\begin{lemma} \label{lemma:wro} Assume that (\ref{hpushed}) holds. Let $h>0$ and $\xi=\frac{h}{L}$. There exists $C>1$ (that only depends on $\rho$ and $h$) such that if $L$ is sufficiently large, then
    \begin{equation*}
        \omega_{\linf+\xi}\geqslant
          {C^{-1}\frac{1}{L}e^{\beta L},}
    \end{equation*}
    and for all $x\in[0,L]$,
    \begin{equation*}
        \varphi_{\linf+\xi}(x)\leqslant C(1\wedge x)\left(\frac{  {1}}{L} e^{\beta x}+e^{-\beta x}\right),
    \end{equation*}
    and
    \begin{equation*}
        \psi_{\linf+\xi}(x)\leqslant C(1\wedge(L-x))e^{\beta (L-x)}.
    \end{equation*}
\end{lemma}
The proofs of Lemma \ref{lem:alphat} and Lemma \ref{lemma:wro} can be found in Appendix \ref{appendix:greenf}.

\section[Branching Brownian motion in an interval: moment estimates]{BBM in an interval: moment estimates}\label{sec:moment:est}

In Section \ref{sec:moment:est}, we assume that (\ref{hpushed}) holds and we consider the  BBM defined in Section \ref{sec:heatkernel}, that is a dyadic BBM with branching rate $r(x)$ and drift $-\mu$, killed upon reaching $0$ and $L$. This section is aimed at controlling the first and second moments of the supermartingale $Z'_t$ introduced in Section \ref{sec:skproof}. Let us first give a precise definition of this process. Denote by $\mathcal{N}_t^L$ the set of particles alive in the BBM at time $t$ and for each particle $v\in\mathcal{N}^L_t$, denote by $X_v(t)$ its position at time $t$. Recall the definition of $v_1$ from Lemma \ref{eigenvloc} and consider the eigenvector
\begin{equation}
    w_1(x)=\sinh\left(\sqrt{2\lambda_1}(L-1)\right)v_1(x), \label{def:w1}
\end{equation}
and the function
\begin{equation}
    z(x)=e^{\mu(x-L)}w_1(x) \label{def:z}.
\end{equation}
We also define
\begin{align}
    Z'_t        & =\sum_{v\in\mathcal{N}_t^L}e^{\mu(X_v(t)-L)}w_1(X_v(t))=\sum_{v\in\mathcal{N}_t^L}z(X_v(t))\label{def:Z}, \\
    Y_t         & =\sum_{v\in\mathcal{N}_t^L}(X_v(t)\wedge 1)e^{\mu(X_v(t)-L)}\label{def:Y},                                \\
    \tilde{Y}_t & =\sum_{v\in\mathcal{N}_t^L}e^{\mu(X_v(t)-L)}.\label{def:tY}
\end{align}
This section is  divided into two parts. In Section \ref{sec:fm}, we estimate the first moments of the processes $(Z'_t)_{t>0}$, $(Y_t)_{t>0}$ and $(\tilde{Y}_t)_{t>0}$ under (\ref{hpushed}).  In Section \ref{sec:sm}, we bound the second moment of $(Z'_t)_{t>0}$ under (\ref{hwp}). The key idea to calculate these moments is to approximate the density $p_t$ by the \textit{stationary configuration} from Lemma \ref{th1} and to control the fluctuations using the Green function. We will also use the following consequence of Lemma \ref{th1}: there exists a positive constant $C>0$ (that only depends on $\rho$) such that if $L$ is sufficiently large and $t>c_{\ref{th1}}L$, then
\begin{equation}
    p_t(x,y)\leqslant C e^{\mu(x-y)} v_1(x)v_1(y).\label{ub:pt2}
\end{equation}
Before getting to the moment calculations, we state a preliminary result that will be extensively used in the remainder of the article.

\begin{lemma}\label{ubv1}
    Assume that (\ref{hpushed}) holds. There exists a constant $C>0$ (that only depends on $\rho$) such that for $L$ large enough
    \begin{equation*}
        C^{-1}(x\wedge 1\wedge(L-x)) e^{\beta(L-x)} \leqslant  w_1(x)\leqslant C(x\wedge 1\wedge(L-x)) e^{\beta(L-x)}.
    \end{equation*}
    As a consequence, there exists $C'>0$ such that, for $L$ large enough, we have
    \begin{equation*}
        (C')^{-1}(x\wedge 1\wedge (L-x))e^{-\beta x}\leqslant v_1(x)\leqslant C'(x\wedge 1\wedge (L-x))e^{-\beta x}.
    \end{equation*}
\end{lemma}

\begin{proof}
    We distinguish $3$ cases to prove the first part of the lemma. Once this is proved, the second part is a direct consequence of Remark \ref{r:vw}.

    Suppose $x\in[0,1]$. Recall from Lemma \ref{eigenvlocasymp} that for $L$ large enough, $\sqrt{\rho-1-2\lambda_1}\in\left(\frac{\pi}{2},\pi\right)$. Hence, a concavity argument combined with Remark \ref{r:vw} yields the inequality
    \begin{equation*}
        w_1(x)\geqslant x \sinh(\sqrt{2\lambda_1}(L-1)) \geqslant x e^{\beta L}\geqslant x e^{\beta (L-x)}.
    \end{equation*}
    In addition, using that $|\sin(y)|\leqslant y$ for all $y\in \mathbb{R}$, we get that
    \begin{align*}
        w_1(x) & \leqslant \frac{\sqrt{\rho-1-2\lambda_1}}{\sin(\sqrt{\rho-1-2\lambda_1})} x \sinh(\sqrt{2\lambda_1}(L-1)) \\ &\leqslant  \frac{\sqrt{\rho-1-2\lambda_1}}{\sin(\sqrt{\rho-1-2\lambda_1})} x \sinh(\sqrt{2\lambda_1}(L-x))
        \leqslant C x e^{\beta (L-x)},
    \end{align*}
    where the last line follows from Remark \ref{lb:sin}.
    Suppose $x\in[1,L-1]$. Then, $w_1(x)=\sinh(\sqrt{2\lambda_1}(L-x))$ and
    \begin{eqnarray*}
        w_1(x)e^{-\beta(L-x)}&=&\frac{1}{2}\left(e^{(\sqrt{2\lambda_1}-\beta)(L-x)}-e^{-(\sqrt{2\lambda_1}+\beta)(L-x)}\right)\\&\geqslant&\frac{1}{2}\left(e^{(\sqrt{2\lambda_1}-\beta)L}-e^{-(\sqrt{2\lambda_1}+\beta)}\right).
    \end{eqnarray*} Recall from Lemma \ref{eigenvlocasymp} that $(\sqrt{2\lambda_1}-\beta)L\to0$ as $L\to \infty$. Besides $e^{-(\sqrt{2\lambda_1}+\beta)}<e^{-\beta }<1$ so that $e^{(\sqrt{2\lambda_1}-\beta)L}-e^{-(\sqrt{2\lambda_1}+\beta)}>1/2$ for $L$ large enough. On the other hand, it follows directly from the definition of $w_1$ that
    \begin{equation*}
        w_1(x)\leqslant e^{\sqrt{2\lambda_1}(L-x)}\leqslant e^{\beta (L-x)}.
    \end{equation*}
    Suppose $x\in[L-1,L]$. Since the function $w_1$ is convexe on this interval, we have
    \begin{eqnarray*}
        w_1(x)\geqslant \sqrt{2\lambda_1}(L-x)\geqslant (\beta/2)(L-x)\geqslant (\beta/2)(L-x)e^{\beta}\geqslant (\beta/2)(L-x)e^{\beta(L-x)},
    \end{eqnarray*}
    for $L$ large enough, and
    \begin{equation*}
        w_1(x)\leqslant \sinh(\sqrt{2\lambda_1})(L-x)\leqslant (L-x)e^{\beta}\leqslant C(L-x)e^{\beta (L-x)}.
    \end{equation*}
    This concludes the proof of the lemma.
\end{proof}

\subsection{First moment estimates} \label{sec:fm}

\begin{lemma}[First moment of $Z_t'$]\label{lem:fmZ}
    Assume that (\ref{hpushed}) holds and let $t>0$. We have
    \begin{equation*}
        \mathbb{E}[Z_t']=e^{(\lambda_1-\linf)t} Z_0'.
    \end{equation*}
\end{lemma}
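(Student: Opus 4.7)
The plan is to apply the Many-to-one lemma with the specific test function $f(y) = e^{\mu(y-L)} w_1(y)$ and then exploit the fact that $w_1$ is, up to a scalar, the first eigenfunction $v_1$ of the self-adjoint operator appearing in PDE (B). By the branching property and linearity of expectation, it is enough to treat the case of a single initial particle at $x \in [0,L]$ and show that
\[
\mathbb{E}_x[Z_t'] = e^{(\lambda_1 - \lambda_1^\infty)t}\, e^{\mu(x-L)} w_1(x),
\]
since the right-hand side is precisely $Z_0'$ evaluated on a Dirac mass at $x$.

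First, I would apply Lemma~\ref{lem:many-to-one} to write
\[
\mathbb{E}_x[Z_t'] = \int_0^L p_t(x,y)\, e^{\mu(y-L)} w_1(y)\, dy,
\]
and then use the factorisation \eqref{eq:ptqt}, $p_t(x,y) = e^{\mu(x-y) - \lambda_1^\infty t} q_t(x,y)$, to pull the exponential factors outside the integral:
\[
\mathbb{E}_x[Z_t'] = e^{\mu(x-L) - \lambda_1^\infty t} \int_0^L q_t(x,y)\, w_1(y)\, dy.
\]

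The heart of the computation is then the observation that $w_1(y) = \sinh(\sqrt{2\lambda_1}(L-1))\, v_1(y)$ is proportional to the first eigenfunction of the self-adjoint operator $T$ on the domain $\mathcal{D}_L$. Plugging in the spectral expansion \eqref{def:q} of $q_t$ and using the orthogonality of the normalised eigenfunctions $(v_k)$ in $L^2([0,L])$, every term $k \ne 1$ of the resulting series vanishes and the $k=1$ term gives
\[
\int_0^L q_t(x,y)\, v_1(y)\, dy = e^{\lambda_1 t}\, v_1(x).
\]
Multiplying by the constant $\sinh(\sqrt{2\lambda_1}(L-1))$ converts $v_1(x)$ back into $w_1(x)$. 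Combining with the prefactor above yields the claimed identity for a single initial particle, and the general statement follows by summing over the initial configuration.

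I do not foresee any real obstacle: the only subtlety is to make sure the interchange of expectation and integration, as well as the term-by-term orthogonality in the series for $q_t$, are justified. For the first point, the Many-to-one lemma is already stated for positive measurable $f$, and $f(y) = e^{\mu(y-L)} w_1(y) \ge 0$ on $[0,L]$ since $v_1 > 0$ on $(0,L)$. For the second point, the spectral series for $q_t$ converges in $L^2$, which together with $v_1 \in L^2([0,L])$ is enough to exchange sum and integral.
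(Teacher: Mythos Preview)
Your proof is correct and follows essentially the same approach as the paper: apply the Many-to-one lemma, use the factorisation $p_t(x,y)=e^{\mu(x-y)-\lambda_1^\infty t}q_t(x,y)$, and then exploit that $w_1$ is an eigenfunction of the self-adjoint operator $T$ so that $\int_0^L q_t(x,y)w_1(y)\,dy=e^{\lambda_1 t}w_1(x)$. The paper phrases the last step directly in terms of $q_t$ being the fundamental solution of $\partial_t u=Tu$, while you unpack it via the spectral expansion and orthogonality of the $v_k$; these are equivalent.
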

\begin{proof}
    The many-to-one lemma (see Lemma \ref{lem:many-to-one}) yields
    \begin{equation*}
        \mathbb{E}_x[Z_t']=\int_0^Lp_t(x,y)e^{\mu(y-L)}w_1(y)dy= e^{\mu(x-L)}e^{-\linf t}\int_0^L w_1(y)\sum_{k=1}^\infty e^{\lambda_k t}\frac{v_k(x)v_k(y)}{\|v_k\|^2}\,dy.
    \end{equation*}
    The second equality comes from Equation \eqref{def:pt1}.
    Yet, $w_1$ is a multiple of $v_1$ and  $\left(\frac{v_k}{\|v_k\|}\right)_{k\geq1}$ is an orthonormal sequence of $\mathrm{L}^2([0,L])$ (see Section \ref{sec:spec}).  Hence
    $$\int_0^L w_1(y)\left(\sum_{k=1}^\infty e^{\lambda_k t}\frac{v_k(x)v_k(y)}{\|v_k\|^2}\right)dy=e^{\lambda_1 t}w_1(x).$$ The result follows by summing over the particles at time $0$.
\end{proof}

\begin{lemma}[First moment of $\tilde{Y}_t$]\label{lemmatild} Assume that (\ref{hpushed}) holds. There exists a constant $C>0$ (that only depends on $\rho$) such that for $L$ large enough  and $t>c_{\ref{th1}} L$
    \begin{equation*}
        \mathbb{E}[\tilde{Y}_t]\leqslant Ce^{-\beta L}Z_0'.
    \end{equation*}
\end{lemma}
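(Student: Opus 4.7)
The plan is to apply the Many-to-one Lemma~\ref{lem:many-to-one} to reduce the expectation to an integral of the heat kernel, and then invoke the spectral approximation of Lemma~\ref{th1}. By linearity of the expectation, it suffices to treat the case where the system starts with a single particle at $x\in[0,L]$, in which case $Z_0' = e^{\mu(x-L)} w_1(x)$ and, using \eqref{density},
\begin{equation*}
\mathbb{E}_x[\tilde Y_t] \;=\; \int_0^L p_t(x,y)\, e^{\mu(y-L)}\,dy \;=\; e^{\mu(x-L)}\,e^{-\linf t}\int_0^L q_t(x,y)\,dy.
\end{equation*}

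For $t>C_2 L$ with $C_2$ as in Lemma~\ref{th1}, the bound \eqref{rem:asymptM} gives $q_t(x,y)\leqslant 2\,\|v_1\|^{-2}\, e^{\lambda_1 t}\, v_1(x)\, v_1(y)$ for $L$ large, whence
\begin{equation*}
\mathbb{E}_x[\tilde Y_t]\;\leqslant\; 2\, e^{\mu(x-L)}\,\frac{v_1(x)}{\|v_1\|^2}\, e^{(\lambda_1-\linf)t}\int_0^L v_1(y)\,dy.
\end{equation*}
Three ingredients then reduce this to the desired bound. First, by Lemma~\ref{eigenvlocasymp} the principal eigenvalue $\lambda_1=\lambda_1(L)$ increases to $\linf$, so $e^{(\lambda_1-\linf)t}\leqslant 1$. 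Second, by Lemma~\ref{ubv1}, $v_1(y)\leqslant C(y\wedge 1)e^{-\beta y}$, hence $\int_0^L v_1(y)\,dy$ is bounded by a constant independent of $L$; and by Lemma~\ref{lem:l2norm}, $\|v_1\|^2$ is bounded away from $0$ for $L$ large. Third, combining the definition $w_1(x)=\sinh(\sqrt{2\lambda_1}(L-1))\,v_1(x)$ with the lower bound $\sinh(\sqrt{2\lambda_1}(L-1))\geqslant C_1 e^{\beta L}$ from Remark~\ref{r:vw} yields
\begin{equation*}
e^{\mu(x-L)}v_1(x)\;=\;\frac{e^{\mu(x-L)}w_1(x)}{\sinh(\sqrt{2\lambda_1}(L-1))}\;\leqslant\; C\, e^{-\beta L}\, Z_0'.
\end{equation*}
Assembling these three facts proves $\mathbb{E}_x[\tilde Y_t]\leqslant C_1 e^{-\beta L} Z_0'$, and linearity extends this to arbitrary starting configurations.

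The argument is essentially bookkeeping on top of the spectral machinery developed in Section~\ref{sec:density}; the only conceptual point is that the $t$-dependence of the heat kernel bound is absorbed by the monotonicity $\lambda_1\leqslant\linf$, which makes the estimate uniform in $t$ on the range $t>C_2 L$. No serious obstacle is anticipated beyond tracking the constants carefully.
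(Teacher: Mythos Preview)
Your proof is correct and follows essentially the same approach as the paper: apply the Many-to-one Lemma, use the heat-kernel bound from Lemma~\ref{th1} for $t>C_2L$, bound $\int_0^L v_1(y)\,dy$ via Lemma~\ref{ubv1}, drop $e^{(\lambda_1-\linf)t}\leqslant 1$, and convert $v_1$ to $w_1$ via Remark~\ref{r:vw}. The only cosmetic difference is that you work with $q_t$ while the paper works directly with $p_t$, but these are equivalent through \eqref{density}.
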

\begin{cor}[First moment of $Y_t$] \label{lem:fmY}
    There exists a constant $C>0$ (that only depends on $\rho$) such that for $L$ large enough and $t>c_{\ref{th1}} L$
    \begin{equation*}
        \mathbb{E}[Y_t]\leqslant Ce^{-\beta L}Z_0'.
    \end{equation*}
\end{cor}
\begin{proof}[Proof of Lemma \ref{lemmatild}]

    Combining \eqref{ub:pt2} with the many-to-one lemma, we get that, for $L$ large enough and  $t>c_{\ref{th1}}L$,
    \begin{eqnarray*}
        \mathbb{E}_x[\tilde{Y}_t]=\int_0^Le^{\mu(y-L)}p_t(x,y)dy
        \leqslant Ce^{\mu(x-L)}v_1(x) \int_0^Lv_1(y)dy.\end{eqnarray*}
    Recalling from Lemma \ref{ubv1} that  $v_1(y)\leqslant Ce^{-\beta y}$, we see that  for $L$ large enough and $t>c_{\ref{th1}}L$,
    \begin{equation*}
        \mathbb{E}_x[\tilde{Y}_t]\leqslant Ce^{\mu(x-L)}v_1(x).
    \end{equation*}
    Remark \ref{r:vw} then yields the lemma.
\end{proof}

\subsection{Second moment estimates}\label{sec:sm}

  {In this section, we bound the second moment of $Z'$ under \eqref{hwp}. In particular, we will make heavy use of the fact that $\mu>3\beta$ in the semipushed regime (see \eqref{rem:alpha}). }
\begin{lemma}[Second moment of $Z_t'$]\label{lem:smZ}
    Assume (\ref{hwp}) holds and let $u:(1,\infty)\rightarrow (0,\infty)$ be a function such that $u(L)\rightarrow \infty$  and $u(L)/L\rightarrow \infty$ as $L\rightarrow \infty$. There exists a constant $C>0$ (that only depends on $\rho$) such that for $L$ large enough
    \begin{equation*}
        \mathbb{E}_{  {x}}[(Z_u')^2]\leqslant C\left(u e^{-2\beta L}Z'_0+Y_0\right).
    \end{equation*}
\end{lemma}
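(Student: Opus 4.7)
The natural tool is the Many-to-two lemma (Lemma~\ref{lem:many-to-two}), applied to $Z'_t = \sum_{v\in\mathcal{N}_t^L}\phi(X_v(t))$ with $\phi(y) = e^{\mu(y-L)}w_1(y)$. Substituting the first-moment identity $\mathbb{E}_y[Z'_{u-s}] = e^{(\lambda_1-\linf)(u-s)}\phi(y)$ from Lemma~\ref{lem:fmZ}, starting from a single particle at $x$ we obtain
\[
\mathbb{E}_x[(Z'_u)^2] = \underbrace{\int_0^L p_u(x,y)\phi(y)^2\,dy}_{I_1} + \underbrace{2\int_0^u\!\!\int_0^L p_s(x,y)\,r(y)\,e^{2(\lambda_1-\linf)(u-s)}\phi(y)^2\,dy\,ds}_{I_2}.
\]
The strategy is to bound $I_1$ and $I_2$ separately by $C(ue^{-2\beta L}Z'_0(x) + Y_0(x))$, and then to sum over the initial particles using the independence of the sub-populations they generate.

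For the diagonal term $I_1$: since $u/L\to\infty$, Lemma~\ref{th1} in the form \eqref{rem:asymptM} lets us replace $p_u(x,y)$ by its leading part $\|v_1\|^{-2}e^{\mu(x-y)+(\lambda_1-\linf)u}v_1(x)v_1(y)$. Using the asymptotic $w_1\sim \tfrac12 e^{\beta L}v_1$ of Remark~\ref{r:vw} and the pointwise bound $v_1(y)\leq C(y\wedge 1)e^{-\beta y}$ of Lemma~\ref{ubv1}, the spatial integral reduces to $\int_0^L (y\wedge 1)^3 e^{(\mu-3\beta)y}\,dy$. Under~\eqref{hwp} we have $\mu>3\beta$, so this integral behaves like $e^{(\mu-3\beta)L}/(\mu-3\beta)$, and collecting the remaining prefactors (using that $e^{\mu x}v_1(x)$ is comparable to $e^{(\mu-\beta)L}Z'_0(x)$) yields $I_1\leq Ce^{-2\beta L}Z'_0(x)$, which is absorbed into the target bound.

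For the branching term $I_2$: bound $e^{2(\lambda_1-\linf)(u-s)}\leq 1$, swap the order of integration, and apply \eqref{rk:green} to get $\int_0^u p_s(x,y)\,ds\leq e\,G_{1/u}(x,y)$. Because $1/u\to 0$ and $L/u\to 0$, Lemma~\ref{lem:alphat} applies with $\xi = 1/u$ and gives the decomposition
\[
G_{1/u}(x,y)\leq C\,e^{\mu(x-y)}(1\wedge(x\wedge y))(1\wedge(L-x\vee y))\bigl(e^{-\beta|x-y|} + u\,e^{-\beta(x+y)}\bigr).
\]
Substituting into $I_2$ and splitting the $y$-integral at $y=x$, one is reduced to four families of spatial integrals of the form $\int (y\wedge 1)^a e^{cy}\,dy$. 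In each of them the exponent is controlled thanks to $\mu>3\beta$, and a direct computation shows that the first piece of $G_{1/u}$ (the one without the factor $u$) produces a contribution bounded by $CY_0(x)$, while the second piece (with the factor $u$) produces the dominant $Cu e^{-2\beta L}Z'_0(x)$.

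The main obstacle is the bookkeeping of exponents in the $I_2$ step: the four exponentials arising from the partition $\{y\leq x\}\cup\{y\geq x\}$ and the two pieces of $G_{1/u}$ must each be integrated against the weight $(y\wedge 1)^2 e^{(2\mu-2\beta)(y-L)}$ coming from $\phi(y)^2 r(y)$, and the hypothesis $\mu>3\beta$ available under \eqref{hwp} (see the closing remark of Section~\ref{sec:skproof}) is tight: it is exactly what ensures that the bulk integrals are dominated by their right endpoint and that the resulting prefactor is $ue^{-2\beta L}$ rather than a larger power. The extension from a single starting particle to a general initial configuration is then immediate from the independence of distinct subtrees, the cross terms being controlled by the first-moment estimate of Lemma~\ref{lem:fmZ}.
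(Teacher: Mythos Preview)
Your approach is essentially the same as the paper's: Many-to-two decomposition into a diagonal term handled via the heat-kernel estimate of Lemma~\ref{th1}, and a branching term handled by the Green-function bound $\int_0^u p_s\,ds\le eG_{1/u}$ together with Lemma~\ref{lem:alphat}. Your combined inequality for $G_{1/u}(x,y)$ is a correct repackaging of the paper's separate $A(x),B(x)$ computations, and the subsequent exponent bookkeeping (driven by $\mu>3\beta$) matches.

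Two points are worth flagging. First, to pass from the raw spatial bound on $I_2$ to the target $Cue^{-2\beta L}Z'_0(x)$ you implicitly need a \emph{lower} bound on $w_1$, namely $(1\wedge x\wedge(L-x))e^{\beta(L-x)}\le Cw_1(x)$; the paper isolates and proves this as a separate step (its Equation~\eqref{eq:73}) by a three-case analysis on $[0,1]$, $[1,L-1]$, $[L-1,L]$. Your sketch uses it without stating it. Second, your closing remark that the extension to a general initial configuration is ``immediate from independence'' is not accurate: the cross terms contribute $\bigl(\sum_i\mathbb{E}[Z'^{(i)}_u]\bigr)^2\asymp (Z'_0)^2$, which is not absorbed by the linear bound $C(ue^{-2\beta L}Z'_0+Y_0)$. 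The paper's own proof in fact only treats a single initial particle (it begins with $\mathbb{E}_x[(Z'_u)^2]$), and all subsequent applications of the lemma are in that setting, so no extension is actually needed.
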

\begin{proof}

    The many-to-two lemma (see Lemma \ref{lem:many-to-two})) combined with the formula for the first moment of $Z'$ calculated in Lemma \ref{lem:fmZ} yields
    \begin{eqnarray}
        \mathbb{E}_x[(Z_u')^2]&=&\mathbb{E}_x\left[\sum_{v\in\mathcal{N}_u^L}z(X_v (u))^2\right]+2\int_0^u\int_0^L r(y)p_s(x,y)\mathbb{E}_y[Z'_{t-s}]^2dy\, ds\nonumber\\
        &\leqslant&\underbrace{\mathbb{E}_x\left[\sum_{v\in\mathcal{N}_u^L}z(X_v (u))^2\right]}_{\textstyle=:T_1}+2\rho\underbrace{\int_0^u\int_0^L p_s(x,y)e^{2\mu(y-L)}w_1(y)^2dy \;ds}_{\textstyle=:T_2}.
        \label{manyto2}
    \end{eqnarray}
    Let us first bound the expectation $T_1$. By the many-to-one lemma,
    \begin{equation*}
        T_1=\int_0^Lp_u(x,y)e^{2 \mu (y-L)}w_1(y)^2dy.
    \end{equation*}
    Using Equation \eqref{ub:pt2} along with Lemma \ref{ubv1}, we get that \begin{align*}
        T_1 & \leqslant Ce^{-2\beta L}e^{\mu (x-L)}w_1(x)\int_0^L e^{\mu(y-L)}v_1^3(y)dy    \\
            & \leqslant  Ce^{-2\beta L}e^{\mu (x-L)}w_1(x)\int_0^Le^{-(\mu-3\beta)(L-y)}dy,
    \end{align*}
    as long as $u\geqslant c_{\ref{th1}}L$.    {Using that $\mu>3\beta$ under \eqref{hwp}}, we see that the last integral is bounded by a constant that only depends on $\rho$. Hence, for $L$ large enough, we have
    \begin{equation}\label{ub:T1}
        T_1\leqslant Ce^{-2\beta L}e^{\mu(x-L)}w_1(x).
    \end{equation}
    Let us now bound the double integral $T_2$. First, recall from Equation (\ref{rk:green}) that the integral of the density $p_s$ with respect to $s$ can be bounded thanks to the Green function
    \begin{equation*}
        \int_0^{u} p_s(x,y)ds \leqslant eG_{\frac{1}{u}}(x,y).
    \end{equation*}
    Fubini's theorem then gives
    \begin{align}
        T_2 & = \int_0^L\left(\int_0^{u} p_s(x,y) ds\right)e^{2\mu(y-L)}w_1(y)^2dy \nonumber
        \\&\leqslant Ce^{\mu(x-L)}\left(\psi_{\linf+\frac{1}{u}}(x)A(x)+\varphi_{\linf+\frac{1}{u}}(x)B(x)\right) \label{int:Green}
    \end{align}
    with
    \begin{equation*}
        A(x):= \left(\omega_{\linf+\frac{1}{u}}\right)^{-1}\int_0^x e^{\mu(y-L)}w_1(y)^2 \varphi_{\linf+\frac{1}{u}}(y)dy,
    \end{equation*}
    and
    \begin{equation*}
        B(x):= \left(\omega_{\linf+\frac{1}{u}}\right)^{-1}\int_x^Le^{\mu(y-L)}w_1(y)^2 \psi_{\linf+\frac{1}{u}}(y)dy.
    \end{equation*}
    We recall from Lemma  \ref{lem:alphat} (applied to $\xi=\frac{1}{u}$) and Lemma \ref{ubv1} that there exist some constants (that only depend on $\rho$) such that for $L$ large enough,
    \begin{align}
        \varphi_{\linf+\frac{1}{u}}(x)               & \leqslant C(1\wedge x)\left(\frac{1}{u}e^{\beta x}+e^{-\beta x}\right), \label{estpsia3} \\
        \psi_{\linf+\frac{1}{u}}(x)                  & \leqslant C(1\wedge(L-x))e^{\beta(L-x)},
        \label{estphia3}                                                                                                                        \\
        \left(\omega_{\linf+\frac{1}{u}}\right)^{-1} & \leqslant Cue^{-\beta L},\label{eq:wal}                                                  \\
        w_1(x)                                       & \leqslant Ce^{\beta(L-x)}.\label{eq:w1}
    \end{align}
    Equations (\ref{estpsia3}), (\ref{eq:wal}) and (\ref{eq:w1}) yield
    \begin{align}
        A(x) & \leqslant Ce^{-(\mu-\beta) L}\left(\int_0^x e^{(\mu-\beta)y}dy + u\int_0^x e^{(\mu-3\beta)y}dy\right)\nonumber \\
             & \leqslant C(1\wedge x)e^{-(\mu-\beta) L}\left(e^{(\mu-\beta)x}+u e^{(\mu-3\beta)x}\right)\nonumber             \\
             & \leqslant C(1\wedge x)e^{-(\mu-\beta) L}\left(e^{(\mu-\beta)x}+u e^{(\mu-3\beta)L}\right)\nonumber             \\
             & \leqslant C(1\wedge x)\left(e^{-(\mu-\beta)(L-x)}+u e^{-2\beta L}\right)
        \label{est:A}\end{align} since $\mu>3\beta$ under \eqref{hwp} (see \eqref{rem:alpha}).
    Similarly, Equations (\ref{estphia3}), (\ref{eq:wal}) and (\ref{eq:w1}) give that
    \begin{align}
        B(x) & \leqslant Cue^{-(\mu-2\beta)L}\int_x^Le^{(\mu-3\beta)y}dy\leqslant C(1\wedge(L-x))u e^{-(\mu-2\beta)L}e^{(\mu-3\beta)L}\nonumber \\
             & \leqslant C(1\wedge(L-x))u e^{-\beta L} \label{est:B}.
    \end{align}
    Therefore, combining Equations (\ref{estphia3}) and (\ref{est:A}) and using that $\mu>3\beta$ under \eqref{hwp}, we get that
    \begin{equation}
        \psi_{\linf+\frac{1}{u}}(x)A(x)\leqslant C(1\wedge x\wedge(L-x))\left(1+u e^{-2\beta L}e^{\beta(L-x)}\right)\label{est:A:bis},
    \end{equation}
    and it follows from Equations (\ref{estpsia3}) and (\ref{est:B}) that
    \begin{equation}
        \varphi_{\linf+\frac{1}{u}}(x)B(x)\leqslant C(1\wedge x\wedge(L-x))\left(1+u e^{-2\beta L}e^{\beta(L-x)}\right)\label{est:B:bis}.
    \end{equation}
    Yet we know from Lemma \ref{ubv1} that for $L$ large enough
    \begin{equation}
        (1\wedge x\wedge(L-x))e^{\beta(L-x)}\leqslant Cw_1(x).\label{eq:73}
    \end{equation}
    Finally, combining Equations (\ref{int:Green}), (\ref{est:A:bis}), (\ref{est:B:bis}) and (\ref{eq:73}), we get that for $L$ large enough,
    \begin{equation}
        T_2\leqslant Ce^{\mu(x-L)}\left((1\wedge x)+u e^{-2\beta L}w_1(x)\right).\label{est:T2}
    \end{equation}
    Equations (\ref{ub:T1}) and (\ref{est:T2}) yield the lemma.
\end{proof}

\section{The particles hitting the right-boundary} \label{sec:R}

Recall that we are considering a BBM with branching rate $r(x)$, drift $-\mu$ and killed upon reaching $0$.

We are now interested in the contribution of the particles that reach the level $L$. This will be the object of the two following sections. In Section \ref{sec:R}, we control the number of particles reaching this level for the first time or, equivalently, the number of particles killed at the right boundary in the BBM with branching rate $r(x)$, drift $-\mu$ and killed upon exiting $(0,L)$. In Section \ref{sec:W}, we estimate the contribution of these particles to $Z_t$.

For $0\leqslant s< t$, let $R([s,t])$ denote the number of particles absorbed at $L$ between times $s$ and $t$ for the BBM in the interval $[0,L]$. As stated in \cite[Lemma 5.7]{Maillard:2020aa},  the first and second moments  of $R([s,t])$ can be calculated from the density $p_t$. More precisely, if we denote by $w_\tau(x,y)$ the density of a Brownian motion killed upon exiting $[0,L]$ at time $\tau$ and  by $H_0$ and $H_L$ the hitting times of the boundaries $0$ and $L$, then
\begin{equation}
    \mathbb{P}_x\left(H_L\in d\tau, H_L<H_0\right)=-\frac{1}{2}\partial_y w_\tau(x,y)\big |_{y=L}d\tau. \label{eq:lawR}
\end{equation}
In words, this means that the density at time $\tau$ of the hitting time of the right boundary is equal to the heat flow of the density $w_\tau$ out of the boundary $L$ at time $\tau$. Combining this with the many-to-one lemma (see \cite{Maillard:2016uw} for a general version with stopping lines) shows that
\begin{equation}
    \mathbb{E}_x[R([s,t])]=-\frac{1}{2}\int_s^t \frac{\partial}{\partial y}p_\tau(x,y)|_{y=L}d\tau=-\frac{1}{2}e^{\mu(x-L)}\int_{s}^te^{-\linf \tau}\frac{\partial}{\partial y}{ \tilde p}_\tau(x,y)|_{y=L}d\tau.\label{E:R}
\end{equation}
Standard second moment calculations (see \cite[Theorem 4.15]{Ikeda:1969tj}) then yield that
\begin{equation}
    \mathbb{E}_x\left[R([0,u])^2\right]=\mathbb{E}_x\left[R([0,u])\right]+2\int_{s=0}^u\int_{y=0}^L r(y)p_s(x,y)\mathbb{E}_y\left[R([s,u])\right]^2dy\,ds. \label{E:R2}
\end{equation}

In Section \ref{sec:fmR}, we estimate \eqref{E:R} under assumption (\ref{hpushed}).  We first consider the case  $s>c_{\ref{th1}}L$ (see Lemma \ref{lemma:infs}) and apply a similar argument  to that used to control the heat kernel $p_t$ (see Lemma \ref{th1}).
We then bound the expected number of particles absorbed at $L$ between times 0 and $c_{\ref{th1}}L$ (see Lemma \ref{lemma:aL}) using the Green function. We  combine these two estimates in Lemma \ref{th:r}.
In Section \ref{sec:smR}, we establish an upper bound on \eqref{E:R2} under \eqref{hwp}. The idea is similar to that used to bound the second moment of $Z'$.

\subsection{First moment estimates} \label{sec:fmR}

For any measurable subset $S\subset [0,+\infty)$, define
\begin{equation*}
    I(x,S)=-\frac{1}{2}\int_{S}e^{-\linf s}\frac{\partial}{\partial y}{ \tilde p_s(x,y)}|_{y=L}ds.
\end{equation*}
We also define
\begin{equation}
    \ell(S)=\int_Se^{(\lambda_1-\linf)s}ds.
    \label{def:ls}
\end{equation}
We denote by $\text{Leb}(S)$ the Lebesgue measure of the set $S$.
Since $\lambda_1$ is increasing with respect to $L$, we have
\begin{equation}
    \ell(S)\leqslant \text{Leb}(S) \label{r:ell}.
\end{equation}
  {We recall that $C$ denotes a positive constant whose value  may change from line to line. In Sections \ref{sec:fmR} and \ref{sec:smR}, these constants only depend on $\rho$.}
\begin{lemma} \label{th:r}Assume that (\ref{hpushed}) holds.
      {There exists a constant $C>0$ such that for all $L$ large enough and $0<s<t$, we have }
    \begin{equation*}
        \left|\mathbb{E}[R([s,t])]-\ell([s,t])g(L)Z_0'\right|\leqslant C(Y_0+g(L)Z_0'),
    \end{equation*}
    where $\ell$ is defined in \eqref{def:ls} and  $g(L)= \sqrt{2\lambda_1}/(2w_1(1)^{  {2}}\|v_1\|^2).$
\end{lemma}
\begin{rem}\label{rem:g}
    According to Lemma \ref{lem:l2norm}  and Remark \ref{r:vw},  as $L\to\infty$,
    \begin{equation*}
        g(L)=\frac{1}{2}\left(\frac{\beta}{\lim\limits_{L\rightarrow\infty}\|v_1\|^2}\right)e^{-2\beta (L-1)}+o(e^{-2\beta L}).
    \end{equation*}
\end{rem}
\noindent As outlined above, the proof of this result is divided into two parts.

\begin{lemma}\label{lemma:infs}
    Assume that (\ref{hpushed}) holds.   {For all $L$ large enough and $t>s>c_{\ref{th1}}L$}
    \begin{eqnarray*}
        \left|I(x,[s,t])-g(L)\ell([s,t])w_1(x)\right|
        \leqslant  e^{-\beta L}g(L)w_1(x),   {\quad \forall x\in[0,L]},
    \end{eqnarray*}
    where $g(L)$ is  as in Lemma \ref{th:r}.
\end{lemma}

\begin{proof} The proof  is similar to the proof of Lemma \ref{th1} and mainly relies on the bounds established in Lemma \ref{lem:est:evk}.
      {First, remark that $v_1'(L)=-\sqrt{2\lambda_1}/{w_1(1)}$ so that $$-\frac{1}{2}(v_1'(L)v_1)/\|v_1\|^2=w_1(1)g(L)v_1=g(L)w_1.$$}
    Since the sum is uniformly convergent for $s>1$, we have
    \begin{multline*}
        I(x,[s,t])-g(L)\ell([s,t])w_1(x)\\=\underbrace{\sum_{k=2}^K -\frac{1}{2}\frac{v_k(x)v_k'(L)}{\|v_k\|^2}\int_{s}^te^{(\lambda_k-\linf)u}du}_{\textstyle{=:U_1}}
        +\underbrace{\sum_{i=K+1}^\infty -\frac{1}{2}\frac{v_k(x)v_k'(L)}{\|v_k\|^2}\int_{s}^te^{(\lambda_k-\linf)u}du}_{\textstyle=:U_2}.
    \end{multline*}
    Note that for all $k\geqslant 2$
    \begin{equation}\int_s^te^{(\lambda_k-\linf)u}du\leq  \frac{e^{(\lambda_k-\linf)s}}{\linf-\lambda_k},
        \label{dis:e}
    \end{equation}
    and recall from Lemma \ref{eigenvloc}, \ref{ev:sc} and \ref{eigenvlocasymp} that
    \begin{equation}\label{ub:linflk}
        \linf-\lambda_{k}\geqslant \begin{cases}\frac{5}{8}\pi^2 & \text{if} \quad 2\leqslant k \leqslant K \\
             \linf            & \text{if} \quad  k\geqslant K+1.\end{cases}
    \end{equation}
    We first bound $U_1$. Lemma \ref{lem:est:evk} (i)-(ii) implies that for $L$ large enough
    \begin{equation}
          {\frac{\left|v_k(x)\right|}{\|v_k\|^2}\leqslant  C_1^{-1/2}C_3 e^{\beta L}v_1(x) \quad  \quad \forall \ k\in \llbracket 2, K\rrbracket, \ \forall x\in[0,L].}\label{eq1:vkk}
    \end{equation}
      {Moreover, a direct calculation shows that} $v_k'(L)$ tends to $0$ as $L\to \infty$ for all $k\in\llbracket 2, K\rrbracket$.
    Putting this together with \eqref{ub:linflk}, \eqref{eq1:vkk}, Remark \ref{r:vw} and Remark \ref{rem:g} and summing over $k\in \llbracket 2, K\rrbracket$, we get that for $L$ large enough
    \begin{equation*}
        \left| U_1\right|\leqslant C e^{2\beta L-\frac{5}{8}\pi^2 s}g(L)w_1(x), \quad \forall x\in[0,L].
        \label{dis:t1}
    \end{equation*}
    It now remains to bound $U_2$. Let $k>K$. We know from  Lemma \ref{lem:est:evk} (ii) that  for $L$ large enough,\begin{equation}
        \frac{|v_k(x)|}{\|v_k\|}\leqslant C_4\sqrt{\rho-1-2\lambda_k}\,e^{{\beta}L}v_1(x), \quad \forall x\in[0,L].
        \label{eq:fR1}
    \end{equation}
    On the other hand, we have $$|v_k'(L)|=(\sqrt{-2\lambda_k})/|\sin(\sqrt{-2\lambda_k}(L-1)|\leqslant (\sqrt{\rho-1-2\lambda_k})/|\sin(\sqrt{-2\lambda_k}(L-1))|.$$ Putting this together with Lemma \ref{lem:est:evk} (i), we see that
    \begin{equation*}
        \frac{|v_k'(L)|}{\|v_k\|}\leqslant C_2^{-1}\sqrt{\rho-1-2\lambda_k}.
    \end{equation*}
    Combining this inequality with
    (\ref{eq:fR1}), using Remarks \ref{r:vw} and \ref{rem:g} and summing over $k>K$, we get that for $L$ large enough
    \begin{equation}
        \left|U_2\right|\leqslant Ce^{2{\beta} L}\left[\sum_{k=K+1}^\infty(\rho-1-2\lambda_k)\frac{e^{(\lambda_k-\linf) s }}{\linf-\lambda_k}\right]g(L)w_1(x), \quad \forall x\in[0,L]. \label{eq;fR3}
    \end{equation}
    We then recall from Equation \eqref{ub:S3} that the sum
    $\sum_{k>K} (\rho-1-2\lambda_{k})e^{(\lambda_k-\lambda_{K+1}) s }$
    is bounded by $CL$ for all $s >1$. Using Equations (\ref{ub:linflk}) and \eqref{eq;fR3}, we see that for  $L$ large enough
    \begin{eqnarray*}
        \left|U_2\right|
        \leqslant CLe^{2{\beta}L-\linf  s }g(L)w_1(x), \quad \forall x\in[0,L].
        \label{dist:t2}
    \end{eqnarray*}
    The result then follows by comparing $U_1$ and $U_2$ with the quantities $S_1$ and $S_2$ in the proof of Lemma \ref{th1}.
\end{proof}

\begin{lemma} \label{lemma:aL}Assume that (\ref{hpushed}) holds. There exists a constant $C>0$ (that only depends on $\rho$) such that for $L$ large enough,
    \begin{equation*}
        I(x,[0,c_{\ref{th1}}L])\leqslant C(1\wedge x), \quad \forall x\in[0,L].
    \end{equation*}
\end{lemma}
\begin{proof} Let $\xi=\frac{(c_{\ref{th1}})^{-1}}{L}$. Using a bound similar to (\ref{rk:green}), we see that
    \begin{equation*}
        I(x,[0,c_{\ref{th1}}L])\leqslant e \int_0^\infty  e^{-(\linf+\xi) s}\left (-\frac{\partial}{\partial y}{ \tilde p_s(x,y)}|_{y=L}\right)ds.
    \end{equation*} Interchanging the partial derivative and the integral in the definition of the Green function, we get
    \begin{equation*}
        \frac{\partial}{\partial y}G_{\xi}(x,y)|_{y=L}= e^{\mu(x-L)}\int_0^\infty  e^{-(\linf+\xi) s}\left (-\frac{\partial}{\partial y}{ \tilde p_s(x,y)}|_{y=L}\right)ds,
    \end{equation*} so that
    \begin{equation*}
        I(x,[0,c_{\ref{th1}}L]) \leqslant Ce^{\mu(L-x)}\left(-\frac{\partial}{\partial y} G_{\xi}(x,y)|_{y=L}\right), \quad \forall x\in[0,L].
    \end{equation*} We then deduce from  (\ref{greenfun}) that for all $x\in[0,L]$
    \begin{equation*}
        \frac{\partial}{\partial y}G_{\xi}(x,y)_{|_{y=L}}=e^{\mu(x-L)}(\omega_{\linf+\xi})^{-1}\psi_{\linf+\xi}'(L)\varphi_{\linf+\xi}(x).
    \end{equation*}
    The definition of $\psi_{\linf+\xi}$ and Lemma \ref{lemma:wro} applied to $h=\frac{1}{c_{\ref{th1}}}$ implies that, for $L$ large enough,
    \begin{align*}
        -\psi_{\linf+\xi}'(L)                & =\sqrt{2{\linf+\xi}}\leqslant C,                                                         \\
        \left(\omega_{\linf+\xi}\right)^{-1} & \leqslant CLe^{-{\beta}L},                                                               \\
        \varphi_{\linf+\xi}(x)               & \leqslant C(1\wedge x)\left(\frac{1}{L}e^{\beta x}+e^{-\beta x}\right), \quad x\in[0,L].
    \end{align*}
    Putting all of this together, we see that for $L$ large enough
    \begin{equation*}
        I(x,[0,c_{\ref{th1}}L])\leqslant C(1\wedge x)\left(e^{\beta (x-L)}+Le^{-\beta L}\right)\leqslant C(1\wedge x).
    \end{equation*}
\end{proof}

\begin{lemma} \label{th:I} Assume that (\ref{hpushed}) holds. There exists a constant $C>0$ (that only depends on $\rho$) such that for all $0\leqslant s<t$ and $L$ large enough, we have \begin{equation*}
        |I(x,[s,t])-g(L)\ell([s,t])w_1(x)|\leqslant C((1\wedge x)+ g(L)w_1(x)),\quad  \forall x\in[0,L].
    \end{equation*}
\end{lemma}
\begin{proof}   {Note that the result is a direct consequence of Lemma \ref{lemma:infs} when $s\geq c_{\ref{th1}}$L.}

    We now assume that $s< c_{\ref{th1}}L$. By definition of $I$, $I(x,[s,t])=I(x,[s,c_{\ref{th1}}L])+I(x,[c_{\ref{th1}}L,t]).$ Thus the triangle inequality yields
    \begin{align}
                  & I(x,[s,t])-g(L)\ell([s,t])w_1(x)|\nonumber                                                                  \\
        \nonumber & \leqslant |I(x,[s,c_{\ref{th1}}L])|  +|I(x,[c_{\ref{th1}}L,t])-g(L)\ell([c_{\ref{th1}}L,t])w_1(x)|\nonumber \\
                  & \quad +g(L)w_1(x)|\ell([c_{\ref{th1}}L,t])-\ell([s,t])|,\label{eq:ITI}
    \end{align}
    for all $x\in[0,L]$.
    Using Lemma \ref{lemma:infs}, we get that the second term on the RHS of \eqref{eq:ITI} is bounded by
    \begin{equation*}
        |I(x,[c_{\ref{th1}}L,t])-g(L)\ell([c_{\ref{th1}}L,t])w_1(x)|\leqslant Cg(L)w_1(x), \quad \forall x\in[0,L],
    \end{equation*}
    for $L$ large enough.
    Besides, the first summand on the RHS of \eqref{eq:ITI} is upper bounded by $I(x,[0,c_{\ref{th1}}L])$ and we know thanks to Lemma \ref{lemma:aL} that
    $$|I(x,[0,c_{\ref{th1}}L])|\leqslant C(1\wedge x),$$
    for $L$ large enough.
    Finally, by definition of $\ell$ (see  \eqref{def:ls} and (\ref{r:ell})), the last term on the RHS of \eqref{eq:ITI} can be written as
    \begin{equation*}
        g(L)w_1(x)|\ell([c_{\ref{th1}}L,t])-\ell([s,t])|= \ell([s,c_{\ref{th1}}L])g(L)w_1(x)\leqslant c_{\ref{th1}}Lw_1(x)g(L).
    \end{equation*}
    According to Lemma \ref{ubv1} and Remark \ref{rem:g}, we know that
    $$Lw_1(x)g(L)\leqslant  C(1\wedge x), \quad \forall x\in[0,L],$$
    for $L$ large enough, which concludes the proof of the lemma.
\end{proof}

\begin{proof}[Proof of Lemma \ref{th:r}] The lemma follows directly from Equation (\ref{E:R}) and Lemma \ref{th:I}.
\end{proof}

\subsection{Second moment estimates}\label{sec:smR}
\begin{lemma}\label{lem:sdmR}
    Assume that (\ref{hwp}) holds and let $u:(1,\infty)\rightarrow (0,\infty)$ be a function such that $u(L)\rightarrow \infty$  and $u(L)/L\rightarrow \infty$ as $L\rightarrow \infty$. There exists a constant $C>0$ (that only depends on $\rho$) such that for $L$ large enough
    \begin{equation*}
        \mathbb{E}_{  x}\left[R([0,u])^2\right]-\mathbb{E}_{  x}\left[R([0,u])\right]\leqslant C\left(1+g(L)^2u^2\right)(Y_0+u e^{-2\beta L}Z_0'), \quad \forall x\in[0,L].
    \end{equation*}
\end{lemma}
\begin{proof}
    Since  $r(x)\leqslant \rho/2$ for all $x\in[0,L]$, we see from   \eqref{E:R2}  that
    \begin{align*}
        \mathbb{E}_x\left[R([0,u])^2\right]\leqslant \mathbb{E}_x\left[R([0,u])\right]+\rho\int_{y=0}^L\int_{s=0}^up_s(x,y)\mathbb{E}_y\left[R([s,u])\right]^2ds dy.
    \end{align*}
    Moreover, we know from Lemma \ref{th:r} and Equation (\ref{r:ell}) that for all $s\in[0,u]$ and $L$ large enough,
    \begin{equation*}
        \mathbb{E}_y\left[R([s,u])\right]\leqslant Ce^{\mu(y-L)}((1\wedge y)\,+\,g(L)(1+(u-s))w_1(y)).
    \end{equation*}
    Then, using that $(a+b)^2\leqslant 2a^2+2b^2$ for all $(a,b)\in\mathbb{R}^2$, $(1\wedge x)^2\leqslant 1$ for all $x\in[0,L]$ and that $1+(u-s)^2\leqslant 1+u^2\leqslant 2u^2$ for all $u\geqslant 1$,  we obtain that for $L$ large enough,
    \begin{align}
         & \int_{y=0}^L\int_{s=0}^up_s(x,y)\mathbb{E}_y\left[R([s,u])\right]^2dsdy\label{eq:smR}                                                                                                                                 \\
         & \leqslant C\left(\underbrace{\int_0^Le^{2\mu(y-L)}\int_{s=0}^u p_s(x,y)ds dy}_{\textstyle=:V_1}\,+\,g(L)^2u^2\underbrace{\int_0^Le^{2\mu(y-L)}w_1(y)^2\int_{s=0}^u p_s(x,y)ds dy}_{\textstyle=:V_2}\right). \nonumber
    \end{align}
    As in the proof of Lemma \ref{lem:smZ}, we can bound the integrals $V_1$ and $V_2$ using the estimates on the Green function established in Lemma \ref{lem:alphat}. Note that $V_2$ has already been estimated in the proof of Lemma \ref{lem:smZ} (it corresponds to the integral $T_2$, see Equations \eqref{int:Green} and \eqref{est:T2}): we know that for $L$ large enough,
    \begin{equation}
        V_2\leqslant  Ce^{\mu(x-L)}\left((1\wedge x)+u^{-2\beta L}w_1(x)\right).\label{est:U2}
    \end{equation}
    We then bound $V_1$ using  \eqref{rk:green} and (\ref{greenfun}). We obtain that for $L$ large enough
    \begin{align}
        V_1 & =\int_0^Le^{2\mu(y-L)}\int_{s=0}^u p_s(x,y)dsdy \leqslant C(D(x)+E(x)),
        \label{B1B2}\end{align}
    with
    \begin{equation*}
        D(x)= e^{\mu(x-L)}\left(\omega_{\linf+\frac{1}{u}} \right)^{-1}\psi_{\linf+\frac{1}{u}}(x)\int_0^x e^{\mu(y-L)}\varphi_{\linf+\frac{1}{u}}(y)\,dy,
    \end{equation*}
    and
    \begin{equation*}
        E(x)= e^{\mu(x-L)}\left(\omega_{\linf+\frac{1}{u}}\right)^{-1}\varphi_{\linf+\frac{1}{u}}(x)\int_x^L e^{\mu(y-L)}\psi_{\linf+\frac{1}{u}}(y)\,dy.
    \end{equation*}
    Following the proof of Lemma \ref{lem:smZ}, we use Equations (\ref{estpsia3}), (\ref{estphia3}) and (\ref{eq:wal}) to bound the quantities $D(x)$ and $E(x)$. We obtain that for $L$ large enough
    \begin{align*}
        D(x) & \leqslant  C(1\wedge (L-x))e^{\mu(x-L)} e^{-\beta x}\int_0^x e^{\mu(y-L)}\left(e^{\beta y}+u e^{-\beta y}\right)dy                   \\
             & \leqslant C(1\wedge (L-x))e^{\mu(x-L)} e^{-\beta x}e^{-\mu L}\left(\int_0^x e^{(\mu+\beta)y}dy+ u\int_0^x e^{(\mu-\beta)y }dy\right) \\
             & \leqslant C(x\wedge1\wedge (L-x))e^{\mu(x-L)} e^{-\beta x}e^{-\mu L}\left(e^{(\mu+\beta)x}+ u e^{(\mu-\beta)x}\right)                \\
             & \leqslant  C(x\wedge 1\wedge (L-x))e^{\mu(x-L)}\left(e^{\mu(x-L)}+u e^{(\mu-2\beta)x}e^{-\mu L}\right)                               \\
             & \leqslant  C(x\wedge 1\wedge (L-x))e^{\mu(x-L)}\left(1+u e^{-2\beta L}\right)                                                        \\
             & \leqslant  C(x\wedge 1\wedge (L-x))e^{\mu(x-L)}\left(1+u e^{-2\beta L}e^{\beta (L-x)}\right),
    \end{align*}
    where the two last inequalities come from the fact that $(\mu-2\beta)x\leqslant (\mu-2\beta)L$ (since $\mu>3\beta>2\beta$ under \eqref{hpushed}, see \eqref{rem:alpha}) and $e^{\beta(L-x)}\geq 1$ for all $x\in[0,L]$. Similarly, we get that for $L$ large enough,
    \begin{align*}
        E(x) & \leqslant  C(1\wedge x)e^{\mu(x-L)}e^{-\beta L}\left(e^{\beta x}+u e^{-\beta x}\right)\int_x^L e^{(\mu-\beta)(y-L)}dy  \\
             & \leqslant  C(1\wedge x)e^{\mu(x-L)}\left(e^{\beta x}+u e^{-\beta x}\right)e^{-\beta L}\int_0^{L-x} e^{-(\mu-\beta)z}dz \\
             & \leqslant C(1\wedge x\wedge(L-x))e^{\mu(x-L)}e^{-\beta L}\left(e^{\beta x}+u e^{-\beta x}\right)                       \\
             & \leqslant  C(1\wedge x\wedge(L-x))e^{\mu(x-L)}\left(1+u e^{-2\beta L}e^{\beta(L-x)}\right).
    \end{align*}
    Using Lemma \ref{ubv1} we then see  that for $L$ large enough,
    \begin{eqnarray*}
        D(x)+E(x)\leqslant e^{\mu(x-L)}((1\wedge x)+u e^{-2\beta L}w_1(x)).
    \end{eqnarray*}
    Putting this together with Equations (\ref{eq:smR}), (\ref{B1B2}), and (\ref{est:U2}), we get that for $L$ large enough,
    \begin{equation*}
        \mathbb{E}_x\left[R([0,u])^2\right]\leqslant \mathbb{E}_x\left[R([0,u])\right]+ Ce^{\mu(x-L)}(1+g(L)^2u^2)((1\wedge x)+u e^{-2\beta L}w_1(x)),
    \end{equation*}
    which concludes the proof of the lemma.
\end{proof}

\section{Descendants of a single particle} \label{sec:W}
In this section, we estimate the number of descendants of one particle at $L$. As outlined in Section \ref{sec:skproof}, the proof is based on \cite[Section 4]{Berestycki2010}. We start the process with a single particle at $L$ and stop its descendants when they reach the level $L-y$, for some large constant $y>0$. We denote by $Z_y$  the total number of particles stopped at $L-y$. We will be interested in the large-$y$ behaviour of the random variable $Z_y$.
Equivalently, one can consider a dyadic BBM with branching rate $\frac{1}{2}$ and drift $-\mu$, starting with a single particle at $0$ and absorbed at $-y$. Indeed, for sufficiently large $L$, the level $L-y$ stays above $1$. As a consequence, $Z_y$ has the same distribution as the number of particles killed at $-y$ in this process. Besides, since $\mu>1$ under (\ref{hpushed}), this BBM with absorption almost surely goes extinct \cite{KESTEN19789} so that $Z_y$ is finite almost surely. In addition, it was shown \cite{Neveu:1988aa} that the process $(Z_y)_{y\geqslant 0}$ is a supercritical continuous time branching process. Note that this differs from \cite{Berestycki2010}: in the case $\rho=1$,  the drift $\mu$ is equal to $1$ (which corresponds to our pulled regime) so that $Z_y$ is critical. This is the reason why this case requires  a control on the \textit{derivative martingale} associated to the BBM. In our case ($\mu>1$), it will be sufficient to consider a certain \textit{additive martingale} to construct a travelling wave (see Equation \eqref{K:equation} below).

In Lemma \ref{lem:Zy}, we prove that $e^{-(\mu-\beta)y}Z_y$ converges to a random variable $W$ such that $P(W>x)$ is proportional  $1/x^\alpha$. This result follows from  the uniqueness of the travelling wave solutions of Kolmogorov's equation and Karamata's Tauberian theorem (see Theorem 8.1.6 of \cite{Bingham:1989aa}).

\begin{lemma} \label{lem:Zy} Assume that (\ref{hwp}) holds.
    There exists a random variable $W$ such that almost surely
    \begin{equation}
        \lim_{y\to+\infty}e^{-(\mu-\beta)y}Z_y= W. \label{def:W}
    \end{equation}
    Besides, for all $u\in\mathbb{R}$, we have $\mathbb{E}\left[\exp\left(-e^{- (\mu-\beta)  u}W\right)\right]=\phi(u)$, where $\phi:\mathbb{R}\rightarrow(0,1)$ solves Kolmogorov's equation
    \begin{equation}
        \frac{1}{2}\phi''  {+}\mu\phi'=\frac{1}{2}\phi(1-\phi),\label{K:equation}
    \end{equation}
    with $\lim_{u\to-\infty}\phi(u)=0$ and $\lim_{u\to+\infty}\phi(u)=1$. In addition, there exists $b_{\ref{lem:Zy}}>0$ such that, as ${  q}\to 0$, we have
    \begin{equation}
        \mathbb{E}\left[e^{-{  q} W}\right]=\exp\left(-{  q}+b_{\ref{lem:Zy}}{  q}^\alpha +o({  q}^\alpha)\right), \label{TE:LTW}
    \end{equation}
    where $\alpha$ is given by \eqref{defalpha:rho}.
\end{lemma}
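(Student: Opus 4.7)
The plan is to prove this in three stages: (i) branching process structure and martingale convergence, (ii) identification of $\phi$ as a traveling wave of Kolmogorov's equation, (iii) the Tauberian expansion.

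\emph{Stage (i).} The strong branching property of BBM applied at the stopping times at which particles first reach $-y_1$, together with translation invariance, shows that $(Z_y)_{y\geq 0}$ is a continuous-time Galton--Watson process: for $0<y_1<y_2$, $Z_{y_2}=\sum_{i=1}^{Z_{y_1}}Z^{(i)}_{y_2-y_1}$ with i.i.d.\ copies $Z^{(i)}_{y_2-y_1}$. Applying the Many-to-one lemma to a tagged BM with drift $-\mu$ weighted by $e^{t/2}$, one obtains $\mathbb{E}[Z_y]=\mathbb{E}[e^{\tau_y/2}]$ for $\tau_y$ the hitting time of $-y$; since $\mu>1$ the Laplace transform of $\tau_y$ evaluated at $1/2$ is finite and yields $\mathbb{E}[Z_y]=e^{(\mu-\beta)y}$, as $\mu-\beta$ is the smaller positive root of $\theta^2/2-\mu\theta+1/2=0$. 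Hence $W_y:=e^{-(\mu-\beta)y}Z_y$ is a nonnegative martingale in $y$, converging almost surely to $W$ by Doob's theorem. An $L\log L$ check (Kesten--Stigum, cf.\ Neveu's direct analysis) provides uniform integrability, so that $\mathbb{E}[W]=1$ and $\mathbb{P}(W>0)=1$; equivalently, $F(\lambda):=\mathbb{E}[e^{-\lambda W}]$ satisfies $F(0)=1$ and $F'(0)=-1$.

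\emph{Stage (ii).} Consider the generating function $v_\theta(x):=\mathbb{E}^x[\exp(-\theta Z_y)]$ of the BBM started at $x\in(-y,\infty)$ with absorption at $-y$. A standard F-KPP derivation — conditioning on the first branching event and letting the time step shrink — shows that $v_\theta$ solves the elliptic Kolmogorov equation on $(-y,\infty)$ with $v_\theta(-y)=e^{-\theta}$ and $v_\theta(+\infty)=0$. Combining the translation identity $Z^{(x)}_y\stackrel{d}{=}Z_{y+x}$ with a suitable rescaling of $\theta$ as $y\to\infty$, the function $\phi$ arises as a monotone solution of Kolmogorov's equation interpolating between the two constant fixed points; the uniqueness (up to translation) of traveling waves with these boundary data, as established by Harris or Kyprianou for F-KPP type equations, identifies this solution with $\phi$.

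\emph{Stage (iii) and main obstacle.} From $F(\lambda)=\phi(-\mu^{-1}\log\lambda)$ and the linearization of Kolmogorov's equation at the relevant fixed point, there are two exponential decay modes for $1-\phi(u)$ at $+\infty$: the dominant mode translates to $1-F(\lambda)\sim\lambda$ (forced by $\mathbb{E}[W]=1$), and the subdominant mode produces a correction of the form $b\lambda^\alpha$, with $\alpha=(\mu+\beta)/(\mu-\beta)$ from (\ref{defnot}) and some $b>0$. By Karamata's Tauberian Theorem (Thm.~8.1.6 in \cite{Bingham:1989aa}), the expansion $1-F(\lambda)=\lambda-b\lambda^\alpha+o(\lambda^\alpha)$ is equivalent to the heavy tail $\mathbb{P}(W>x)\sim Cx^{-\alpha}$. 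Taking logarithms and absorbing the $O(\lambda^2)$ contributions into $o(\lambda^\alpha)$ — which is legitimate since $\alpha<2$ under (\ref{hwp}) — yields the claimed formula $\mathbb{E}[e^{-\lambda W}]=\exp(-\lambda+b\lambda^\alpha+o(\lambda^\alpha))$. The main obstacle is carefully extracting the subleading exponent $\alpha$ from the characteristic equation of the linearized Kolmogorov ODE combined with the substitution $\lambda=e^{-\mu u}$, ruling out contributions from the faster mode, and checking strict positivity of $b$ via the precise constant produced by the Tauberian correspondence; the uniqueness of traveling waves and the $L\log L$ condition are standard inputs but must be cited carefully.
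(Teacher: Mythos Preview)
Your stages (i) and (ii) are correct and close to the paper's argument, which proceeds more tersely by citing \cite{Neveu:1988aa} and \cite{Chauvin:1991uw} for the identification of $W$ with the limit of the additive martingale $W_t(\underline{\lambda})=\sum_v e^{-\underline{\lambda}(X_v(t)+\mu t)}$, $\underline{\lambda}=\mu-\beta$, and for the traveling-wave representation.

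Stage (iii) is where you and the paper diverge, and where your argument has a genuine gap. The paper does \emph{not} read off $\alpha$ and the sign of $b$ from the Kolmogorov ODE. It passes instead to the discrete-time branching random walk obtained by observing the BBM at integer times and invokes Theorem~2.2 of \cite{Liu:2000aa}: for $\theta=\underline{\lambda}$ the fixed-point condition $\varphi(p\theta)=p\,\varphi(\theta)$ with $\varphi(\theta)=\theta^2/2+1/2$ forces $p=1/\underline{\lambda}^{\,2}=\bar{\lambda}/\underline{\lambda}=\alpha$, and the theorem yields directly $\mathbb{P}(W>x)\sim l x^{-\alpha}$ with an explicit $l>0$. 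Karamata (in the easy direction, tail $\Rightarrow$ Laplace) then gives \eqref{TE:LTW} with $b=-l\,\Gamma\!\bigl(-\tfrac{2\beta}{\mu-\beta}\bigr)>0$.

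Your ODE route breaks at two points. First, the substitution $F(\lambda)=\phi(-\mu^{-1}\log\lambda)$ together with the linearization of \eqref{K:equation} at $\phi=1$ (characteristic roots $\mu\pm\beta$) produces powers $\lambda^{(\mu\pm\beta)/\mu}$, not $\lambda$ and $\lambda^\alpha$; the correct additive-martingale/traveling-wave correspondence has $\lambda\propto e^{\underline{\lambda} u}$, and in fact with the boundary conditions and substitution exactly as written in the statement one cannot even recover $F(0^+)=1$. So the exponent bookkeeping you sketch does not go through as stated. Second, and more seriously, even after repairing the substitution so that the two linear modes do map to $\lambda$ and $\lambda^\alpha$, the linearized ODE only gives the \emph{form} $1-F(\lambda)=\lambda+c\lambda^\alpha+o(\lambda^\alpha)$; it says nothing about the sign of $c$. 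Your proposal to obtain $b>0$ ``via the precise constant produced by the Tauberian correspondence'' is circular: going from the Laplace expansion to a power-law tail requires a priori regular variation of the tail, which is exactly what is at stake. Liu's theorem sidesteps both difficulties by establishing the tail, with a strictly positive constant, directly from the branching-random-walk structure.
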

\begin{proof} The first part of the lemma (Equations \eqref{def:W} and \eqref{K:equation}) is a consequence of the uniqueness (up to a multiplicative constant) of the travelling wave solutions of \eqref{K:equation}.

    Following \cite{Neveu:1988aa}, we consider a  dyadic BBM with branching rate $\frac{1}{2}$ and no drift (and no killing). For this specific BBM, which will only be studied in this section, we also denote by $\mathcal{N}_t$ the set of individuals alive at time $t$ and for each particle $v\in \mathcal{N}_t$,  we denote by $X_v(t)$ its position at time $t$. Note that $Z_y$ has the same distribution as the number of \textit{first crossings} of the line $y=\mu t$ studied in \cite{Neveu:1988aa}. Let $\underline{{  q}}=\mu-\beta$ and define
    \begin{equation*}
        W_t(\underline{{  q}})=\sum_{v\in\mathcal{N}_t}e^{-\underline{{  q}}(X_v(t)+\mu t)}.
    \end{equation*}
    This process is a positive martingale \cite{Neveu:1988aa}. Hence, it converges almost surely. We denote by $W(\underline{{  q}})$ its limit. It was shown \cite{Neveu:1988aa} that this convergence also holds in $L^1$ and that
    \begin{equation*}
        \mathbb{E}\left[\exp\left(-e^{-  \underline{q} u}W(\underline{{  q}})\right)\right]=\phi(u),
    \end{equation*}
    where $\phi$   {is as in} \eqref{K:equation}.
    In addition, it was proved (see e.g. \cite[Theorem 8]{Kyprianou:2004aa}) that $e^{-\underline{{  q}}y}Z_y$ is also a martingale that converges almost surely and in $\mathrm{L}^1$ to $W(\underline{  q})$. In particular $\mathbb{E}[W(\underline q)]=1.$

    The second part of the lemma (Equation \eqref{TE:LTW}) concerns the tails of the limiting quantity $W(\underline q)$ and follows from \cite[Theorem 2.2]{Liu:2000aa}   combined with Karamata's Tauberian theorem. Consider the BBM introduced above at discrete times. This defines a branching random walk and one can consider the associated additive martingales $W_n({  q})$ defined as

    \begin{equation*}
        W_n({  q})=\sum_{v\in\mathcal{N}_n}e^{-{  q} X_v(n)-n\varphi({  q})},
    \end{equation*}
    with $$\varphi({  q})=\log\mathbb{E}\left[\sum_{v\in \mathcal{N}_1}e^{-{  q} X_v(1)}\right]=\frac{{  q}^2}{2}+\frac{1}{2},$$  for all ${  q}\in \mathbb{R}$. The additive martingale $W_n({  q})$ is positive so that it converges a.s.~to a limit $W({  q})\geq0$ as $n\to\infty$.
    Theorem 2.2 of \cite{Liu:2000aa} states that, if for some $p>1$,
    \begin{equation}
        \varphi(p{  q})=p\varphi({  q}) \label{cond:p},
    \end{equation}
    and
    \begin{multline}
        E_1:=\mathbb{E}\left[\sum_{v\in \mathcal{N}_1}\left(e^{-{  q} X_v-\varphi({  q})}\right)^p\log^+\left(e^{-{  q} X_v-\varphi({  q})}\right)\right]<\infty,\\ E_2:=\mathbb{E}\left[\left(\sum_{v\in \mathcal{N}_1} e^{-{  q} X_v-\varphi({  q})}\right)^p\right]<\infty, \label{cond:moment}
    \end{multline}
    then there exists $l>0$ such that
    \begin{equation}
        \mathbb{P}\left(W({  q})>x\right)\sim \frac{l}{x^p}, \quad x\to\infty. \label{tail:W}
    \end{equation}
    Note that the condition \eqref{cond:p} and the definition of the function $\varphi$ implies that
    \begin{equation*}
        p=\frac{1}{{  q}^2}.
    \end{equation*}
    For ${  q}= \underline{{  q}}$, we get $p=1/\underline{{  q}}^2$. Remark that $\underline{{  q}}$ is the smallest root of $\frac{{  q}^2}{2}-\mu {  q} +\frac{1}{2}$ and that the second root of this polynomial is given by $\bar{{  q}}=\mu+\beta$. Hence,  $\underline{{  q}}\bar{{  q}}=1$ and $$p=\frac{1}{\underline {  q} ^2}=\frac{\bar{{  q}}}{\underline{{  q}}}=\frac{\mu+\beta}{\mu-\beta}=\alpha\in(1,2).$$
      {Assume for a moment that \eqref{cond:moment} holds for this choice of $p$ and $q$}.
    We then deduce from Equation (\ref{tail:W}) and Karamata's Tauberian theorem \cite[Theorem 8.1.6]{Bingham:1989aa}  that
    \begin{equation*}
        \mathbb{E}\left[e^{-{  q} W(\underline{{  q}})}\right]=1-\, {  q}+b{  q}^\alpha+o({  q}^\alpha),\quad{  q}\to 0,
    \end{equation*}
    with $b=-l\Gamma\left(-(\alpha-1)\right)>0,$ where $\Gamma$ refers to the gamma function. Finally, since  $\alpha \in(1,2)$ under \eqref{hwp}, we have
    \begin{equation*}
        \mathbb{E}\left[e^{-{  q} W}\right]=\exp\left(-\,{  q}+b{  q}^\alpha+o({  q}^\alpha)\right), \quad {  q}\to0.
    \end{equation*}
      {It now remains to prove that \eqref{cond:moment} holds for $q=\underline q$ and $p=\alpha$. To do so, we use our many-to-one (see Lemma \ref{lem:many-to-one}) and many-to-two (see Lemma \ref{lem:many-to-two}) formulae. Let $P_t(x,y)$ be the heat kernel associated to the dyadic BBM with branching rate $\frac{1}{2}$. This density $P$ can be expressed as $P_t(x,y)=e^{\frac{1}{2}t}v_t(x,y)$, where $v_t$ denotes the density of a standard Brownian motion at time $t$.}
    We first bound $E_1$. Note that
    $\log^+(e^{-x})\leq e^{-x}$ for all $x\in\mathbb{R}$. Applying the many-to-one formula to $f(y):=e^{-(p+1)\underline q y- (p+1) \varphi(\underline  q)}$, we get that
    \begin{align*}
        E_1 & \leq \mathbb{E}_0\left[\sum_{v\in \mathcal{N}_1} f(X_v(1))\right]=\int_{-\infty}^{+\infty}f(y)P_1(0,y)dy \\ &\leq e^{\frac{1}{2}-(p+1) \varphi(\underline q)}\int_{-\infty}^{\infty}e^{-(p+1)\underline q y}v_1(0,y)dy
        \leq e^{\frac{1}{2}-(p+1) \varphi(\underline q)+\frac{(p +1)^2\underline q^2}{2}}.
    \end{align*}
    We now move to the second part of \eqref{cond:moment}. Since $p<2$ under \eqref{hwp}, it is sufficient to check that the bound holds for $p=2$. The many-to-two lemma applied to $f(y)=e^{-\underline{q} y-\varphi(\underline{q})}$ entails
    \begin{align*}
        \mathbb{E}_0\left[\left(\sum_{v\in\mathcal{N}_1} f(X_v(1))\right)^2\right]= & \int_{-\infty}^{+\infty} f(y)^2P_1(0,y)dy                                                                   \\
                                                                                    & +\int_0^1\int_{-\infty}^{+\infty}P_s(0,y)\left(\int_{-\infty}^{+\infty}f(z)P_{1-s}(y,z)dz\right)^2 dy \ ds.
    \end{align*}
    A direct calculation (using the explicit form of $P_t$) shows that the first term on the RHS of the above is finite. Using  that $W_t(\underline q)$ is a martingale for the BBM, we get that
    \begin{equation*}
        \int_{-\infty}^{+\infty} f(z)P_{1-s}(y,z)dz= e^{-\varphi(\underline q)s}\int_{-\infty}^{+\infty} e^{-\underline q z -\varphi(\underline q)(1-s)}P_{1-s}(y,z)dz= e^{-\underline q y -\varphi(\underline q)s}.
    \end{equation*}
    Hence
    \begin{align*}
         & \int_0^1\int_{-\infty}^{+\infty}P_s(0,y)\left(\int_{-\infty}^{+\infty}f(z)P_{1-s}(y,z)dz\right)^2 dy \ ds                                                                                        \\
         & = \int_0^1e^{\frac{1}{2}s-2\varphi(\underline q)s}\int_{-\infty}^{+\infty}e^{-2\underline {q} y}v_s(0,y)dy\ ds=\int_0^1 e^{\frac{1}{2}s-2\varphi(\underline q)s+2s^2\underline{q}^2} ds <\infty.
    \end{align*}
    This concludes the proof of the result.
\end{proof}

\section{Convergence to the CSBP: small time steps}\label{sec:smallts}

This section is devoted to the proof of Proposition \ref{prop7} (see Section \ref{sec:notation6} below). Following \cite{Maillard:2020aa}, we prove that after a short time (on the time scale of the CSBP), the Laplace transform of the process $Z_t$ is close to that of an $\alpha$-stable CSBP. As in \cite{Maillard:2020aa} and \cite{Berestycki2010} in the case $\rho=1$, we will decompose the set of particles into two subsets: the particles that reach the level $L$ and those who stay below $L$ at all time. We will then control their respective contributions using the estimates established in Sections \ref{sec:moment:est}, \ref{sec:R} and \ref{sec:W}.

\subsection{Notation and result}\label{sec:notation6}
Before getting to the result, we recall the definition of some quantities introduced in the previous sections and define several new constants that will be used in the remainder of the paper.
From now, we consider the dyadic BBM with absorption at $0$, branching rate $r(x)$ and drift $-\mu$. Recall that $\mathcal{N}_t$ denotes the set of particles alive at time $t$, i.e.~that have not been absorbed at the origin. In this framework,  $\mathcal{N}_t^{L}$ will refer to the set of particles whose ancestors stayed below $L$ until time $t$. Recall that for each particle $v\in\mathcal{N}_t$, $X_v(t)$ denotes its position at time $t$. We also define
\begin{equation}
    N_t=|\mathcal{N}_t|, \quad N'_t=|\mathcal{N}^L_t| \quad \text{and} \quad M(t)=\max\{X_v(t), \; v\in\mathcal{N}_t\}.\label{def:Nt}
\end{equation}
Recall the definitions of the processes $Z_t$ and $Z_t'$ from Section \ref{sec:skproof}:
\begin{align}
    Z_t=\sum_{v\in \mathcal{N}_t}z(X_v(t))\mathbf{1}_{X_v(t)\in[0,L]}, \quad
    Z_t^{'}=\sum_{v\in \mathcal{N}^{L}_t}z(X_v(t)),\label{def:Zt}
\end{align}
where $z$ is the function from \eqref{def:z}. We also recall from Section \ref{sec:moment:est}, the definitions of the processes $Y$ and $\tilde Y,$
\begin{align}
    Y_t=\sum_{v\in \mathcal{N}^{L}_t}(X_v(t)\wedge 1)e^{\mu(X_v(t)-{L})},\quad \tilde Y_t^{}=\sum_{v\in \mathcal{N}^{L}_t} e^{\mu(X_v(t)-{L})}.\label{def:Yt}
\end{align}
We  consider the variable $R([s,t])$ from Section \ref{sec:R}, which counts the number of particles that hit $L$ (for the first time) between times $s$ and $t$.
The notation $\mathbb{P}_{(x,t)}$ and $\mathbb{E}_{(x,t)}$  refer to the probabilities and the expectations for the BBM when we start the process at time $t$ with a single particle at $x$. We denote by $(\mathcal{F}_t,t\geqslant 0)$ the natural filtration of the BBM.

In what follows, we will need to consider a quantity $A$ that goes slowly to infinity as $L$ tends to infinity. In other words, we first let $L\rightarrow \infty$, then $A\rightarrow \infty$ and we will consider the following notation:
\begin{itemize}
    \item $\vep_L$ is a quantity that is bounded in absolute value by a function $h(A,L)$ such that $$\forall A\geqslant 1:\lim_{L\rightarrow \infty}h(A,L)=0,$$
    \item $\vep_{A,L}$ is a quantity that is bounded in absolute value by a function $h(A,L)$ such that $$\lim_{A\rightarrow \infty}\limsup_{L\rightarrow \infty}h(A,L)=0.$$
\end{itemize}
Furthermore, we will consider a function $\bar{\theta}:(0,\infty)\rightarrow (0,\infty)$ satisfying
\begin{equation}
    \bar{\theta}(A)e^{4\beta A}\rightarrow 0, \quad A\to\infty, \label{cond:theta}
\end{equation}
and fix a constant $\Lambda>0$.

We will use the symbol $O(\cdot)$ to denote a quantity that is bounded in absolute value by a constant times the quantity inside the parenthesis. We will also use the letter $C$ to refer to a constant whose value may change from line to line as in the previous sections. In both cases,  the constants may only depend on $\rho$, $\Lambda$ and $\bar{\theta}$. We also assume that the functions $h$ defined above only depend on $\rho$, $\Lambda$ and $\bar{\theta}$.

Fix a time $t>0$ and consider $\theta \in (0,\bar{\theta}(A))$ such that $t\left(\theta e^{2\beta A}\right)^{-1}\in \mathbb{N}$. Set $  {\kappa}\in \mathbb{N}$ such that $t =  \kappa\theta e^{2\beta A}$  and define a  subdivision $(t_k)_{k=1}^{  {\kappa}}$ of the interval $[0,t e^{2\beta L}]$ defined by
\begin{equation}
    t_k=k\theta e^{2\beta (L+A)}\label{disc:t}.
\end{equation}
In words, we consider time steps of length $\theta e^{2\beta A}$ on the time scale of the CSBP, namely $e^{2\beta L}$. We  now recall two asymptotic expansions computed in Sections \ref{sec:spec} and \ref{sec:fmR} that will be needed in the proof of this result.
Let
\begin{equation*}
    a:=\frac{\beta}{\lim\limits_{L\rightarrow\infty}\|v_1\|^2}.
\end{equation*}We know from Corollary \ref{exp:lambda1} that
\begin{equation}
    \lambda_1-\linf=-a e^{-2\beta(L-1)}+o(e^{-2\beta L}),\label{DL}
\end{equation}
and that the quantity $g(L)$ defined in Lemma \ref{th:r} is such that (see Remark \ref{rem:g})
\begin{equation}
    g(L)=2a(1+\vep_L)e^{-2\beta(L-1)}. \label{eq:DLg}
\end{equation}
Finally, we recall from Equation \eqref{r:ell} that for all $0\leqslant s <t$,
\begin{equation}
    \ell([s,t]) \leqslant t-s.\label{rk:l}
\end{equation}

Our goal  in this section is to prove the following result, which is a variation of  \cite[ Proposition 7.1]{Maillard:2020aa} in the case $\rho=1$.
\begin{proposition}\label{prop7}
    Assume that (\ref{hwp}) holds.  Set $b_{\ref{prop7}}= 2^{1-\alpha}\frac{\beta}{\lim\limits_{L\rightarrow\infty}\|v_1\|^2}b_{\ref{lem:Zy}}$. Uniformly in ${  q}\in[0,\Lambda]$, on the event $\{\forall v\in \mathcal{N}_{t_k}, X_v(t_k)\leqslant L\},$
    \begin{multline*}
        \mathbb{E}\left[e^{-{  q} e^{-(\mu-\beta)A} Z^{}_{t_{k+1}}}|\mathcal{F}_{t_k}\right]\\=\exp\left(\left(-{  q}+\theta\left(b_{\ref{prop7}}{  q}^\alpha +\vep_{A,L}\right)\right)e^{-(\mu-\beta)A}Z^{}_{t_k}+O\left(e^{-(\mu-\beta)A}Y^{}_{t_k}\right)\right).
    \end{multline*}

\end{proposition}

\subsection{The particles hitting $L$}

We first control the contribution of the particles that reach $L$. As mentioned in Section \ref{sec:skproof}, we can count the descendants of these particles by stopping them at a  level $L-y$ for some large $y>0$.
\begin{lemma}\label{lem:71}Assume that (\ref{hwp}) holds. Let $y:(1,\infty)\rightarrow (0,\infty)$ be a function such that $y(L)\rightarrow\infty$ and $y(L)/L\to0$ as $L\to \infty$. Uniformly in ${  q}\in[0,\Lambda]$ and $u\in[t_k,t_{k+1}],$ we have
    \begin{equation*}
        \mathbb{E}_{(L-y,u)}
        \left[e^{-{  q} Z^{}_{t_{k+1}}}\right]= \exp\left(-\frac{{  q}}{2}(1+O(\theta e^{2\beta A})+\vep_L)e^{-(\mu-\beta)y}\right).
    \end{equation*}
\end{lemma}
\begin{proof}
    On the event $\{R^{}([u,t_{k+1}])=0\}$, $Z^{}_t=Z_t^{'}$ for all $t\in[u,t_{k+1}]$. Thus, by Markov's inequality, we have
    \begin{eqnarray}
        \left|\mathbb{E}_{(L-y,u)}\left[e^{-{  q} Z^{}_{t_{k+1}}}\right]-\mathbb{E}_{(L-y,u)}\left[e^{-{  q} Z^{'}_{t_{k+1}}}\right]\right|&\leqslant& \mathbb{P}_{(L-y,u)}\left(R^{}([u,t_{k+1}])\geqslant 1\right)\nonumber\\
        &\leqslant& \mathbb{E}_{(L-y,u)}\left[R^{}([u,t_{k+1}])\right]\label{eq:mkRA}.
    \end{eqnarray}
    Yet, according to Lemma \ref{th:r},
    \begin{align}\label{eq:expR}
        \mathbb{E}_{(L-y,u)}\left[R^{}([u,t_{k+1}])\right] & \leqslant C\left(g(L)\ell\left([0,t_{k+1}-u]\right)z(L-y)+e^{-\mu y}\right) \\
                                                           & \leqslant C(\theta e^{2\beta A}z(L-y)+e^{-\mu y}),
        \nonumber
    \end{align}
    where the second inequality comes from \eqref{eq:DLg} and  \eqref{rk:l}.
    Besides, we know from Lemma \ref{lem:fmZ} combined with Equation (\ref{DL}) that
    \begin{align}
        \mathbb{E}_{(L-y,u)}\left[Z^{'}_{t_{k+1}}\right] & =e^{(\lambda_1-\linf)(t_{k+1}-u)}z(L-y)=(1+O(\theta e^{2\beta A}))z(L-y),\label{eq:expZ}
    \end{align}
    and from Lemma \ref{lem:smZ} that
    \begin{align}
        \mathbb{E}_{(L-y,u)}\left[(Z^{'}_{t_{k+1}})^2\right] & \leqslant C((t_{k+1}-u)e^{-2\beta L}z(L-y)+(1\wedge (L-y))e^{-\mu y})\nonumber \\
                                                             & \leqslant C(\theta e^{2\beta A} z(L-y)+e^{-\mu y})\label{eq:varZ}.
    \end{align}
    In addition, Equation \eqref{DL} yields
    \begin{align}\label{eq:zy}
        z(L-y) & =e^{-\mu y}\sinh\left(\sqrt{2\lambda_1}y\right)=e^{-\mu y}\left(\sinh(\beta y)+O(Le^{-\beta L})\right)                                \\
               & =e^{-\mu y}\left(\frac{1}{2}e^{\beta y}+O(ye^{-\beta y})\right)=\frac{1}{2}e^{-(\mu-\beta)y}\left(1+O(ye^{-2\beta y})\right)\nonumber \\
               & =\frac{1}{2}(1+\vep_L)e^{-(\mu-\beta)y},\quad
        \nonumber
    \end{align}
    since $e^{-\beta y}\leqslant ye^{-\beta y}$ and $Le^{-\beta L}\leqslant ye^{-\beta y}$ for $L$ large enough.  Note that the $\vep_L$ depends on the form of the function $y$. Yet, we will only  apply this lemma to a single function $y$ so that we do not need a uniform bound.

    We can now put all these estimates together. Using that
    \begin{equation}
        e^{-\mu y}=e^{-(\mu-\beta) y}e^{-\beta y}= e^{-(\mu-\beta) y}\vep_L \label{eq:int2},
    \end{equation}
    we get from Equations \eqref{eq:expZ}, (\ref{eq:varZ}) and \eqref{eq:zy} that
    \begin{eqnarray*}
        \mathbb{E}_{(L-y,u)}\left[e^{-{  q} Z^{'}_{t_{k+1}}}\right]&=&1-{  q}\mathbb{E}_{(L-y,u)}\left[Z^{'}_{t_{k+1}}\right]+O\left(\mathbb{E}_{(L-y,u)}\left[\left(Z^{'}_{t_{k+1}}\right)^2\right]\right)\\
        &=&1-\frac{{  q}}{2}\left(1+O(\theta e^{2\beta A})+\vep_L\right)e^{-(\mu-\beta)y}.
    \end{eqnarray*}
    Combining this with Equations (\ref{eq:mkRA}), (\ref{eq:expR}) and \eqref{eq:int2}, we obtain
    \begin{equation*}
        \mathbb{E}_{(L-y,u)}\left[e^{-{  q} Z^{}_{t_{k+1}}}\right]=1-\frac{{  q}}{2}\left(1+O(\theta e^{2\beta A})+\vep_L\right)e^{-(\mu-\beta)y}.
    \end{equation*}
    Finally, we use that $e^{-x+O(x^2)}=1-x$ to get that
    \begin{equation*}
        \mathbb{E}_{(L-y,u)}\left[e^{-{  q} Z^{}_{t_{k+1}}}\right]=\exp\left(-\frac{{  q}}{2}(1+O(\theta e^{2\beta A})+\vep_L)e^{-(\mu-\beta)y}\right),\end{equation*}
    which concludes the proof of the lemma.
\end{proof}

\begin{lemma}\label{lemma72}
    Assume that (\ref{hwp}) hold. Uniformly in ${  q}\in[0,\Lambda]$ and in $u\in[t_k,t_{k+1}-L]$,
    \begin{equation*}
        \mathbb{E}_{(L,u)}\left[e^{-{  q} e^{-(\mu-\beta) A} Z^{}_{t_{k+1}}}\right]=\exp\left(\psi_{1,b_{\ref{lem:Zy}}}\left(\frac{{  q}}{2}e^{-(\mu-\beta) A}\right)+e^{-(\mu+\beta) A}\vep_{A,L}\right),
    \end{equation*}
    where $\psi_{1,b_{\ref{lem:Zy}}}({  q})=-{  q}+b_{\ref{lem:Zy}}{  q}^\alpha$ and $b_{\ref{lem:Zy}}$ is the constant from Lemma \ref{lem:Zy}.
\end{lemma}
\begin{proof} Let $y:(1,\infty)\rightarrow (0,\infty)$ be a function such that $y(L)\rightarrow\infty$ and $y(L)/L\to0$ as $L\to \infty$. Starting with a single particle located at $L$ at some time $u\in[t_k,t_{k+1}-L]$, we stop its descendants when they reach the level $L-y$. Denote by $\kappa_y$ the number of particles absorbed at $L-y$ and by $\tau_1\leq\tau_2\leq...\leq\tau_{ \kappa_y}$ the times they hit it. As mentioned in Section \ref{sec:W}, ${ \kappa_y}$ is finite almost surely. Moreover, it is known (see \cite[Theorem 1]{Harris:2007aa}) that there exists a positive constant $d$ (that does not depend on $u$) such that
    \begin{equation*}
        \mathbb{P}_{(L,u)}(\tau_{ \kappa_y}-u>L)\sim d\frac{y}{L^{3/2}}e^{\mu y- \linf L}, \quad L\to\infty.
    \end{equation*}
    Hence, by definition of the function $y$, we get that
    \begin{equation*}
        \mathbb{P}_{(L,u)}(\tau_{ \kappa_y}-u>L)=\vep_L.
    \end{equation*}
    Then,  $\tau_i\in[u,t_{k+1}]\subset[t_k,t_{k+1}]$ for all $i\in\llbracket 1,{ \kappa_y}\rrbracket$, with probability $1-\vep_L$.
    Decomposing $Z_{t_{k+1}}$ into subfamilies according to the ancestors at level $L-y$, we get that
    \begin{eqnarray*}
        \mathbb{E}_{(L,u)}\left[e^{-{  q} e^{-(\mu-\beta)A} Z^{}_{t_{k+1}}}\right]
        &=&\mathbb{E}_{(L,u)}\left[\prod_{i=1}^{ \kappa_y} \mathbb{E}_{(L-y,\tau_i)}\left[e^{-{  q} e^{-(\mu-\beta)A} Z^{}_{t_{k+1}}}\right]\mathbf{1}_{\{\tau_i\in[t_k,t_{k+1}], \forall i\in\llbracket 1,{ \kappa_y}\rrbracket\}}\right]+\vep_L.\\
    \end{eqnarray*}
    Lemma \ref{lem:71} then yields
    \begin{align*}
        \mathbb{E}_{(L,u)} & \left[\prod_{i=1}^{ \kappa_y} \mathbb{E}_{(L-y,\tau_i)}\left[e^{-{  q} e^{-(\mu-\beta)A} Z^{}_{t_{k+1}}}\right]\mathbf{1}_{\{\tau_i\in[t_k,t_{k+1}], \forall i\in\llbracket 1,{ \kappa_y}\rrbracket\}}\right]          \\
                           & =\mathbb{E}_{(L,u)}\left[\exp\left(-\frac{{  q}}{2}(1+O(\theta e^{2\beta A})+\vep_L){ \kappa_y} e^{-(\mu-\beta)y}\right)\mathbf{1}_{\{\tau_i\in[t_k,t_{k+1}], \forall i\in\llbracket 1,{ \kappa_y}\rrbracket\}}\right] \\
                           & =\mathbb{E}_{(L,u)}\left[\exp\left(-\frac{{  q}}{2}e^{-(\mu-\beta)A}(1+O(\theta e^{2\beta A})+\vep_L){ \kappa_y} e^{-(\mu-\beta)y}\right)\right]+\vep_L.
    \end{align*}
    By Lemma \ref{lem:Zy}, the quantity ${ \kappa_y} e^{-(\mu-\beta)y}$ converges in law to a random variable $W$ satisfying \eqref{TE:LTW} as $L\to\infty$. Hence, using that $|e^{-z_1}-e^{-z_2}|<|z_1-z_2|\wedge 1$ for all $z_1,z_2>0$, we get that
    \begin{multline*}
        \mathbb{E}_{(L,u)}\left[\exp\left(-\frac{{  q}}{2}e^{-(\mu-\beta)A}(1+O(\theta e^{2\beta A})+\vep_L){ \kappa_y} e^{-(\mu-\beta)y}\right)\right]\\
        \quad =\mathbb{E}\left[\exp\left(-\frac{{  q}}{2}e^{-(\mu-\beta)A}(1+O(\theta e^{2\beta A})+\vep_L)W\right)\right]+\vep_L.
    \end{multline*}
    On the other hand, remarking that $\alpha(\mu-\beta)=\mu+\beta$, we deduce that
    \begin{align*}
         & \psi_{1,b_{\ref{lem:Zy}}}\left(\frac{{  q}}{2}e^{-(\mu-\beta)A}(1+O(\theta e^{2\beta A})+\vep_L)\right)                                                                                       \\
         & \quad = -\frac{{  q}}{2}e^{-(\mu-\beta)A}(1+O(\theta e^{2\beta A})+\vep_L)+b_{\ref{lem:Zy}}\frac{{  q}^\alpha}{2^\alpha}e^{-(\mu+\beta)A}(1+O(\theta e^{2\beta A})+\vep_L)^\alpha \\
         & \quad = \psi_{1,b_{\ref{lem:Zy}}}\left(\frac{{  q}}{2}e^{-(\mu-\beta)A}\right)+O(\theta e^{-(\mu-3\beta)A})+e^{-(\mu+\beta)A}\vep_{A,L}                                                       \\
         & \quad = \psi_{1,b_{\ref{lem:Zy}}}\left(\frac{{  q}}{2}e^{-(\mu-\beta)A}\right)+e^{-(\mu+\beta)A}\vep_{A,L},
    \end{align*} since $\theta e^{-(\mu-3\beta)A}= \theta e^{4\beta A} e^{-(\mu+\beta)A}$ and $ \theta e^{4\beta A}\rightarrow 0$ as $A\rightarrow \infty$.
    Putting this together with \eqref{TE:LTW}, we see that
    \begin{multline*}
        \mathbb{E}\left[\exp\left(-\frac{{  q}}{2}e^{-(\mu-\beta)A}(1+O(\theta e^{2\beta A})+\vep_L)W\right)\right]\\
        =\exp\left(\psi_{1,b_{\ref{lem:Zy}}}\left(\frac{{  q}}{2}e^{-(\mu-\beta)A}\right)+e^{-(\mu+\beta)A}\vep_{A,L}\right).
    \end{multline*}
    Finally, we get that
    \begin{eqnarray*}
        \mathbb{E}_{(L,u)}\left[e^{-{  q} e^{-(\mu-\beta)\beta A} Z^{}_{t_{k+1}}}\right]&=&\exp\left(\psi_{1,b_{\ref{lem:Zy}}}\left(\frac{{  q}}{2}e^{-(\mu-\beta) A}\right)+e^{-(\mu+\beta)A}\vep_{A,L}\right)+\vep_L\\
        \\ &= & \exp\left(\psi_{1,b_{\ref{lem:Zy}}}\left(\frac{{  q}}{2}e^{-(\mu-\beta) A}\right)+e^{-(\mu+\beta)A}\vep_{A,L}\right),
    \end{eqnarray*}
    which concludes the proof of the lemma.
\end{proof}

\subsection{Proof of Proposition \ref{prop7}}\label{sec:62}

\newcommand\zu{Z^{A,(u)}}

Starting with one particle at $x\leqslant L$ at time $t_k$, we stop the particles when they hit $L$.
We denote by $\mathcal{L}$ the set of particles that hit $L$. For each $v\in \mathcal{L}$, we identify the particle $v$ with the time it hits $L$ and  denote by $Z^{(v)}$ the contribution of its descendants to $Z$.  Writing $Z$ as the sum of these different contributions, we get
\begin{equation*}
    Z^{}_t=Z^{'}_t+\sum_{v\in \mathcal{L}}Z^{(v)}_t.
\end{equation*}
Conditioning on $\mathcal{L}$, we see that
\begin{equation*}
    \mathbb{E}_{(x,t_k)}\left[e^{-{  q} e^{-(\mu-\beta)A}Z^{}_{t_{k+1}}}\right]= \mathbb{E}_{(x,t_k)}\left[e^{-{  q} e^{-(\mu-\beta)A}Z^{'}_{t_{k+1}}}\prod_{v\in\mathcal{L}}\mathbb{E}_{(L,
        u)}\left[e^{-{  q} e^{-(\mu-\beta)A} Z^{(v)}_{t_{k+1}}}\right]\right].\end{equation*}
Since Lemma \ref{lemma72} was only proved for $u\in [t_k,t_{k+1}-L]$, we have to show that only a few particles hit $L$ between times $t_{k+1}-L$ and $t_{k+1}$. Set $s= t_{k+1}-L$. Using Lemma \ref{th:r}, Lemma \ref{lem:fmY}, Equations \eqref{eq:DLg} and \eqref{rk:l},  Markov's inequality and conditioning on $\mathcal{F}_{s}$, we get that
\begin{align}
    \mathbb{P}_{(x,t_k)}\left(|\mathcal{L}\cap[s,t_{k+1}]|\geqslant 1\right) & \leqslant\mathbb{E}_{(x,t_k)}\left[R([s,t_{k+1}])\right]\nonumber                                         \\
                                                                             & \leqslant C\mathbb{E}_{(x,t_k)}\left[(\ell([s,t_{k+1}])+1)g(L)Z^{'}_s+Y_s\right]= \vep_Lz(x).\label{eq:s}
\end{align}
On the other hand, Lemma \ref{lemma72} yields
\begin{align*}
     & \mathbb{E}_{(x,t_k)}\left[e^{-{  q} e^{-(\mu-\beta)A}Z^{'}_{t_{k+1}}}\prod_{v\in\mathcal{L}\cap[t_k,s]}\mathbb{E}_{(L,
    u)}\left[e^{-{  q} e^{-(\mu-\beta)A} Z^{(v)}_{t_{k+1}}}\right]\right]                                                                                                                             \\
     & =\mathbb{E}_{(x,t_k)}\left[\exp\left(-{  q} e^{-(\mu-\beta)A}Z^{'}_{t_{k+1}}\right. \right.                                                                                                    \\
     & \qquad \qquad \qquad \qquad \qquad \left.\left.+R^{}([t_k,s])\left(\psi_{1,b_{\ref{lem:Zy}}}\left(\frac{{  q}}{2}e^{-(\mu-\beta) A}\right)+e^{-({\mu+\beta})A}\vep_{A,L}\right)\right)\right].
\end{align*}
A Taylor expansion combined with Equation (\ref{eq:s}) then gives that
\begin{align}
    \mathbb{E}_{(x,t_k)}\left[e^{-{  q} e^{-(\mu-\beta)A}Z^{}_{t_{k+1}}}\right] & =1-{  q} e^{-(\mu-\beta)A}\mathbb{E}_{(x,t_k)}\left[Z_{t_{k+1}}^{'}\right]\label{eq:TY}                                                                                    \\
                                                                                            & +\left(\psi_{1,b_{\ref{lem:Zy}}}\left(\frac{{  q}}{2}e^{-(\mu-\beta) A}\right)+e^{-({\mu+\beta})A}\vep_{A,L}\right)\mathbb{E}_{(x,t_k)}\left[R^{}([t_k,s])\right]\nonumber \\
                                                                                            & +\;O\left(e^{-2(\mu-\beta)A}\mathbb{E}_{(x,t_k)}\left[\left(Z^{'}_{t_{k+1}}\right)^2+R^{}([t_k,s])^2\right]\right)+\vep_Lz(x).\nonumber
\end{align}
The moments appearing in \eqref{eq:TY} have been bounded in Lemmas \ref{lem:fmZ}, \ref{lem:smZ}, \ref{th:r} and \ref{lem:sdmR}. These lemmas, combined with Equations \eqref{DL}, \eqref{eq:DLg} and \eqref{rk:l} provide the following estimates
\begin{align}
    \mathbb{E}_{(x,t_k)}\left[Z_{t_{k+1}}^{'}\right]                & =e^{(\lambda_1-\linf)(t_{k+1}-t_k)}z(x),\label{tab:1}                                                          \\
    \mathbb{E}_{(x,t_k)}\left[\left(Z_{t_{k+1}}^{'}\right)^2\right] & \leqslant C\left(\theta e^{2\beta A} z(x)+(1\wedge x)e^{\mu(x-L)}\right)\label{tab:2},                         \\
    \mathbb{E}_{(x,t_k)}\left[R^{}([t_k,s])\right]                  & =\ell([0,s-t_k])g(L)z(x)+O\left((1\wedge x)e^{\mu(x-L)}\right),\label{tab:3}                                   \\
    \mathbb{E}_{(x,t_k)}\left[\left(R^{}([t_k,s])\right)^2\right]   & \leqslant C(1+\theta^2e^{4\beta A})\left(\theta e^{2\beta A} z(x)+(1\wedge x)e^{\mu(x-L)}\right)\label{tab:4}.
\end{align}
For the sake of clarity, we will write $Y_{t_k}$ instead of $(1\wedge x)e^{\mu(x-L)}$ in the remainder of the proof.
We know from \eqref{DL} that
\begin{align*}
    \ell([0,s-t_k]) & =\frac{1}{\linf-\lambda_1}\left(1-e^{(\lambda_1-\linf)(s-t_k)}\right)        \\
                    & =(1+\vep_L)a^{-1}e^{2\beta(L-1)}\left(1-e^{(\lambda_1-\linf)(s-t_k)}\right).
\end{align*}
Putting this together with \eqref{eq:DLg} and \eqref{tab:3}, we have
\begin{equation*}
    \mathbb{E}_{(x,t_k)}\left[R^{}([t_k,s])\right]=2(1+\vep_L)\left(1-e^{(\lambda_1-\linf)(s-t_k)}\right) z(x)+O\left(Y_{t_k}\right).
\end{equation*}
Then, recalling that $\psi_{1,b_{\ref{lem:Zy}}}({  q})=-{  q}+b_{\ref{lem:Zy}}{  q}^\alpha$ and that $\alpha(\mu-\beta)=\mu+\beta$, we see that the third term on the RHS of \eqref{eq:TY} is equal to
\begin{align}
    \left(\left(1-e^{(\lambda_1-\linf)(s-t_k)}\right)\left( -e^{-(\mu-\beta)A} {  q}+\frac{b}{2^{\alpha-1}}e^{-(\mu+\beta)A}{  q}^\alpha\right)+e^{-({\mu+\beta})A}\vep_{A,L}\right)z(x)\nonumber \\+O\left(e^{-(\mu-\beta)A}Y_{t_k}\right).\label{eq:t:1}
\end{align}
Let us now bound the last summand on the RHS of \eqref{eq:TY}.
Since $\mu>3\beta$ under \eqref{hwp} (see \eqref{rem:alpha}),
\begin{equation*}
    e^{-2(\mu-\beta)A}e^{2\beta A}=  e^{-(\mu-\beta)A}e^{-(\mu-3\beta)A}=\vep_{A,L}e^{-(\mu-\beta)A}.
\end{equation*}
In addition, note that $\theta^2e^{4\beta L}=\vep_{A,L}.$
Putting this together with (\ref{tab:3}) and (\ref{tab:4}), we get that
\begin{equation}
    e^{-2(\mu-\beta)A}\mathbb{E}_{(x,t_k)}\left[\left(Z^{'}_{t_{k+1}}\right)^2+R^{}([t_k,s])^2\right]=O\left(\theta e^{-(\mu-\beta)A}\vep_{A,L}z(x)+e^{-2(\mu-\beta)A}Y_{t_k}\right) .\label{eq:t:2}
\end{equation}
Equation (\ref{eq:TY}) combined with  Equations (\ref{tab:1}), (\ref{eq:t:1}) and (\ref{eq:t:2}) then yields
\begin{align*}
    \mathbb{E}_{(x,t_k)}\left[e^{-{  q} e^{(\mu-\beta)A}Z_{t_{k+1}}}\right] & =1-{  q} e^{-(\mu-\beta)A}\left(1+e^{(\lambda_1-\linf)(t_{k+1}-t_k)}-e^{(\lambda_1-\linf)(s-t_k)}\right)z(x)      \\
                                                                                        & +\frac{b}{2^{\alpha-1}}{  q}^\alpha e^{-(\mu-\beta)A}\left(1-e^{(\lambda_1-\linf)(s-t_k)}\right)e^{-2\beta A}z(x) \\
                                                                                        & + \theta e^{-(\mu-\beta)A}\vep_{A,L}z(x)+ O\left(e^{-(\mu-\beta)A}Y_k\right),
\end{align*}
where we write $e^{-(\mu+\beta)A}$ as $e^{-(\mu-\beta)A}e^{-2\beta A}$ to obtain the third term and use that $e^{-2(\mu-\beta)A}Y_{t_k}=O\left(e^{-(\mu-\beta)A}Y_{t_k}\right)$ to get the last one.

It now remains to control the exponential factors. Since the exponents are negative,
\begin{equation*}
    \left|e^{(\lambda_1-\linf)(s-t_k)}-e^{(\lambda_1-\linf)(t_{k+1}-t_k)}\right|\leqslant (\linf-\lambda_1)(t_{k+1}-s)=\vep_L,
\end{equation*}
and  \eqref{DL} yields
\begin{equation*}
    e^{(\lambda_1-\linf)(s-t_k)} = \exp\left(-a\theta e^{2\beta A}+\theta e^{2\beta A}\vep_{A,L}\right)=1-a\theta e^{2\beta A}+\theta e^{2\beta A}\vep_{A,L}.
\end{equation*}
Therefore,
\begin{equation}
    \mathbb{E}_{(x,t_k)}\left[e^{-{  q} e^{(\mu-\beta)A}Z^{}_{t_{k+1}}}\right]=1-\left[{  q}  -\frac{ab}{2^{\alpha-1}}\theta{  q}^\alpha + \theta \vep_{A,L}\right]e^{-(\mu-\beta)A}z(x)+O\left(e^{-(\mu-\beta)A}Y_{t_k}\right).\label{eqf61}
\end{equation}
Finally, we use that $e^{-y+O(y^2)}=1-y$ as $y\to0$ to conclude the proof. Hence it suffices to show that $\left(e^{-(\mu-\beta)A}z(x)\right)^2=O(e^{-(\mu-\beta)A}Y_{t_k})$.
Using Lemma \ref{ubv1} and recalling that $\mu>3\beta$ under \eqref{hwp} (see \eqref{rem:alpha}), we see  that
\begin{equation*}
    z(x)^2\leqslant C(1\wedge x)^2e^{2\mu(x-L)}e^{2\beta(L-x)}\leqslant C(1\wedge x)e^{\mu(x-L)}e^{(2\beta -\mu)(L-x)}\leqslant CY_{t_k}.
\end{equation*}
This remark combined with Equation \eqref{eqf61} concludes the proof of Proposition \ref{prop7}.

\section{Convergence to the CSBP}\label{sec:cvcsbp}

This section is devoted to the proof of Theorem \ref{semipushedfr}.
As in \cite{Berestycki2010}, we will first  establish the convergence of the process $Z_t$ (see Theorem \ref{th:csbp} below). The technical arguments used to prove the convergence of $Z$ will be adapted from \cite[Section 8]{Maillard:2020aa}. The proof of Theorem \ref{semipushedfr} will then be deduced following the approach used in \cite{Berestycki2010}.

In this section, we will use all the notation introduced in Section \ref{sec:notation6}. Moreover,  we will need to control the position of the rightmost particle $M(t)$. This quantity could be easily controlled by examining the density of the BBM with absorption at $0$. However, since we only established moment estimates on the BBM in an interval, we will rather consider the density of a BBM killed at 0 and $L+y$, for some large $y>0$. To this extent, we define $Z_{t,y}$, $Z_{t,y}'$, $Y_{t,y}$, $\tilde{Y}_{t,y}$, $R_y$, $w_{1,y}$ in the the same way as $Z_t$, $Z_{t}'$, $Y_t$, $\tilde{Y}_t$, $R$, $w_1$ but for $L-y$ instead of $L$. In what follows, we will write $\Rightarrow$ to refer to the convergence in distribution and $\to_p$ for the convergence in probability.

\subsection{The process $Z_t$}\label{ztcsbp}

\begin{theorem}\label{th:csbp}
    Assume that (\ref{hwp}) holds and suppose that the configuration of particles at time zero satisfies $Z_0\Rightarrow Z$  and   $M(0)-L\rightarrow_p-\infty$ as $L\rightarrow \infty$. Let $b_{\ref{prop7}}$ be the constant from Proposition \ref{prop7}. The finite-dimensional distributions of the processes
    \begin{equation*}
        \left(Z_{e^{2\beta L} t}, t\geqslant 0\right)
    \end{equation*} converges as $L\rightarrow \infty$  to the finite-dimensional distributions of a continuous-state branching process $(\Xi(t),t\geqslant 0)$ with branching mechanism $\Psi({  q}):=b_{\ref{prop7}}{  q}^\alpha$, whose distribution at time zero is the distribution of $Z$.
\end{theorem}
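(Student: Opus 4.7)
The strategy is to characterise the limit via its Laplace transform, using the CSBP formula \eqref{csbpLaplace}--\eqref{diffeq}: it suffices to show that for every fixed $\tau>0$ and $\lambda_0>0$,
\begin{equation*}
\mathbb{E}\bigl[e^{-\lambda_0 Z_{\tau e^{2\beta L}}}\bigr] \;\xrightarrow[L\to\infty]{}\; \mathbb{E}\bigl[e^{-u_\tau(\lambda_0)\,Z}\bigr],
\end{equation*}
where $u_\tau(\lambda_0)$ solves $\partial_t u = -b u^\alpha$, $u_0=\lambda_0$. The plan is to exploit the Markov property of the BBM to iterate Proposition~\ref{prop7} over the $K=\tau/(\theta e^{2\beta A})$ discrete times $t_k=k\theta e^{2\beta(L+A)}$ that partition $[0,\tau e^{2\beta L}]$, and recognise the backward induction as an Euler discretisation of the above ODE.

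Concretely, I would define a backward sequence $(\tilde\lambda^{(k)})_{k=0}^{K}$ with $\tilde\lambda^{(K)}=\lambda_0$ such that, with high probability on the event of no escape past $L$,
\begin{equation*}
\mathbb{E}\bigl[e^{-\lambda_0 Z_{t_K}}\bigm|\mathcal{F}_{t_k}\bigr] \approx \exp\bigl(-\tilde\lambda^{(k)} Z_{t_k} + \text{error}(Y_{t_k})\bigr).
\end{equation*}
Setting $\lambda^{(k)}=\tilde\lambda^{(k)}e^{(\mu-\beta)A}$ in Proposition~\ref{prop7} and using the key algebraic identity $(\alpha-1)(\mu-\beta)=2\beta$ from \eqref{defnot}, the update rule becomes
\begin{equation*}
\tilde\lambda^{(k)} = \tilde\lambda^{(k+1)} - \Delta\,b\,(\tilde\lambda^{(k+1)})^\alpha + \Delta\,\eta_{A,L},\qquad \Delta := \theta e^{2\beta A},
\end{equation*}
which is precisely a backward Euler step of size $\Delta$ for $\partial_t u=-bu^\alpha$. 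Since $\alpha\in(1,2)$ and $u\mapsto bu^\alpha$ is locally Lipschitz on $(0,\lambda_0]$, standard ODE theory yields $\tilde\lambda^{(0)}\to u_\tau(\lambda_0)$ in the iterated limit $L\to\infty$, then $A\to\infty$, then $\theta\to 0$ (with $K\Delta=\tau$ throughout). The constant $b$ is identified with the one appearing in Lemma~\ref{lem:Zy} via the tail expansion of $W$, so that $C=b$ in the notation of Proposition~\ref{prop7}.

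The main technical obstacle is the accumulation of error terms, especially the $O(e^{-(\mu-\beta)A}Y_{t_k})$ term which must be controlled uniformly in $k=0,\dots,K-1$. For this I would proceed in three steps: (i) use the assumption $M(0)-L\to_p-\infty$ to guarantee $\tilde Y_0=Y_0$ is small initially; (ii) control $\mathbb{E}[Y_{t_k}]$ via Corollary~\ref{lem:fmY}, restarting from the initial configuration and using that $Z'_{t_k}$ stays bounded in mean thanks to Lemma~\ref{lem:fmZ}; (iii) apply Markov's inequality and a union bound to ensure $\sum_{k=0}^{K-1}Y_{t_k}$ is negligible with high probability. The event $\{\forall v\in\mathcal{N}_{t_k},X_v(t_k)\le L\}$ required by Proposition~\ref{prop7} is handled by showing that the contribution from particles above $L$ is already absorbed into the $\sum_v Z^{(v)}$ decomposition used in Section~\ref{sec:62}, so it does not cause an additional loss. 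Careful bookkeeping of the $\vep_{A,L}$ terms ensures that, summed over $K=\tau e^{-2\beta A}/\theta$ steps, the error tends to zero in the iterated limit.

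Finally, for the finite-dimensional convergence at times $0<\tau_1<\dots<\tau_n$, I would argue by induction on $n$ using the Markov property at each $\tau_i e^{2\beta L}$. The induction hypothesis provides $Z_{\tau_i e^{2\beta L}}\Rightarrow \Xi(\tau_i)$ and, combined with the one-dimensional convergence already established on the interval $[\tau_i e^{2\beta L},\tau_{i+1}e^{2\beta L}]$ starting from this (random) initial condition, yields the joint convergence. Checking that $M(\tau_i e^{2\beta L})-L\to_p-\infty$ at each intermediate time (so that the hypothesis of the theorem propagates) is a last verification, obtained from the fact that at any time on the CSBP scale the bulk of the mass lies well below $L$, as quantified by the heat-kernel estimate of Lemma~\ref{th1} together with $v_1(x)\lesssim (x\wedge1)e^{-\beta x}$ from Lemma~\ref{ubv1}.
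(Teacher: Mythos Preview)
Your overall strategy---iterate Proposition~\ref{prop7} over the discrete times $t_k$, recognise a backward Euler scheme for $\partial_t u=-bu^\alpha$, control the accumulated errors, and extend to finite-dimensional distributions via the Markov property---is exactly the paper's approach.

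There is, however, a genuine gap in your implementation. Proposition~\ref{prop7} is stated \emph{uniformly in $\lambda\le\bar\lambda$} for a fixed $\bar\lambda$, because its proof (via Lemma~\ref{lemma72}) rests on the expansion $\mathbb E[e^{-sW}]=\exp(-s+bs^\alpha+o(s^\alpha))$ of Lemma~\ref{lem:Zy}, which is only valid as the argument $s=\tfrac{\lambda}{2}e^{-(\mu-\beta)A}\to 0$. Your substitution $\lambda^{(k)}=\tilde\lambda^{(k)}e^{(\mu-\beta)A}$ with $\tilde\lambda^{(K)}=\lambda_0>0$ fixed forces $\lambda^{(k)}$ to grow like $e^{(\mu-\beta)A}$ as $A\to\infty$, so that argument no longer tends to zero and the $\vep_{A,L}$ error in Proposition~\ref{prop7} is no longer controlled. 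The paper resolves this by working not with $Z_t$ but with a shifted process $Z_{t,A}$ (the $Z$-functional for the system with barrier $L-A$), which satisfies $e^{-(\mu-\beta)A}Z_{t,A}\approx Z_t$ on the event $\{M(t)\le L-A\}$. Applying Proposition~\ref{prop7} to this shifted system keeps the parameter $\lambda'\in[0,\Lambda]$ bounded while the induction still yields the Euler scheme in $\lambda'$; a final comparison between $e^{-(\mu-\beta)A}Z_{t,A}$ and $Z_t$ transfers the convergence back.

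A smaller point: the conditioning event $\{M(t_k)\le L\}$ in Proposition~\ref{prop7} is not ``already absorbed into the $\sum_v Z^{(v)}$ decomposition''. That decomposition (Section~\ref{sec:62}) handles particles that \emph{reach} $L$ during $[t_k,t_{k+1}]$ starting from a configuration entirely below $L$; it says nothing about a particle already above $L$ at time $t_k$. The paper deals with this by introducing the good events $G_k=\{\forall j\le k:\,M(t_j)\le L-A,\ Y_{t_j,A}\le Z_{t_j,A}/b_L\}$, proving $\mathbb P(G_K)=1-\vep_L$ via a union bound (Lemma~\ref{lem:proba:G}), and running the Euler sandwich on $\mathbb{1}_{G_k}$. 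This simultaneously dispatches your step~(iii): on $G_k$ the $O(e^{-(\mu-\beta)A}Y_{t_k})$ error is dominated by $e^{-(\mu-\beta)A}Z_{t_k}/b_L$ and can be absorbed into the $\pm\delta$ perturbation of the Euler scheme, avoiding a separate first-moment/union-bound argument.
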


The proof of Theorem \ref{th:csbp} is inspired by \cite[Section 8]{Maillard:2020aa}. Similarly, we discretise time  and use the estimate established in Proposition \ref{prop7}. This allows us to identify the Euler scheme of the branching mechanism $\Psi$. Likewise, we claim that it is sufficient to prove the one-dimensional convergence of the process. Indeed, Theorem \ref{th:csbp} can be deduced from the one-dimensional convergence result and the Markov property of the process if we prove that under the assumptions of Theorem \ref{th:csbp},  for any fixed $t>0$, the two following points hold
\begin{itemize} 
    \item[(1)] $Z(te^{2\beta L})\Rightarrow \Xi_t$, where $\Xi$ is as Theorem \ref{th:csbp},
    \item[(2)]  $L-M(te^{2\beta L})\rightarrow_p \infty$,
\end{itemize}
as $L\to\infty$. Theorem \ref{th:csbp} would follow by induction. Yet, the first point is exactly the one-dimensional convergence of the process $Z$. The second will be proved below using the moment estimates established in Section \ref{sec:moment:est}.

Before we prove this second point, we will need to show that the convergence of the process $Z$ implies the convergence of the processes $Z_y$ under suitable conditions on the initial configuration. This will be the object of the following lemmas.

Note that the assumption $L-M(0)\to_p\infty$ as $L\to\infty$ implies the existence of a sequence $(a_L)$ such that $a_L\to \infty$ and \begin{equation}
    \mathbb{P}(L-M(0)\geqslant a_L)=1-\vep_L \label{p_AL}.
\end{equation}
We denote by $A_L$ the event
\begin{equation*}
    A_L:=\{L-M(0)\geqslant a_L\}.
\end{equation*}
Without loss of generality, one can assume that
\begin{equation}
    a_L\leqslant \sqrt{L}.\label{hyp:aL}
\end{equation}
We first state a technical lemma that compares $w_1$ and $w_{1,-y}$ on $[0,L-a_L]$. The proof of this result can be found in Appendix \ref{proof:lem71}
\begin{lemma}\label{est:wy}
    \begin{equation*}  
        \sup_{y\in[0,\infty)} \quad \sup_{ x\in[0,L-a_L]}\left|e^{-\beta y}\frac{w_{1,-y}(x)}{w_1(x)}-1\right|\to 0, \quad L\to\infty.
    \end{equation*}
\end{lemma}
\begin{lemma}\label{lem:CVZ}
    Let $t>0$ and $y\in\mathbb{R}$. Suppose  that $Z_{te^{2\beta L}}\Rightarrow Z'$ for some random variable $Z'\geqslant 0$ and that  $L-M(te^{2\beta L})\to_p\infty$ as $L\to\infty$. Then, as $L\to \infty$,
    \begin{equation*}
        Z_{te^{2\beta L},-y}\Rightarrow e^{-(\mu-\beta)y}Z'.
    \end{equation*}
\end{lemma}

\begin{proof}
    We deal with the case $t=0$. The proof is similar for fixed $t>0$. Let $f$ be a bounded and continuous test function on $[0,\infty)$. Conditioning on the event $A_L$, we see that
    \begin{eqnarray*}
        \mathbb{E}\left[|f(Z_{0,{-y}})-f(e^{-(\mu-\beta)y}Z_0)|\right]\leqslant 2\|f\|_\infty\mathbb{P}(A_L^c)+ \mathbb{E}\left[\left|f(Z_{0,{-y}})-f(e^{-(\mu-\beta)y}Z_0)\right|\mathbf{1}_{A_L}\right].
    \end{eqnarray*}
    Recall from (\ref{p_AL}) that $\mathbb{P}(A_L^c)=\vep_L$ so that we only need to control the second term on the RHS of the above. Yet, we know from Lemma \ref{est:wy} that
    \begin{equation*}
        \text{ on $A_L$,} \quad e^{(\mu-\beta) y} Z_{0,-y}=(1+\vep_L)Z_0.
    \end{equation*}
    The result follows by continuity of $f$.
\end{proof}

\begin{lemma}\label{lem:ZsqrtL}
    Suppose that $Z_0\Rightarrow Z'$ for some random variable $Z'\geqslant 0$ and that $L-M(0)\to_p\infty$ as $L\to\infty$. Then, as $L\to \infty$
    \begin{equation*}
        Z_{0,-\sqrt{L}}\to_p0.
    \end{equation*}
\end{lemma}
\begin{proof}
    Let $\vep>0$. Conditioning on the event $A_L$, we see that
    \begin{equation*}
        \mathbb{P}\left(Z_{0,-\sqrt{L}}>\vep\right)= \mathbb{P}\left(Z_{0,-\sqrt{L}}\mathbf{1}_{A_L}>\vep\right)+\vep_L.
    \end{equation*}
    According to Lemma \ref{est:wy}, we know that
    \begin{equation*}
        \text{on $A_L$}, \quad Z_{0,-\sqrt{L}}= (1+\vep_L)e^{-(\mu-\beta) \sqrt{L}} Z_0.
    \end{equation*}
    Hence, since $Z_0\mathbf{1}_{A_L}\Rightarrow Z$ and $(1+\vep_L)e^{-\beta \sqrt{L}}\to0$ as $L\to 0$, Slutsky's theorem yields the result.
\end{proof}

We are now ready to prove point (2).
\begin{lemma}\label{lem:max}
    Let $t>0$. Suppose that $Z_0\Rightarrow Z'$ for some random variable $Z'\geqslant 0$ and that $L-M(0)\to_p\infty$ as $L\to\infty$. Then, as $L\to \infty$,
    \begin{equation*}
        L-M(te^{2\beta L})\to_p\infty.
    \end{equation*}
    More precisely, we have
    \begin{equation*}
        \mathbb{P}\left(L-M(te^{2\beta L})\leqslant a_L\right)=\vep_L.
    \end{equation*}
\end{lemma}
\begin{proof}
    Consider the event that no particle reaches level $L+\sqrt{L}$ before time $te^{2\beta L}$ \begin{equation*}
        B_L=\{R_{-\sqrt{L}}([0,te^{2\beta L}])=0\}.
    \end{equation*}
    Using Lemma \ref{lemmatild} and  that $\tilde{Y}\geqslant 0$, we get that  for $L$ large enough
    \begin{equation}
        \mathbb{E}\left[\tilde{Y}_{te^{2\beta L},{-\sqrt{L}}}\mathbf{1}_{B_L}|\mathcal{F}_0\right]\leqslant\mathbb{E}\left[\tilde{Y}_{te^{2\beta L},{-\sqrt{L}}}|\mathcal{F}_0\right]\leqslant Ce^{-\beta (L+\sqrt{L})}Z_{0,-\sqrt{L}}'.\label{eq:EYtildeL}
    \end{equation}
    Lemma \ref{th:r} combined with Equation (\ref{eq:DLg}) implies that for $L$ large enough
    \begin{align}
        \mathbb{E}\left[R_{-\sqrt{L}}([0,te^{2\beta L}])|\mathcal{F}_0\right] & \leqslant C\left(Y_{0,-\sqrt{L}}+g(L+\sqrt{L})te^{2\beta L}Z_{0,-\sqrt{L}}\right)\nonumber \\&\leqslant C\left( Y_{0,-\sqrt{L}}+ e^{-2\beta \sqrt{L}}Z_{0,-\sqrt{L}}\right).\label{eq:ERL}
    \end{align}
    Then, conditioning on the event $A_L$, we see that
    \begin{equation}
        \mathbb{P}\left(L-M(te^{2\beta L})\leqslant a_L|\mathcal{F}_0\right)\leqslant
        \mathbb{P}\left(L-M(te^{2\beta L})\leqslant a_L|\mathcal{F}_0\right)\mathbf{1}_{A_L}+\mathbf{1}_{A_L^c}. \label{eq:lm}
    \end{equation}
    Since $\mathbb{P}(A_L^c)=\vep_L$, we only need to bound the first term on the RHS of \eqref{eq:lm}. Yet, note that
    \begin{equation*}
        \mathbb{P}\left(L-M(te^{2\beta L})\leqslant a_L|\mathcal{F}_0\right)=\mathbb{P}\left(e^{\mu(M(te^{2\beta L})-(L+\sqrt{L}))}\geqslant e^{-\mu (\sqrt{L}+a_L)}|\mathcal{F}_0\right).
    \end{equation*}
    In addition, remark that
    $$ B_L\cap \left\{e^{\mu(M(te^{2\beta L})-(L+\sqrt{L}))}\geqslant e^{-\mu (\sqrt{L}+a_L)}\right\} \subset B_L\cap \left\{\tilde{Y}_{-\sqrt{L},te^{2\beta L}}\geqslant e^{-\mu (\sqrt{L}+a_L)}\right\}.
    $$
    Hence,
    \begin{equation*}
        \mathbb{P}\left(e^{\mu(M(te^{2\beta L})-(L+\sqrt{L})}\mathbf{1}_{B_L}\geqslant e^{-\mu (\sqrt{L}+a_L)}|\mathcal{F}_0\right)\leqslant \mathbb{P}\left(\tilde{Y}_{te^{2\beta L},-\sqrt{L}}\mathbf{1}_{B_L}\geqslant e^{-\mu (\sqrt{L}+a_L)}|\mathcal{F}_0\right).
    \end{equation*}
    Therefore, conditioning on the event $B_L$, we get that
    \begin{align}
         &   \mathbb{P}\left(L-M(te^{2\beta L})\leqslant a_L|\mathcal{F}_0\right)\mathbf{1}_{A_L}\nonumber                                                                                                                           \\&\leqslant \mathbb{P}\left(\tilde{Y}_{te^{2\beta L},-\sqrt{L}}\mathbf{1}_{B_L}\geqslant e^{-\mu (\sqrt{L}+a_L)}|\mathcal{F}_0\right)\mathbf{1}_{A_L}+\mathbb{P}\left(B_L^c|\mathcal{F}_0\right)\mathbf{1}_{A_L}\nonumber\\
         & \leqslant \left(e^{\mu(\sqrt{L}+a_L)}\mathbb{E}\left[\tilde{Y}_{te^{2\beta L},{-\sqrt{L}}}\mathbf{1}_{B_L}|\mathcal{F}_0\right]+\mathbb{E}\left[R_{-\sqrt{L}}([0,te^{2\beta L}])|\mathcal{F}_0\right]\right)\mathbf{1}_{A_L}\nonumber \\
         & \leqslant C\left(\left[e^{\mu(\sqrt{L}+a_L)}e^{-\beta(L+\sqrt{L})}+e^{-2\beta \sqrt{L}}\right]Z_{0,-\sqrt{L}}+Y_{0,-\sqrt{L}}\right)\mathbf{1}_{A_L},\label{eq:77}
    \end{align}
    where the second line is obtained thanks to conditional Markov's inequality and the last one by combining Equations \eqref{eq:EYtildeL} and \eqref{eq:ERL}. Then, we see from \eqref{hyp:aL}  that
    $$e^{\mu(\sqrt{L}+a_L)}e^{-\beta(L+\sqrt{L})}\leqslant e^{(2\mu-\beta)\sqrt{L}-\beta L}\leqslant e^{-CL},$$
    for $L$ large enough.
    We then recall from Lemma \ref{ubv1} that for all $x\in[0,L+\sqrt{L}]$, we have
    \begin{equation*}
        \frac{1\wedge x\wedge (L+\sqrt{L}-x)}{w_{1,-\sqrt{L}}(x)}\leqslant Ce^{-\beta(L+\sqrt{L}-x)},
    \end{equation*}
    so that for all $L$ large enough
    \begin{align*}
        \text{on $A_L$,}\quad \forall u\in \mathcal{N}_0, \quad 1\wedge X_u(0) & = 1\wedge X_u(0)\wedge (L+\sqrt{L}-X_u(0))                                                            \\
                                                                               & \leqslant C e^{-\beta (\sqrt{L}+a_L)}w_{1,-L}(X_u(0))\leqslant C e^{-\beta \sqrt{L}}w_{1,-L}(X_u(0)).
    \end{align*}
    Thus we get that, for sufficiently large $L$,
    \begin{equation}
        \text{on $A_L$,} \quad  Y_{0,-\sqrt{L}}\leqslant C e^{-\beta \sqrt{L}}Z_{0,-\sqrt{L}}.\label{eq:79}
    \end{equation}
    Finally, combining Equations \eqref{eq:lm}, \eqref{eq:77} and \eqref{eq:79}, we get that
    \begin{equation}
        \mathbb{P}\left(L-M(te^{2\beta L})\geqslant a_L|\mathcal{F}_0\right)= \vep_LZ_{0,-\sqrt{L}}\mathbf{1}_{A_L}+\mathbf{1}_{A_L^c} \label{IQ:aL}.
    \end{equation}
    Lemma \ref{lem:ZsqrtL} then yields the result.
\end{proof}

We now move to point (1). As in \cite{Maillard:2020aa}, we claim that
it is  sufficient to assume that $Z_0\rightarrow_pz_0$ as $L\rightarrow \infty$ for some constant $z_0\geq 0$ instead of the one dimensional convergence (1). One can then deduce Theorem \ref{th:csbp} thanks to a conditioning argument.
Hence, we will prove the following: for fixed $t>0$ and  ${  q}>0$,
\begin{equation}
    \underset{L\rightarrow \infty}{\lim}\mathbb{E}\left[e^{-{  q} Z_{te^{2\beta L}}}\right]=e^{-z_0u_t({  q})}, \label{eq:CV:CSBP}
\end{equation}
where $u_t({  q})$ is the function from Equation (\ref{diffeq}) corresponding to the branching mechanism $\Psi({  q})=b_{\ref{prop7}}{  q}^\alpha$.
We recall from Section \ref{sec:notation6} that we divided the interval $[0,te^{2\beta L}]$ into small time steps of length $\theta e^{2\beta(L+A)}$ so that we consider the process at times $t_k=k\theta e^{2\beta (L+A)}$ for  $k\in\llbracket 0,  {\kappa}\rrbracket$.

We also recall  the definition of $a_L$ from Equation (\ref{p_AL}) and define $b_L=a_L-A$. Note that $b_L\to\infty$ as $L\to \infty$ by definition of $A$. For $k\in\llbracket 0,  {\kappa}\rrbracket$, let
\begin{equation}
    G_k=\left\{\forall j\in\llbracket0,k\rrbracket:M(t_j)\leqslant L-A-b_L, \, Y_{t_j,A}\leqslant Z_{t_j,A}/b_L\right\}. \label{def:Gk}
\end{equation}

\begin{lemma}\label{lem:proba:G}
    We have $\mathbb{P}(G_{  {\kappa}})=1-\vep_L$.
\end{lemma}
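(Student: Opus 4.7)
The plan is to establish the stronger statement $\mathbb{P}(\exists j \in \llbracket 0, K\rrbracket: M(t_j) > L - a_L) = \vep_L$, which then implies Lemma~\ref{lem:proba:G}. Indeed, on the event $\{M(t_j) \le L - a_L\}$, the condition $M(t_j) \le L - A$ in the definition of $G_K$ holds since $a_L > A$. For the second condition, each surviving particle $v$ satisfies $L - A - X_v(t_j) \ge b_L$, so using the explicit form \eqref{vecp1} of $w_1^{L-A}$ together with Remark~\ref{r:vw} and $\sinh(\beta b_L) \ge e^{\beta b_L}/3$ for $b_L$ large, a direct pointwise calculation gives $(X_v(t_j) \wedge 1)/w_1^{L-A}(X_v(t_j)) \le C e^{-\beta b_L}$, and summing over $v$ yields $Y_{t_j, A} \le C e^{-\beta b_L} Z_{t_j, A} \le Z_{t_j, A}/b_L$ since $b_L \to \infty$.

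To bound $\mathbb{P}(\exists j: M(t_j) > L - a_L)$, I would adapt the argument used just above for $L - M(te^{2\beta L}) \to_p +\infty$. Fix a large $y > 0$ and introduce the auxiliary BBM obtained from the original BBM by additional killing at $L + y$, with quantities denoted by the subscript $-y$. If no particle of the original BBM has reached $L + y$ by time $t_K$, the two processes coincide on $[0, t_K]$, and then $M(t_j) > L - a_L$ forces $\tilde{Y}_{t_j, -y} \ge e^{-\mu(a_L+y)}$. Combined with Markov's inequality and a union bound over $j$, this yields
\begin{equation*}
\mathbb{P}(\exists j: M(t_j) > L - a_L \mid \mathcal{F}_0) \le \mathbb{E}[R_{-y}([0, t_K]) \mid \mathcal{F}_0] + (K+1)\, e^{\mu(a_L+y)} \max_{0 \le j \le K} \mathbb{E}[\tilde{Y}_{t_j, -y} \mid \mathcal{F}_0].
\end{equation*}

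The two conditional expectations are controlled by Lemmas~\ref{th:r} and~\ref{lemmatild} applied to the shifted barrier $L + y$. Using $t_K = \tau e^{2\beta L}$, $g(L+y) = O(e^{-2\beta(L+y)})$, together with the elementary relations $Z'_{0,-y} \le C e^{-(\mu-\beta)y} Z'_0$ and $Y_{0,-y} = e^{-\mu y} Y_0$ already used above (valid on the event $\{M(0) \le L - a_L\}$), one obtains $\mathbb{E}[R_{-y}([0, t_K]) \mid \mathcal{F}_0] \le C\tau e^{-(\mu+\beta)y} Z'_0 + C e^{-\mu y} Y_0$, while the crucial cancellation $e^{\mu(a_L+y)} \cdot e^{-\beta(L+y)} \cdot e^{-(\mu-\beta)y} = e^{\mu a_L - \beta L}$ turns the second term into $C(K+1)\, e^{\mu a_L - \beta L} Z'_0$, independent of $y$. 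Since $K+1$ depends only on $A$, $\theta$ and $\tau$, a standard truncation on the high-probability events $\{Z_0 \le M\}$ and $\{Y_0 \le 1/M\}$ (using $Z_0 \Rightarrow Z$ and $Y_0 \to_p 0$ established just above), combined with choosing $a_L$ growing slowly enough that $\mu a_L = o(\beta L)$ and taking $y = y(L) \to \infty$ sufficiently slowly, makes both contributions vanish in the $L \to \infty$ limit. The main technical hurdle is precisely this joint choice of $y$ and $a_L$; the $y$-cancellation in the Markov estimate on $\tilde{Y}$ is what allows the argument to succeed despite the growing number $K$ of time steps being accumulated in the union bound.
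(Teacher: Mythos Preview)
Your proposal is correct and follows essentially the same route as the paper. The paper's proof is extremely terse: it handles $G_0$ via the initial assumptions, then for each $k\in\llbracket 1,K\rrbracket$ simply asserts that the argument establishing \eqref{eq:Mcv} can be repeated to give $\mathbb{P}(L-A-M(t_k)\geqslant b_L)=1-\vep_L$, invokes the pointwise comparison \eqref{eq:Y/Z} to deduce the $Y/Z$ condition, and concludes by a union bound over the (for fixed $A$, $\theta$, $\tau$) finitely many $k$. Your write-up is a detailed execution of exactly this plan: you make the union bound explicit, you spell out the auxiliary killing at $L+y$, and you isolate the cancellation $e^{\mu(a_L+y)}\cdot e^{-\beta(L+y)}\cdot e^{-(\mu-\beta)y}=e^{\mu a_L-\beta L}$ that makes the Markov estimate on $\tilde Y$ uniform in $y$. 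One small remark: for $j=0$ Lemma~\ref{lemmatild} does not apply, but this case is already covered directly by the hypothesis \eqref{def:al}, so the max in your displayed bound should in effect range over $j\geqslant 1$; and your freedom to ``choose $a_L$ slowly growing'' is legitimate since \eqref{def:al} only requires the existence of some diverging sequence, which can always be slowed down.
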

\begin{proof} On $A_L$, we have $M(t_0)\leqslant L-a_L= L-A-b_L$. Moreover, we see from Lemma \ref{ubv1} that for sufficiently large $L$,
    \begin{equation*}
        \text{on $A_L$,}\quad Y_{t_0,A}\leqslant  e^{-\beta(a_L-A)}Z_{t_0,A}\leqslant e^{-\beta b_L}Z_{t_0,A}\leqslant \frac{1}{b_L}Z_{t_0,A}.
    \end{equation*}
    Thus, $\mathbb{P}(G_0)=1-\vep_L$.  Let $k\in\llbracket 1,  {\kappa}\rrbracket$. We know from Lemma \ref{lem:max} that $$\mathbb{P}\left(L-A-M(t_k)\geqslant b_L\right)=1-\vep_L.$$ Similarly, we can prove that, on this event,  $Y_{t_k,A}\leqslant Z_{t_k,A}/b_L$ for $L$ large enough. We conclude the proof of the lemma using a union bound.
\end{proof}

For fixed ${  q}>0$ and $\delta\in \mathbb{R}$, we define the sequence $({  q}_k^{(\delta)})_{k=0}^{  {\kappa}}$ by
\begin{align*}
    {  q}_{  {\kappa}}^{(\delta )} & ={  q},                                                                           \\
    {  q}_k^{(\delta)}                            & = {  q}_{k+1}^{(\delta)}-\theta(\Psi({  q}_{k+1}^{(\delta)})-\delta).
\end{align*}

\begin{lemma}\label{lem:82}

    \begin{itemize}  Fix $t>0$ and  ${  q}>0$. Suppose that $\theta$ is as in \eqref{disc:t}.
        \item[1.] There exists $\Lambda>0$ such that for $|\delta |$ small enough and for $\theta$ small enough, we have ${  q}_k^{(\delta)}\in[0,\Lambda]$ for all $k\in \llbracket 0,   {\kappa}\rrbracket$ .
        \item[2.] For every $\eta>0$, there exists $\delta >0$ such that for $\theta$  small enough, we have
              \begin{equation*}
                  {  q}_0^{(\delta)},{  q}_0^{(-\delta)}\in[u_t({  q})-\eta,u_t({  q})+\eta].
              \end{equation*}
        \item[3.] For every $\delta >0$, we have for sufficiently large $A$ and $L$, for every $k=0,...,  {\kappa},$
              \begin{eqnarray}
                  \mathbb{E}\left[e^{-{  q}_k^{(\delta)}e^{-(\mu-\beta)A}Z_{t_k,A}}\mathbf{1}_{G_k}\right]-\mathbb{P}\left(G_{  {\kappa}}\setminus G_k\right)\leqslant\mathbb{E}\left[e^{-{  q} e^{-(\mu-\beta)A}Z_{t_{  {\kappa}},A}}\mathbf{1}_{G_{  {\kappa}}}\right],
                  \label{eq:lem82}
              \end{eqnarray}
              and
              \begin{equation}
                  \mathbb{E}\left[e^{-{  q}_k^{(-\delta)}e^{-(\mu-\beta)A}Z_{t_k,A}}\mathbf{1}_{G_k}\right]  \geqslant \mathbb{E}\left[e^{-{  q} e^{-(\mu-\beta)A}Z_{t_{  {\kappa}},A}}\mathbf{1}_{G_{  {\kappa}}}\right].
              \end{equation}
    \end{itemize}
\end{lemma}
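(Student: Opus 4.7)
The sequence $(\lambda_k^{(\pm\delta)})$ is precisely the backward Euler discretisation of the ODE $\dot v(s) = -\psi_{0,b}(v(s)) \pm \delta$ with terminal condition $v(t)=\lambda$, evaluated at the scaled times $s = k\theta e^{2\beta A}$ with step size $\theta e^{2\beta A}$ and total length $t = K\theta e^{2\beta A}$. When $\delta = 0$ this is the Euler scheme for $u_{t-s}(\lambda)$, whose exact solution stays in the compact interval $[u_t(\lambda),\lambda] \subset (0,\infty)$ (since $\psi_{0,b}(0) = 0$ keeps the solution bounded below). As $\psi_{0,b}(u) = bu^\alpha$ is locally Lipschitz on compact subsets of $(0,\infty)$, a standard Gronwall argument shows that for $|\delta|$ and $\theta$ small enough the Euler iterates remain inside the slightly enlarged interval $[u_t(\lambda)/2,\lambda+1] \subset (0,\Lambda)$, establishing (1); and continuity of the Euler error in $\theta$ together with continuity of the ODE in $\delta$ at $\delta=0$ gives $\lambda_0^{(\pm\delta)} \to u_t(\lambda)$, yielding (2).

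\textbf{Part (3), induction.} Treat the upper bound; the lower bound is symmetric. The base case $k=K$ is trivial. For the inductive step at level $k$, reduce the claim to the single-step comparison
\begin{equation*}
\mathbb{E}\left[e^{-\lambda_k^{(\delta)} e^{-(\mu-\beta)A} Z_{t_k,A}}\mathbb{1}_{G_k}\right] \leq \mathbb{E}\left[e^{-\lambda_{k+1}^{(\delta)} e^{-(\mu-\beta)A} Z_{t_{k+1},A}}\mathbb{1}_{G_{k+1}}\right] + \mathbb{P}(G_k \setminus G_{k+1}),
\end{equation*}
which combines with the hypothesis at level $k+1$ via the telescoping identity $\mathbb{P}(G_k \setminus G_{k+1}) + \mathbb{P}(G_{k+1}\setminus G_K) = \mathbb{P}(G_k \setminus G_K)$ (valid since $G_K \subseteq G_{k+1} \subseteq G_k$; I read the probability term in the statement as $\mathbb{P}(G_k \setminus G_K)$, the only reading consistent with this nesting, since $G_K \setminus G_k$ is empty). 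Using $\mathbb{1}_{G_{k+1}} = \mathbb{1}_{G_k} - \mathbb{1}_{G_k\setminus G_{k+1}}$ and $e^{-x} \leq 1$, this reduces further to the pointwise inequality, on the $\mathcal F_{t_k}$-measurable event $G_k$,
\begin{equation*}
e^{-\lambda_k^{(\delta)} e^{-(\mu-\beta)A} Z_{t_k,A}} \leq \mathbb{E}\left[e^{-\lambda_{k+1}^{(\delta)} e^{-(\mu-\beta)A} Z_{t_{k+1},A}} \bigm| \mathcal F_{t_k}\right].
\end{equation*}

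\textbf{Part (3), single-step estimate via Proposition \ref{prop7}.} On $G_k$, every particle alive at time $t_k$ satisfies $X_v(t_k) \leq L - A$, so by the branching property (factorising the conditional Laplace transform over descendants of particles at time $t_k$) and Proposition \ref{prop7} applied to each particle, we get on $G_k$
\begin{equation*}
\mathbb{E}\left[e^{-\lambda_{k+1}^{(\delta)} e^{-(\mu-\beta)A} Z_{t_{k+1},A}} \bigm| \mathcal F_{t_k}\right] = \exp\Bigl(\bigl(-\lambda_{k+1}^{(\delta)} + \theta(b(\lambda_{k+1}^{(\delta)})^\alpha + \vep_{A,L})\bigr) e^{-(\mu-\beta)A} Z_{t_k,A} + O\bigl(e^{-(\mu-\beta)A} Y_{t_k,A}\bigr)\Bigr).
\end{equation*}
Since the Euler recursion reads $-\lambda_k^{(\delta)} = -\lambda_{k+1}^{(\delta)} + \theta(b(\lambda_{k+1}^{(\delta)})^\alpha - \delta)$, comparing exponents reduces the desired inequality to
\begin{equation*}
\theta(\delta + \vep_{A,L})\, Z_{t_k,A} + O(Y_{t_k,A}) \geq 0.
\end{equation*}
The event $G_k$ forces $Y_{t_k,A} \leq Z_{t_k,A}/b_L$ with $b_L = a_L - A \to \infty$, so the inequality is implied by $\theta(\delta + \vep_{A,L}) \geq C/b_L$, where $C$ is the absolute constant in the $O(\cdot)$ of Proposition \ref{prop7}; for fixed $\delta > 0$ and $\theta$ this is satisfied once $A$, then $L$, are taken large enough (since $|\vep_{A,L}| \to 0$). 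The $-\delta$ case is identical with $\delta - \vep_{A,L}$ in place of $\delta + \vep_{A,L}$.

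\textbf{Main obstacle.} The delicate point is the three-way interplay between $\delta$, $\vep_{A,L}$ and $1/b_L$: the event $G_k$ is engineered so that the unsigned error $O(Y_{t_k,A})$ is dominated by $\theta \delta Z_{t_k,A}$, but $\delta$ must simultaneously be fixed (for Part (3) to bite) and taken small (for Part (2) to produce $\lambda_0^{(\pm\delta)}$ close to $u_t(\lambda)$). Carefully ordering the quantifiers ($\delta$ first, then $\theta \leq \bar\theta(A)$, then $A$, then $L$) and checking that the constants in Proposition \ref{prop7} are uniform in $\lambda \in [0,\Lambda]$ (so that the estimate can be applied with $\lambda \leftarrow \lambda_{k+1}^{(\delta)}$ for every $k$, using the a priori bound from Part (1)) is the essential bookkeeping to execute cleanly.
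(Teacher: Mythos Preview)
Your proposal is correct and follows the paper's approach: parts (1)--(2) are referred to standard Euler-scheme arguments (the paper cites \cite{Maillard:2020aa}), and part (3) proceeds exactly as in the paper by extracting the pointwise single-step bounds from Proposition~\ref{prop7} on $G_k$ (where the constraint $Y_{t_k,A}\le Z_{t_k,A}/b_L$ absorbs the $O(Y)$ error into the $\pm\delta$) and then inducting downward from $k=K$. You are also right that the statement's $\mathbb{P}(G_K\setminus G_k)$ is a typo for $\mathbb{P}(G_k\setminus G_K)$ (since $G_K\subset G_k$ by definition of \eqref{def:Gk}); one minor slip in your write-up is the quantifier order ``$\theta\le\bar\theta(A)$, then $A$'', which should read $A$ first, then $\theta\le\bar\theta(A)$.
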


\begin{proof} The proof of parts $1$ and $2$ relies on standard results on the Euler scheme and is similar to the proof of Theorem 2.1 in \cite{Maillard:2020aa}. The only difference is that we do not need to modify the function $\Psi$ at zero, since it is a Lipschitz function on any interval $[0,\Lambda]$, $\Lambda>0.$

    Hence, we only prove part $3$ of the lemma.
    Let $\Lambda>0$ be such that the first part of the lemma holds. Let $\delta>0$ and $\tilde \theta$ be small enough so that ${  q}_k^{(\pm\delta)}\in[0,\Lambda]$ for all $k\in \llbracket0,  {\kappa}\rrbracket.$ By Proposition \ref{prop7}, we know that for $L$ and $A$ sufficiently large, for all $\theta<\bar \theta(A)\wedge \tilde \theta$, for all ${  q}'\in[0,\Lambda]$, and for all $k\in \llbracket 0,  {\kappa}\rrbracket$,
    \begin{multline*}
        e^{-{  q}'+\theta(\Psi({  q}')-\delta)e^{-(\mu-\beta)A}Z_{t_k,A}}\mathbf{1}_{G_k}\\\leqslant \mathbb{E}\left[e^{-{  q} 'e^{-(\mu-\beta)A}Z_{t_k,A}}|\mathcal{F}_k\right]\mathbf{1}_{G_k}\\ \leqslant e^{-{  q}'+\theta(\Psi({  q}')+\delta)e^{-(\mu-\beta)A}Z_{t_k,A}}\mathbf{1}_{G_k},
    \end{multline*}
    almost surely.
    Since ${  q}_k^{(\pm\delta)}\in[0,\Lambda]$ for all $k\in \llbracket0,  {\kappa}\rrbracket,$ this also implies that for all  $k\in \llbracket0,  {\kappa}\rrbracket,$
    \begin{align}
        \mathbb{E}\left[e^{-{  q}^{(\delta)}_{k+1}e^{-(\mu-\beta)A}Z_{t_{k+1,A}}}|\mathcal{F}_k\right]\mathbf{1}_{G_k}  & \geqslant e^{-{  q}_k^{(\delta)}e^{-(\mu-\beta)A}Z_{t_k,A}}\mathbf{1}_{G_k},\label{79-}   \\
        \mathbb{E}\left[e^{-{  q}^{(-\delta)}_{k+1}e^{-(\mu-\beta)A}Z_{t_{k+1,A}}}|\mathcal{F}_k\right]\mathbf{1}_{G_k} & \leqslant e^{-{  q}_k^{(-\delta)}e^{-(\mu-\beta)A}Z_{t_k,A}}\mathbf{1}_{G_k}. \label{79+}
    \end{align}
    The third part of the lemma follows by induction as in \cite{Maillard:2020aa}: for $k=  {\kappa}$, Equation \eqref{eq:lem82} holds. Let $k\in\llbracket 0,  {\kappa}-1\rrbracket$ and assume that \eqref{eq:lem82} holds for $k+1$. By definition of the sequence $(G_k)$, we have  $G_k\subset G_{k+1}$,  so that the induction hypothesis implies that
    \begin{align*}
        \mathbb{E}\left[e^{-{  q}_k^{(\delta)}e^{-(\mu-\beta)A}Z_{t_{k+1},A}}\mathbf{1}_{G_k}\right]-\mathbb{P}\left(G_{  {\kappa}}\setminus G_k\right) & \leqslant\mathbb{E}\left[e^{-{  q} e^{-(\mu-\beta)A}Z_{t_{  {\kappa}},A}}\mathbf{1}_{G_{  {\kappa}}}\right], \\
        \mathbb{E}\left[e^{-{  q}_k^{(-\delta)}e^{-(\mu-\beta)A}Z_{t_{k+1},A}}\mathbf{1}_{G_k}\right]                                                                  & \geqslant\mathbb{E}\left[e^{-{  q} e^{-(\mu-\beta)A}Z_{t_{  {\kappa}},A}}\mathbf{1}_{G_{  {\kappa}}}\right].
    \end{align*}
    These equations combined with Equations \eqref{79-} and \eqref{79+} concludes the proof of the third point.
\end{proof}

We now deduce (\ref{eq:CV:CSBP}) from Lemmas \ref{lem:proba:G} and \ref{lem:82}.

We see from  Lemma \ref{est:wy} (applied to $L-A$ instead of $L$) that
\begin{equation*}
    \text{on $G_{  {\kappa}}$,}\quad Z_{t_{  {\kappa}},A}=(1+\vep_L)e^{(\mu-\beta)A}Z_{t_{  {\kappa}}}.
\end{equation*}
Combining this with Lemma  \ref{lem:proba:G}, we obtain
\begin{equation}
    \mathbb{E}\left[e^{-{  q} e^{-(\mu-\beta)A} Z_{t_{  {\kappa}},A}}\mathbf{1}_{G_{  {\kappa}}}\right]=\mathbb{E}\left[e^{-{  q} Z_{t_{  {\kappa}}}}\mathbf{1}_{G_{  {\kappa}}}\right]+\vep_L=\mathbb{E}\left[e^{-{  q} Z_{t_{  {\kappa}}}}\right]+\vep_L. \label{eq:715}
\end{equation}
Let $\eta>0$ and choose $\delta>0$ such that the second part of Lemma \ref{lem:82} holds. Therefore, the third part of the lemma and Equation \eqref{eq:715} imply that for $L$ and $A$ large enough,
\begin{equation}\label{eq:716}
    \mathbb{E}\left[e^{-(u_t({  q})+\eta)e^{-(\mu-\beta)A}Z_{0,A}}\right]-\vep_L\leqslant\mathbb{E}\left[e^{-{  q} Z_{t_{  {\kappa}}}}\right]\leqslant \mathbb{E}\left[e^{-(u_t({  q})-\eta)e^{-(\mu-\beta)A}Z_{0,A}}\right]+\vep_L.
\end{equation}
Since $\mathbb{P}(G_{  {\kappa}})=1-\vep_L$, we know from Lemma \ref{lem:CVZ} (applied to $L-A$ instead of $L$) that
\begin{equation*}
    Z_{0,A}\Rightarrow e^{(\mu-\beta)A}z_0, \quad L\to\infty.
\end{equation*}
Hence, letting $L\to\infty$ in \eqref{eq:716}, we get that
\begin{equation*}
    e^{-(u_t({  q})+\eta)z_0}\leqslant \liminf\limits_{L\rightarrow\infty}\mathbb{E}\left[e^{-{  q} Z_{te^{2\beta L}}}\right]\leqslant \limsup\limits_{L\rightarrow\infty}\mathbb{E}\left[e^{-{  q} Z_{te^{2\beta L}}}\right]\leqslant e^{-(u_t({  q})-\eta)z_0}.
\end{equation*}
Letting $\eta\to0$ then concludes the proof of \eqref{eq:CV:CSBP}.

\subsection{The number of particles $N_t$} \label{nbrpart:CSBP}
In this section, we conclude the proof of Theorem \ref{semipushedfr} by deducing the result on the number of particles $N_t$ from the convergence of the process $Z_t$ established in Theorem \ref{th:csbp}.  In Theorem \ref{ThNt}, we state a version of Theorem \ref{semipushedfr}, under more general assumptions on the initial configuration.

First, recall  the definitions of the processes $Z_t$, $Z_t'$, $N_t$ and $N_t'$ from Section \ref{sec:smallts} (see Equations \eqref{def:Nt} and \eqref{def:Zt}). In addition, define the normalising constant
\begin{equation}
    \label{nc}
    d_{\infty}=\frac{1}{2}\left(\lim\limits_{L\rightarrow\infty}\|v_1\|\right)^{-2}\left(\lim\limits_{L\rightarrow\infty}\int_0^Le^{-\mu y} v_1(y)dy\right).
\end{equation}
Recall from Lemma \ref{lem:l2norm} that the $\mathrm{L}^2$-norm of the eigenvector $v_1$ converges to a positive limit as $L$ goes to $\infty$. Besides, one can prove that $L\mapsto \int_0^Le^{-\mu y} v_1(y)dy$ also converges to a positive limit as $L\to \infty$ using the dominated convergence theorem combined with Lemma \ref{ubv1}. Thus the constant $d_{\infty}$ is well defined.

\begin{theorem}\label{ThNt} Assume that \eqref{hwp} holds and let $b_{\ref{prop7}}$ be as in Theorem \ref{th:csbp}.  In addition, suppose that the configuration of particles at time zero satisfies $Z_0\to_p z_0$, for some $z_0>0$,  and  that $L-M(0)\rightarrow_p\infty$ as $L\rightarrow \infty$. Then, the finite-dimensional distributions of the processes
    \begin{equation*}
        \left(\frac{1}{d_\infty e^{(\mu-\beta)L}z_0}N_{e^{2\beta L} t}, t\geqslant 0\right)
    \end{equation*} converge as $L\rightarrow \infty$  to the finite-dimensional distributions of a continuous-state branching process $(\Xi(t),t\geqslant 0)$ with branching mechanism $\Psi({  q}):=b_{\ref{prop7}}{  q}^\alpha$ starting from $1$.
\end{theorem}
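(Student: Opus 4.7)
It suffices to prove that for every fixed $t>0$,
\begin{equation*}
\frac{N_{e^{2\beta L}t}}{C_0\,e^{(\mu-\beta)L}} - Z_{e^{2\beta L}t} \xrightarrow[L\to\infty]{\mathbb{P}} 0,
\end{equation*}
since Theorem~\ref{th:csbp} gives $Z_{e^{2\beta L}t}\Rightarrow z_0\,\Xi(t)$ in the sense of finite-dimensional distributions, with $\Xi$ a CSBP of branching mechanism $\psi_{0,b}$ starting at $1$; the extension of the one-dimensional convergence of $N$ to the finite-dimensional one then follows by the Markov property exactly as in the discussion preceding the proof of Theorem~\ref{th:csbp}.

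\textbf{Conditional first moment.} Following the scheme of \cite[Section~4.6]{Berestycki2010}, I introduce an intermediate time $s_L$ with $s_L\to\infty$ and $s_L=o(e^{2\beta L})$ (for instance $s_L=L^2$), condition on $\mathcal{F}_{e^{2\beta L}t-s_L}$, and decompose $N_{e^{2\beta L}t}=N^L_{e^{2\beta L}t}+R^*$, where $R^*$ counts the alive descendants of the particles that have hit $L$ in $[e^{2\beta L}t-s_L,\,e^{2\beta L}t]$. On the high-probability event that all particles at time $e^{2\beta L}t-s_L$ lie below $L-A$ (with $A\to\infty$ slowly, as in the proof of Theorem~\ref{th:csbp}), Lemma~\ref{lem:many-to-one} combined with Lemma~\ref{th1} yields
\begin{equation*}
\mathbb{E}\bigl[N^L_{e^{2\beta L}t}\bigm|\mathcal{F}_{e^{2\beta L}t-s_L}\bigr]=(1+o(1))\,\frac{e^{(\lambda_1-\linf)s_L}}{\|v_1\|^2}\Bigl(\int_0^L e^{-\mu y}v_1(y)\,dy\Bigr)\sum_{v\in\mathcal{N}^L_{e^{2\beta L}t-s_L}}e^{\mu X_v}v_1(X_v).
\end{equation*}
Substituting $v_1(x)=w_1(x)/\sinh(\sqrt{2\lambda_1}(L-1))$, using the asymptotic $\sinh(\sqrt{2\lambda_1}(L-1))\sim\tfrac12 e^{\beta(L-1)}$ from Remark~\ref{r:vw}, together with the definition~\eqref{def:C0} of $C_0$ and the fact that $(\lambda_1-\linf)s_L\to 0$, this simplifies to $(1+o(1))\,C_0\,e^{(\mu-\beta)L}\,Z'_{e^{2\beta L}t-s_L}$.

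\textbf{Variance and remainder.} The Many-to-two Lemma~\ref{lem:many-to-two} applied with $f\equiv 1$, together with the Green-function estimates of Section~\ref{sec:green} (used exactly as in the proof of Lemma~\ref{lem:smZ}), gives
\begin{equation*}
\operatorname{Var}\bigl(N^L_{e^{2\beta L}t}\bigm|\mathcal{F}_{e^{2\beta L}t-s_L}\bigr)=o\bigl(e^{2(\mu-\beta)L}\bigr)\cdot\bigl(Z'_{e^{2\beta L}t-s_L}+Y_{e^{2\beta L}t-s_L}\bigr).
\end{equation*}
The remainder $R^*$ is handled via Lemmas~\ref{th:r} and~\ref{lem:sdmR}: the expected number of crossings of $L$ in the window $[e^{2\beta L}t-s_L,\,e^{2\beta L}t]$ is $O(g(L)\,s_L\,Z'_{e^{2\beta L}t-s_L})=o(1)\cdot Z'_{e^{2\beta L}t-s_L}$, and each such crossing contributes $O(e^{(\mu-\beta)L})$ descendants in expectation (by the first-moment computation of the preceding paragraph, applied to a BBM started at $L$), so $R^*/(C_0\,e^{(\mu-\beta)L})=o_p(1)$. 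Finally, $Z_{e^{2\beta L}t}-Z_{e^{2\beta L}t-s_L}\to_p 0$ as a consequence of Proposition~\ref{prop7} applied with $\theta\to 0$ (which shows that the Laplace transform of $Z$ varies by $o(1)$ over an interval of length $o(e^{2\beta L})$ on the CSBP scale). Combining these three ingredients with the first-moment approximation yields the reduction announced in the first paragraph.

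\textbf{Main obstacle.} The most delicate point is the variance estimate. The Many-to-two Lemma produces a double integral whose inner expectation $\mathbb{E}_y[N^L_{t-s}]$ grows like $C_0\,e^{(\mu-\beta)L}\,e^{\mu(y-L)}w_1(y)$, and its square carries the weight $e^{(2\mu-3\beta)(y-L)}\cdot e^{2(\mu-\beta)L}$ near $y=L$; integrability over the relevant range requires $\mu>3\beta$, which is exactly the condition guaranteed by the weakly pushed assumption~\eqref{hwp}. Under this assumption, the full double integral is of order $e^{2(\mu-\beta)L}$, matching the square of the conditional mean, so that $N^L_{e^{2\beta L}t}$ concentrates around its conditional expectation after normalization, and the announced convergence to $z_0\,\Xi(t)$ ensues.
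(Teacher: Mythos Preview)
Your overall strategy is the paper's: reduce to $e^{-(\mu-\beta)L}N_u - C_0 Z_u \to_p 0$ for $u=te^{2\beta L}$ (this is the paper's Lemma~\ref{lemf}), condition at an intermediate time, control the first and second moments of $N'$, and separately handle the crossings at $L$ and the drift of $Z$. Your window $s_L=L^2$ differs from the paper's $\delta u$ with $\delta=\theta e^{2\beta A}$, but either choice works.

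The genuine gap is your variance step. You say it follows ``exactly as in the proof of Lemma~\ref{lem:smZ}'', but it does not. In Lemma~\ref{lem:smZ} the inner expectation satisfies $\mathbb{E}_y[Z'_{t-s}]=e^{(\lambda_1-\linf)(t-s)}z(y)\le z(y)$ \emph{uniformly in $s\in[0,t]$}, so one may pass directly to the Green-function bound $\int_0^t p_s\,ds\le eG_{1/t}$. For $N'$ this uniform bound fails: the estimate $\mathbb{E}_y[N'_{t-s}]\le C e^{(\mu-\beta)L}e^{\mu(y-L)}w_1(y)$ comes from Lemma~\ref{th1} and requires $t-s>CL$; at $t-s=0$ one has $\mathbb{E}_y[N'_0]=1$, which is not $O(e^{(\mu-\beta)L}z(y))$ for small $y$. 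The paper therefore proves a dedicated second-moment estimate (Lemma~\ref{lem2:N}) by splitting the time integral into three pieces: the bulk $s\in[0,\delta u-C_1L]$ where Lemma~\ref{th1} controls both $p_s$ and $\mathbb{E}_y[N'_{\delta u-s}]$; a short terminal piece $s\in[\delta u-C_2L,\delta u]$ where the crude bound $\int_0^L p_{\delta u-s}(y,z)\,dz\le e^{\rho(\delta u-s)/2}$ suffices, with $C_2$ chosen so that $\rho C_2<\mu-\beta$; and an intermediate piece $s\in[\delta u-C_1L,\delta u-C_2L]$ handled via the full eigenfunction expansion of $p_{\delta u-s}$. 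This three-part argument is the substantive work your sketch omits. Once it is done, the conclusion reads $\mathbb{E}[(N'_u)^2\,|\,\mathcal F_{(1-\delta)u}]\le Ce^{2(\mu-\beta)L}(Y_{(1-\delta)u}+\delta Z'_{(1-\delta)u})$ (note the $o$-factor sits only on the $Z'$ term, via $\delta$, not on $Y$), and one finishes via Chebyshev on the event $\{Z'\le B\gamma^{-1},\,Y\le Z'/b_L\}$ from Lemma~\ref{lem:proba:G}.
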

\begin{rem} 
    As in the statement of Theorem \ref{th:csbp}, one could assume that $Z_0\Rightarrow Z$ for some random variable $Z\geq 0$. In this case, one could show (using a conditioning argument) that the finite-dimensional distributions of the processes $((d_\infty e^{(\mu-\beta)L})^{-1}N_{e^{2\beta L}t}, \ t\geq 0)$ converge to the finite-dimensional distributions of a CSBP with branching mechanism $\Psi({  q}):=b_{\ref{prop7}}{  q}^\alpha$, whose distribution at time $0$ is the distribution of $Z$.
\end{rem}

Note that Theorem \ref{semipushedfr} can be deduced from Theorem \ref{ThNt} by computing $Z_0$ when the system starts with $N$ particles located at $1$. In this case,  Remark \ref{r:vw} entails
\begin{equation*}
    Z_0=Ne^{\mu(1-L)}w_1(1)=\frac{1}{2}N(1+\vep_L)e^{-(\mu-\beta)L}e^{\mu-\beta}.
\end{equation*}
Thus we set
$$L=\frac{1}{\mu-\beta}\log(N),$$
so that
\begin{equation}
    N=e^{(\mu-\beta)L} \quad  \text{and}\quad  N^{\alpha-1}=e^{2\beta L}.\label{eq:NL}
\end{equation}
Then, Theorem \ref{semipushedfr}  follows directly from Theorem \ref{ThNt} and the normalising constant $\sigma(\rho)$ can be expressed as a function $d_\infty$,
\begin{equation}
    \sigma(\rho)=\frac{2}{d_\infty e^{\mu-\beta}}.\label{def:sigma}
\end{equation}

As outlined in Section \ref{sec:skproof},  Theorem \ref{ThNt} can be deduced from the convergence of the process $Z_t$ because $N_t$ is \textit{roughly proportional} to $Z_t$: there exists a constant $C>0$ such that the number of particles $N_t$ can be approximated by
\begin{equation*}
    N_t\approx Ce^{(\mu-\beta)L}Z_t,
\end{equation*}
for $t$ and $L$ large enough. Actually, we have $C=d_\infty$ and a rigorous statement of this claim is given in the following lemma. This result is analogous to the one proved in \cite[Section 6.3]{Berestycki2010} in the case $\rho=1$ and the strategy of the proof is similar.

\begin{lemma}\label{lemf}
    Assume that (\ref{hwp}) holds and suppose the configuration of particles at time zero satisfies $Z_0\to_p z_0$, for some $z_0>0$,  and   $L-M(0)\rightarrow_p\infty$ as $L\rightarrow \infty$. Let $t>0$. Then, we have
    \begin{equation}
        \left| e^{-(\mu-\beta)L}N_{e^{2\beta L}t} - d_\infty Z_{e^{2\beta L}t}\right|\to_p 0, \quad L\to\infty.
    \end{equation}
\end{lemma}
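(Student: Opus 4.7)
The plan is to prove this approximation by a conditioning argument at a time just before the target. Fix a small $\delta > 0$, set $\sigma = \delta e^{2\beta L}$ and $s = e^{2\beta L} t - \sigma$. The choice $\sigma \gg L$ makes the heat-kernel estimate of Lemma \ref{th1} applicable on $[s, s+\sigma]$, while $\sigma \ll e^{2\beta L}$ ensures that $Z$ fluctuates little over this window. By Theorem \ref{th:csbp} and the stochastic continuity of the limiting CSBP at fixed times, $|Z_s - Z_{e^{2\beta L} t}| \to_p 0$ on letting $L \to \infty$ then $\delta \to 0$, so I would reduce the claim to
$$e^{-(\mu-\beta)L}\, N_{e^{2\beta L} t} \;-\; C_0\, Z_s \;\longrightarrow_p\; 0.$$

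For the conditional first moment, I would apply Lemma \ref{lem:many-to-one}: for a particle at position $x \in [0, L]$ at time $s$, the expected number of descendants in $[0, L]$ at time $s + \sigma$ equals $\int_0^L p_\sigma(x, y)\, dy$. Since $\sigma \gg L$, Lemma \ref{th1} gives
$$\int_0^L p_\sigma(x, y)\, dy \;=\; (1+o(1))\, e^{\mu x}\, v_1(x)\, \|v_1\|^{-2}\, e^{(\lambda_1-\linf)\sigma} \int_0^L e^{-\mu y}\, v_1(y)\, dy.$$
Using $v_1 = \sinh(\sqrt{2\lambda_1}(L-1))^{-1} w_1$, Remark \ref{r:vw} and Corollary \ref{exp:lambda1} (so that $(\lambda_1-\linf)\sigma = O(\delta)$), summing over $v \in \mathcal{N}_s^L$ and sending $\delta \to 0$ at the end should yield
$$E\!\left[N_{e^{2\beta L}t} \mid \mathcal{F}_s\right] \;=\; (1+o_p(1))\, C_0\, e^{(\mu-\beta)L}\, Z'_s,$$
with $C_0$ as in \eqref{def:C0}.

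For the concentration, I would invoke Lemma \ref{lem:many-to-two}, following the template of the proof of Lemma \ref{lem:smZ} (using the Green-function bounds of Section \ref{sec:green}), to bound $\Var(N_{e^{2\beta L} t} \mid \mathcal{F}_s)$ by an expression of the form $C\bigl(\sigma\, e^{-2\beta L}\, e^{2(\mu-\beta) L}\, Z'_s + e^{2(\mu-\beta)L}\, Y_s\bigr) = o(e^{2(\mu-\beta)L})$ in probability; here $Z'_s$ is bounded in probability (since $Z_s \Rightarrow \Xi(t)$ by Theorem \ref{th:csbp}) and $Y_s \to_p 0$ is an instance of \eqref{cv:Y0} applied at time $s$. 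Chebyshev's inequality then gives $e^{-(\mu-\beta)L}\bigl(N_{e^{2\beta L} t} - E[N_{e^{2\beta L} t} \mid \mathcal{F}_s]\bigr) \to_p 0$, and combined with the first-moment computation this delivers the desired approximation by $C_0 Z_s$, hence by $C_0 Z_{e^{2\beta L}t}$.

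The hard part will be controlling the contribution of particles in $\mathcal{N}_s \setminus \mathcal{N}_s^L$, i.e.~descendants whose ancestors crossed $L$ before time $s$: they are counted in $N_{e^{2\beta L}t}$ but absent from $Z'_s$. The hitting-time estimates of Lemmas \ref{th:r} and \ref{lem:sdmR} control the number of such crossings, and one must argue that on the time scale $\sigma$ the descendants of each border-hitting particle equilibrate to the limiting spatial profile encoded by $v_1$, so that they contribute proportionally to both $N$ and $Z$ and are therefore accurately accounted for by $C_0 Z_s$.
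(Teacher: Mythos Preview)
Your overall architecture matches the paper's: condition at time $s=(1-\delta)u$ with $u=te^{2\beta L}$, compute the conditional first moment of $N'_u$ (this is the paper's Lemma~\ref{lem1:N} and yields exactly the constant $C_0$), control the conditional variance, and conclude by Chebyshev. Your use of Theorem~\ref{th:csbp} together with stochastic continuity of the CSBP to get $|Z_s-Z_u|\to_p 0$ is a legitimate shortcut; the paper instead re-derives this directly from Lemmas~\ref{lem:fmZ} and~\ref{lem:smZ}, which is more self-contained but amounts to the same thing.

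Two points deserve correction.

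\textbf{The variance bound is not a straight copy of Lemma~\ref{lem:smZ}.} For $Z'$ one has $\mathbb E_y[Z'_{t-s}]\le z(y)$ uniformly in $t-s$, so the Green-function argument applies to the whole time integral. For $N'$ the inner factor is $(\int_0^L p_{t-s}(y,z)\,dz)^2$, and the heat-kernel approximation $p_{t-s}\approx u_{t-s}$ from Lemma~\ref{th1} is only valid for $t-s\gg L$; for small $t-s$ this quantity can be as large as $e^{\rho(t-s)}$. The paper (Lemma~\ref{lem2:N}) therefore splits $\int_0^{\delta u}\,ds$ into three regimes --- $t-s>C_1L$, $t-s\in[C_2L,C_1L]$, $t-s<C_2L$ --- and in the last regime uses the crude bound $\int_0^L p_{t-s}(y,z)\,dz\le e^{\rho(t-s)/2}$ together with $p_s(x,y)\approx u_s(x,y)$ to show this piece is of lower order. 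Your stated bound $C\,e^{2(\mu-\beta)L}(\delta Z_s'+Y_s)$ is the right target, but getting there requires this extra splitting.

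\textbf{Your last paragraph misidentifies the difficulty and proposes an unnecessary fix.} There is no need to track particles in $\mathcal N_s\setminus\mathcal N_s^L$ or to argue any ``equilibration'' of border-hitting descendants. The paper simply works on the high-probability event $\{M(s)<L\}$ (this is \eqref{eq:Mcv}, proved just before Lemma~\ref{lem:proba:G}); on that event every particle at time $s$ lies in $[0,L]$, so one restarts the process at $s$ and sums over \emph{all} of $\mathcal N_s$ --- the conditional first moment is then exactly $C_0 e^{(\mu-\beta)L}Z_s$, not $Z_s'$. The residual error $|N_u-N'_u|$ and $|Z_u-Z'_u|$ (primes now relative to $[s,u]$) vanish on $\{R([s,u])=0\}$, and Lemma~\ref{th:r} gives $\mathbb P(R([s,u])\ge 1\mid\mathcal F_s)\le C(\delta Z_s+Y_s)$, which is small. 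That is the entire argument; no equilibration enters.
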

Theorem \ref{ThNt} then follows  from Theorem \ref{th:csbp} and Lemma \ref{lemf}.

We now consider a fixed time $t$ and denote by $u$ the corresponding time on the time scale of the CSBP
\begin{equation*}
    u=te^{2\beta L}.
\end{equation*}
We also consider a small parameter $0<\delta<1$ and note that, according to  Lemma \ref{eigenvlocasymp} and Lemma  \ref{th1}, we have
\begin{equation}
    p_{\delta u}(x,y)= (1+\vep_L)(1+O(\delta))\|v_1\|^{-2}e^{\mu(x-y)}v_1(x)v_1(y).\label{rk:lem:p}
\end{equation}
The proof of Lemma \ref{lemf} is divided into three parts:
\begin{enumerate}
    \item  We first prove that $N'_u$ is well approximated by $d_\infty e^{(\mu-\beta)L}Z'_{(1-\delta)u}$ for large values of $L$, by controlling the first and second moments of $N'$ (see Lemma \ref{lem1:N} and Lemma \ref{lem2:N}).
    \item Then, we show that $Z'$ does not vary much between times $(1-\delta)u$ and $u$ for $\delta$ small enough with a similar argument.
    \item Finally, we recall why $Z_u'$ (resp. $N'_u$) is a good approximation of $Z_u$ (resp. $N_u$)  for $L$ large enough.
\end{enumerate}
We now move to the moment estimates. As in Section \ref{sec:moment:est}, we estimate the first moment of $N'$ under (\ref{hpushed}) and bound its second moment  under (\ref{hwp}).

\begin{lemma}[First moment of $N'$]\label{lem1:N} Assume that (\ref{hpushed}) holds. Then,

    \begin{equation*}
        \mathbb{E}[N'_{u}|\mathcal{F}_{(1-\delta)s}]=d_\infty(1+\vep_L)(1+O(\delta))e^{(\mu-\beta)L}Z'_{(1-\delta)u}.
    \end{equation*}
\end{lemma}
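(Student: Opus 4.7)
The plan is to combine the Markov property at time $(1-\delta)u$ with the Many-to-one lemma and the heat kernel estimate from Lemma \ref{th1}. By Markov and the definition of $N'$,
\[
\mathbb{E}[N'_u \mid \mathcal{F}_{(1-\delta)u}] = \sum_{v\in\mathcal{N}^L_{(1-\delta)u}} \mathbb{E}_{X_v((1-\delta)u)}[N'_{\delta u}],
\]
and Lemma \ref{lem:many-to-one} gives $\mathbb{E}_x[N'_{\delta u}] = \int_0^L p_{\delta u}(x,y)\,dy$. So the goal reduces to showing that, uniformly for $x\in(0,L)$,
\[
\int_0^L p_{\delta u}(x,y)\,dy \;=\; C_0(1+\vep_L)(1+O(\delta))\, e^{(\mu-\beta)L}\, z(x),
\]
with $z(x)=e^{\mu(x-L)}w_1(x)$, and then summing over $v$ recovers $Z'_{(1-\delta)u}$.

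For the key integral, I would plug in the leading-order expansion of $p_{\delta u}(x,y)$ supplied by Equation \eqref{rk:lem:p}: since $\delta u = \delta t e^{2\beta L}$ is much larger than $L$ for any fixed $\delta,t>0$, Lemma \ref{th1} (together with \eqref{density}) gives
\[
p_{\delta u}(x,y) = (1+\vep_L)(1+O(\delta))\, \|v_1\|^{-2}\, e^{\mu(x-y)}\, e^{(\lambda_1-\linf)\delta u}\, v_1(x) v_1(y),
\]
and by Corollary \ref{exp:lambda1}, $(\linf-\lambda_1)\delta u = \delta t\cdot a e^{2\beta}(1+\vep_L) = O(\delta)$, so $e^{(\lambda_1-\linf)\delta u} = 1+O(\delta)$. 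Integrating in $y$ yields
\[
\int_0^L p_{\delta u}(x,y)\,dy = (1+\vep_L)(1+O(\delta))\, \|v_1\|^{-2}\, e^{\mu x} v_1(x) \int_0^L e^{-\mu y}v_1(y)\,dy.
\]

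It then remains to recognise the right-hand side. Using Remark \ref{r:vw}, $v_1(x) = \sinh(\sqrt{2\lambda_1}(L-1))^{-1} w_1(x) = 2(1+\vep_L)e^{-\beta(L-1)}w_1(x)$, so that $e^{\mu x}v_1(x)$ is proportional to $e^{(\mu-\beta)L} z(x)$ up to an explicit constant and $\vep_L$ factors; meanwhile Lemma \ref{lem:l2norm} handles $\|v_1\|^{-2}$ and dominated convergence (applied with the uniform bound $v_1(y)\le C(y\wedge 1)e^{-\beta y}$ from Lemma \ref{ubv1}, together with Remark \ref{lb:sin} to avoid degeneracy on $[0,1]$) handles $\int_0^L e^{-\mu y}v_1(y)\,dy$. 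Collecting all the constants in these asymptotics matches the definition \eqref{def:C0} of $C_0$, giving the stated identity.

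The only delicate point is the uniformity of the $(1+O(\delta))$ error across initial positions $x\in(0,L)$: the heat-kernel estimate \eqref{rk:lem:p} already holds uniformly, and the only other place where $x$ enters nontrivially is through $v_1(x)$, which is precisely absorbed into $z(x)$. There is no genuine obstacle here; the argument is essentially the spectral decomposition plus the elementary asymptotics of $\lambda_1$ and $\sinh(\sqrt{2\lambda_1}(L-1))$ developed in Section \ref{sec:density}.
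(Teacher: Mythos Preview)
Your proof is correct and follows essentially the same route as the paper: Many-to-one at a single particle, the heat-kernel expansion \eqref{rk:lem:p}, Remark~\ref{r:vw} to pass from $v_1$ to $w_1$, and Lemma~\ref{lem:l2norm} together with dominated convergence to identify the constant $C_0$. You add a few helpful details (the explicit handling of $e^{(\lambda_1-\linf)\delta u}$ and the remark on uniformity in $x$) that the paper leaves implicit, but the argument is the same.
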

\begin{proof}
    The many-to-one lemma (see Lemma \ref{lem:many-to-one}) yields
    \begin{equation*}
        \mathbb{E}_{(x,(1-\delta)u)}[N'_{u}]=\mathbb{E}_{(x,(1-\delta)u)}\left[\sum_{v\in\mathcal{N}^L_{u}}1\right]=\int_0^L p_{\delta u}(x,y)dy.
    \end{equation*}
    Then, we see from  \eqref{rk:lem:p} that
    \begin{equation*}
        \mathbb{E}_{(x,(1-\delta)u)}[N'_{u}]= (1+\vep_L)(1+O(\delta))\|v_1\|^{-2}e^{\mu x}v_1(x)\int_0^L e^{-\mu y}v_1(y)dy.
    \end{equation*}
    Moreover, Remark \ref{r:vw} implies that
    \begin{equation}
        e^{\mu x}v_1(x)=\frac{1}{2}(1+\vep_L) e^{(\mu-\beta)L} e^{\mu(x-L)}w_1(x)\label{rem:21}.
    \end{equation}
    Finally, by definition of $d_\infty$, we get that
    \begin{equation*}
        \mathbb{E}_{(x,(1-\delta)u)}[N'_{u}]=d_\infty(1+\vep_L)(1+O(\delta))e^{(\mu-\beta)L}e^{\mu(x-L)}w_1(x),
    \end{equation*}
    which concludes the proof of the lemma.
\end{proof}

\begin{lemma}[Second moment of $N'$]\label{lem2:N} Assume that (\ref{hwp}) holds. For $L$ large enough, we have
    \begin{equation*}
         \mathbb{E}_{(x,(1-\delta)u)}\left[(N'_{u})^2\right]\leqslant C e^{2(\mu-\beta)L}\left(Y_{(1-\delta)u}+\delta Z'_{(1-\delta)u}\right), \quad \forall x\in[0,L].
    \end{equation*}
\end{lemma}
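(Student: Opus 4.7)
The plan is to apply the Many-to-two Lemma \ref{lem:many-to-two} to $N'$ restarted at time $(1-\delta)u$: conditionally on $\mathcal{F}_{(1-\delta)u}$, the subtrees born from distinct particles in $\mathcal{N}^L_{(1-\delta)u}$ are independent, so it suffices to bound $\mathbb{E}_x[(N'_{\delta u})^2]$ for a single initial particle at $x\in(0,L)$ and then sum the resulting pointwise bound against the time-$(1-\delta)u$ configuration. For a single particle, Many-to-two with $f\equiv 1$ gives
\begin{equation*}
\mathbb{E}_x[(N'_{\delta u})^2]=\mathbb{E}_x[N'_{\delta u}]+2\int_0^{\delta u}\int_0^L p_s(x,y)\,r(y)\,\mathbb{E}_y[N'_{\delta u-s}]^2\,dy\,ds.
\end{equation*}
The diagonal term is $O(e^{(\mu-\beta)L}z(x))$ by Lemma \ref{lem1:N}, hence negligible against the target $e^{2(\mu-\beta)L}$-order bound; all the difficulty sits in the double integral.

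The key auxiliary input will be a uniform first-moment estimate $\mathbb{E}_y[N'_\tau]\leq C\,e^{(\mu-\beta)L}z(y)$ valid for \emph{every} $\tau\geq 0$ and $y\in[0,L]$ (Lemma \ref{lem1:N} only supplies it for $\tau\gtrsim L$). Granted this, squaring and inserting it into the double integral reduces the task to estimating
\begin{equation*}
\int_0^{\delta u}\int_0^L p_s(x,y)\,z(y)^2\,dy\,ds,
\end{equation*}
which is precisely the integral $T_2$ already treated in the proof of Lemma \ref{lem:smZ} via the Green function $G_{1/(\delta u)}$ and Lemmas \ref{lem:alphat}--\ref{lemma:wro}. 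That computation yields the two-term splitting $Ce^{\mu(x-L)}\bigl((1\wedge x)+\delta u\,e^{-2\beta L}w_1(x)\bigr)$, which, after substituting $u=te^{2\beta L}$, becomes $C\bigl((1\wedge x)e^{\mu(x-L)}+\delta t\,z(x)\bigr)$. Multiplying by $e^{2(\mu-\beta)L}$ and summing over $v\in\mathcal{N}^L_{(1-\delta)u}$ with $x=X_v((1-\delta)u)$, the first piece reconstructs exactly $Y_{(1-\delta)u}$ and the second $\delta\,Z'_{(1-\delta)u}$, giving the claimed bound up to the independence-cross-term $\mathbb{E}[N'_u|\mathcal{F}_{(1-\delta)u}]^2$; by Lemma \ref{lem1:N} the latter is of order $e^{2(\mu-\beta)L}(Z'_{(1-\delta)u})^2$, which is absorbed into the right-hand side under the a-priori boundedness of $Z'_{(1-\delta)u}$ provided by Theorem \ref{th:csbp}.

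The main technical obstacle is therefore the uniform-in-$\tau$ first-moment estimate. For $\tau\gtrsim L$ it follows from Lemma \ref{lem1:N}; for short $\tau$ one expands $\mathbb{E}_y[N'_\tau]=\int_0^L p_\tau(y,z)\,dz$ using the Sturm--Liouville series from Section \ref{sec:spec}, shows that the $k=1$ contribution is already of the correct form $C\,e^{(\mu-\beta)L}z(y)$ (with a bound that is uniform in $\tau$ since $\lambda_1\leq\linf$), and controls the $k\geq 2$ contributions via the eigenvector estimates of Lemma \ref{lem:est:evk} exactly as in the proof of Lemma \ref{th1}. Equivalently, one can split the $s$-integral at $s=\delta u-CL$: on $[0,\delta u-CL]$ Lemma \ref{lem1:N} applies directly, while on the shorter interval $[\delta u-CL,\delta u]$ one replaces $p_s(x,y)$ by its rank-one approximation from Lemma \ref{th1}, bounds $\mathbb{E}_y[N'_{\delta u-s}]^2$ by the crude $e^{\rho(\delta u-s)}\leq e^{\rho C L}$, and observes that this residual contribution is absorbed thanks to the factor $e^{(\lambda_1-\linf)s}$ and the $y$-integrability of $e^{-\mu y}v_1(y)$ on $[0,L]$.
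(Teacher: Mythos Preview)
Your overall strategy---Many-to-two plus the $T_2$ computation from Lemma~\ref{lem:smZ}---is the right one, and the paper follows it. But the two-interval split you describe has a genuine gap in the intermediate range of $\tau=\delta u-s$, and your alternative ``uniform-in-$\tau$'' first-moment bound does not hold as stated.

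Specifically: for your residual interval $s\in[\delta u-CL,\delta u]$ you need $\rho C<\mu-\beta$ so that $e^{\rho C L}$ is absorbed by the target $e^{2(\mu-\beta)L}$. But on the main interval $[0,\delta u-CL]$ you invoke Lemma~\ref{lem1:N}, which rests on the rank-one approximation of Lemma~\ref{th1} and therefore requires $\delta u-s>C_2L$ with a constant $C_2$ dictated by the spectral gap (essentially $C_2\gtrsim 4\beta/\linf$). In the weakly pushed regime $\linf$ can be arbitrarily small while $\rho>1+\pi^2/4$, so $C_2$ may be much larger than the $C$ you can afford; the interval $\delta u-s\in[CL,C_2L]$ is then uncovered. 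The factor $e^{(\lambda_1-\linf)s}$ you invoke does not help: over $s\le\delta u$ it equals $1+O(\delta)$. Your first approach hits the same wall: the claimed bound $\mathbb{E}_y[N'_\tau]\le C e^{(\mu-\beta)L}z(y)$ fails near $y=L$ (where $z(y)\to0$ while $\mathbb{E}_y[N'_0]=1$), and ``exactly as in Lemma~\ref{th1}'' cannot give uniform control because $M(\tau,L)$ blows up for small $\tau$.

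The paper fixes this with a \emph{three}-interval split of the $s$-integral. On $[0,\delta u-C_1L]$ it uses Lemma~\ref{lem1:N} and reduces to $T_2$ as you do. On $[\delta u-C_2L,\delta u]$ it uses the crude domination $\int_0^L p_\tau(y,z)\,dz\le e^{\rho\tau/2}$ with $C_2$ chosen so that $\rho C_2<\mu-\beta$. On the middle interval $[\delta u-C_1L,\delta u-C_2L]$ it proves a direct spectral estimate
\[
p_\tau(y,z)\le C\,e^{\mu(y-z)}\Bigl(v_1(y)v_1(z)+L\,e^{-\linf\tau}\Bigr),\qquad \tau\in[C_2L,C_1L],
\]
obtained by bounding $|v_k(y)v_k(z)|/\|v_k\|^2\le C$ for $k>K=1$ (Lemma~\ref{lem:est:evk}) and summing $\sum_{k\ge2}e^{(\lambda_k-\lambda_2)\tau}\le CL$ as in \eqref{ub:S3}. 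This produces an extra term $L^2e^{-2\linf C_2L}$ after squaring, which is negligible. That middle step is what your argument is missing.

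A final remark: absorbing the independence cross-term $\bigl(\mathbb{E}[N'_u\mid\mathcal{F}_{(1-\delta)u}]\bigr)^2\asymp e^{2(\mu-\beta)L}(Z'_{(1-\delta)u})^2$ via Theorem~\ref{th:csbp} would be circular, since Lemma~\ref{lem2:N} is used (through Lemma~\ref{lemf}) to pass from $Z$ to $N$ after Theorem~\ref{th:csbp}. In the paper the single-particle second-moment bound is what is actually proved and what Chebyshev needs; the conditional-variance form of the statement is obtained by summing the single-particle variances, which avoids the cross-term altogether.
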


\begin{proof}
    Lemma \ref{lem:many-to-two} entails
    \begin{equation*}
        \mathbb{E}_{(x,(1-\delta)u)}\left[(N'_{u})^2\right]=\mathbb{E}_{(x,(1-\delta)u)}\left[N'_{u}\right]+ \underbrace{\int_0^{\delta u}\int_0^Lp_s(x,y)2r(y)\mathbb{E}_{y}\left[N_{\delta u-s}'\right]^2dyds}_{\textstyle=: \tilde U}.
    \end{equation*}
    Applying the many-to-one lemma and interchanging the integrals, the quantity $ \tilde U$ can be written as
    \begin{equation*}
        \tilde U=\int_0^L2r(y)\int_0^{\delta u}p_s(x,y)\left(\int_0^Lp_{\delta u-s}(y,z) dz\right)^2 dsdy.
    \end{equation*}
    We then divide the second integral into three parts.
    We first recall from Equation \eqref{ub:pt2} that for $\delta u-s>c_{\ref{th1}} L$ and $L$ large enough,
    \begin{equation} 
        p_{\delta u-s}(y,z)\leqslant C e^{\mu(y-z)}v_1(z)v_1(y). \label{eq:ps}
    \end{equation}
    Therefore, combining \eqref{rem:21} and \eqref{eq:ps} with Lemma \ref{ubv1}, we get that
    \begin{align*}
        \tilde U_1 & :=\int_0^L2r(y)\int_{0}^{\delta u-c_{\ref{th1}}L}p_s(x,y)\left(\int_0^Lp_{\delta u-s}(y,z) dz\right)^2 \,ds\,dy                                          \\
                   &  \leqslant C \int _0^L \int_{0}^{\delta u-c_{\ref{th1}}L}p_s(x,y)\left(e^{2y}v_1(y)\right)^2\left(\int_0^Le^{(\mu-\beta)z}dz\right)^2ds \ dy \\
                   & \leqslant  C e^{2(\mu-\beta)L}\int_0^Le^{2\mu(y-L)}w_1^2(y)\int_0^{\delta u-c_{\ref{th1}} L}p_s(x,y)\,ds\,dy                                             \\
                   & \leqslant  Ce^{2(\mu-\beta)L}\int_0^Le^{2\mu(y-L)}w_1^2(y)\int_0^{\delta u}p_s(x,y)\,ds\,dy                                                              \\
                   & \leqslant  Ce^{2(\mu-\beta)L}\left(e^{\mu(x-L)}\left((1\wedge x)+\delta u e^{-2\beta L}w_1(x)\right)\right).
    \end{align*}
    The last line is obtained by applying a similar argument to that developed in the proof of Lemma \ref{lem:smZ}. More precisely, we apply Equations (\ref{int:Green}) and (\ref{est:T2}) to $u(L)=\delta e^{2\beta L}$. Hence, we get that for $L$ large enough
    \begin{equation}\label{est:A1:N}
        \tilde U_1\leqslant C e^{2(\mu-\beta)L}e^{\mu(x-L)}\left((1\wedge x)+\delta w_1(x)\right).\end{equation}

    For small values of $\delta u -s$, we bound the integral $\int_0^Lp_{\delta u-s}(y,z) dz$ by the expected number of particles in a branching Brownian motion with no killing and constant branching rate $\rho/2$, that is
    \begin{equation}
        \label{ub:bbm}
        \int_0^Lp_{\delta u -s}(y,z)\leqslant e^{\frac{\rho}{2}(\delta u -s)}.
    \end{equation}
    We use this upper bound  for $\delta u-s\in[0,cL]$ for some constant $c>0$ to determine. Consider
    \begin{equation*}
        \tilde U_2:= \int_0^L 2r(y)\int_{\delta u}^{\delta u -c  L}p_s(x,y)\left(\int_0^Lp_{\delta u-s}(y,z) dz\right)^2\;ds \,dy.
    \end{equation*}
    Note that, for sufficiently large $L$, Equation (\ref{rk:lem:p}) holds for $s$ instead of $\delta u$ for all $s\in[\delta u, \delta u-cL]$. Hence, Equation \eqref{ub:bbm} combined with \eqref{rk:lem:p} and \eqref{rem:21} implies that
    \begin{align*}
        \tilde U_2 & \leqslant C\int_0^L \int_{\delta u-c L}^{\delta u} p_s(x,y) e^{\rho (\delta u-s)}ds\, dy                                                      \\
                   & \leqslant  Ce^{(\mu-\beta)L}e^{\mu(x-L)}w_1(x)\int_0^L e^{-\mu y}v_1(y)\left(\int_{\delta u -c L}^{\delta u}e^{\rho (\delta u-s)}ds \right)dy \\
                   & \leqslant  Ce^{(\mu-\beta)L}e^{\mu(x-L)}w_1(x)e^{\rho c L}\int_0^L e^{-\mu y}v_1(y)dy                                                         \\
                   & \leqslant  C e^{(\mu-\beta)L}e^{\mu(x-L)}w_1(x)e^{\rho c L}.
    \end{align*}
    We then fix $c<\frac {\mu-\beta}{\rho}$ so that, for $L$ large enough
    \begin{equation}
        \tilde U_2\leqslant C e^{2(\mu-\beta)L}\left(e^{(c\rho -(\mu-\beta))L}\right)e^{\mu(x-L)}w_1(x)\leqslant C e^{2(\mu-\beta)L}\delta e^{\mu(x-L)}w_1(x). \label{est:A2:N}
    \end{equation}
    We now control the remaining part of the time integral, i.e.~for $s\in [\delta u-c_{\ref{th1}}L, \delta u -cL]$.
    To this end, we make use of Lemma \ref{lem:212}: we get that for $\delta u-s>1$,\begin{equation*}
        p_{\delta u-s}(y,z)\leqslant Ce^{\mu(y-z)}\left(v_1(y)v_1(z)+Le^{-\linf (\delta u-s) }\right),
    \end{equation*}
    so that
    \begin{equation}\label{ub:int2}
        \left(\int_0^L p_{\delta u-s}(y,z) dz\right)^2\leqslant Ce^{2\mu y}\left(v_1(y)^2+L^2e^{-2\linf (\delta u-s)}\right).
    \end{equation}
    (recall that  $\int_0^L e^{-\mu z}v_1(z)dz$ converges to a positive limit).
    In addition, note that for sufficiently large $L$, Equation (\ref{rk:lem:p}) holds for $s$ instead of $\delta u$ for all $s\in\left[\delta u -c_{\ref{th1}}L, \delta u-cL\right]$. Therefore,
    \begin{eqnarray*}
        \tilde U_3&:=&\int_0^L2r(y)\int_{\delta u-c_{\ref{th1}}L}^{\delta u-c L}p_s(x,y)\left(\int_0^Lp_{\delta u -s}(y,z)dz\right)^2 ds\;dy\\
        &\leqslant &   \tilde U_{3,1}+  \tilde U_{3,2},
    \end{eqnarray*}
    with
    \begin{equation*}
        \tilde U_{3,1}\leqslant Ce^{\mu x}v_1(x) \int_0^L\int_{\delta u-c_{\ref{th1}}L}^{\delta u-c L} e^{\mu y}v_1^3 (y) ds \,dy,
    \end{equation*}
    and
    \begin{equation*}
        \tilde U_{3,2}\leqslant C L^2 e^{\mu x}v_1(x) \int_0^L\int_{\delta u-c_{\ref{th1}}L}^{\delta u-c L} e^{\mu y}v_1(y)e^{-2\linf (\delta u-s)}ds \,dy.
    \end{equation*}
    Recalling that $\mu>3\beta$ under \eqref{hwp} (see \eqref{rem:alpha}) and using Lemma \ref{ubv1} and Equation \eqref{rem:21}, we get that
    \begin{align}
        \tilde U_{3,1} & \leqslant CLe^{\mu x}v_1(x) \left(\int_0^L e^{\mu y}v_1^3 (y)  dy \right)\leqslant   CL e^{(\mu-\beta)L}e^{\mu(x-L)}w_1(x) e^{(\mu-3\beta)L}\nonumber  \\
                       & \leqslant  C \left(Le^{-2\beta L}\right) e^{2(\mu-\beta)L}e^{\mu(x-L)}w_1(x)\leqslant  C \delta e^{2(\mu-\beta)L}e^{\mu(x-L)}w_1(x) \label{est:A31:N},
    \end{align}
    for $L$ large enough.
    On the other hand,  Lemma \ref{ubv1} and Equation \eqref{rem:21} yield
    \begin{align}
        \tilde U_{3,2} & \leqslant CL^2 e^{\mu x}v_1(x)\left(\int_0^L e^{\mu y} v_1(y)dy\right)\left(\int_{\delta u-c_{\ref{th1}}L}^{\delta u-c L} e^{-2\linf (\delta u-s)}ds \right)\nonumber \\
                       & \leqslant  C e^{2(\mu-\beta)L}e^{\mu(x-L)}w_1(x) \left(L^2e^{-2\linf c L}\right)\leqslant C\delta e^{2(\mu-\beta)L}e^{\mu(x-L)}w_1(x),\label{est:A32:N}
    \end{align}
    for $L$ large enough.
    Finally, we obtain the lemma by combining Equations \eqref{est:A1:N}, \eqref{est:A2:N},   \eqref{est:A31:N} and \eqref{est:A32:N}.
\end{proof}

\begin{proof}[Proof of Lemma \ref{lemf}] Let $\gamma>0$. Let us  prove that for $L$ large enough, we have
    \begin{equation*}
        \mathbb{P}\left(|e^{-(\mu-\beta)L}N_u-d_\infty Z_u|>\gamma \right)=\mathbb{P}\left(|N_u-d_\infty e^{(\mu-\beta)L} Z_u|>\gamma e^{(\mu-\beta)L}\right)<\gamma.
    \end{equation*}
    As explained above, we use that for $u>0$
    \begin{eqnarray*}
        |N_u-d_\infty e^{(\mu-\beta)L}Z_u|&\leqslant &|N_u-N_u'|+|N_u'-d_\infty e^{(\mu-\beta)L}Z'_{(1-\delta )u}|\\&&+d_\infty e^{(\mu-\beta)L}|Z'_{(1-\delta )u}-Z_{u}'|+d_\infty e^{(\mu-\beta)L}|Z_{u}'-Z_{u}|,
    \end{eqnarray*}
    and that each quantity on the RHS is small as long as $L$ is large enough and $\delta$ is small enough.

    First, we choose $\delta >0$ of the form
    \begin{equation*}
        \delta =\theta e^{2\beta A},
    \end{equation*}
    where $A$ and $\theta$ are defined in the beginning of Section \ref{sec:smallts}. As in Section \ref{sec:smallts}, we will first let $L$ to $\infty$, then, $\delta$ to $0$ (or equivalently $A\to\infty$). We also recall the definitions of the subdivision $(t_k)_{k=0}^{  {\kappa}}$ from Equation \eqref{disc:t}  and of the events $(G_k)$ defined in \eqref{def:Gk}. Note that with this notation, we have  $u=t_{  {\kappa}}$ and $(1-\delta )u=t_{  {\kappa}-1}$.

    Since the variances of $N_u'$ and $Z_u'$ are both bounded by a quantity that depends on $Z'_{(1-\delta) u}$ (see Lemma \ref{lem:smZ} and Lemma \ref{lem2:N}), we first control $Z'_{(1-\delta) u}$ on $G_{  {\kappa}}$. Recall from Lemma \ref{lem:fmZ} that $Z'_{t_j}$ is a supermartingale. Thus, one can prove (for example using Doob's martingale inequality) that
    \begin{equation*}
        \mathbb{P}\left(\max_{0\leqslant j\leqslant   {\kappa}} Z_{t_j}'>B\gamma^{-1}  \right)\leqslant \gamma \frac{\mathbb{E}[Z_0]}{B}.
    \end{equation*}
    Let $E_{\gamma}^{  {\kappa}}=G_{  {\kappa}}\cap \left\{\max_{0\leqslant j\leqslant   {\kappa}} Z_{t_j}'\leqslant B\gamma^{-1} \right\}. $ Remark that one can choose $B$ large enough so that $\mathbb{P}(E_{\gamma}^{  {\kappa}})\geqslant 1-\gamma/4$. From now on, we consider the event $E_{\gamma}^{  {\kappa}}$ corresponding to this choice of $B$.   {Remarking that $E_\gamma^\kappa\subset G_\kappa$, we see that for $L$ large enough, }
    \begin{equation}\label{eq:YZE}
        \text{  on $E_\gamma^{  {\kappa}}$,} \quad Y_{(1-\delta)u}\leqslant \delta Z_{(1-\delta)u}.
    \end{equation}

    We now bound the quantities $|N_u'-d_\infty e^{(\mu-\beta)L}Z'_{(1-\delta )u}|$ and $d_\infty e^{(\mu-\beta)L}|Z'_{(1-\delta )u}-Z_{u}'|$ with high probability.   {In both cases, we will use our first and second moment estimates combined with Chebyshev's inequality.} First, we recall from Lemma \ref{lem:fmZ} that
    \begin{equation*}
        \mathbb{E}\left[Z'_{u}|\mathcal{F}_{(1-\delta)u}\right]= (1+O(\delta))Z'_{(1-\delta)u}.  \end{equation*}
      {Note that,  conditional on $\mathcal{F}_{(1-\delta)u},$  the particles alive at time $(1-\delta)u$ evolve independently between times $(1-\delta)u$ and $u$. Hence the conditional variance $\text{Var}\left[Z'_{u}|\mathcal{F}_{(1-\delta)u}\right]$ is equal to the sum of the conditional variances of the contribution to $Z'_u$ from the particles alive at time $(1-\delta)u$.}
    Lemma \ref{lem:smZ} then entails
    \begin{equation*} 
        \text{Var}\left[Z'_{u}|\mathcal{F}_{(1-\delta)u}\right]\leqslant C\left(\delta Z'_{(1-\delta)u}+Y_{(1-\delta)u}\right),\end{equation*}
    for $\delta>0$ and $L$ large enough.
    Therefore,  Chebyshev's inequality,   {together with \eqref{eq:YZE}},  implies that for $L$ large enough and $\delta$ small enough,
    \begin{equation*}
        \text{on $E_\gamma^{  {\kappa}}$,} \quad
        \mathbb{P}\left( \left|Z'_{u}-\mathbb{E}\left[Z'_{u}\big|\mathcal{F}_{(1-\delta)u}\right]\right|>\frac{\gamma}{2}\big|\mathcal{F}_{(1-\delta)u}\right)\leqslant C\delta B\gamma^{-2}\leqslant \frac{\gamma}{2}.
    \end{equation*}
    Moreover, we know that, on $E_\gamma^{  {\kappa}}$, $\delta Z'_{(1-\delta)u}\leqslant \delta B\gamma^{-1}\leqslant  \frac{\gamma}{2}$  for $L$ large enough and $\delta$ small enough. Hence, we obtain that for sufficiently large $L$ and sufficiently small $\delta$, we have
    \begin{eqnarray}
        \mathbb{P}\left( \left|Z'_{u}-Z'_{(1-\delta)u}\right|>\gamma\right)&\leqslant&\mathbb{P}\left(\left|Z'_{u}-Z'_{(1-\delta)u}\right|>\gamma ,E_\gamma^{  {\kappa}}\right)+\mathbb{P}((E^{  {\kappa}}_\gamma)^c)\nonumber \\
        &\leqslant& \frac{\gamma}{2}+\mathbb{P}((E^{  {\kappa}}_\gamma)^c)\leqslant \gamma. \label{eq:condE} \end{eqnarray}
    Similarly, we get thanks to Lemma \ref{lem1:N} and Lemma \ref{lem2:N} that for $L$ large enough and $\delta$ small enough,
    \begin{eqnarray}\label{condEN}
        \mathbb{P}\left( \left|N'_{u}-d_\infty e^{(\mu-\beta)L}Z'_{(1-\delta)u}\right|>\gamma e^{(\mu-\beta)L}\right)&\leqslant&\gamma.\end{eqnarray}

    Recall that, if the process starts with all its particles to the left of $L$ at time $(1-\delta)u$, then $Z'_{(1-\delta) u}=Z_{(1-\delta) u}$ and   $Z'_u=Z_u$ with high probability.  This is a consequence of Lemma \ref{th:r} combined with Markov's inequality:  on $E_\gamma^{  {\kappa}}$,
    \begin{equation*}
        \mathbb{P}\left(\left|Z_u-Z_u'\right|>0\big|\mathcal{F}_{(1-\delta)u}\right)\leqslant \mathbb{P}\left(R([0,\delta u])\geqslant 1\big|\mathcal{F}_{(1-\delta)u}\right)\leqslant C\left(\delta Z'_{(1-\delta)u}+Y_{(1-\delta)u}\right)\leqslant \frac{\gamma}{2},
    \end{equation*}
    for $L$ large enough and $\delta$ small enough. Using the same argument as in Equation \eqref{eq:condE}, we get that
    \begin{equation}
        \mathbb{P}\left(\left|Z_u-Z_u'\right|>0\right)\leqslant \gamma.\label{ub:ZZ}
    \end{equation}
    Similarly, we get that for large $L$ and sufficiently small $\delta$, we have
    \begin{equation}
        \mathbb{P}\left(\left|N'_u-N_u\right|>0\right)\leqslant \gamma.\label{ubNN}
    \end{equation}
    Combining \eqref{eq:condE}, \eqref{condEN}, \eqref{ub:ZZ} and \eqref{ubNN}, we get that for  $L$ large enough,\begin{equation*}
        \mathbb{P}\left(|e^{-(\mu-\beta)L}N_u-d_\infty  Z_u|>2\gamma\right)\leqslant 4\gamma,
    \end{equation*}
    which concludes the proof of the lemma.
\end{proof}

\section{The case $\alpha>2$: the fully pushed regime}

In this section, we briefly outline the adjustments required to prove the second  conjecture stated at the end of Section \ref{sec:model}. We leave the details for future work.

In the fully pushed regime, we have $\rho>\rho_2$ so that $\mu<3\beta$ (see Equation  \eqref{eq:alpha}).  We expect (see Equation \eqref{eq:NL}) the genealogy to evolve over the timescale $$N=e^{(\mu-\beta)L}.$$
For $t>0$ fixed, we see from Lemma \ref{lem:fmZ}  that
\begin{equation*}
    \mathbb{E}[Z_{tN}']=(1+\vep_L)Z_0',
\end{equation*}
and from Lemma  \ref{th:r}  that $$\mathbb{E}[R([0,tN])]=\vep_L Z_0',$$
since $\mu<3\beta$. Essentially, this means that for $N$ large enough, no particle reaches the boundary $L$ on the time scale $N$.
On the other hand, the analysis of the additive martingale conducted in Section \ref{sec:W} would provide a similar convergence result.  Equation \eqref{def:W} would hold
for some random variable $W$ satisfying
\begin{equation*}
    \mathbb{E}\left[e^{-{  q} W}\right]=\exp\left(-{  q}+o({  q}^2)\right), \quad {  q} \to0.
\end{equation*}
As a consequence, a finer estimate of the second moment of $Z'$ is required to prove Proposition \ref{prop7} for $\alpha=2$.   The arguments developed in Sections \ref{sec:smallts} and \ref{sec:cvcsbp} would then be similar.

From a biological standpoint, the fact that the particles do not exit $(0,L)$ indicates that the invasion is driven by the particles living in the bulk (i.e. that stay far from $L$).

\begin{appendix}
\addtocontents{toc}{\protect\setcounter{tocdepth}{-1}}
    \section{Proof of Proposition \ref{th:Trho}} \label{proof:15}

    The properties of $  {h} $ can easily be checked and we only prove the expression of $\lambda_c(\rho)$.

    We first show that $\lambda_c(\rho) \ge 0$ for all $\rho\in\mathbb{R}$. Let $\lambda < 0$. Assume that there exists  $u\in \mathcal D_{T_\rho}$ such that $T_\rho u = \lambda u$. Then $u'' = 2\lambda u$ on $[1,\infty)$ so that $u(x) = A \sin(\sqrt{-2\lambda}x) + B\cos(\sqrt{-2\lambda}x)$ on $[1,\infty)$ for some $A,B\in\mathbb{R}$. Then $u$ changes sign on $[1,\infty)$ so that we do not have $u>0$ on $(0,\infty)$. Hence, $\lambda_c(\rho) \ge 0$ for all $\rho\in\mathbb{R}$.

    We now claim that $\lambda_c(\rho) = 0$ for $\rho \le \rho_c$. Since $\lambda_c(\rho)$ is increasing, it is enough to show that $\lambda_c(\rho_c) = 0$. Define
    \[
        u(x) = \begin{cases}
            \sin(\frac{\pi}{2}x) & x\in[0,1] \\
            1                    & x\ge 1.
        \end{cases}
    \]
    Then $u\in C^1((0,\infty))\cap C^2((0,1)\cup(1,\infty))$, $u(0) = 0$ and $u>0$ on $(0,\infty)$. Moreover, $T_{\rho_c}u(x) = 0$ for $x\in (0,1)\cup(1,\infty)$. Hence, $u\in \mathcal D_{T_{\rho_c}}$ and $T_{\rho_c} u = 0$. It follows that $\lambda_c(\rho_c) = 0.$

    Let $\rho > \rho_c$ and $\lambda \in(0,\rho/2)$. Let $u\in \mathcal D_{T_\rho}$ such that $u>0$ on $(0,\infty)$ and $T_\rho u = \lambda u$. Then $u'' = (2\lambda - \rho) u$ on $(0,1)$. Since $\lim_{x\to0}u(x) = 0$, there exists a constant $A\in\mathbb{R}$ such that  $u(x) = A\sin(\sqrt{\rho-2\lambda}x)$ for all $x\in(0,1)$. Since $u>0$, we have $\rho - 2\lambda < \pi^2$ and $A>0$. Suppose (without loss of generality) that $A=1$ so that $$u(x) = \sin(\sqrt{\rho-2\lambda}x), \quad x\in(0,1).$$
    For $x\in(1,\infty)$, we have $u''(x) = 2\lambda u(x)$, so that $$u(x) = A\cosh(\sqrt{2\lambda}(x-1)) + B\sinh(\sqrt{2\lambda}(x-1)), \quad x\in(0,1),$$ for some $A,B\in\mathbb{R}$. Since $u$ and $u'$ are continuous at $1$, we have $A = u(1) = \sin(\sqrt{\rho-2\lambda})$ and $B = u'(1) = \sqrt{\rho-2\lambda}\cos(\sqrt{\rho-2\lambda})$. Furthermore, we have $u > 0$ on $(1,\infty)$ if and only if  $A+B \ge 0$, which holds if and only if
    \begin{equation}\label{eq:ineq}
        \frac{\cos(\sqrt{\rho-2\lambda})}{\sqrt{2\lambda}} \ge -\frac{\sin(\sqrt{\rho-2\lambda})}{\sqrt{\rho-2\lambda}}.
    \end{equation}
    Moreover, all of these conditions are also sufficient: if $\lambda\in J_\rho := (0\vee \frac12(\rho-\pi^2),\rho/2)$ satisfies \eqref{eq:ineq}, then the function $u$ defined by
    \[
        u(x) = \begin{cases}
            \sin(\sqrt{\rho-2\lambda}x)                            & x\in[0,1] \\
            \sin(\sqrt{\rho-2\lambda}) \cosh(\sqrt{2\lambda}(x-1)) &           \\\qquad  \quad + \;\sqrt{\rho-2\lambda}\cos(\sqrt{\rho-2\lambda})\sinh(\sqrt{2\lambda}(x-1))&x\ge 1,
        \end{cases}
    \]
    satisfies $u\in C^1((0,\infty))\cap C^2((0,1)\cap(1,\infty))$, $u>0$ on $(0,1)\cup(1,\infty)$, $\lim_{x\to0} u(x) = 0$ and $T_\rho u(x) = \lambda u(x)$ for $x\in (0,1)\cup (1,\infty)$. Hence, by continuity of $u$, $T_\rho u(1) = \lim_{x\to1} T_\rho u(x)$ exists and is equal to  $\lambda u(1)$. Thus $u\in\mathcal D_{T_\rho}$, $u>0$ on $(0,\infty)$ (by continuity) and $T_\rho u = \lambda u$.

    Let $\Lambda_\rho$ be the set of those $\lambda\in J_\rho$ such that \eqref{eq:ineq} holds. It remains to show that $\inf \Lambda_{\rho}= \lambda_c(\rho)$, where $\lambda_c(\rho)$ is as in the statement of the result. Note that $\sqrt{\rho-2\lambda} \in (0,\pi)$ for $\lambda\in J_\rho$, so that we can rewrite \eqref{eq:ineq} as
    \begin{equation}
        \label{eq:ineq2}
        \sqrt{2\lambda} \ge -\sqrt{\rho-2\lambda}\cot(\sqrt{\rho-2\lambda}).
    \end{equation}
    Now the right-hand side is a decreasing function of $\lambda$ on the interval $J_\rho' := (0\vee \frac12(\rho-\pi^2),\frac12(\rho-\rho_c))$, and the left-hand side an increasing function of $\lambda$. Admit for the moment that $\lambda_c = \lambda_c(\rho)$ as defined in the statement of the theorem yields equality in \eqref{eq:ineq2} and that $\lambda_c \in J_\rho'$. It then follows that $\Lambda_\rho \cap J_\rho' = [\lambda_c,\frac12(\rho-\rho_c)]$, and in particular, $\lambda_c = \inf\Lambda_\rho$.

    It remains to show that $\lambda_c \in J_\rho'$ and that $\lambda_c$ yields equality in \eqref{eq:ineq2}. Note that $  {h} ^{-1}(\rho) \in (\rho_c,\pi^2)$ for all $\rho> \rho_c$. Hence $\lambda_c \in (\frac12(\rho-\pi^2),\frac12(\rho-\rho_c))$. Furthermore, $  {h} ^{-1}(\rho) < \rho$ for all $\rho > \rho_c$, since $  {h} (\rho) > \rho$ for all $\rho>\rho_c$ by the properties of $  {h} $ stated in the theorem. Hence, $\lambda_c > 0$. It follows that $\lambda_c \in J_\rho'$.

    On the interval $J_\rho'$, the right-hand side of \eqref{eq:ineq2} is positive. Thus this equality holds in \eqref{eq:ineq2} if and only if
    \[
        2\lambda = (\rho-2\lambda)\cot(\sqrt{\rho-2\lambda})^2 = (\rho-2\lambda)(\sin(\sqrt{\rho-2\lambda})^{-2} - 1) =   {h} (\rho-2\lambda) - (\rho-2\lambda),
    \]
    that is, if and only if
    \[
          {h} (\rho-2\lambda) = \rho,
    \]
    which is exactly satisfied for $\lambda = \lambda_c$. This concludes the proof.

    \section{Estimates on the eigenvectors. Proof of Lemma \ref{lem:est:evk}}\label{appendix}

    In this section, we give a proof of Lemma \ref{lem:est:evk}. Recall that, under \eqref{hpushed}, we have $K\geqslant 1$ in Lemmas \ref{eigenvloc} and  \ref{ev:sc}. Since the distribution of the eigenvalues is similar in these two lemmas, all the bounds will be calculated for $\rho\neq 1+\left(n-\frac{1}{2}\right)^2\pi^2$, $n\in\mathbb{N}$, but the results can easily be extended to $\left\{\rho>\rho_1\right\}$.

    \begin{lemma}
        Assume that (\ref{hpushed}) holds. There exists $C>0,$ such that for all $L$ large enough
        \begin{equation*}
            \|v_1\|^2\leqslant C.
        \end{equation*}
        \label{lem:norm1}
    \end{lemma}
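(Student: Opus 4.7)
The plan is to use the explicit formula (\ref{vecp1}) for $v_1$ and split the integral at $x=1$, reducing the problem to bounding two pieces using asymptotics already available. In fact, the statement is a direct corollary of Lemma \ref{lem:l2norm}, which gives the exact value of $\lim_{L\to\infty}\|v_1\|^2$ under (\ref{hpushed}) as a finite positive number; boundedness for $L$ large follows. I will nevertheless sketch a self-contained argument, since the computations are short.

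Write $\|v_1\|^2=I_1+I_2$ with $I_1=\int_0^1 v_1(x)^2\,dx$ and $I_2=\int_1^L v_1(x)^2\,dx$. For $I_1$, formula (\ref{vecp1}) gives
\[
v_1(x)=\frac{\sin(\sqrt{\rho-1-2\lambda_1}\,x)}{\sin(\sqrt{\rho-1-2\lambda_1})},\quad x\in[0,1].
\]
The numerator is bounded by $1$ in absolute value; the denominator is bounded below by a positive constant uniformly in $L$ by Remark \ref{lb:sin}, since $\sqrt{\rho-1-2\lambda_1}\to\gamma\in(\pi/2,\pi)$ and $\sin(\gamma)\ne0$. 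Hence $I_1\le C$.

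For $I_2$, a change of variables $u=L-x$ and the identity $2\sinh^2(y)=\cosh(2y)-1$ yield
\[
I_2=\frac{1}{\sinh(\sqrt{2\lambda_1}(L-1))^2}\int_0^{L-1}\sinh(\sqrt{2\lambda_1}\,u)^2\,du=\frac{1}{2\sinh(\sqrt{2\lambda_1}(L-1))^2}\left(\frac{\sinh(2\sqrt{2\lambda_1}(L-1))}{2\sqrt{2\lambda_1}}-(L-1)\right).
\]
By Remark \ref{r:vw}, $\sinh(\sqrt{2\lambda_1}(L-1))=\tfrac12(1+o(1))e^{\beta(L-1)}$, so $\sinh(\sqrt{2\lambda_1}(L-1))^2\sim\tfrac14 e^{2\beta(L-1)}$ and $\sinh(2\sqrt{2\lambda_1}(L-1))\sim\tfrac12 e^{2\beta(L-1)}$. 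Together with $\sqrt{2\lambda_1}\to\beta>0$, this gives $I_2\to 1/(2\beta)$ as $L\to\infty$; in particular $I_2$ is bounded for $L$ large.

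There is essentially no obstacle: the only points requiring care are the positivity of $\beta=\sqrt{2\linf}$ (ensured by (\ref{hpushed})) and the uniform lower bound on $|\sin(\sqrt{\rho-1-2\lambda_1})|$ (provided by Remark \ref{lb:sin}, which itself relies on the fact that the limit $\gamma$ lies strictly inside $(\pi/2,\pi)$). Combining the two bounds, $\|v_1\|^2\le C$ for $L$ large enough, completing the proof.
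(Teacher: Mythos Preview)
Your proposal is correct and takes essentially the same approach as the paper: both deduce the result directly from Lemma~\ref{lem:l2norm}, which establishes that $\|v_1\|^2$ converges to a finite positive limit as $L\to\infty$. The additional self-contained sketch you provide is in fact a recapitulation of the computation carried out in the proof of Lemma~\ref{lem:l2norm} itself, so no genuinely new ingredient is introduced.
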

    \begin{proof}
        This result follows directly from Lemma \ref{lem:l2norm}.
    \end{proof}
    \begin{lemma}
        \label{lem:normk}
        Assume that (\ref{hpushed}) holds. There exists $C>0$ such that for $L$ large enough and $k\in\llbracket1,K\rrbracket$,
        \begin{equation*}
            \|v_k\|^2\geqslant C.
        \end{equation*}
    \end{lemma}
    \begin{proof} The proof is similar to the case $k=1$. Let $k\in\llbracket1,K\rrbracket$. We proved in Lemma \ref{eigenvlocasymp} that $\lambda_k$ converges to a positive limit $ \lambda_k^\infty$ satisfying \eqref{eqlim} and such that
        $\sqrt{\rho-1-\lambda_k^\infty} \in \left(\left(k-\frac{1}{2}\right)\pi,k\pi\right).$
        Moreover, by definition of $v_k$, we have
        \begin{equation*} 
            \|v_k\|^2=\int_0^Lv_k(x)^2dx=\frac{1-\frac{\sin(2\sqrt{\rho-1-2\lambda_k})}{2\sqrt{\rho-1-2\lambda_k}}}{2\sin(\sqrt{\rho-1-2\lambda_k})^2}+\frac{\frac{\sinh(2\sqrt{2\lambda_k}(L-1))}{2\sqrt{2\lambda_k}}-(L-1)}{2\sinh(\sqrt{2\lambda_k}(L-1))^2}.
        \end{equation*}
        Hence, one can explicitly compute the limit of $\|v_k\|$ as in the proof of Lemma \ref{lem:l2norm} and show that this limit is positive. This entails the result.
    \end{proof}
    \begin{lemma}
        Assume that (\ref{hpushed}) holds. There exists $C>0$  such that for $L$ large enough and $k>K$,
        \begin{equation*}
            \|v_k\|^2\geqslant \frac{C}{\sin(\sqrt{-2\lambda_k}(L-1))^2\wedge\sin(\sqrt{\rho-1-2\lambda_k})^2}.
        \end{equation*}
        \label{lem:normK}
    \end{lemma}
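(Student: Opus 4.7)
The plan is to compute $\|v_k\|^2$ explicitly from the formula \eqref{vecp1}, then bound each of the two contributions from below using elementary estimates on $\sin(x)/x$. For $k>K$ we have $\lambda_k<0$; writing $\gamma_k=\sqrt{\rho-1-2\lambda_k}>0$ and $\delta_k=\sqrt{-2\lambda_k}>0$, the eigenvector reads
\[
v_k(x)=\begin{cases}
\sin(\gamma_k x)/\sin(\gamma_k),& x\in[0,1],\\
\sin(\delta_k(L-x))/\sin(\delta_k(L-1)),& x\in[1,L],
\end{cases}
\]
and direct integration (using $\int_0^a\sin^2(\omega u)\,du=\tfrac12(a-\tfrac{\sin(2\omega a)}{2\omega})$) gives
\[
\|v_k\|^2=\frac{1-\tfrac{\sin(2\gamma_k)}{2\gamma_k}}{2\sin^2(\gamma_k)}+\frac{(L-1)\bigl(1-\tfrac{\sin(2\delta_k(L-1))}{2\delta_k(L-1)}\bigr)}{2\sin^2(\delta_k(L-1))}.
\]

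The next step is to bound both brackets $1-\sin(2y)/(2y)$ from below by a positive constant. Since $|\sin(2y)/(2y)|\le 1/(2y)$, it suffices to show $\gamma_k\ge\pi/2$ and $\delta_k(L-1)\ge\pi/2$; both then yield $1-\sin(2y)/(2y)\ge 1-1/\pi$. The bound $\gamma_k\ge\pi/2$ is immediate: under (\ref{hpushed}) we have $\rho-1>\pi^2/4$, hence $\gamma_k^2=\rho-1-2\lambda_k>\rho-1>\pi^2/4$.

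The main point that needs care is the bound $\delta_k(L-1)\ge\pi/2$. I would extract it from the description \eqref{encK} in Lemma \ref{eigenvloc}: writing $k=K+j$ with $N_{i-1}<j\le N_i$, the eigenvalue $\lambda_{K+j}$ lies in $(-a_{j-i+1},-a_{j-i})$, so $\delta_k(L-1)>\sqrt{2a_{j-i}}(L-1)=(j-i-\tfrac12)\pi$. Thus it is enough to verify $j-i\ge 1$, which by definition of $N_{i-1}$ (see \eqref{def:N}) amounts to $\lfloor \tfrac{L-1}{\pi}\sqrt{A_{i-1}}-\tfrac12\rfloor\ge 1$ when $i\ge 1$ and to $j\ge 1$ when $i=0$; since $A_0>0$ by definition of $K$ (so $A_{i-1}\ge A_0$ for $i\ge 1$), this holds for $L$ large enough, uniformly in $i$.

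Putting the two lower bounds together yields
\[
\|v_k\|^2\ge\frac{1-1/\pi}{2\sin^2(\gamma_k)}+\frac{(L-1)(1-1/\pi)}{2\sin^2(\delta_k(L-1))}\ge C\Bigl(\tfrac{1}{\sin^2(\gamma_k)}+\tfrac{1}{\sin^2(\delta_k(L-1))}\Bigr),
\]
and one concludes using the elementary inequality $1/a+1/b\ge 1/(a\wedge b)$ with $a=\sin^2(\gamma_k)$, $b=\sin^2(\delta_k(L-1))$. I expect no serious obstacle; the only delicate step is the uniform-in-$k$ lower bound on $\delta_k(L-1)$, which is however handled cleanly by the explicit interval localisation \eqref{encK}.
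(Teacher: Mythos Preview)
Your proposal is correct and follows essentially the same route as the paper: compute $\|v_k\|^2$ explicitly as the sum of the two integrals, then bound each bracket $1-\tfrac{\sin(2y)}{2y}$ from below using $\gamma_k>\pi/2$ and $\delta_k(L-1)>\pi/2$. The only cosmetic difference is that the paper obtains $\delta_k(L-1)>\pi/2$ more directly from the fact (established at the end of the proof of Lemma~\ref{eigenvloc}) that $\lambda_k<-a_1$ for all $k>K$, which immediately gives $\sqrt{-2\lambda_k}(L-1)>\sqrt{2a_1}(L-1)=\pi/2$, whereas you reconstruct this from the interval localisation~\eqref{encK} and the inequality $j-i\ge1$; your detour is correct but unnecessary.
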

    \begin{proof} For $k>K$,
        $$\|v_k\|^2=\frac{1-\frac{\sin(2\sqrt{\rho-1-2\lambda_k})}{2\sqrt{\rho-1-2\lambda_k}}}{2\sin(\sqrt{\rho-1-2\lambda_k})^2}+\frac{(L-1)-\frac{\sin(2\sqrt{-2\lambda_k}(L-1))}{2\sqrt{-2\lambda_k}}}{2\sin(\sqrt{-2\lambda_k}(L-1))^2}.$$ Both terms are non negative. Furthermore,  since $\lambda_k<0$, we have $\sqrt{\rho-1-2\lambda_k}>\sqrt{\rho-1}>\frac{\pi}{2}>0$ so that there exists $C>0$ such that
        \begin{equation*}
            1-\frac{\sin(2\sqrt{\rho-1-2\lambda_k})}{2\sqrt{\rho-1-2\lambda_k}}\geqslant C, \quad \forall k>K.
        \end{equation*}
        Moreover, recall from Lemma \ref{eigenvloc} that $\lambda_k<-a_1$ for all $k>K$. Therefore, we have
        \begin{equation*}
            \left|\frac{\sin(2\sqrt{-2\lambda_k}(L-1))}{2\sqrt{-2\lambda_k}}\right|\leqslant \frac{1}{2\sqrt{ 2 a_1}},
        \end{equation*}
        and
        $$L-1-\frac{\sin(2\sqrt{-2\lambda_k}(L-1))}{2\sqrt{-2\lambda_k}}>L-1-\frac{1}{2\sqrt{2 a_1}}>L-1-\frac{1}{\pi}(L-1)=\left(1-\frac{1}{\pi}\right)(L-1).$$
        As a consequence,
        \begin{equation*}\|v_k\|^2\geqslant C\left(\frac{L-1}{2\sin(\sqrt{-2\lambda_k}(L-1))^2}\vee\frac{1}{2\sin(\sqrt{\rho-1-2\lambda_k})^2}\right),
        \end{equation*}
        for all $k>K$ and $L$ large enough.
    \end{proof}

    \begin{lemma}\label{upperboundratio1}
        Assume that (\ref{hpushed}) holds. There exists $C>0$ such that for $L$ large enough, $x\in[0,L]$ and $k\leqslant K$, we have
        \begin{equation*}
            \left|v_k(x)\right|\leqslant Ce^{\beta L} v_1(x).
        \end{equation*}
    \end{lemma}
    \begin{proof}
        Recall from Lemma \ref{ubv1} that for sufficiently large $L$, we have
        \begin{equation*}
            v_1(x)\geqslant C(x\wedge 1\wedge (L-x))e^{-\beta x}.
        \end{equation*}
        Similarly, one can easily prove the existence of a constant $C>0$ such that
        \begin{equation*}
            |v_k(x)|\leqslant C(x\wedge 1\wedge (L-x))e^{-\sqrt{2\lambda_k^\infty} x}, \quad \forall k\leq K.
        \end{equation*}
        Hence, we see that for all $k\leq K$, we have
        \begin{equation*}
            |v_k(x)|\leqslant  C(x\wedge 1\wedge (L-x))\leqslant e^{\beta x}v_1(x)\leqslant e^{\beta L}v_1(x).
        \end{equation*}
    \end{proof}

    \begin{lemma}
        Assume that (\ref{hpushed}) holds. There exists $C>0$ such that for $L$ large enough, $x\in[0,1]$ and $k\in \mathbb{N}$, we have
        \begin{equation*}
            \left|{v_k(x)}\right|\leqslant C  \left| \frac{\sqrt{\rho-1-2\lambda_k}}{\sin(\sqrt{\rho-1-2\lambda_k})}\right|v_1(x).
        \end{equation*}
    \end{lemma}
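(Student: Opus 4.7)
The proof is a short direct calculation from the explicit formula for $v_k$ on $[0,1]$. The strategy is to use the elementary inequality $|\sin(z)|\leqslant |z|$ to bound the numerator of $v_k$, and concavity of $\sin$ on $[0,\pi]$ to bound $x$ from above by $v_1(x)$.

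First, I would invoke formula \eqref{vecp1} from Lemma \ref{eigenvloc}. Since Lemma \ref{eigenvloc} ensures $\lambda_k<\frac{\rho-1}{2}$ for every $k\in\mathbb{N}$, the quantity $\rho-1-2\lambda_k$ is strictly positive, so on $[0,1]$ the eigenfunction takes the sine form
\begin{equation*}
v_k(x)=\frac{\sin(\sqrt{\rho-1-2\lambda_k}\,x)}{\sin(\sqrt{\rho-1-2\lambda_k})}.
\end{equation*}
Applying $|\sin(y)|\leqslant|y|$ in the numerator yields
\begin{equation*}
|v_k(x)|\leqslant \frac{\sqrt{\rho-1-2\lambda_k}}{|\sin(\sqrt{\rho-1-2\lambda_k})|}\,x,\qquad x\in[0,1].
\end{equation*}
Note that under \eqref{hpushed}, the denominator never vanishes (eigenvalues lie in open intervals strictly avoiding the zeros of $\sin(\sqrt{\rho-1-2\lambda})$, as described in Lemma \ref{eigenvloc}).

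Next I would show $v_1(x)\geqslant x$ on $[0,1]$ for $L$ large enough. Indeed, Lemma \ref{eigenvlocasymp} (combined with Lemma \ref{eigenvloc}) guarantees $\sqrt{\rho-1-2\lambda_1}\in(\tfrac{\pi}{2},\pi)$ for $L$ sufficiently large, so with $a:=\sqrt{\rho-1-2\lambda_1}\in(0,\pi)$ we have $ax\in[0,a]\subset[0,\pi]$ and $\sin$ is concave on this interval. Concavity of $\sin$ together with the endpoint values $\sin(0)=0$ and $\sin(a)>0$ gives $\sin(ax)\geqslant x\sin(a)$, hence
\begin{equation*}
v_1(x)=\frac{\sin(ax)}{\sin(a)}\geqslant x,\qquad x\in[0,1].
\end{equation*}

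Combining the two bounds then yields the claim
\begin{equation*}
|v_k(x)|\leqslant \left|\frac{\sqrt{\rho-1-2\lambda_k}}{\sin(\sqrt{\rho-1-2\lambda_k})}\right|v_1(x),\qquad x\in[0,1].
\end{equation*}
There is no genuine obstacle here: the only thing to check carefully is that the formula \eqref{vecp1} reduces to the sine case for every $k$, which follows from the uniform upper bound $\lambda_k<\frac{\rho-1}{2}$ established in the proof of Lemma \ref{eigenvloc}.
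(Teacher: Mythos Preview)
Your proposal is correct and matches the paper's own argument essentially line for line: the paper also bounds $|v_k(x)|$ by $\frac{\sqrt{\rho-1-2\lambda_k}}{|\sin(\sqrt{\rho-1-2\lambda_k})|}\,x$ via $|\sin(y)|\leqslant |y|$, and then uses $v_1(x)\geqslant x$ on $[0,1]$ (a fact recorded in the proof of Lemma~\ref{upperboundratio1}, relying on $\sqrt{\rho-1-2\lambda_1}\in(\tfrac{\pi}{2},\pi)$).
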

    \begin{proof} We know from Lemma \ref{ubv1} that $v_1(x)\geqslant Cx$, for all $x\in[0,1]$. Then, using that $|\sin y|\leqslant y$ for all $y\in\mathbb{R}$, we see that
        \begin{equation*}
            |v_k(x)|\leqslant \frac{\sqrt{\rho-1-2\lambda_k}}{|\sin(\sqrt{\rho-1-2\lambda_k})|}\; x, \quad \forall x\in[0,1].
        \end{equation*}
        This concludes the proof of the lemma.
    \end{proof}

    \begin{lemma}
        Assume that  \eqref{hpushed} holds. There exists $C>0$ such that for $L$ large enough,  $k>K$ and $x\in[1,L]$, we have
        $$  \left|{v_k(x)}\right|\leqslant C \frac{\sqrt{\rho-1-2\lambda_k}}{|\sin(\sqrt{-2\lambda_k}(L-1))|} e^{\beta L} v_1(x).$$
    \end{lemma}
    \begin{proof}
        Recall from Lemma \ref{ubv1} that for sufficiently large $L$, we have
        \begin{equation*}
            v_1(x)\geqslant C(x\wedge 1\wedge (L-x))e^{-\beta x}.
        \end{equation*}
        Then, note that for $x\in[1,L-1]$,
        \begin{equation*}
            |\sin(\sqrt{-2\lambda_k}(L-x))|\leqslant 1\leqslant C\sqrt{\rho-1}\leqslant C\sqrt{\rho-1-2\lambda_k}\leqslant Ce^{\beta x}\sqrt{\rho-1-2\lambda_k}v_1(x).
        \end{equation*}
        On the other hand, for $x\in[L-1,L],$
        \begin{align*}
            |\sin(\sqrt{-2\lambda_k}(L-x))| & \leqslant \sqrt{-2\lambda_k}(L-x)\leqslant \sqrt{\rho-1-2\lambda_k}(L-x) \\
                                            & \leqslant C\sqrt{\rho-1-2\lambda_k}e^{\beta x}v_1(x),
        \end{align*}
        which concludes the proof of the lemma.
    \end{proof}
    \begin{lemma}
        Assume that \eqref{hpushed} holds. There exists $C>0$ such that for $L$ large enough, $k>K$ and $x\in[0,L]$, we have
        $$  \left|{v_k(x)}\right|\leqslant C \|v_k\|.$$
    \end{lemma}
    \begin{proof}
        This is a straightforward consequence of Lemma \ref{lem:normK}.
    \end{proof}

    \section{The Green function}\label{appendix:greenf}
    \begin{lemma}\label{lemma:ubsf}
        Assume that \eqref{hpushed} holds. There exist $C>0$ and $\delta >0$ such that for $L$ sufficiently large, $x\in[0,L]$ and $
            \xi \in(0,\delta)$,
        \begin{align}\psi_{{\linf+\xi}}(x)  & \leqslant C\sinh\left(\sqrt{2{(\linf+\xi)}}(L-x)\right), \label{psiub}                                                                                                                        \\
             \varphi_{\linf+\xi}(x) & \leqslant C (1\wedge x)\left({  \tilde f_1}({\linf+\xi})e^{\sqrt{2(\linf+\xi)}(x-1)}+{  \tilde f_2}({\linf+\xi})e^{-\sqrt{2{(\linf+\xi)}}(x-1)}\right). \label{phiub}
        \end{align}
    \end{lemma}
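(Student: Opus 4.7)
The bounds should follow almost directly from the explicit formulas \eqref{nt:phi} and \eqref{nt:psi} for $\varphi_{\linf+\xi}$ and $\psi_{\linf+\xi}$, together with continuity of $f$ and $g$ (see \eqref{def:walpha}) at $\xi=\linf$. I will choose $\delta>0$ small enough that on $(0,\delta)$ the quantities $\sqrt{2(\linf+\xi)}$ and $\sqrt{\rho-1-2(\linf+\xi)}$ remain bounded away from $0$ (this is possible since $\linf>0$ and $\rho-1-2\linf=\gamma^2>\pi^2/4$ under \eqref{hpushed}), and such that $g(\linf+\xi)\ge g(\linf)/2>0$ by continuity and the computation $g(\linf)=-2\gamma\cos\gamma>0$ recalled just after \eqref{def:walpha}.

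\emph{The case $x\in[1,L]$.} For this range $1\wedge x=1$ and both inequalities are essentially tautological. The formula \eqref{nt:psi} gives $\psi_{\linf+\xi}(x)=\sinh(\sqrt{2(\linf+\xi)}(L-x))$, so \eqref{psiub} holds with $C=1$. For $\varphi_{\linf+\xi}$, formula \eqref{nt:phi} reads $\varphi_{\linf+\xi}(x)=\bigl(2\sqrt{2(\linf+\xi)}\bigr)^{-1}\bigl(f(\linf+\xi)e^{\sqrt{2(\linf+\xi)}(x-1)}+g(\linf+\xi)e^{-\sqrt{2(\linf+\xi)}(x-1)}\bigr)$, and on $(0,\delta)$ the prefactor is bounded by $(2\sqrt{2\linf})^{-1}+o(1)$, so \eqref{phiub} follows upon absorbing this factor into~$C$.

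\emph{The case $x\in[0,1]$.} For $\psi_{\linf+\xi}$, I would apply $|\cos|,|\sin|\le 1$ in \eqref{nt:psi} to obtain
\[
|\psi_{\linf+\xi}(x)|\le \sinh\!\bigl(\sqrt{2(\linf+\xi)}(L-1)\bigr)+\frac{\sqrt{2(\linf+\xi)}}{\sqrt{\rho-1-2(\linf+\xi)}}\cosh\!\bigl(\sqrt{2(\linf+\xi)}(L-1)\bigr).
\]
The ratio is bounded uniformly in $\xi\in(0,\delta)$ by our choice of $\delta$, and for $L$ large one has $\cosh(\sqrt{2(\linf+\xi)}(L-1))\le 2\sinh(\sqrt{2(\linf+\xi)}(L-1))$. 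Hence $|\psi_{\linf+\xi}(x)|\le C\sinh(\sqrt{2(\linf+\xi)}(L-1))\le C\sinh(\sqrt{2(\linf+\xi)}(L-x))$, the last step because $L-x\ge L-1$. For $\varphi_{\linf+\xi}$, I will use $|\sin(\gamma_\xi x)|\le \gamma_\xi x$ with $\gamma_\xi:=\sqrt{\rho-1-2(\linf+\xi)}$ bounded on $(0,\delta)$, to get $\varphi_{\linf+\xi}(x)\le C x=C(1\wedge x)$. It then suffices to bound the right-hand side factor of \eqref{phiub} from below by a positive constant: since $x-1\le 0$ we have $e^{-\sqrt{2(\linf+\xi)}(x-1)}\ge 1$, and $f(\linf+\xi)\ge 0$ (by $f(\linf)=0$ and $f'(\linf)>0$) so that the $f$-term is nonnegative, giving
\[
f(\linf+\xi)e^{\sqrt{2(\linf+\xi)}(x-1)}+g(\linf+\xi)e^{-\sqrt{2(\linf+\xi)}(x-1)}\;\ge\;g(\linf+\xi)\;\ge\;g(\linf)/2>0,
\]
which yields \eqref{phiub} on $[0,1]$ after dividing.

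\emph{Expected difficulty.} There is no real obstacle; the only subtlety is to fix $\delta$ once and for all so that all the uniform bounds above (boundedness of the coefficients, positivity of $g(\linf+\xi)$, and the $\cosh\le 2\sinh$ estimate for $L$ large) hold simultaneously, which is a book-keeping matter rather than a substantive step.
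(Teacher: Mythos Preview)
Your proof is correct and follows essentially the same approach as the paper: handling $x\in[1,L]$ directly from the formulas \eqref{nt:phi}--\eqref{nt:psi}, and for $x\in[0,1]$ bounding $|\psi_{\linf+\xi}|$ by a constant times $\sinh(\sqrt{2(\linf+\xi)}(L-1))$ and bounding $\varphi_{\linf+\xi}(x)\le Cx$ while using $f(\linf+\xi)\ge 0$, $g(\linf+\xi)\ge g(\linf)/2>0$ to lower-bound the bracket. The only cosmetic difference is that the paper passes through $e^{\sqrt{2(\linf+\xi)}(L-1)}$ rather than your $\cosh\le 2\sinh$ step, which is equivalent.
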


    \begin{proof}
        According to Equation (\ref{nt:psi}), it is sufficient to prove that (\ref{psiub}) holds in  $[0,1]$. Yet, for $x\in [0,1],$ we have
        \begin{equation}
            \psi_{\linf+\xi}(x)\leqslant|\psi_{\linf+\xi}(x)|\leqslant\left(1+\frac{\sqrt{2({\linf+\xi})}}{\sqrt{\rho-1-2{(\linf+\xi)}}}\right)e^{\sqrt{2{(\linf+\xi)}}(L-1)}. \label{eq:281}
        \end{equation} Since $\sqrt{\rho-1-2\linf}\in(\pi/2,\pi)$, there exists $\delta>0$ such that
        \begin{equation}
            \pi/2<\sqrt{\rho-1-2({\linf+\xi})}<\pi,\label{eq:281:bis}
        \end{equation}
        and
        \begin{equation*}
            \sqrt{2({\linf+\xi})}<\sqrt{\rho-1-\pi^2/4},
        \end{equation*}
        for all $|\xi|<\delta$. Therefore, for all $\xi\in(-\delta,\delta)$, we have
        \begin{equation}\label{eq:2820}\frac{\sqrt{2({\linf+\xi})}}{\sqrt{\rho-1-2({\linf+\xi})}}\leqslant 2\frac{\sqrt{\rho-1-\pi^2/4}}{\pi}.
        \end{equation}
        Moreover, we know that for $x\in[0,1]$ and $\xi\in(0,\delta),$
        \begin{eqnarray*}
            \sinh\left(\sqrt{2({\linf+\xi})}(L-x)\right)&\geqslant&\sinh\left(\sqrt{2({\linf+\xi})}(L-1)\right)\\
            &=&  \frac{1}{2}(1-e^{-2\sqrt{2\linf+\xi}(L-1)})e^{\sqrt{2{(\linf+\xi)}}(L-1)}\\
            &\geqslant& \frac{1}{2}(1-e^{-2\sqrt{2\linf}(L-1)})e^{\sqrt{2{(\linf+\xi)}}(L-1)}\\
            &\geqslant& \frac{1}{4} e^{\sqrt{2{(\linf+\xi)}}(L-1)},
        \end{eqnarray*}
        for $L$ large enough (that does not depend on~$\delta$). This estimate, combined with Equations (\ref{eq:281}) and (\ref{eq:2820}), concludes the proof of  (\ref{psiub}).

        In order to prove (\ref{phiub}), we use  that ${  \tilde f_1}(\linf)=0$, ${  \tilde f_1}'(\linf)>0$ and ${  \tilde f_2}(\linf)>0$. Therefore, without loss of generality, we have ${  \tilde f_1}({\linf+\xi})>0$ and ${  \tilde f_2}({\linf+\xi})>\frac{1}{2}{  \tilde f_2}(\linf)$  for all $\xi\in(0,\delta)$.
        Thus, for $x\in [0,1]$ and  ${\xi}\in(0,\delta)$, we have
        \begin{equation}
            {  \tilde f_1}({\linf+\xi})e^{\sqrt{2({\linf+\xi})}(x-1)}+{  \tilde f_2}({\linf+\xi})e^{-\sqrt{2({\linf+\xi})}(x-1)}\geqslant \frac{{  \tilde f_2}(\linf)}{2}.\label{fg:xi}
        \end{equation}
        Besides, combining Equations (\ref{nt:phi}) and (\ref{eq:281:bis}), we obtain that for $x\in[0,1]$ and ${\xi}\in(0,\delta)$,
        $$\varphi_{\linf+\xi}(x)=\sin\left(\sqrt{\rho-1-2({\linf+\xi})}\,x\right)\leqslant \sqrt{\rho-1-2({\linf+\xi})} x \leqslant \pi x.$$
        This equation, along with (\ref{fg:xi}), implies that (\ref{phiub}) holds in $[0,1]$ for any $C>\frac{2\pi}{{  \tilde f_2}(\linf)}$. Finally, note that for  ${\xi}\in(0,\delta)$,
        $\left(2({\linf+\xi})\right)^{-\frac{1}{2}}<\left(2\linf\right)^{-\frac{1}{2}},$
        so that (\ref{phiub}) holds in $[0,L]$ for any $C>\max\left(\frac{1}{\sqrt{2\linf}},\frac{2\pi}{{  \tilde f_2}(\linf)}\right).$
    \end{proof}

    \begin{proof}[Proof of Lemma \ref{lem:alphat}] Since $$\sqrt{2(\linf+\xi)}-\sqrt{2\linf}=\frac{2\xi}{\sqrt{2\linf}+\sqrt{2(\linf+\xi)}}\sim\frac{1}{\sqrt{2\linf}}\xi,$$ as $L\to\infty$, we know that for $L$ large enough (that does not depend on $x$), we have
        \begin{equation*}
            e^{\sqrt{2(\linf+\xi)}(L-x)}\leqslant e^{\sqrt{2\linf}(L-x)}e^{\frac{2}{\sqrt{2\linf}}\xi (L-x)}.
        \end{equation*}
        Yet, $\xi(L-x)\leqslant \xi L$ uniformly tends to $0$ as $L\to\infty$. Therefore, for $L$ large enough (that does not depend on $x$), we have
        \begin{equation}
            e^{ \sqrt{2\linf}(L-x)}\leqslant e^{ \sqrt{2(\linf+\xi)}(L-x)}\leqslant 2e^{\sqrt{2\linf}(L-x)}\label{exp:1}.
        \end{equation}
        We then use that ${  \tilde f_2}(\linf)>0$, ${  \tilde f_1}(\linf)=0$ and ${  \tilde f_1}'(\linf)>0$ to claim that
        \begin{align}
            0<\frac{1}{2}{  \tilde f_2}(\linf)<      & {  \tilde f_2}(\linf+\xi(L))<2g(\linf),\label{g:xi}        \\
            \frac{1}{2}{  \tilde f_1}'(\linf)\xi(L)< & {  \tilde f_1}(\linf+\xi(L))<2f'(\linf)\xi(L)\label{f:xi},
        \end{align}
        for $L$ large enough.
        Thus, combining the definition of the Wronskian (\ref{def:walpha}) and Equations (\ref{exp:1}), (\ref{g:xi}) and (\ref{f:xi}), we get that for $L$ large enough,
        \begin{equation*}
            \omega_{\linf+\xi}>{  \tilde f_1}(\linf+\xi)e^{\sqrt{2(\linf+\xi)}(L-1)}>Cf'(\linf)\xi(L)e^{\sqrt{2\linf}L}.
        \end{equation*}
        Then, Equation (\ref{exp:1}) applied to $x=1$, divided by Equation (\ref{exp:1}) implies that for $L$ large enough,
        $$\frac{1}{2}e^{\sqrt{2\linf}(x-1)}\leqslant e^{\sqrt{2(\linf+\xi)}(x-1)}\leqslant 2e^{\sqrt{2\linf}(x-1)}.$$ This inequality combined with Equations (\ref{phiub}) from Lemma \ref{lemma:ubsf}, (\ref{f:xi}) and (\ref{g:xi}) yields the expected control on $\varphi_{\linf+\xi}$.

        The estimate on $\psi_{\linf+\xi}$ can easily be  deduced from Equations (\ref{psiub}) from Lemma \ref{lemma:ubsf} and  from Equation (\ref{exp:1}) on $[0,L-1]$. For $x\in[L-1,L],$ we use that
        $$\sinh\left(\sqrt{2{(\linf+\xi)}}(L-x)\right)\leqslant \sinh\left(\sqrt{2{(\linf+\xi)}}\right)(L-x)\leqslant C(L-x),$$ for $L$ large enough. Putting this together with Equation (\ref{psiub}), we get  that for $L$ large enough and $x\in[L-1,L]$,
        \begin{equation*}
            \psi_{\linf+\xi}(x)\leqslant C(L-x) \leqslant C(L-x)e^{\beta(L-x)},
        \end{equation*}
        which concludes the proof of the lemma.
    \end{proof}

    \begin{proof}[Proof of Lemma \ref{lemma:wro}]
        The proof is similar to that of Lemma \ref{lem:alphat}
        except that
        $$\sqrt{2(\linf+\xi)}-\sqrt{2(\linf+\xi)}\sim\frac{1}{\sqrt{2\linf}}\frac{h}{L},$$ as $L\to\infty$, so that for $L$ large enough (that does not depend on $x$), we have
        \begin{equation*}
            e^{\sqrt{2(\linf+\xi)}(L-x)}\leqslant e^{\sqrt{2\linf}(L-x)}e^{\frac{2}{\sqrt{2\linf}}h}.
        \end{equation*}
    \end{proof}

    \section{Proof of Lemma \ref{est:wy}}
    Only for this proof, we write $\lambda_1^L$ instead of $\lambda_1$ to be able to compare $w_{1,-y}$ and $w_1$. For $x\in [1,L-a_L]$, we have
    \begin{align*}
         & e^{-\beta y}\frac{w_{1,-y}(x)}{w_1(x)}=e^{-\beta y}\frac{\sinh\left(\sqrt{2\lambda_1^{L+y}}(L+y-x)\right)}{\sinh\left(\sqrt{2\lambda_1^{L}}(L-x)\right)}                                                                    \\
         & = \underbrace{\frac{\sinh\left(\sqrt{2\lambda_1^{L+y}}(L+y-x)\right)}{\sinh\left(\beta(L+y-x)\right)}}_{\textstyle =:F(x)}\underbrace{\left(e^{-\beta y}\frac{\sinh\left(\beta(L+y-x)\right)}{\sinh\left(\beta(L-x)\right)}
            \right)}_{\textstyle =:G(x)}\underbrace{\frac{\sinh\left(\beta(L-x)\right)}{\sinh\left(\sqrt{2\lambda_1^{L}}(L-x)\right)}
        }_{\textstyle =:H(x)}.
    \end{align*}
    Note that
    \begin{align*}
        G(x)=\frac{1-2e^{-2\beta y}e^{-2\beta(L-x)}}{1-e^{-2\beta(L-x)}} & \in\left[\frac{1-2e^{-2\beta y}e^{-2\beta a_L}}{1-e^{-2\beta L}},\frac{1-2e^{-2\beta y}e^{-2\beta L}}{1-e^{-2\beta a_L}}\right] \\
                                                                         & \subset\left[\frac{1-e^{-2\beta a_L}}{1-e^{-2\beta L}},\frac{ 1}{1-e^{-2\beta a_L}}\right].
    \end{align*}
    Thus $G$ converges uniformly in $x\in[1,L-a_L]$ and $y\geqslant 0$.
    Using the mean value theorem and Lemma \ref{exp:lambda1}, remarking that $x\mapsto xe^{-x}$ is decreasing on $(1,+\infty)$ and recalling that $\lambda_1^L$ increases with $L$, we get that  for $L$ large enough,
    \begin{align*}
        |F(x)-1| & \leqslant \frac{\cosh\left(\beta(L+y-x)\right)}{\sinh\left(\beta(L+y-x)\right)}\left(\beta-\sqrt{2\lambda_1^{L+y}}\right)(L+y-x)          \\
                 & \leqslant C\frac{e^{-\beta(L+y)}(L+y)}{\tanh(\beta a_L) }\leqslant C\frac{Le^{-\beta L}}{\tanh(\beta a_L) } \xrightarrow[L\to\infty]{} 0.
    \end{align*}
    Using similar arguments, one can prove that as $L\to\infty$
    \begin{equation*}
        |H(x)-1|\to 0 \quad \text{uniformly in $x\in[1,L-a_L]$}.
    \end{equation*}
    Note that $H(x)$ does not depend on $y$. To deal with the case $x\in[0,1]$, we first remark that the previous computations implies that \begin{equation*}
        e^{-\beta y}\frac{\sinh\left(\sqrt{2\lambda_1^{L+y}}(L+y-1)\right)}{\sinh(\sqrt{2\lambda_1^L}(L-1)}\to 1
    \end{equation*} as $L\to \infty$, uniformly in $y\in[0,+\infty)$. Then, one can easily prove (for instance, using the mean value theorem) that, as $L\to\infty$,
    \begin{equation*}
        \frac{\sin\left(\sqrt{\rho-1-2\lambda_1^{L+y}}x\right)}{\sin\left(\sqrt{\rho-1-2\lambda_1^{L+y}}\right)}\frac{\sin\left(\sqrt{\rho-1-2\lambda_1^{L}}\right)}{\sin\left(\sqrt{\rho-1-2\lambda_1^{L}}
            \,x\right)}\to 1
    \end{equation*}
    uniformly in $y\in[0,+\infty)$ and in $x\in[0,1]$, which concludes the proof of  Lemma \ref{est:wy}.
    \label{proof:lem71}
\end{appendix}

\section*{Acknowledgments}

I am grateful to Pascal Maillard and Gaël Raoul for initiating this project and for their guidance. I also thank Jean-Marc Bouclet for useful explanations of spectral methods and Cyril Labbé for helpful discussions, which gave me a better understanding of the problem. This work was partially supported by the grant ANR-20-CE92-0010-01. This project has received funding from the European Union’s Horizon 2020 research and innovation program under the Marie Skłodowska-Curie grant agreement No 101034413. This project was initiated at the École Polytechnique, CMAP, whose support is gratefully acknowledged.

\bibliographystyle{myplain} %
\bibliography{AOP1691.bib}       

\end{document}